\providecommand\@dotsep{5}
\numberwithin{equation}{section}
\newtheorem{theorem}{Theorem}[section]
\newtheorem{corollary}[theorem]{Corollary}
\newtheorem{lemma}[theorem]{Lemma}
\newtheorem{proposition}[theorem]{Proposition}
\newtheorem{definition}[theorem]{Definition}
\newtheorem*{claim}{Claim}
\newtheorem{example}[theorem]{Example}
\theoremstyle{definition}
\newtheorem*{induction hypothesis}{Induction hypothesis}
\theoremstyle{remark}
\newtheorem*{remark}{Remark}
\def\inn#1#2{\langle#1,#2\rangle}
\def\bu{u}
\newcommand{\supp}{\mathrm{supp}\,}
\newcommand{\xisupp}{\mathrm{supp}_{\xi}\,}
\newcommand{\C}{\mathbb{C}}
\newcommand{\R}{\mathbb{R}}
\newcommand{\Z}{\mathbb{Z}}
\newcommand{\N}{\mathbb{N}}
\newcommand{\ud}{\mathrm{d}}
\newcommand{\xip}{\tfrac{\xi}{|\xi|}}
\newcommand{\be}{\mathbf{e}}
\newcommand{\ba}{\mathbf{a}}
\newcommand{\bg}{\mathbf{g}}
\newcommand{\floor}[1]{\lfloor #1 \rfloor }
\def\bbone{{\mathbbm 1}}
\newcommand{\Be}{\begin{equation}}
\newcommand{\Ee}{\end{equation}}
\newcommand{\Bm}{\begin{multline}}
\newcommand{\Em}{\end{multline}}
\newcommand{\Bea}{\begin{eqnarray}}
\newcommand{\Eea}{\end{eqnarray}}
\newcommand{\Beas}{\begin{eqnarray*}}
\newcommand{\Eeas}{\end{eqnarray*}}
\newcommand{\Benu}{\begin{enumerate}}
\newcommand{\Eenu}{\end{enumerate}}
\newcommand{\Bi}{\begin{itemize}}
\newcommand{\Ei}{\end{itemize}}
\def\intslash{\rlap{\kern  .32em $\mspace {.5mu}\backslash$ }\int}
\def\qsl{{\rlap{\kern  .32em $\mspace {.5mu}\backslash$ }\int_{Q_x}}}
\def\Im{\operatorname{Im\,}}
\def\N{\mathbb N}
\def\floor#1{{\lfloor #1 \rfloor }}
\def\ceil#1{{\lceil#1 \rceil }}
\def\emph#1{{\it #1 }}
\def\inn#1#2{\langle#1,#2\rangle}
\def\fI{{\mathfrak {I}}}
\def\fd{{\mathfrak {d}}}
\def\bbN{{\mathbb {N}}}
\newcommand{\br}{\mathbf{r}}
\title{Sharp $L^p$ bounds for the helical maximal function}
\author[D. Beltran]{David Beltran}\author[S. Guo]{Shaoming Guo}\author[J. Hickman]{Jonathan Hickman}\author[A. Seeger]{Andreas Seeger}
\date{\today}
\address{David Beltran: Department of Mathematics, University of Wisconsin, 480 Lincoln Drive, Madison, WI, 53706, USA.}
\email{dbeltran@math.wisc.edu}
\address{Shaoming Guo: Department of Mathematics, University of Wisconsin, 480 Lincoln Drive, Madison, WI, 53706, USA.}
\email{shaomingguo@math.wisc.edu}
\address{Jonathan Hickman: School of Mathematics, James Clerk Maxwell Building, The King's Buildings, Peter Guthrie Tait Road, Edinburgh, EH9 3FD, UK.}
\email{jonathan.hickman@ed.ac.uk}
\address{Andreas Seeger: Department of Mathematics, University of Wisconsin, 480 Lincoln Drive, Madison, WI, 53706, USA.}
\email{seeger@math.wisc.edu}
\begin{document}

\maketitle

\begin{abstract} We establish the $L^p(\R^3)$ boundedness of the helical maximal function for the sharp range $p>3$. Our results improve the previous known bounds for $p>4$. The key ingredient is a new microlocal smoothing estimate for averages along dilates of the helix, which is established via a square function analysis.
\end{abstract}




\section{Introduction}




\subsection{Main results} For $n\ge 2$ let $\gamma \colon I \to \R^n$ be a smooth curve, where $I \subset \R$ is a compact interval, and $\chi \in C^{\infty}(\R)$ be a bump function supported on the interior of $I$. Given $t>0$, consider the averaging operator
\begin{equation*}
    A_tf(x) := \int_{\R} f(x - t\gamma(s))\,\chi(s)\,\ud s
\end{equation*}
and define the associated maximal function
\begin{equation*}
    M_\gamma f(x):= \sup_{t>0} |A_t f(x)|.
\end{equation*}

We are interested in the $L^p$ mapping properties of $M_{\gamma}$. It is well-known that the range of exponents 
$p$ for which $M_{\gamma}$ is bounded on $L^p$ depends on the curvature of the underlying curve. Accordingly, we consider smooth curves $\gamma \colon I \to \R^n$ which are \textit{non-degenerate}, in the sense that there is a constant $c_0 > 0$ such that 
\begin{equation}\label{eq:nondegenerate}
    |\det(\gamma'(s), \cdots, \gamma^{(n)}(s))| \geq c_0 \qquad \textrm{for all $s \in I$.}
\end{equation}

A celebrated theorem of Bourgain \cite{Bourgain1985, Bourgain1986} states that if $\gamma \colon I \to \R^2$ is a smooth, non-degenerate plane curve, then $M_{\gamma}$ is bounded on $L^p(\R^2)$ if and only if $p > 2$. Here we establish a 3-dimensional variant of this result.

\begin{theorem}\label{intro max thm} If $\gamma: I \to \R^3$ is a smooth,  non-degenerate space curve, then $M_{\gamma}$ is bounded on $L^p(\R^3)$ if and only if $p > 3$.
\end{theorem}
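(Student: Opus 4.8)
The plan is to handle the two directions separately. The necessity of $p>3$ is the easier and essentially known part: it follows from a Knapp‑type example concentrated, in frequency, near the conic set of directions for which the curvature hypothesis \eqref{eq:nondegenerate} is least effective (those $\eta$ perpendicular to $\gamma'(s)$ and $\gamma''(s)$ for some $s$, where the torsion contribution degenerates relative to the frequency scale), superposed over dyadic scales with a logarithmic weight; I will only sketch this. For the sufficiency, fix $p>3$. By a localisation argument one may assume $\gamma$ is a small $C^N$‑perturbation of the moment curve $s\mapsto(s,s^2,s^3)$ on a short interval. A Littlewood--Paley decomposition of $f$ together with parabolic rescaling in $t$ reduces the theorem, for each $j\ge 1$, to the bound $\bigl\|\sup_{t\in[1,2]}|A^{(j)}_tf|\bigr\|_{L^p(\R^3)}\lesssim 2^{-\varepsilon j}\|f\|_{L^p(\R^3)}$, where $A^{(j)}_t$ localises $f$ to frequencies $|\xi|\sim 2^j$ (the piece $|\xi|\lesssim 1$ being dominated by the Hardy--Littlewood maximal function). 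Applying the Sobolev‑embedding bound
\[
\sup_{t\in[1,2]}|g(t)|^p\lesssim\int_1^2|g(t)|^p\,\mathrm{d}t+\int_1^2|g(t)|^{p-1}\,|t\,g'(t)|\,\mathrm{d}t
\]
with $g(t)=A^{(j)}_tf(x)$, integrating in $x$, and using that $2^{-j}t\partial_t A^{(j)}_t$ is again a Fourier integral operator of the same type, one reduces matters to the \emph{microlocal smoothing estimate}
\[
\bigl\|A^{(j)}_tf\bigr\|_{L^p(\R^3\times[1,2])}\lesssim 2^{-(1/p+\varepsilon)j}\,\|f\|_{L^p(\R^3)},\qquad p>3.
\]
This is the estimate advertised as new, and everything hinges on it.

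To prove the microlocal smoothing estimate I would analyse the Fourier integral operator $(x,t)\mapsto A^{(j)}_tf(x)$, which has symbol $m(t\xi)$ with $m(\eta)=\int e^{-i\eta\cdot\gamma(s)}\chi(s)\,\mathrm{d}s$. By stationary phase, away from the conic set $\Gamma^*$ of directions $\eta$ with $\eta\cdot\gamma'(s)=\eta\cdot\gamma''(s)=0$ for some $s$, $m(\eta)$ is a finite sum of terms $|\eta|^{-1/2}a(\eta/|\eta|)e^{-i|\eta|\phi(\eta/|\eta|)}$, so the canonical relation is governed by the conic hypersurfaces $\{(\xi,\tau)\in\R^4:\tau=-|\xi|\phi(\xi/|\xi|)\}$; the non‑degeneracy \eqref{eq:nondegenerate} forces these cones to have a single non‑vanishing principal curvature (inherited from the curvature of $\gamma$), so they are of ``light‑cone type'' but with one flat (null) direction. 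Near $\Gamma^*$ the relevant critical point of $s\mapsto\eta\cdot\gamma(s)$ becomes degenerate of type $A_2$ and $|m(\eta)|$ degrades from $|\eta|^{-1/2}$ to $|\eta|^{-1/3}$. Accordingly I would split $A^{(j)}_t=A^{(j),\mathrm{deg}}_t+A^{(j),\mathrm{ell}}_t$, the first with $\widehat f$ supported in a fixed neighbourhood of $\Gamma^*$ — further decomposed dyadically by the distance $\sigma$ to $\Gamma^*$, together with a ``core'' $\sigma\lesssim 2^{-j/3}$ — and the second supported at distance $\gtrsim c$ from $\Gamma^*$. The core is handled for free: an anisotropic rescaling $(x_1,x_2,x_3)\mapsto(2^{-j/3}x_1,2^{-2j/3}x_2,2^{-j}x_3)$ adapted to $\Gamma^*$ turns $A^{(j),\mathrm{core}}_t$ into a unit‑scale averaging operator, whence $\bigl\|\sup_t|A^{(j),\mathrm{core}}_tf|\bigr\|_p\lesssim 2^{-j/3}\|f\|_p$, which already suffices once $p>3$ (indeed for all $p>1$).

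The heart of the matter is the elliptic piece (and, by a $\sigma$‑dependent rescaling, the transition pieces at distance $\sigma$ from $\Gamma^*$, where $|m|\sim 2^{-j/2}\sigma^{-1/2}$), and here I would use a square function analysis. Decompose the frequency annulus into $\sim 2^{j/2}$ plates $\theta$ of angular width $2^{-j/2}$ in the curved direction of the cone, so that $A^{(j),\mathrm{ell}}_tf=\sum_\theta A^\theta_tf$. The key new input is a reverse square function inequality for the cone,
\[
\Bigl\|\sum_\theta A^\theta_tf\Bigr\|_{L^p(\R^3\times[1,2])}\lesssim 2^{\varepsilon j}\,\Bigl\|\bigl(\textstyle\sum_\theta|A^\theta_tf|^2\bigr)^{1/2}\Bigr\|_{L^p(\R^3\times[1,2])},\qquad p\ge 3,
\]
proved by an induction‑on‑scales / broad--narrow argument in the spirit of the square function estimates for the light cone (Mockenhaupt--Seeger--Sogge, Guth--Wang--Zhang), exploiting the curvature of the cone; this is exactly the ingredient that improves on the earlier $L^2$‑based arguments, which only reach $p>4$. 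Each plate carries the full $|\eta|^{-1/2}$‑decay, so that $\|A^\theta_tf\|_{L^2(\R^3\times[1,2])}\lesssim 2^{-j/2}\|f_\theta\|_{L^2}$; combining the square function inequality with this (nearly orthogonal) plate bound, interpolated against the trivial $L^\infty\to L^\infty$ estimate, yields the claimed gain $2^{-(1/p+\varepsilon)j}$ precisely in the range $p>3$. Summing the resulting geometric series in $\sigma$ over the transition scales costs only a constant, and summing over $j$ completes the proof.

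The step I expect to be the main obstacle is the reverse square function estimate for the cone at the sharp exponent: $p=3$ is critical, being the smallest exponent for which the square function / microlocal smoothing machinery produces a gain that beats the $2^{j/p}$ loss from the Sobolev step, so there is no slack and one must extract the full strength of the decoupling / small‑cap technology for conic hypersurfaces that have a flat direction — which is precisely where Kakeya‑type compressions threaten to defeat a naive square function argument. A secondary but delicate point is organising the plate decomposition and the $\sigma$‑decomposition near $\Gamma^*$ so that the $|\eta|^{-1/2}$ and $|\eta|^{-1/3}$ regimes are compatible, and so that all the dyadic sums — over $j$, over plates, and over the transition scales — close with an $\varepsilon$‑power to spare.
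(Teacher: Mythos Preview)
Your framework is correct---reduction to local smoothing via Sobolev embedding, Littlewood--Paley in $j$, and dyadic decomposition in the distance $\sigma$ to the binormal cone $\Gamma^*$---and matches the paper. But there are two genuine gaps.

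\textbf{The core is not free.} Your anisotropic dilation $D_\lambda$, $\lambda=2^{-j/3}$, does carry the core frequency region to unit scale, but the $s$-variable rescales to an interval of length $\sim 2^{j/3}$ (equivalently: the $s$-localisation for the core is to $|s-\theta_2(\xi)|\lesssim 2^{-j/3}$, where the critical point $\theta_2(\xi)$ ranges over the full interval $I_0$ as $\xi$ varies over the core). After rescaling one faces a unit-frequency Radon transform over a long curve---or, equivalently, $\sim 2^{j/3}$ angularly separated unit-scale pieces to recombine---and bounding this at $L^p$, $p>2$, is exactly the non-trivial problem. In the paper, the core bound $2^{-j/3}$ is the $\ell=\lfloor j/3\rfloor$ case of Proposition~7.1 and uses the full new machinery.

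\textbf{The square function is for the wrong cone.} The codimension-1 cone $\widetilde\Gamma_1\subset\widehat{\R}^4$ you identify (from the $|\eta|^{-1/2}$ asymptotic) governs the geometry \emph{far} from $\Gamma^*$; decoupling/square-function arguments for $\widetilde\Gamma_1$, combined with your $\sigma$-rescaling for transition pieces, is precisely the Pramanik--Seeger approach, and it yields the bound $2^{-\frac{j-\ell}{2}(\frac12+\frac1p)}$ at distance $\sigma\sim 2^{-\ell}$. This beats $2^{-j/p}$ only for $\ell<j(p-2)/(p+2)$, i.e.\ $\ell<j/5$ at $p=3$, leaving $j/5<\ell\le j/3$ uncovered---exactly why that method stalls at $p>4$. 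The paper's decisive observation is that in this near-$\Gamma^*$ regime the space-time Fourier support of $\mathfrak{A}_\gamma$ concentrates on a \emph{codimension-2} cone $\widetilde\Gamma_2\subset\widehat{\R}^4$, generated over the Frenet vector $\bar\be_4$ of the lifted curve $\bar\gamma(s)=(\int_0^s\gamma,\,s)$. The new ingredient is an $L^4$ reverse square function for Frenet boxes along $\widetilde\Gamma_2$ (Theorem~5.2), proved not by light-cone/GWZ technology but via 4-linear restriction (Bennett--Bez--Flock--Lee) plus a Bourgain--Guth broad/narrow argument. Even after this square function one needs an $L^2$-weighted analysis with two further new tools your sketch does not anticipate: a forward square function in $\R^3$ (Proposition~5.3) and a singular Nikodym maximal bound $L^2(\R^4)\to L^2(\R^3)$ for anisotropic plates with eccentricities up to $(r,r^2,r^3)$ (Proposition~5.4).
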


In the $n=3$ case, the condition \eqref{eq:nondegenerate} is equivalent to the non-vanishing of the curvature and torsion functions. As a concrete example, Theorem~\ref{intro max thm} implies that the \textit{helical maximal operator}
\begin{equation*}
    M_{\mathrm{Helix}}f(x) := \sup_{t > 0} \Big|\int_0^{2 \pi} f(x_1 - t\cos \theta, x_2 - t\sin \theta, x_3 - t\theta)\,\ud \theta \Big|
\end{equation*}
is bounded on $L^p(\R^3)$ for all $p > 3$.\medskip

A simple Knapp-type example shows $L^p$ boundedness fails for $p \leq 3$ (see \S\ref{nec cond sec}). On the other hand, Pramanik and the fourth author~\cite{PS2007} proved that Wolff's decoupling inequality \cite{Wolff2000} for the light cone implies the boundedness of $M_\gamma$ for a suitable range of $p$. The optimal range for Wolff's inequality was obtained by Bourgain and Demeter \cite{BD2015}  and the  combination of the results in \cite{PS2007} and \cite{BD2015} yields the $L^p$ boundedness of 
$M_{\gamma}$ for the partial range $4 < p \leq \infty$. Thus, Theorem~\ref{intro max thm} closes the gap by establishing boundedness for the remaining exponents $3 < p \leq 4$.\medskip

To prove Theorem~\ref{intro max thm}, we follow the basic strategy introduced by Mockenhaupt, Sogge and the fourth author \cite{MSS1992} in the context of the classical circular maximal function in the plane. In particular, in \cite{MSS1992} the authors gave an alternative proof of Bourgain's maximal theorem, deriving it as a consequence of certain \textit{local smoothing} estimates for the wave propagator. In the case of maximal functions associated to space curves, Theorem~\ref{intro max thm} follows from a local smoothing estimate for a class of Fourier integral operators associated to the averages $A_t$. To give a simple statement of the key underlying inequality, set $\mathfrak{A}_{\gamma}f(x,t) := \rho(t) \cdot A_tf(x)$ for some $\rho \in C^{\infty}_c(\R)$ with $\supp \rho \subseteq [1,2]$. Our main theorem then reads as follows.

\begin{theorem}\label{intro LS thm} Suppose $\gamma: I \to \R^3$ is a smooth,  non-degenerate space curve and let $3 \leq p \leq 4$ and $\sigma < \sigma(p)$ where $\sigma(p) := \tfrac{1}{5}\big(1 + \tfrac{2}{p}\big)$. Then $\mathfrak{A}_{\gamma}$ maps $L^p(\R^3)$ boundedly into $L^p_{\sigma}(\R^4)$. 
\end{theorem}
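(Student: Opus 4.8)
The plan is to prove Theorem~\ref{intro LS thm} by reducing the $L^p$ local smoothing estimate for $\mathfrak{A}_\gamma$ to a frequency-localized estimate at dyadic scales $\lambda = 2^j$ and then decomposing the averaging operator into pieces adapted to the conic geometry of the underlying Fourier integral operator. After making a Littlewood--Paley decomposition in the space variable, it suffices to show that the piece $\mathfrak{A}_\gamma^\lambda$ with frequencies $|\xi|\sim\lambda$ satisfies $\|\mathfrak{A}_\gamma^\lambda f\|_{L^p(\R^4)} \lesssim_\varepsilon \lambda^{-\sigma(p)+\varepsilon}\|f\|_{L^p(\R^3)}$, since summing a geometric series over $j$ recovers any $\sigma<\sigma(p)$. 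The operator $\mathfrak{A}_\gamma^\lambda$ is a Fourier integral operator of order $-1$ (the averaging along a curve with non-vanishing torsion gains one derivative by stationary phase) associated to the canonical relation parametrized by the cone bundle of the helix; the fixed-time operator $A_t$ gains only $1/2$ derivative, but the extra $t$-variable and the curvature of $t\mapsto t\gamma(s)$ provide the remaining smoothing, exactly as in \cite{MSS1992} for the circular means.

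\medskip

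The next step is the core of the argument: a \emph{square function} estimate for the wave-cone-like object attached to the helix. First I would decompose the frequency annulus $|\xi|\sim\lambda$ into $\sim\lambda^{1/2}$ angular sectors (plates) of dimensions roughly $\lambda\times\lambda^{1/2}\times 1$ adapted to the osculating geometry of the dual curve; write $\mathfrak{A}_\gamma^\lambda f = \sum_\nu \mathfrak{A}_\gamma^{\lambda,\nu}f$ where the sum is over these plates. The phase function associated to each plate is, after the decomposition, essentially linear up to acceptable errors, so the operators $\mathfrak{A}_\gamma^{\lambda,\nu}$ behave like modulated Nikodym-type averages. The key inequality to establish is a square function bound
\begin{equation*}
\Big\| \mathfrak{A}_\gamma^\lambda f \Big\|_{L^p(\R^4)} \lesssim_\varepsilon \lambda^{\varepsilon}\,\lambda^{-1/2}\Big\| \Big(\sum_\nu |\mathcal{E}^{\lambda,\nu} f|^2\Big)^{1/2}\Big\|_{L^p(\R^4)},
\end{equation*}
for suitable companion operators $\mathcal{E}^{\lambda,\nu}$, combined with an $L^p$ orthogonality/decoupling input for the cone over the helix. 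Here one invokes the sharp $\ell^2$-decoupling for the light cone of \cite{BD2015} (equivalently the curve $\gamma$ in $\R^3$ produces a cone in $\R^4$ to which Bourgain--Demeter applies), which yields the gain $\lambda^{(d-1)/4}$-type savings; tracking exponents carefully in dimension $4$ produces precisely the threshold $\sigma(p)=\tfrac15(1+\tfrac2p)$, which at $p=4$ gives $\sigma(4)=3/10$ (matching the old bound obtained via \cite{PS2007}) and at $p=3$ gives $\sigma(3)=1/3$ (which feeds back into Theorem~\ref{intro max thm} after a standard Sobolev embedding argument, since $1/p<\sigma(p)$ exactly when $p>3$).

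\medskip

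I expect the main obstacle to be \emph{not} the decoupling step itself---that is essentially a black box---but rather the passage from the raw Fourier integral operator $\mathfrak{A}_\gamma^\lambda$ to a model to which decoupling applies, uniformly over $t\in[1,2]$ and over the nontrivial curvature/torsion variation of $\gamma$. Concretely: (i) the canonical relation here is not exactly the light cone but a perturbation of it (a "variable-coefficient cone"), so one must either straighten the phase via a change of variables that is uniform in $(s,t)$, or prove a variable-coefficient decoupling inequality with the same exponents; (ii) the square function step requires an $L^p$ reverse square function estimate which is only available for $p\le 6$ via the $L^6$ bilinear/decoupling machinery, so the argument must be organized to stay within the favorable range $p\le 4$; and (iii) one must handle the interaction between the space-frequency localization and the dilation variable $t$ carefully---the "extra" gain beyond the fixed-time FIO bound comes precisely from integrating in $t$, and capturing it requires the non-degeneracy condition \eqref{eq:nondegenerate} through a rank/curvature computation on the full space-time symbol.

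\medskip

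The final assembly is routine once the frequency-localized estimate is in hand: interpolate with the trivial $L^2$ bound (where $\mathfrak{A}_\gamma^\lambda$ has operator norm $\lesssim\lambda^{-1}$ by the fixed-time $L^2$ Sobolev bound of order $1$ together with van der Corput in $t$, which actually already gives $\sigma\le 1$ on $L^2$), sum the Littlewood--Paley pieces, and note that low frequencies are harmless since $A_t$ is trivially bounded there. The endpoint $\sigma=\sigma(p)$ is lost because decoupling carries an $\varepsilon$-loss, but the theorem only claims $\sigma<\sigma(p)$, so this is not an issue.
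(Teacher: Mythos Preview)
Your proposal has a genuine gap: you treat the Bourgain--Demeter $\ell^2$ decoupling for the cone as the core analytic input, but decoupling alone only recovers the bound $\delta(p)=\tfrac13(\tfrac12+\tfrac1p)$ for $2\le p\le 6$ already obtained in \cite{PS2007,BD2015}. A direct computation shows $\delta(p)<\sigma(p)=\tfrac15(1+\tfrac2p)$ throughout $3\le p\le 4$ (for instance $\delta(4)=\tfrac14<\tfrac{3}{10}=\sigma(4)$, so your parenthetical ``matching the old bound'' is incorrect), and in particular $\delta(3)=\tfrac{5}{18}<\tfrac13$, which is \emph{not} enough to beat $1/p$ and feed into the maximal theorem. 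The paper is explicit that the decoupling route only yields Theorem~\ref{intro max thm} for $p>4$; closing the gap $3<p\le 4$ requires a genuinely new ingredient.

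What you are missing is the \emph{second} conic geometry. Near the binormal cone $\Gamma\subset\widehat{\R}^3$ (where the Fourier decay of $\widehat{\mu}$ drops to $|\xi|^{-1/3}$, not the $|\xi|^{-1/2}$ you assume), the space-time Fourier transform of $\mathfrak{A}_\gamma$ concentrates on a \emph{codimension-$2$} cone $\widetilde{\Gamma}_2\subset\widehat{\R}^4$, not the codimension-$1$ light cone your plate decomposition $\lambda\times\lambda^{1/2}\times 1$ describes. The paper's new input is a reverse $L^4$ square function estimate for $(2,r)$-Frenet boxes along $\widetilde{\Gamma}_2$ (Theorem~\ref{Frenet reverse SF theorem}), proved not via decoupling but via the $4$-linear restriction theorem of Bennett--Bez--Flock--Lee \cite{BBFL2018} combined with a Bourgain--Guth broad/narrow argument. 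This is then coupled with a forward weighted square function in $\R^3$ (Proposition~\ref{f SF prop}) and a singular Nikodym maximal bound from $\R^{4}$ to $\R^{3}$ (Proposition~\ref{Nikodym prop}) to control the pieces close to $\Gamma$; the pieces far from $\Gamma$ are indeed handled by \cite{PS2007}+\cite{BD2015} as you suggest. The exponent $\sigma(p)$ arises precisely from balancing these two regimes at the crossover scale $\ell\sim k/5$, not from any single decoupling inequality.
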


Note that $\sigma(p)>1/p$ for $p > 3$. Thus, by a well-known Sobolev embedding argument, Theorem~\ref{intro LS thm} implies Theorem~\ref{intro max thm}. For completeness, the details of this implication are presented in~\S\ref{LS vs max sec}.




 \subsection{Comparison with previous results} It follows from work of Pramanik and the fourth author \cite{PS2007} (combined with sharp decoupling estimates from \cite{BD2015}) that, for each fixed $t$, the single average $A_t$ maps $L^p(\R^3)$ boundedly into $L^p_{\alpha}(\R^3)$ for all $2 \leq p \leq \infty$ and $\alpha < \alpha(p)$, where\footnote{In \cite{PS2007} the $\alpha=\alpha(p)$ endpoint estimate is also shown to hold for $p > 4$.}
\begin{equation*}
    \alpha(p):=\begin{cases}
    \frac{1}{3}(\frac{1}{2}+\frac{1}{p}) \qquad &\text{if $2 \leq p \leq 4$}\\
    \frac{1}{p} \qquad & \text{if $p \geq 4$}
    \end{cases}.
\end{equation*}
 Theorem~\ref{intro LS thm} represents a gain of $\sigma(p)-\alpha(p)-\varepsilon=\frac{1}{15}(\frac{1}{2} + \frac{1}{p})-\varepsilon$ derivatives when integrating locally in time in the range $3 \leq p \leq 4$. In this sense, Theorem~\ref{intro LS thm} is an example of \textit{local smoothing} (see, for instance, \cite{Sogge1991, MSS1992, GWZ2020, BHS2} for a discussion of the classical local smoothing phenomenon for the wave equation).\medskip
 
  Theorem~\ref{intro LS thm} complements previous local smoothing estimates from \cite{PS2007}, which deal with the supercritical\footnote{Here we are referring to criticality for the \textit{single average} operator, so that $p=4$ correspond to the critical point where the behaviour of the $\alpha(p)$ exponent changes.} regime $p > 4$. In \cite[Theorem 1.4]{PS2007} it is shown that $\mathfrak{A}_\gamma$ maps $L^p(\R^3)$ boundedly into $L^p_{\delta}(\R^4)$ for all $2 \leq p \leq \infty$ and $\delta < \delta(p)$, where
\begin{equation*}
    \delta(p):=\begin{cases}
    \frac{1}{3}(\frac{1}{2}+\frac{1}{p}) \qquad &\text{if $2 \leq p \leq 6$}\\
    \frac{4}{3p} \qquad & \text{if $p \geq 6$}
    \end{cases}.
\end{equation*}
Note that this does \textbf{not} yield any local smoothing estimates in the subcritical regime $2 \leq p \leq 4$, where $\alpha(p)$ and $\delta(p)$ agree. Consequently, the local smoothing estimates in \cite{PS2007} only imply $L^p(\R^3)$-boundedness of $M_\gamma$ for the restricted range $p>4$.\medskip

It is remarked that the (somewhat loosely) related problem of $L^p(\R^n) \to L^p(\R^{n+1})$ bounds for $\mathfrak{A}_{\gamma}$ (as opposed to Sobolev bounds) was investigated in \cite{Hickman2016}. This question is significantly easier than establishing local smoothing estimates and, accordingly, in \cite{Hickman2016} an almost complete characterisation of the $L^p(\R^n) \to L^q(\R^{n+1})$ mapping properties is obtained in all dimensions.




\subsection{Overview of the argument}\label{overview subsec} For $\gamma \colon I \to \R^n$ a smooth curve let $\mu$ denote the pushforward of the measure $\chi(s)\ud s$ under $\gamma$. Defining the dilates $\inn{\mu_t}{f} = \inn{\mu}{f(t\,\cdot\,)}$, it follows that the underlying averaging operators satisfy $A_tf = f \ast \mu_t$. Thus, in the frequency domain $A_t$ corresponds to multiplication against the Fourier transform
\begin{equation*}
    \widehat{\mu}_t(\xi) = \int_{\R} e^{-i t \inn{\gamma(s)}{\xi}} \chi(s)\, \ud s. 
\end{equation*}
Since the main estimate in Theorem~\ref{intro LS thm} is an $L^p$-Sobolev bound, we are led to studying the decay properties of the above oscillatory integral for large $\xi$.\medskip

Suppose $\gamma \colon I \to \R^3$ satisfies the non-degeneracy hypothesis \eqref{eq:nondegenerate}. This implies $\sum_{j=1}^3|\inn{\gamma^{(j)}(s)}{\xi}| \gtrsim |\xi|$ for all $s \in I$ and all $\xi \in \widehat{\R}^3$
and, consequently, a simple van der Corput estimate yields
\begin{equation*}
    |\widehat{\mu}_t(\xi)| \lesssim_{\gamma} (1+t|\xi|)^{-1/3}.
\end{equation*}
However, this slow decay rate only occurs on a small portion of the frequency domain, corresponding to a (neighbourhood of a) codimension 1 cone $\Gamma \subseteq \widehat{\R}^3$ generated by the binormal vector $\be_3(s)$ to the curve $\gamma$. In light of this, it is natural to dyadically decompose the frequency domain into conic regions according to the distance to $\Gamma$.\medskip

The pieces of the decomposition which are supported far away from $\Gamma$ satisfy improved decay estimates. In one extreme case, the non-degeneracy condition improves to $\sum_{j=1}^2|\inn{\gamma^{(j)}(s)}{\xi}| \gtrsim |\xi|$ and the van der Corput estimate therefore becomes \begin{equation*}
    |\widehat{\mu}_t(\xi)| \lesssim_{\gamma} (1+t|\xi|)^{-1/2}.
\end{equation*}
In this situation, the operator behaves in many ways like the circular average in the plane, and can be estimated using a lifted version of the argument developed to study the 2 dimensional problem in \cite{MSS1992} and \cite{Wolff2000}. In particular, to prove the desired local smoothing estimate in this extreme case, we observe that the Fourier transform of $\mathfrak{A}_{\gamma}$ in all $4$ variables $(x,t)$ is essentially supported in a neighbourhood of a codimension 1 cone $\widetilde{\Gamma}_1 \subseteq \widehat{\R}^4$. This surface is analogous to the light cone in $\widehat{\R}^3$ which is central to the analysis of local smoothing for the circular averages in \cite{MSS1992, Wolff2000} and, more recently, \cite{GWZ2020}. Following an argument of Wolff~\cite{Wolff2000}, the operator is further decomposed according to plate regions on $\widetilde{\Gamma}_1$ using a decoupling estimate. The individual pieces of this decomposition are then finally amenable to direct estimation.\medskip

The method described in the previous paragraph only directly applies very far from the binormal cone (and therefore far from the most singular parts of the operator). However, by using decoupling inequalities and rescaling, it can also be used to study pieces of the decomposition which lie closer to $\Gamma$. The key observation is that the pieces of the decomposition which lie close to $\Gamma$ can be decoupled into smaller pieces which, when rescaled, resemble the part of the decomposition far from $\Gamma$. This, roughly speaking, is the approach used in \cite{PS2007} to obtain Theorem~\ref{intro max thm} in the restricted range $4 < p \leq \infty$.\medskip

In order to prove the full range of $L^p$-boundedness of Theorem~\ref{intro max thm} a more direct method is required to analyse the pieces of the decomposition which lie close to the binormal cone. For this part of the operator, the microlocal geometry no longer resembles that of the 2-dimensional problem and, consequently, the decoupling and rescaling argument used in \cite{PS2007} is inefficient.\medskip

Close to the binormal cone, we observe that the Fourier transform of $\mathfrak{A}_{\gamma}$ in all $4$ variables $(x,t)$ is essentially supported in a neighbourhood of a codimension 2 cone $\widetilde{\Gamma}_2 \subseteq \widehat{\R}^4$. This cone is a lower-dimensional submanifold of the cone $\widetilde{\Gamma}_1$ we encountered earlier. Similarly to the previous case, the operator is further decomposed according to plate regions, now along $\widetilde{\Gamma}_2$. However, in order to efficiently carry out this decomposition, here we use a square function rather than a decoupling inequality, in the spirit of \cite{MSS1992}. The required square function estimate is deduced using a $4$-linear restriction estimate from \cite{BBFL2018}. After applying the square function, a series of weighted $L^2$ inequalities can be brought to bear on the problem to obtain, together with various corresponding Nikodym-type maximal bounds, a favourable estimate for this part of the operator. This final step of the argument is itself somewhat involved and a discussion of the details is beyond the scope of this introduction.\medskip

The above discussion focuses on two extreme cases of the problem: 
\begin{enumerate}[i)]
\item Far from the binormal cone $\Gamma$, where $\mathfrak{A}_{\gamma}$ is $(x,t)$-Fourier localised to a codimension 1 cone $\widetilde{\Gamma}_1$.
\item Close to the binormal cone $\Gamma$, where $\mathfrak{A}_{\gamma}$ is $(x,t)$-Fourier localised to a codimension 2 cone $\widetilde{\Gamma}_2$.
\end{enumerate}

For pieces of the decomposition which lie in the intermediate range, both cones $\widetilde{\Gamma}_1$ and $\widetilde{\Gamma}_2$ play a r\^ole in the analysis. This complicates matters somewhat, since it is necessary to carry out frequency decompositions simultaneously with respect to both geometries.

\subsection*{Outline of the paper} This paper is structured as follows: 
\begin{itemize}
    \item In \S\ref{LS vs max sec} we show how Theorem~\ref{intro LS thm} implies Theorem~\ref{intro max thm}.
    \item In \S\ref{sec:bandlimited} we reduce Theorem~\ref{intro LS thm} to its version for band-limited functions, which is Theorem~\ref{LS thm}.
    \item In \S\ref{curve sym sec} we introduce a class of model curves.
    \item In \S\ref{key ingredients sec} we state 3 key auxiliary results that feature in the proof of Theorem~\ref{LS thm}: a reverse square function estimate in $\R^{3+1}$, a forward square function estimate in $\R^3$ and a Nikodym maximal operator bound.
    \item In \S\S\ref{sec:slow decay cone}--\ref{J=3 sec} we present the proof of Theorem~\ref{LS thm}.
    \item In \S\ref{reverse SF sec} we present the proof of the reverse square function estimate in $\R^{3+1}$ (Theorem~\ref{Frenet reverse SF theorem}).
    \item In \S\ref{forward SF sec} we present the proof of the forward square function estimate in $\R^3$ (Proposition~\ref{f SF prop}).
    \item In \S\ref{Nikodym sec} we present the proof of the Nikodym maximal operator bound (Proposition~\ref{Nikodym prop}).
    \item In \S\ref{nec cond sec} we show the condition $p > 3$ is necessary for the boundedness of the global maximal function. 
    \item Appendix~\ref{BG appendix} contains an abstract broad/narrow decomposition lemma which features in the proof of Theorem~\ref{Frenet reverse SF theorem}.
    \item There are two further appendices which deal with various auxiliary results and technical lemmas used in the main argument.
\end{itemize}




\subsection*{Notational conventions} Given a (possibly empty) list of objects $L$, for real numbers $A_p, B_p \geq 0$ depending on some Lebesgue exponent $p$ or dimension parameter $n$ the notation $A_p \lesssim_L B_p$, $A_p = O_L(B_p)$ or $B_p \gtrsim_L A_p$ signifies that $A_p \leq CB_p$ for some constant $C = C_{L,p,n} \geq 0$ depending on the objects in the list, $p$ and $n$. In addition, $A_p \sim_L B_p$ is used to signify that both $A_p \lesssim_L B_p$ and $A_p \gtrsim_L B_p$ hold. Given $a$, $b \in \R$ we write $a \wedge b:= \min \{a, b\}$ and  $a \vee b:=\max \{a,b\}$. 
The length of a multiindex $\alpha\in \bbN_0^n$ is given by $|\alpha|=\sum_{i=1}^n{\alpha_i}$.

\subsection*{Acknowledgements}
{The authors thank the American Institute of Mathematics for funding their collaboration through the SQuaRE program, also  supported in part  by the National Science Foundation. D.B. was partially supported by NSF grant DMS-1954479. S.G. was partially supported by  NSF grant DMS-1800274. A.S. was partially supported by  NSF grant DMS-1764295 and by a Simons fellowship.  This material is partly based upon work supported by the National Science Foundation under Grant No. DMS-1440140 while the authors were in residence at the Mathematical Sciences Research Institute in Berkeley, California, during the Spring 2017 semester. D.B. and J.H. would also like to thank the LMS for funding a research visit through the LMS `Research in Pairs' Scheme 4 grant (Grant Ref 41802).}




\section{Local smoothing vs maximal bounds}\label{LS vs max sec}

For the readers' convenience, here we state and prove a general result relating local smoothing estimates for the operator $\mathfrak{A}_{\gamma}f(x,t) := \rho(t) \, A_tf(x)$ to $L^p$ estimates for the corresponding maximal function $M_{\gamma}$. 

\begin{proposition}\label{LS vs max prop} Let $\gamma \colon I \to \R^n$ be a smooth curve and suppose $\mathfrak{A}_{\gamma}$ maps $L^{p}(\R^n)$ boundedly into $L^{p}_{\sigma}(\R^{n+1})$ for some $2 \leq p < \infty$ and $\sigma > 1/p$. Then $M_{\gamma}$ is bounded on $L^p(\R^n)$.
\end{proposition}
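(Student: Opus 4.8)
The plan is to deduce the global $L^p$ boundedness of $M_{\gamma}$ from the local smoothing hypothesis by means of a Littlewood--Paley decomposition in which the frequency and time scales are coupled, combined with a Sobolev embedding in the $t$-variable. Let $\{\beta_j\}_{j\ge 0}$ be a standard inhomogeneous Littlewood--Paley partition of unity on $\widehat{\R}^n$, with $\beta_0$ supported in $\{|\eta|\lesssim 1\}$ and $\beta_j(\eta)=\beta_1(2^{-(j-1)}\eta)$ for $j\ge 1$, and let $Q_j^t$ be the Fourier multiplier operator in the space variable with symbol $\eta\mapsto\beta_j(t\eta)$. Since $A_t$ and $Q_j^t$ are both Fourier multipliers in $x$ they commute, and $\sum_{j\ge 0}Q_j^t=\mathrm{Id}$, so
\[
M_{\gamma} f(x)\le \sup_{t>0}\norm{A_tQ_0^t f(x)}+\sum_{j\ge 1}\sup_{t>0}\norm{A_tQ_j^t f(x)}.
\]
The first term is elementary: the kernel of $A_tQ_0^t$ is $\mu_t*\Phi_t$ with $\Phi_t(z)=t^{-n}\check\beta_0(z/t)$, and since $\mu_t$ is carried by $t\gamma(I)$, a curve of diameter $O(t)$, splitting into the regimes $\dist(y,t\gamma(I))\lesssim t$ and $\gtrsim t$ and using that $\gamma$ is an immersion yields $\norm{\mu_t*\Phi_t(y)}\lesssim_N t^{-n}(1+\norm{y}/t)^{-N}$ uniformly in $t>0$. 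Hence $\sup_{t>0}\norm{A_tQ_0^t f}\lesssim \mathrm M f$, the Hardy--Littlewood maximal function, which is bounded on $L^p$ for $p>1$.

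The heart of the argument is a quantitative \emph{local} maximal estimate for band-limited inputs. Fix $\sigma'$ with $1/p<\sigma'<\sigma$ and set $\eps:=\sigma-\sigma'>0$; let $J\Subset(0,\infty)$ be an interval on which $\norm\rho\ge c>0$. Suppose $h$ has Fourier support in $\{|\xi|\sim 2^m\}$ with $m\ge 1$. Since $\partial_t$ applied to $\rho(t)A_th$ produces a factor of size $O(|\xi|)$, the function $\mathfrak{A}_{\gamma}h$ is, modulo a rapidly decaying error, Fourier-localized to $\{|(\xi,\tau)|\sim 2^m\}$ in the variables $(x,t)$; hence the multiplier $\jp{\tau}^{\sigma'}\jp{(\xi,\tau)}^{-\sigma}$ acts on it with $L^p\to L^p$ norm $O(2^{-\eps m})$ by Mikhlin--H\"ormander. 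Combining this with the one-dimensional Sobolev embedding $W^{\sigma',p}(\R)\hookrightarrow L^\infty(\R)$ in the $t$-variable (valid since $\sigma'p>1$), Fubini, and the hypothesis $\mathfrak{A}_{\gamma}\colon L^p(\R^n)\to L^p_{\sigma}(\R^{n+1})$, we obtain
\[
\Bigl\|\sup_{t\in J}\norm{A_t h}\Bigr\|_{L^p(\R^n)}\lesssim \Norm{\jp{D_t}^{\sigma'}\mathfrak{A}_{\gamma}h}_{L^p(\R^{n+1})}\lesssim 2^{-\eps m}\Norm{h}_{L^p(\R^n)}.
\]
(For $m\le 0$ the trivial bound $\sup_{t\in J}\norm{A_t h}\lesssim \mathrm M h$ is available instead.) Finally, for $s\in J$ the kernel of $Q_j^s$ is dominated by a fixed $L^1$-normalized bump at scale $2^{-j}$, so $\sup_{s\in J}\norm{A_sQ_j^s h}\lesssim \mathrm M\bigl(\sup_{s\in J}\norm{A_s h}\bigr)$ and the last display persists with $A_t$ replaced by $A_tQ_j^t$.

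It remains to reassemble the high-frequency pieces. Decompose $f=\sum_{l\in\Z}P_lf$ into homogeneous Littlewood--Paley pieces. For fixed $j\ge 1$ the symbol of $Q_j^tP_l$ is supported in $\{|\xi|\sim 2^j/t\}\cap\{|\xi|\sim 2^l\}$, which is empty unless $t\sim 2^{j-l}$; so for $t\in[2^k,2^{k+1})$ only the pieces $P_lf$ with $|l-(j-k)|\le C$ contribute to $A_tQ_j^t f$. Rescaling $t\mapsto 2^k s$ (and covering each dyadic interval by a bounded number of rescaled copies of $J$ --- the one genuinely routine technicality here) turns each $(P_lf)(2^k\,\cdot\,)$ into a function Fourier supported in $\{|\xi|\sim 2^j\}$, to which the previous paragraph applies; this gives
\[
\Bigl\|\sup_{t\in[2^k,2^{k+1})}\norm{A_tQ_j^t f}\Bigr\|_{L^p(\R^n)}\lesssim 2^{-\eps j}\sum_{|l-(j-k)|\le C}\Norm{P_lf}_{L^p(\R^n)}.
\]
Raising to the $p$-th power, summing over $k\in\Z$ (each $l$ occurring for $O(1)$ values of $k$), and using $\sum_l\Norm{P_lf}_{L^p}^p\lesssim\Norm{f}_{L^p}^p$ --- valid for $p\ge 2$ by the Littlewood--Paley inequality together with the embedding $\ell^2\hookrightarrow\ell^p$ --- yields $\bigl\|\sup_{t>0}\norm{A_tQ_j^t f}\bigr\|_{L^p}\lesssim 2^{-\eps j}\Norm{f}_{L^p}$. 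Summing the geometric series in $j\ge 1$ and combining with the low-frequency bound completes the proof.

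I expect the summation over time scales to be the main obstacle, and it dictates the coupled (rather than separate) frequency/time decomposition above. For a single dyadic $t$-scale the local estimate only supplies the bound $\lesssim\Norm{f}_{L^p}$ with no decay, and there are infinitely many scales, so a naive dyadic decomposition of $\sup_{t>0}$ diverges. The decomposition circumvents this because, after extracting the frequency sliver $Q_j^t f$ of size $\sim 2^j/t$, distinct $t$-scales resonate with essentially disjoint frequency bands $P_lf$ of the fixed function $f$; the per-scale estimates are then summable exactly against $\sum_l\Norm{P_lf}_{L^p}^p$, which is where the hypothesis $p\ge 2$ is used.
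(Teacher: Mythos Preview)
Your proof is correct and follows the same scheme as the paper's: coupled Littlewood--Paley/time-scale decomposition, Hardy--Littlewood control of the low-frequency piece, a Sobolev embedding in $t$, rescaling, and $\ell^p$-summability of the Littlewood--Paley pieces for $p\ge2$. The only difference is that the paper carries out the Sobolev step via the elementary pointwise bound $\sup_t|F(t)|^p\le\int|F|^p+p\bigl(\int|F'|^p\bigr)^{1/p}\bigl(\int|F|^p\bigr)^{1/p'}$ together with $\|\partial_t A_t\beta_k(D)f\|_p\lesssim 2^k\|A_t\beta_k(D)f\|_p$, which sidesteps your fractional multiplier argument --- and note, as a small caveat, that $\langle\tau\rangle^{\sigma'}\langle(\xi,\tau)\rangle^{-\sigma}$ is not literally a Mikhlin symbol near $\tau=0$ when $\sigma'<1$, though it is Marcinkiewicz (or one may factor as $[\langle\tau\rangle^{\sigma'}\langle(\xi,\tau)\rangle^{-\sigma'}]\cdot\langle(\xi,\tau)\rangle^{-\varepsilon}$).
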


Observe that the exponent $\sigma(p) := \tfrac{1}{5}(1 + \tfrac{2}{p})$ satisfies $\sigma(p) > 1/p$ for all $p > 3$. Consequently, Theorem~\ref{intro LS thm} combines with Proposition~\ref{LS vs max prop} to yield Theorem~\ref{intro max thm} in the restricted range $3 < p \leq 4$. The remaining estimates follow from interpolation with the trivial $L^{\infty}$ bound.\medskip

Before presenting the proof we introduce a system of Littlewood--Paley functions which will feature throughout the article. Fix $\eta \in C^\infty_c(\R)$ non-negative and such that
\begin{equation}\label{eta def}
\eta(r) = 1 \quad \textrm{if $r \in [-1,1]$} \quad \textrm{and} \quad \supp \eta \subseteq [-2,2]   
\end{equation}
and define $\beta^k$, $\tilde{\beta}^k \in C^\infty_c(\R)$ by    
\begin{equation}\label{beta def}
    \beta^k(r):=\eta(2^{-k}r) - \eta(2^{-k+1}r) \qquad \textrm{and} \qquad \tilde{\beta}^k(r):=\eta(2^{-k-1}r) - \eta(2^{-k+2}r)
\end{equation} 
for each $k \in \Z$. By a slight abuse of notation we also let $\eta$, $\beta^k$, $\tilde{\beta}^k \in C^{\infty}_c(\widehat{\R}^n)$ denote the radial functions obtained by evaluating the corresponding univariate functions at $|\xi|$. Finally, if $k=0$, then we drop the superscript and simply write $\beta := \beta^0$ and $\tilde{\beta} := \tilde{\beta}^0$. Note that the $\beta^k$ form a partition of unity of $\widehat{\R}^n$ subordinated to a family of dyadic annuli, and they satisfy the reproducing formula $\beta^k = \tilde{\beta}^k\cdot \beta^k$. 

\begin{proof}[Proof of Proposition~\ref{LS vs max prop}] Decompose the $t$ parameter into dyadic intervals
\begin{equation*}
 M_{\gamma} f(x) = \sup_{\ell \in \Z} \sup_{1 \leq t \leq 2} |A_{2^{\ell}t}f(x)|.
\end{equation*}
Performing a Littlewood--Paley decomposition on each of the averaging operators,
\begin{equation*}
    M_{\gamma} f(x) \leq \sum_{k = 1}^{\infty}\Big(\sum_{\ell \in \Z} \sup_{1 \leq t \leq 2} |A_{2^{\ell}t}\beta_{k-\ell}(D)f(x)|^p \Big)^{1/p} + CM_{\mathrm{HL}}f(x)
\end{equation*}
where $M_{\mathrm{HL}}$ is the Hardy--Littlewood maximal function. Indeed, it is not difficult to verify that the pointwise estimate
\begin{equation*}
    \sup_{\ell \in \Z} \sup_{1 \leq t \leq 2} |A_{2^{\ell}t}\eta_{-\ell}(D)f(x)| \leq C M_{\mathrm{HL}}f(x);
\end{equation*}
for $1 \leq t \leq 2$ the function $A_{2^{\ell}t} \eta_{-\ell}(D)f(x)$ roughly corresponds to an average of $f$ over a ball of radius $2^{\ell}$ centred at $x$. Thus, by the Hardy--Littlewood maximal theorem and the triangle inequality it suffices to show that 
\begin{equation}\label{LP vs max 1}
    \sum_{k = 1}^{\infty}\Big(\sum_{\ell \in \Z} \big\|\sup_{1 \leq t \leq 2} |A_{2^{\ell}t}\beta_{k-\ell}(D)f| \big\|_{L^p(\R^n)}^p \Big)^{1/p} \lesssim_{\gamma, p} \|f\|_{L^p(\R^n)}.
\end{equation}

By a simple scaling argument, one obtains the operator norm identity
\begin{equation*}
    \|\sup_{1 \leq t \leq 2} |A_{2^{\ell} t} \beta_{k-\ell}(D)|\|_{L^p(\R^n) \to L^p(\R^n)} = \|\sup_{1 \leq t \leq 2} |A_{t} \beta_{k}(D)|\|_{L^p(\R^n) \to L^p(\R^n)}.
\end{equation*}
Combining this with the hypothesised local smoothing estimate, it follows that
\begin{align}\label{LP vs max 2}
    \Big(\int_1^2\|A_{2^{\ell} t} \beta_{k-\ell}(D)f\|_{L^p(\R^n)}^p \,\ud t\Big)^{1/p} &\lesssim_{\gamma, p, \sigma} 2^{-\sigma k} \|\tilde{\beta}_{k-\ell}(D)f\|_{L^p(\R^n)}, \\
    \label{LP vs max 3}
    \Big(\int_1^2\|\frac{\partial}{\partial t} A_{2^{\ell} t} \beta_{k-\ell}(D)f\|_{L^p(\R^n)}^p \,\ud t\Big)^{1/p} &\lesssim_{\gamma, p, \sigma} 2^{-\sigma k + k} \|\tilde{\beta}_{k-\ell}(D)f\|_{L^p(\R^n)}.
\end{align}
The second estimate follows by noting that the Fourier multiplier associated to $\partial_t A_{2^\ell t} \beta_{k-\ell}(D)$ is essentially the same as the multiplier associated to $A_{2^\ell t} \beta_{k-\ell}(D)$ but with an extra $|\xi|$ factor. We therefore pick up an additional $2^k$ owing to the estimate $    \||D|\tilde{\beta}_k(D)f\|_{L^p(\R^n)} \lesssim 2^k \|\tilde{\beta}_k(D)f\|_{L^p(\R^n)}$.

Combining \eqref{LP vs max 2} and \eqref{LP vs max 3} with the elementary Sobolev embedding
\begin{equation*}
    \sup_{1 \leq t \leq 2}|F(t)|^p \leq \int_1^2|F(s)|^p\,\ud s + p \Big(\int_1^2|F'(s)|^p\,\ud s\Big)^{1/p} \Big(\int_1^2|F(s)|^p\,\ud s\Big)^{1/p'},
\end{equation*}
it follows that 
\begin{equation}\label{LP vs max 4}
   \|\sup_{1 \leq t \leq 2}
  |A_{2^{\ell} t} \beta_{k-\ell}(D)f| \|_{L^p(\R^n)} \lesssim_{\gamma, p, \sigma} 2^{-k(\sigma - 1/p)} \|\tilde{\beta}_{k-\ell}(D)f\|_{L^p(\R^n)}.
\end{equation}
Taking the $\ell^p$-norm of both sides of \eqref{LP vs max 4}, we may sum the resulting expression in $\ell$ using the elementary inequality
\begin{equation*}
    \Big(\sum_{\ell \in \Z}\|\tilde{\beta}_{\ell}(D)f\|_{L^p(\R^n)}^p \Big)^{1/p} \lesssim \|f\|_{L^p(\R^n)},
\end{equation*}
valid for $p \geq 2$. On the other hand, under the crucial hypothesis $\sigma > 1/p$, we have a geometric decay which allows us to sum in $k$. Thus, we deduce the desired estimate \eqref{LP vs max 1}.  
\end{proof}




\section{Reduction to band-limited estimates}\label{sec:bandlimited} We now turn to the proof of Theorem~\ref{intro LS thm}, which occupies almost the entirety of the article. Since we are interested in $L^p(\R^3) \to L^p_{\sigma}(\R^{3+1})$ estimates for $\sigma$ belonging to an \textit{open} range, the problem is immediately reduced to studying $L^p(\R^3) \to L^p(\R^{3+1})$ bounds for band-limited pieces of the operator. In order to describe this reduction in more detail, it is useful to set up some notational conventions.\medskip

Given $m \in L^{\infty}(\widehat{\R}^n \times \R)$, for each $t \in \R$ let $m(D;t)$ denote the associated multiplier operator
\begin{equation*}
    m(D;t)f (x) :=  \frac{1}{(2 \pi)^n} \int_{\widehat{\R}^n} e^{i \inn{x}{\xi}} m(\xi;t) \widehat{f}(\xi)\,\ud \xi,
\end{equation*}
defined initially for functions $f$ belonging to a suitable \textit{a priori} class. With this notation, the averaging operator $A_t$ is given by $A_t = \widehat{\mu}_t(D)$ where $\mu_t$ is the measure introduced in \S\ref{overview subsec}.\medskip 

The multipliers of interest are of the following form. Let $\gamma \colon I \to \R^n$ be a smooth curve and fix $\chi$, $\rho \in C^\infty_c(\R)$ supported in the interior of $I$ and $[1/2,4]$, respectively. Given a symbol $a \in C^{\infty}(\widehat{\R}^n\setminus\{0\} \times \R \times \R )$, define
\begin{equation}\label{multiplier definition}
    m[a](\xi;t) := \int_{\R} e^{-i t \inn{\gamma(s)}{\xi}} a(\xi;t; s)\chi(s) \rho(t)\,\ud s.
\end{equation}
Taking $a$ in this definition to be identically 1, we recover the ($t$-localised) multiplier $\rho(t) \widehat{\mu}_t(\xi)$. In general, we perform surgery on $\widehat{\mu}_t$ by choosing $a$ so that $m[a]$ is localised to a particular region of the frequency space. \medskip

For $a \in C^{\infty}(\widehat{\R}^n \setminus \{0\} \times \R \times \R)$ as above, we form a dyadic decomposition by writing
\begin{equation}\label{symbol dec}
    a = \sum_{k = 0}^{\infty} a_k \qquad \textrm{where} \qquad  a_k(\xi; t; s) :=   \left\{ \begin{array}{ll}
        a(\xi; t; s) \, \beta^k(\xi) & \textrm{for $k \geq 1$} \\
         a(\xi; t; s) \, \eta(\xi) & \textrm{for $k =0$}
     \end{array} \right. .
\end{equation}
Here $\eta$ and $\beta^k$ are the functions introduced in \eqref{eta def} and \eqref{beta def}.\medskip

With the above definitions, our main result is as follows. 

\begin{theorem}\label{LS thm} Let $\gamma:I \to \R^3$ be a smooth curve and suppose $a \in C^{\infty}(\widehat{\R}^3\setminus \{0\} \times \R \times \R)$ satisfies the symbol condition
\begin{equation*}
    |\partial_{\xi}^{\alpha}\partial_t^i \partial_s^j a(\xi;t;s)| \lesssim_{\alpha, i, j} |\xi|^{-|\alpha|} \qquad \textrm{for all $\alpha \in \N_0^3$ and $i$, $j \in \N_0$}
\end{equation*}
and that
\begin{equation}\label{J=3 condition}
    \sum_{j=1}^3|\inn{\gamma^{(j)}(s)}{\xi}| \gtrsim |\xi| \qquad \text{ for all $(\xi;s) \in \xisupp a \times I$}.
\end{equation}
Let $3 \leq p \leq 4$, $\varepsilon>0$ and $k\ge 1$. If $a_k$ is defined as in \eqref{symbol dec}, then
\begin{equation*}
    \Big(\int_1^2\|m[a_k](D;t)f\|_{L^p(\R^3)}^p\,\ud t\Big)^{1/p} \lesssim_{\varepsilon, p} 2^{-\frac{k}{5}(1+\frac{2}{p}) + k \varepsilon}\|f\|_{L^p(\R^3)}.
\end{equation*}
\end{theorem}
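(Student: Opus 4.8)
The plan is to reduce the band-limited local smoothing estimate to the two model situations identified in the overview, depending on how close $\xisupp a$ lies to the binormal cone $\Gamma$, and to interpolate. First I would rescale: by the standard parabolic rescaling adapted to the Frenet frame, a frequency annulus of size $2^k$ sitting at distance $\sim 2^{k-\ell}$ (for some $0 \le \ell \le k/2$, roughly) from $\Gamma$ can be rescaled to resemble the "far from $\Gamma$" regime, at the cost of a shorter curve of length $\sim 2^{-\ell/3}$ and a loss that is tracked precisely. I would dyadically decompose $a_k = \sum_{\ell} a_{k,\ell}$ according to the distance of $\xisupp a_{k,\ell}$ to $\Gamma$ (comparable to $2^{k - 3\ell}$ or some such scaling chosen so that the rescaled symbol again satisfies the hypotheses), with the extreme piece $\ell \sim k/3$ being the genuinely "close to $\Gamma$" case where the codimension-2 cone $\widetilde\Gamma_2$ governs the geometry, and $\ell = 0$ being the "far" case governed by $\widetilde\Gamma_1$.

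For the pieces far from $\Gamma$ (the codimension-1 cone $\widetilde\Gamma_1 \subseteq \widehat\R^4$), I would follow the Wolff--\cite{MSS1992} template: observe that the full $(x,t)$-Fourier support of $m[a_{k,\ell}](D;t)$ is concentrated in a $2^{k/2}$-neighbourhood of a dilate of $\widetilde\Gamma_1$, decompose into plates via the sharp $\ell^p$-decoupling inequality for the cone (\cite{BD2015}), and estimate each plate directly by an $L^p$ bound that exploits the $(1+t|\xi|)^{-1/2}$ decay and the fact that plate pieces are, after rescaling, essentially translates of a fixed bump. This yields the target exponent $\tfrac15(1+\tfrac2p)$ in this regime (indeed the loss here is dominated by the codim-2 piece). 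For the piece close to $\Gamma$ I would instead invoke the three auxiliary results flagged in \S\ref{key ingredients sec}: the reverse square function estimate in $\R^{3+1}$ (Theorem~\ref{Frenet reverse SF theorem}) to pass from $m[a_{k,\ell}](D;t)f$ to a square function over plates adapted to $\widetilde\Gamma_2$; then the forward square function estimate in $\R^3$ (Proposition~\ref{f SF prop}) together with the Nikodym maximal bound (Proposition~\ref{Nikodym prop}) and a sequence of weighted $L^2$ inequalities to control that square function. The reverse square function step is where the $4$-linear restriction estimate from \cite{BBFL2018} enters. Assembling the plate-by-plate bounds and summing over $\ell$ (a geometric series once the $\varepsilon$-loss is absorbed) gives the claimed estimate; the intermediate $\ell$ require running both decompositions simultaneously, as noted in the overview.

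I expect the main obstacle to be the analysis of the piece close to the binormal cone: organising the $(x,t)$-frequency geometry around the codimension-2 cone $\widetilde\Gamma_2$, setting up the correct plate decomposition, and in particular controlling the resulting square function through the combination of the forward square function estimate, the Nikodym maximal bound, and the weighted $L^2$ estimates. The bookkeeping needed to interpolate the endpoint $p=3$ and $p=4$ behaviours and to make the exponent $\tfrac{k}{5}(1+\tfrac2p)$ come out sharp — rather than some lossier power — is also delicate, since the scaling in the decomposition near $\Gamma$ must be chosen so that the rescaled symbols continue to satisfy the symbol bounds and the non-degeneracy condition \eqref{J=3 condition} in the weaker form $\sum_{j=1}^2 |\inn{\gamma^{(j)}(s)}{\xi}| \gtrsim |\xi|$ after rescaling. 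The remaining steps (the reduction to band-limited form already done in the excerpt, the Littlewood--Paley and Sobolev embedding arguments, and the straightforward far-from-$\Gamma$ case) I regard as routine given the machinery assumed.
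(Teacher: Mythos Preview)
Your overall strategy matches the paper's: decompose $a_k = \sum_{\ell=0}^{\lfloor k/3\rfloor} a_{k,\ell}$ according to the size of $u(\xi) := \inn{\gamma'\circ\theta_2(\xi)}{\xi}$ (so $|u(\xi)| \sim 2^{k-2\ell}$, not $2^{k-\ell}$, and the range is $0\le\ell\le\lfloor k/3\rfloor$, not $k/2$); treat the far-from-$\Gamma$ pieces via the Pramanik--Seeger machinery combined with cone decoupling (this is Theorem~\ref{PS LS J=2}, rescaled as Proposition~\ref{PS LS prop}); and treat the close-to-$\Gamma$ pieces via the reverse square function on $\R^{3+1}$, the singular Nikodym bound, and the forward square function on $\R^3$ (packaged as Proposition~\ref{J=3 LS prop}).

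Two corrections on the point you flag as delicate. First, the sharp exponent $\tfrac15(1+\tfrac2p)$ is \emph{not} produced by interpolating between $p=3$ and $p=4$; it emerges as the \emph{crossover} between the two $\ell$-dependent bounds
\[
2^{-\frac{k-\ell}{2}(\frac12+\frac1p)+\varepsilon k}\ \text{(far, Prop.~\ref{PS LS prop})}\qquad\text{and}\qquad 2^{-k/p - \ell(1-3/p)+\varepsilon k}\ \text{(close, Prop.~\ref{J=3 LS prop})},
\]
which coincide exactly at $\ell = k/5$ with common value $2^{-\frac{k}{5}(1+\frac2p)+\varepsilon k}$. One sums the first over $0\le\ell\le\lfloor k/5\rfloor$ and the second over $\lfloor k/5\rfloor<\ell\le\lfloor k/3\rfloor$; the hypothesis $p\ge 3$ is precisely what makes the second series geometric in the right direction. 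Second, the close-to-$\Gamma$ bound in Proposition~\ref{J=3 LS prop} is itself obtained by interpolating an $L^2$ estimate (Plancherel, disjoint Fourier supports, kernel bounds) against an $L^4$ estimate (reverse square function, then Nikodym maximal bound, then forward square function), not $L^3$ against $L^4$. With these two adjustments your outline is the paper's proof.
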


For $n=3$, the condition \eqref{J=3 condition} is equivalent to the non-degeneracy hypothesis \eqref{eq:nondegenerate}. Thus, Theorem~\ref{LS thm} immediately implies Theorem~\ref{intro LS thm} via the Littlewood--Paley characterisation of Sobolev spaces.\medskip

Under a stronger hypothesis on the phase function, a stronger local smoothing estimate holds, by a combination of the  work of Pramanik and the fourth author \cite{PS2007} with the full   decoupling theorem for the light cone by  Bourgain and Demeter \cite{BD2015}.
\footnote{The estimates in \cite{PS2007} are stated for $p >6$. The version of the result presented here for $2 \leq p \leq \infty$ follows via interpolation with trivial $L^2$-estimates.}

\begin{theorem}[{\it cf.} Theorem 4.1 in \cite{PS2007}]\label{PS LS J=2}
  Let $\gamma:I \to \R^3$ be a smooth curve and suppose that $a \in C^{\infty}(\widehat{\R}^3\setminus \{0\} \times \R \times \R)$ satisfies the symbol conditions
\begin{equation*}
    |\partial_{\xi}^{\alpha}\partial_t^i \partial_s^j a(\xi;t;s)| \lesssim_{\alpha, i, j} |\xi|^{-|\alpha|} \qquad \textrm{for all $\alpha \in \N_0^3$ and $i$, $j \in \N_0$}
\end{equation*}
and that
\begin{equation}\label{J=2 condition}
    |\inn{\gamma'(s)}{\xi}| + |\inn{\gamma''(s)}{\xi}| \gtrsim |\xi| \qquad \text{ for all $\,\, (\xi;s) \in \xisupp a \times I$}.
\end{equation}

Let $2 \leq p \leq 6$, $\varepsilon>0$ and $k \geq 1$. If $a_k$ is defined as in \eqref{symbol dec}, then
\begin{equation*}
    \Big(\int_1^2\|m[a_k](D;t)f\|_{L^p(\R^3)}^p\,\ud t\Big)^{1/p} \lesssim_{\varepsilon, p} 2^{-\frac{k}{2}(\frac{1}{2} + \frac{1}{p}) + k \varepsilon}\|f\|_{L^p(\R^3)}.
\end{equation*}
\end{theorem}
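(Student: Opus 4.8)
The plan is to derive this from the oscillatory integral analysis of \cite{PS2007} together with the sharp $\ell^2$-decoupling inequality for the light cone due to Bourgain and Demeter \cite{BD2015}; the role of the latter is to furnish the estimate at the critical exponent $p=6$, from which the full range $2\le p\le 6$ follows by interpolation with the trivial $L^2$ bound (this is the content of the footnote). The underlying mechanism is that, under the two-variable non-degeneracy hypothesis \eqref{J=2 condition}, the phase $s\mapsto\inn{\gamma(s)}{\xi}$ has only non-degenerate critical points, and hence $m[a_k](D;t)$ is, up to an acceptable error, a finite sum of Fourier integral operators associated to cones with a single non-vanishing curvature --- surfaces to which the decoupling method in the spirit of \cite{MSS1992} and \cite{Wolff2000} applies directly.

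First I would carry out a stationary phase reduction. Write $\phi_\xi(s):=\inn{\gamma(s)}{\xi}$; hypothesis \eqref{J=2 condition} gives $|\phi_\xi'(s)|+|\phi_\xi''(s)|\gtrsim|\xi|$ on $\xisupp a\times I$, so every critical point of $\phi_\xi$ is non-degenerate, such points are uniformly separated and uniformly bounded in number, and away from fixed neighbourhoods of them one has $|\phi_\xi'(s)|\gtrsim|\xi|$. Repeated integration by parts in $s$ handles the non-critical portion of the integral in \eqref{multiplier definition} with a gain of $2^{-kN}$, while the classical stationary phase expansion with parameters gives, for $|\xi|\sim 2^k$ with $\xi\in\xisupp a_k$,
\begin{equation*}
  m[a_k](\xi;t)=2^{-k/2}\sum_{\nu} e^{-it\,q_\nu(\xi)}\,b_{k,\nu}(\xi;t)+O_N(2^{-kN}),
\end{equation*}
a finite sum in which $q_\nu(\xi)=\inn{\gamma(s_\nu(\xi))}{\xi}$ with $\phi_\xi'(s_\nu(\xi))=0$ is smooth and positively homogeneous of degree $1$, and each $b_{k,\nu}$ is a symbol of order $0$ supported where $|\xi|\sim 2^k$. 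Since the error term is harmless, it suffices to prove, for each fixed $\nu$, that the operator
\begin{equation*}
  f\mapsto F(x,t):=2^{-k/2}\int e^{i(\inn{x}{\xi}-t q_\nu(\xi))} b_{k,\nu}(\xi;t)\,\wh f(\xi)\,\ud\xi
\end{equation*}
maps $L^p(\R^3)$ into $L^p(\R^3\times[1,2])$ with norm $\lesssim_\varepsilon 2^{-\frac{k}{2}(\frac12+\frac1p)+\varepsilon k}$.

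Next I would pin down the geometry: $q:=q_\nu$ cuts out a cone with exactly one non-vanishing curvature. Differentiating $\phi_\xi'(s_\nu(\xi))=0$ gives $\nabla_\xi q(\xi)=\gamma(s_\nu(\xi))$ and $D_\xi^2 q(\xi)=-\,\gamma'(s_\nu(\xi))\otimes\gamma'(s_\nu(\xi))\,\big/\,\inn{\gamma''(s_\nu(\xi))}{\xi}$, which by \eqref{J=2 condition} has rank $1$; moreover $q$ is \emph{exactly} linear on each $2$-plane $\{\xi:\inn{\gamma'(s_0)}{\xi}=0\}$, which is precisely a level set of $s_\nu$. Hence the space--time frequency support of $F$ lies in an $O(1)$-neighbourhood of the hypersurface $\Sigma=\{(\xi,\tau):\tau=-q(\xi),\ |\xi|\sim 2^k\}\subseteq\wh\R^4$, a cone ruled by those $2$-planes and with a single transverse curvature; the extra flat direction being harmless by Fubini, the sharp $\ell^2 L^6$ decoupling of \cite{BD2015} for the light cone applies. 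I would decompose $\{|\xi|\sim 2^k\}$ into $\sim 2^{k/2}$ plates $\theta$, each an $O(2^{k/2})$-neighbourhood of one of the $2$-planes $\{\inn{\gamma'(s_0)}{\xi}=0\}$, so that over $\theta$ the surface $\Sigma$ lies within $O(1)$ of a $3$-plane, and obtain, after summing over a finitely overlapping cover of $\R^3\times[1,2]$ by balls of radius $2^k$,
\begin{equation*}
  \|F\|_{L^6(\R^3\times[1,2])}\lesssim_\varepsilon 2^{\varepsilon k}\Big(\sum_\theta\|F_\theta\|_{L^6(\R^3\times[1,2])}^2\Big)^{1/2}.
\end{equation*}
For a single plate the expansion $q(\xi)=\inn{\gamma(s_0)}{\xi}+O(1)$ on $\theta$ shows that $e^{-itq(D)}$ acts on $f_\theta$ as translation by $-t\gamma(s_0)$ followed by a modulation and an $L^p$-bounded multiplier, uniformly in $\theta$ and in $t\in[1,2]$; since $\supp\rho$ is compact this yields $\|F_\theta\|_{L^6(\R^3\times[1,2])}\lesssim 2^{-k/2}\|f_\theta\|_{L^6(\R^3)}$. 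Combining with Hölder in the $\sim 2^{k/2}$ plates and the orthogonality estimate $\big(\sum_\theta\|f_\theta\|_{L^6}^6\big)^{1/6}\lesssim\|f\|_{L^6}$ (valid for exponent $\ge 2$) gives
\begin{equation*}
  \Big(\int_1^2\|m[a_k](D;t)f\|_{L^6(\R^3)}^6\,\ud t\Big)^{1/6}\lesssim_\varepsilon 2^{-k/2}\,2^{\varepsilon k}\,2^{k/6}\,\|f\|_{L^6}=2^{-\frac{k}{3}+\varepsilon k}\|f\|_{L^6},
\end{equation*}
which is the claim at $p=6$. At $p=2$, Plancherel and the van der Corput bound $\sup_t|m[a_k](\xi;t)|\lesssim 2^{-k/2}$ give the estimate with exponent $\tfrac12$ and no $\varepsilon$-loss, and interpolating between $p=2$ and $p=6$ produces the exponent $\tfrac12(\tfrac12+\tfrac1p)$ for all $2\le p\le 6$.

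I expect the main obstacle to be the first step: making the stationary phase expansion uniform in the conic variable $\xi$ and in $t$ over a compact set, with the stated symbol bounds on the $b_{k,\nu}$ and smooth control of $s_\nu$ and $q_\nu$ --- this bookkeeping is essentially the content of \cite[\S4]{PS2007}. Once the problem is recast as an $L^6$ bound for an operator whose frequency support is an $O(1)$-neighbourhood of a cone of one curvature, the decoupling step is a direct application of \cite{BD2015}; the only change relative to \cite{PS2007} is using this sharp inequality in place of Wolff's \cite{Wolff2000}, which is what makes the range $2\le p\le 6$ sharp. I note that, unlike in the proof of Theorem~\ref{LS thm}, no rescaling argument for an intermediate frequency regime is needed here, precisely because \eqref{J=2 condition} keeps $\xisupp a$ uniformly away from the binormal cone of $\gamma$, where both $\phi_\xi'$ and $\phi_\xi''$ would degenerate.
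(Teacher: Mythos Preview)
Your sketch is correct and is precisely the argument of \cite[\S4]{PS2007} updated with the sharp $\ell^2$-decoupling of \cite{BD2015}; the paper itself does not give a proof of this theorem but states it as a consequence of those two references, with the footnote noting that interpolation with the trivial $L^2$ bound extends the \cite{PS2007} range to $2\le p\le 6$. Your identification of the mechanism---stationary phase under \eqref{J=2 condition} reduces $m[a_k]$ to half-wave type operators $e^{-itq_\nu(D)}$ whose characteristic cones have a single nonvanishing curvature, so that the light-cone decoupling in $\R^{2+1}$ applies after fibering out the flat direction---is exactly the content of \cite[Theorem~4.1]{PS2007}, and your numerics at $p=6$ and $p=2$ are right.
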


Owing to the strengthened hypothesis \eqref{J=2 condition}, Theorem~\ref{PS LS J=2} alone is insufficient for our purposes. Indeed, Theorem~\ref{PS LS J=2} only effectively deals with parts of the multiplier which are supported away from the main singularity. However, we still make use of Theorem~\ref{PS LS J=2} in the proof of Theorem~\ref{LS thm} to analyse the multiplier in this less singular region, in which it is effective.




\section{Symmetries and model curves}\label{curve sym sec}

A prototypical example of a smooth curve satisfying the non-degeneracy condition \eqref{eq:nondegenerate} is the \textit{moment curve} $\gamma_{\circ} \colon \R \to \R^n$, given by
\begin{equation*}
    \gamma_{\circ}(s) := \Big(s, \frac{s^2}{2}, \dots, \frac{s^n}{n!} \Big). 
\end{equation*}
Indeed, in this case the determinant appearing in \eqref{eq:nondegenerate} is everywhere equal to 1. Moreover, at small scales, any non-degenerate curve can be thought of as a perturbation of an affine image of $\gamma_{\circ}$. To see why this is so, fix a non-degenerate curve $\gamma \colon I \to  \R^n$ and $\sigma \in I$, $\lambda > 0$ such that $[\sigma - \lambda, \sigma+\lambda] \subseteq I$. Denote by $[\gamma]_{\sigma}$ the $n\times n$ matrix
\begin{equation*} 
    [\gamma]_{\sigma}:=
    \begin{bmatrix}
    \gamma^{(1)}(\sigma) & \cdots & \gamma^{(n)}(\sigma)
    \end{bmatrix},
\end{equation*}
where the vectors $\gamma^{(j)}(\sigma)$ are understood to be \textit{column} vectors. Note that this is precisely the matrix appearing in the definition of the non-degeneracy condition \eqref{eq:nondegenerate} and is therefore invertible by our hypothesis. It is also convenient to let $[\gamma]_{\sigma,\lambda}$ denote the $n \times n$ matrix
\begin{equation}\label{gamma transformation}
[\gamma]_{\sigma,\lambda} := [\gamma]_{\sigma} \cdot D_{\lambda},
\end{equation}
where $D_{\lambda}:=\text{diag}(\lambda, \dots, \lambda^n)$, the diagonal matrix with eigenvalues $\lambda$, $\lambda^2, \dots, \lambda^n$. Consider the portion of the curve $\gamma$ lying over the subinterval $[\sigma-\lambda, \sigma+\lambda]$. This is parametrised by the map  $s \mapsto \gamma(\sigma + \lambda s)$ for $s \in [-1,1]$.  The degree $n$ Taylor polynomial of $s \mapsto \gamma(\sigma + \lambda s)$ around $\sigma$ is given by
\begin{equation}\label{Taylor}
  s \mapsto  \gamma(\sigma) + [\gamma]_{\sigma,\lambda} \cdot \gamma_{\circ}(s),
\end{equation}
which is indeed an affine image of $\gamma_{\circ}$. Furthermore, by Taylor's theorem, the original curve $\gamma$ agrees with the polynomial curve \eqref{Taylor} to high order at $\sigma$. 

Inverting the affine transformation $x \mapsto  \gamma(\sigma) + [\gamma]_{\sigma,\lambda} \cdot x$ from \eqref{Taylor}, we can map the portion of $\gamma$ over $[\sigma - \lambda, \sigma + \lambda]$ to a small perturbation of the moment curve. 

\begin{definition}\label{rescaled curve def} Let $\gamma \in C^{n+1}(I;\R^{n})$ be a non-degenerate curve and $\sigma \in I, \lambda>0$ be such that $[\sigma-\lambda, \sigma+ \lambda] \subseteq I$. The \textit{$(\sigma,\lambda)$-rescaling of $\gamma$} is the curve $\gamma_{\sigma,\lambda} \in C^{n+1}([-1,1];\R^{n})$ given by
\begin{equation*}
    \gamma_{\sigma,\lambda}(s) := [\gamma]_{\sigma,\lambda}^{-1}\big( \gamma(\sigma+\lambda s) - \gamma(\sigma) \big).
\end{equation*}
\end{definition}

It follows from the preceding discussion that 
\begin{equation*}
   \gamma_{\sigma,\lambda}(s) = \gamma_{\circ}(s) + [\gamma]_{\sigma,\lambda}^{-1} \mathcal{E}_{\gamma,\sigma,\lambda}(s)  
\end{equation*}
where $\mathcal{E}_{\gamma,\sigma,\lambda}$ is the remainder term for the Taylor expansion \eqref{Taylor}. In particular, if $\gamma$ satisfies the non-degeneracy condition \eqref{eq:nondegenerate} with constant $c_0$, then
\begin{equation*}
    \|   \gamma_{\sigma,\lambda} - \gamma_{\circ} \|_{C^{n+1}([-1,1];\R^n)} \lesssim c_0^{-1} \lambda \,  \| \gamma \|_{C^{n+1}(I)}^n.
\end{equation*}
Thus, if $\lambda>0$ is chosen to be small enough, then the rescaled curve $\gamma_{\sigma,\lambda}$ is a minor perturbation of the moment curve. In particular, given any $0 < \delta < 1$, we can choose $\lambda$ so as to ensure that $\gamma_{\sigma,\lambda}$ belongs to the following class of \textit{model curves}.

\begin{definition} Given $n \geq 2$ and $0 < \delta < 1$, let $\mathfrak{G}_n(\delta)$ denote the class of all smooth curves $\gamma \colon [-1, 1] \to \R^n$ that satisfy the following conditions: 
\begin{enumerate}[i)]
    \item $\gamma(0) = 0$ and $\gamma^{(j)}(0) = \vec{e}_j$ for $1 \leq j \leq n$;
    \item $\|\gamma - \gamma_{\circ}\|_{C^{n+1}([-1,1])} \leq \delta$.
\end{enumerate}
Here $\vec{e}_j$ denotes the $j$th standard Euclidean basis vector and
\begin{equation*}
    \|\gamma\|_{C^{n+1}(I)} := \max_{1 \leq j \leq n + 1} \sup_{s \in I} |\gamma^{(j)}(s)| \qquad \textrm{for all $\gamma \in C^{n+1}(I;\R^n)$.}
\end{equation*}
\end{definition}

Given any $\gamma \in \mathfrak{G}_n(\delta)$, condition ii) and the multilinearity of the determinant ensures that $\det[\gamma]_s = \det[\gamma_{\circ}]_s + O(\delta) = 1 + O(\delta)$. Thus, there exists a dimensional constant $c_n > 0$ such that if $0 < \delta < c_n$, then any curve $\gamma \in \mathfrak{G}_n(\delta)$ is non-degenerate and, moreover, satisfies $\det [ \gamma]_s \geq 1/2$. Henceforth, it is always assumed that any such parameter $\delta > 0$ satisfies this condition, which we express succinctly as $0 < \delta \ll 1$.




\section{Key analytic ingredients in the proof}\label{key ingredients sec}

There are three key ingredients in the proof of Theorem~\ref{LS thm}: a square function on $\R^4$, a square function on $\R^3$ and a Nikodym-type maximal operator mapping functions in $\R^4$ to functions in $\R^3$. These operators are formulated in terms of the geometry of the underlying curve $\gamma \colon I \to \R^3$ and, in particular, are defined with respect to the Frenet frame on $\gamma$.\footnote{More precisely, the square function on $\R^4$ is defined with respect to Frenet frame associated to a lift of $\gamma$ to $\R^4$.} In this section each of the three key operators is introduced and the relevant norm bounds for these objects are stated in Theorem~\ref{Frenet reverse SF theorem}, Proposition~\ref{f SF prop} and Proposition~\ref{Nikodym prop} below. In \S\S\ref{LS rel G sec}-\ref{J=3 sec}, a careful decomposition of the multiplier $m[a_k]$ is carried out which facilitates application of these results in the proof of Theorem~\ref{LS thm}. We return to proofs of Theorem~\ref{Frenet reverse SF theorem}, Proposition~\ref{f SF prop} and Proposition~\ref{Nikodym prop} in \S\ref{reverse SF sec}, \S\ref{forward SF sec} and \S\ref{Nikodym sec}, respectively. 



\subsection{Frenet geometry}

It is convenient to recall some elementary concepts from differential geometry which feature in our proof. 
Given a smooth non-denegenate curve $\gamma:I \to \R^n$, the Frenet frame is the orthonormal basis resulting from applying the Gram--Schmidt process to  the vectors
\begin{equation*}
    \{ \gamma'(s), \dots, \gamma^{(n)}(s)\},
\end{equation*}
which are linearly independent in view of the condition \eqref{eq:nondegenerate}. Defining the functions\footnote{Note that the $\tilde{\kappa}_j$ depend on the choice of parametrisation and only agree with the (geometric) curvature functions 
\begin{equation*}
    \kappa_j(s) := \frac{\langle \be_j'(s), \be_{j+1}(s) \rangle}{|\gamma'(s)|}
\end{equation*}
if $\gamma$ is unit speed parametrised. Here we do not assume unit speed parametrisation.} 
\begin{equation*}
    \tilde{\kappa}_j(s) := \langle \be_j'(s), \be_{j+1}(s) \rangle  \qquad \text{for } j=1, \dots, n-1,
\end{equation*}
one has the classical Frenet formul\ae
\begin{align*}\be_1'(s)&=  \tilde{\kappa}_1(s) \be_2(s),
    \\ \be_i'(s)&= -\tilde{\kappa}_{i-1}(s)\be_{i-1}(s) + \tilde{\kappa}_{i}(s)\be_{i+1}(s),\,\,i=2,\dots, n-1,
    \\
    \be_n'(s)&=-\tilde{\kappa}_{n-1}(s)\be_{n-1}(s).
\end{align*}
Repeated application of these formul\ae\ shows that 
\begin{equation*}
    \be^{(k)}_{i}(s) \perp \be_{j}(s) \qquad \textrm{whenever} \qquad 0 \leq  k < |i - j|.
\end{equation*}
Consequently, by Taylor's theorem 
\begin{equation*}
    |\inn{\be_{i}(s_1)}{\be_{j}(s_2)}| \lesssim_{\gamma} |s_1 - s_2|^{|i-j|} \qquad \textrm{for $1 \leq i,j \leq n$ and $s_1, s_2 \in I$.}
\end{equation*}
Furthermore, one may deduce from the definition of $\{\be_j(s)\}_{j=1}^n$ that
\begin{equation}\label{Frenet bound alt 1}
    |\inn{\gamma^{(i)}(s_1)}{\be_{j}(s_2)}| \lesssim_{\gamma} |s_1 - s_2|^{(j-i)\vee 0} \qquad \textrm{for $1 \leq i,j \leq n$ and $s_1, s_2 \in I$.}
\end{equation}

In this paper, much of the microlocal geometry of the averaging operators  $A_t$ is expressed in terms of the Frenet frame. We further introduce the following definitions.

\begin{definition}\label{def Frenet box}  Given $1 \leq d \leq n-1$ and $0 < r \leq 1$, for each $s \in I$ let $\pi_{d-1}(s;\,r)$ denote the set of all $\xi \in \widehat{\R}^n$ satisfying the following conditions:
\begin{subequations} 
\begin{align}
\label{neighbourhood 1}
     |\inn{\be_j(s)}{\xi}| &\leq r^{d+1-j} \qquad \textrm{for $1 \leq j \leq d$,} \\
\label{neighbourhood 2}
    1/2\leq |\inn{\be_{d+1}(s)}{\xi}| &\leq 2 \\
\label{neighbourhood 3}
    |\inn{\be_j(s)}{\xi}| &\leq 1 \qquad \textrm{for $d+2 \leq j \leq n$.}
\end{align}
\end{subequations} 
Such sets $\pi_{d-1}(s;\,r)$ are referred to as $(d-1,r)$-\textit{Frenet boxes}. 
\end{definition} 

The relevance of the $d-1$ index is that the $\pi_{d-1}(s;r)$ correspond to plate regions defined with respect to a codimension $d-1$ cone. For $n=4$ and $d-1 = 2$, this geometric observation is discussed in detail in \S\ref{geo obs sec}. 

\begin{definition} A collection $\mathcal{P}_{d-1}(r)$ of $(d-1,r)$-Frenet boxes is a \textit{Frenet box decomposition along $\gamma$} if it consists of precisely the $(d-1,r)$-Frenet boxes $\pi_{d-1}(s;\,r)$ for $s$ varying over an $r$-separated subset of $I$. 
\end{definition}




\subsection{Reverse square function estimates in \texorpdfstring{$\R^{3+1}$}{}} The most important ingredient in the proof of Theorem~\ref{LS thm} is the following square function bound.

\begin{theorem}\label{Frenet reverse SF theorem} Let $0 < r < 1$ and $\mathcal{P}_2(r)$ be a $(2,r)$-Frenet box decomposition along a non-degenerate $\gamma \colon I \to \R^4$. For all $\varepsilon > 0$ the inequality
\begin{equation*}
    \Big\|\sum_{\pi \in \mathcal{P}_2(r)} f_{\pi} \Big\|_{L^4(\R^4)} \lesssim_{\gamma,\varepsilon} r^{ -\varepsilon}  \Big\|\big(\sum_{\pi \in \mathcal{P}_2(r)}|f_{\pi}|^2 \Big)^{1/2}\big\|_{L^4(\R^4)}
\end{equation*}
holds for any tuple of functions $(f_{\pi})_{\pi \in \mathcal{P}_2(r)}$ satisfying $\supp \widehat{f}_{\pi} \subseteq \pi$.
\end{theorem}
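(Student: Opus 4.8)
The plan is to prove Theorem~\ref{Frenet reverse SF theorem} by reducing it to a reverse square function estimate for the light cone in $\R^3$ via the lower-dimensional cone structure carried by the $(2,r)$-Frenet boxes. The key structural fact, to be established first, is that a $(2,r)$-Frenet box $\pi_2(s;r)$ along a non-degenerate curve $\gamma \colon I \to \R^4$ lies in an $O(r^2)$-neighbourhood of a $2$-plane (spanned by $\be_3(s),\be_4(s)$), and that as $s$ ranges over an $r$-separated set, these plates are organised along a codimension $2$ cone $\widetilde\Gamma_2 \subseteq \widehat\R^4$ — exactly the geometry flagged in \S\ref{geo obs sec}. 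Concretely, I would fix the hyperplane $\inn{\be_4(s_0)}{\xi}\sim 1$ (or foliate into such slabs using \eqref{neighbourhood 3}) and observe that on each slab the remaining three coordinates $(\inn{\be_1(s)}{\xi},\inn{\be_2(s)}{\xi},\inn{\be_3(s)}{\xi})$ see the boxes $\pi_2(s;r)$ as $r$-plates tangent to a genuine (curved, nondegenerate, hence light-cone-like) cone in $\R^3$, with the plate dimensions $r^3 \times r^2 \times 1$ matching the anisotropic plate decomposition of a $\delta$-neighbourhood of the cone with $\delta = r^2$.

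With this picture in hand, the second step is to invoke the reverse square function estimate for the cone in $\R^3$ at $L^4$. The sharp reverse square function inequality for the light cone in $\R^3$ — that $\|\sum_\theta f_\theta\|_{L^4(\R^3)} \lesssim_\varepsilon \delta^{-\varepsilon}\|(\sum_\theta |f_\theta|^2)^{1/2}\|_{L^4(\R^3)}$ over $\delta^{1/2}$-plates $\theta$ — is known (it follows, e.g., from the Bourgain–Demeter $\ell^2$-decoupling for the cone, or can be proved directly; in our setting $\delta^{1/2}=r$). The paper's introduction explicitly says the reverse square function is "deduced using a $4$-linear restriction estimate from \cite{BBFL2018}" together with the broad/narrow decomposition lemma in Appendix~\ref{BG appendix}, so I would run a Bourgain–Guth style broad/narrow induction on scales: in the narrow case iterate the decomposition to pass to a finer scale and close via parabolic rescaling on each subplate; in the broad case, where the frequency supports are quantitatively transverse, apply the multilinear (here $4$-linear, since we are in $\R^4$) restriction/Kakeya estimate of \cite{BBFL2018}, which gives the $L^4$ bound with only an $r^{-\varepsilon}$ loss. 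The abstract lemma in Appendix~\ref{BG appendix} packages the bookkeeping of this induction.

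A cleaner alternative, which I would carry out in parallel, is a direct $L^4$ argument: expand $\|\sum_\pi f_\pi\|_{L^4}^4 = \|(\sum_\pi f_\pi)^2\|_{L^2}^2$ and use Plancherel, so the estimate becomes a statement about the overlap function of the sumset $\pi + \pi'$. One shows that for a given frequency $\zeta$, the number of pairs $(\pi,\pi')$ with $\zeta \in \pi+\pi'$ and $|s_\pi - s_{\pi'}|$ in a fixed dyadic range is $O(r^{-\varepsilon})$ after accounting for the $2$-plane directions — this is where the non-degeneracy \eqref{eq:nondegenerate} (equivalently nonvanishing curvature and torsion of the lifted curve) enters, via the Frenet bounds $|\inn{\be_i(s_1)}{\be_j(s_2)}|\lesssim |s_1-s_2|^{|i-j|}$ which control how plates $\pi_2(s;r)$ at well-separated $s$ can overlap after addition. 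Splitting the pairs by the dyadic size of $|s_\pi - s_{\pi'}|$, the diagonal ($|s_\pi-s_{\pi'}|\lesssim r$) contributes the square function trivially, and the off-diagonal terms are handled by the transversality-induced almost-orthogonality, summing the dyadic pieces against an $r^{-\varepsilon}$ loss.

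The main obstacle will be the off-diagonal / transversality analysis: quantifying precisely how $(2,r)$-Frenet boxes at separated parameter values fail to be almost orthogonal in $L^4$ and showing the resulting overlap is only logarithmically bad. This is exactly the point where the codimension-$2$ cone geometry must be exploited honestly rather than treated as a black box — one cannot simply cite the $\R^3$ cone square function, because the Frenet boxes are genuinely $4$-dimensional objects and the reduction to $\R^3$ loses the transverse ($\be_4$) direction unless one tracks it carefully through the slab foliation. I expect the bulk of the work (and the need for the $4$-linear restriction estimate of \cite{BBFL2018} and the broad/narrow lemma of Appendix~\ref{BG appendix}) to be concentrated here, with parabolic rescaling to the model curve class $\mathfrak{G}_4(\delta)$ used to set up a clean induction on $r$.
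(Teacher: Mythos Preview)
Your second paragraph is correct and matches the paper's actual route: the proof goes via a $4$-linear restriction estimate for the codimension-$2$ cone in $\R^4$ (the Bennett--Bez--Flock--Lee generalisation of Bennett--Carbery--Tao, applied after verifying the Brascamp--Lieb dimension conditions for the tangent $2$-planes --- the key geometric input being that any two distinct tangent planes span $\R^4$), a rescaled version thereof, and the broad/narrow lemma of Appendix~\ref{BG appendix}. One small swap of roles: in the paper the narrow case is dispatched trivially by Cauchy--Schwarz (since $r_{\mathrm{n}}\sim_{\varepsilon} r$ leaves only $O_{\varepsilon}(1)$ plates per narrow interval), and the parabolic rescaling is used instead to upgrade the \emph{multilinear} estimate so that its constant depends only on the scale ratio.

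Your first paragraph, however, contains a geometric misconception that derails Approach~1. The $(2,r)$-Frenet boxes in $\R^4$ have dimensions $r^3\times r^2\times r\times 1$ in the $\be_1,\be_2,\be_3,\be_4$ directions (Definition~\ref{def Frenet box} with $d=3$, $n=4$); you have dropped the size-$r$ constraint in the $\be_3$ direction, and your appeal to \eqref{neighbourhood 3} is vacuous since there is no index $j$ with $5\le j\le 4$. More importantly, $\widetilde\Gamma_2$ is a $2$-dimensional surface (codimension $2$) in $\R^4$: slicing by a hyperplane $\{\inn{\be_4(s_0)}{\xi}=c\}$ produces a nondegenerate \emph{curve} in $\R^3$, not a light-cone-type hypersurface, so the $\R^3$ cone reverse square function is not the correct black box. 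Even granting a reverse square function for anisotropic plates along this curve on each slice, such inequalities do not Fubini across the transverse slab variable. The paper instead reparametrises the Frenet boxes as plates $\theta(g;s;r)$ along the codimension-$2$ cone $\Gamma_g\subset\R^4$ generated by a nondegenerate curve $g\colon I_0\to\R^3$ extracted from $\be_4$, and proves the reverse square function directly in $\R^4$.

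Your Approach~3 (direct $L^4$ via Plancherel on $(\sum f_\pi)^2$ and sumset overlap) is not pursued in the paper. For a codimension-$1$ cone in $\R^3$ the overlap lemma for $\theta+\theta'$ is classical, but for a codimension-$2$ cone in $\R^4$ generated by a curve with both curvature and torsion the sumset geometry is considerably more intricate, and it is not clear that a clean finite-overlap statement of the required strength is available; this is precisely why the paper, following the Lee--Vargas strategy, proceeds through multilinear restriction instead.
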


This bound pertains to curves in $\R^4$ rather than $\R^3$ and therefore does not directly apply to the curve $\gamma \colon I \to \R^3$ featured in the definition of our original helical maximal operator. Rather, in \S\ref{spatio temp subsec} we apply Theorem~\ref{Frenet reverse SF theorem} to a certain lift of the original curve $\gamma$ into the spatio-temporal domain $\R^{3+1}$. This is somewhat analogous to the situation in \cite{MSS1992} where a square function estimate in $\R^{2+1}$ is used to study the circular maximal function in $\R^2$.\medskip

Theorem~\ref{Frenet reverse SF theorem} is related to the Lee--Vargas~\cite{LV2012} estimate for the Mockenhaupt square function in $\R^3$. In particular, the Mockenhaupt square function corresponds to studying functions frequency localised with repect to a $(1,r)$-Frenet box decomposition in $\R^3$. Moreover, the strategy used to prove Theorem ~\ref{reverse sf thm} mirrors that of \cite{LV2012}. We first obtain a $4$-linear variant of Theorem~\ref{Frenet reverse SF theorem} via the multilinear Fourier restriction estimates of Bennett--Bez--Flock--Lee \cite{BBFL2018}. The linear result is then deduced from the 4-linear inequality using a variant of the Bourgain--Guth method \cite{Bourgain2011}. The details of the argument are provided in \S\ref{reverse SF sec}.




\subsection{Forward square function estimates in \texorpdfstring{$\R^3$}{}} We also make use of a (forward) $L^2$-weighted square function estimate in $\R^3$. Here the square function estimate is defined in relation to a $(0,r)$-Frenet decomposition. In contrast with Theorem~\ref{Frenet reverse SF theorem}, we work with an operator-theoretic formulation involving certain projection operators. 

As before, let $\eta \in C^\infty_c(\R)$ be non-negative and such that $\eta(r) = 1$ if $r \in [-1,1]$ and $\supp \eta \subseteq [-2,2]$ and define $\tilde{\beta} := \eta(2^{-1}\,\cdot\,) - \eta(4\,\cdot\,)$. Give an $(0,r)$-Frenet box $\pi = \pi_{0,\gamma}(s;r)$ let
\begin{equation}\label{chi pi}
    \chi_{\pi}(\xi) := \eta\big(r^{-1}\,\inn{\be_1(s)}{\xi}\big) \, \tilde{\beta}\big(\inn{\be_2(s)}{\xi}\big) \, \eta \big(\inn{\be_3(s)}{\xi}\big)
\end{equation}
so that $\chi_{\pi}(\xi) = 1$ if $\xi \in \pi_{0,\gamma}(s;r)$ and $\chi_{\pi}$ vanishes outside some fixed dilate of this set.

\begin{proposition}\label{f SF prop}  Let $0 < r < 1$ and $\mathcal{P}_0(r)$ be a $(0,r)$-Frenet box decomposition for a non-degenerate $\gamma \colon I \to \R^3$. For all $\varepsilon > 0$ the inequality
\begin{equation*}
    \int_{\R^3} \sum_{\pi \in \mathcal{P}_0(r)} |\chi_{\pi}(D)f(x)|^2 w(x)\,\ud x \lesssim_{\varepsilon} r^{-\varepsilon} \int_{\R^3} |f(x)|^2 \widetilde{\mathcal{N}}_{\gamma, r}^{\, (\varepsilon)} w(x)\,\ud x
\end{equation*}
holds for any non-negative $w \in L^1_{\mathrm{loc}}(\R^3)$, where $\widetilde{\mathcal{N}}_{\,\gamma, r}^{\,(\varepsilon)}$ is a maximal operator satisfying 
\begin{equation}\label{f SF eq}
    \|\widetilde{\mathcal{N}}_{\,\gamma, r}^{\, (\varepsilon)}\|_{L^2(\R^3) \to L^2(\R^3)} \lesssim_{\varepsilon, \varepsilon_{\circ}} r^{-\varepsilon_{\circ}} \qquad \textrm{for all $\varepsilon_{\circ}  > 0$.}
\end{equation}
\end{proposition}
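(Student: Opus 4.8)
\textbf{Proof proposal for Proposition~\ref{f SF prop}.}

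The plan is to reduce the weighted square function estimate to a more combinatorial statement about overlapping Fourier supports, then close it via a $TT^*$ / duality argument whose $L^2$ cost is absorbed into the definition of the maximal operator $\widetilde{\mathcal{N}}_{\gamma,r}^{(\varepsilon)}$. First I would observe that for a $(0,r)$-Frenet box $\pi = \pi_{0,\gamma}(s;r)$ the cutoff $\chi_\pi$ is supported in a slab of dimensions roughly $r \times 1 \times 1$ in the Frenet coordinates $(\inn{\be_1(s)}{\xi}, \inn{\be_2(s)}{\xi}, \inn{\be_3(s)}{\xi})$. The key geometric input is a \emph{bounded overlap (with logarithmic loss) estimate}: using the Frenet bounds \eqref{Frenet bound alt 1} and the fact that the $s$ parametrising $\mathcal{P}_0(r)$ form an $r$-separated set, one shows that for any fixed $\xi$ the number of $\pi \in \mathcal{P}_0(r)$ with $\xi \in \supp \chi_\pi$ is $O_\varepsilon(r^{-\varepsilon})$ — in fact the $\supp\chi_\pi$ have bounded overlap up to the curvature-induced tangency, which only costs a power of $\log(1/r)$. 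This is the kind of estimate already implicit in the analysis of the Mockenhaupt square function; the point is that distinct Frenet directions $\be_1(s)$, $\be_1(s')$ separate the slabs unless $|s-s'| \lesssim r$.

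Next I would set up the dual formulation. Writing $S_r f(x)^2 := \sum_{\pi} |\chi_\pi(D)f(x)|^2$, the inequality to prove is $\int |S_r f|^2 w \lesssim_\varepsilon r^{-\varepsilon} \int |f|^2 \widetilde{\mathcal{N}}_{\gamma,r}^{(\varepsilon)} w$. Expanding, $\int |S_r f|^2 w = \sum_\pi \int |\chi_\pi(D)f|^2 w = \sum_\pi \int f \cdot \overline{\chi_\pi(D)^*(w\,\chi_\pi(D)f)}$, and one is led to control the operator $f \mapsto \sum_\pi \chi_\pi(D)^*\big(w\,\chi_\pi(D)f\big)$ as a weighted composition. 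The natural candidate for the maximal operator is
\begin{equation*}
    \widetilde{\mathcal{N}}_{\gamma,r}^{(\varepsilon)} w(x) := \sup_{\pi \in \mathcal{P}_0(r)} \big( |\check\chi_\pi| * w \big)(x) \quad \text{(suitably averaged / with an $r^{-\varepsilon}$ smoothing)},
\end{equation*}
since $|\check\chi_\pi|$ is an $L^1$-normalised bump adapted to the dual box $\pi^* $ (a tube of dimensions $r^{-1}\times 1 \times 1$), and pointwise $|\chi_\pi(D)f(x)|^2 \lesssim (|\check\chi_\pi| * |f|^2)(x)$ up to harmless factors — here one uses that $\chi_\pi$ is a fixed smooth bump, so $|\check\chi_\pi|$ has rapidly decaying tails and can be dominated by an $L^1$-normalised average over the dual tube plus a rapidly decaying error. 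Then, after using the bounded-overlap bound to sum in $\pi$, one arrives at $\int |S_r f|^2 w \lesssim r^{-\varepsilon} \int |f|^2\, \widetilde{\mathcal{N}}_{\gamma,r}^{(\varepsilon)} w$, provided the averaging operators $w \mapsto |\check\chi_\pi|*w$ admit a common dominating maximal operator.

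The remaining task is to verify the $L^2$ bound \eqref{f SF eq} for $\widetilde{\mathcal{N}}_{\gamma,r}^{(\varepsilon)}$. Since each $|\check\chi_\pi| * {}$ is (up to Schwartz tails) an average over a tube $T_\pi$ of dimensions $\sim r^{-1} \times 1 \times 1$ pointing in the direction $\be_1(s)$, the operator $\widetilde{\mathcal{N}}_{\gamma,r}^{(\varepsilon)}$ is comparable to a Nikodym-type maximal function over a one-parameter family of tubes with $r$-separated directions. One then obtains $\|\widetilde{\mathcal{N}}_{\gamma,r}^{(\varepsilon)}\|_{L^2 \to L^2} \lesssim_{\varepsilon_\circ} r^{-\varepsilon_\circ}$ either by a direct $L^2$ argument (square function / Plancherel, using that the Fourier multipliers of the individual averages have essentially disjoint supports up to bounded overlap) or by interpolating the trivial $L^\infty$ bound with an $L^1 \to L^{1,\infty}$ bound of Córdoba--Fefferman type for tube maximal functions; the $r$-separation of directions gives only a logarithmic number of "essentially distinct" tube orientations at each scale, yielding the $r^{-\varepsilon_\circ}$ loss. \emph{The main obstacle} I anticipate is making the passage from $|\chi_\pi(D)f(x)|^2$ to $(|\check\chi_\pi|*|f|^2)(x)$ genuinely rigorous with uniform constants — one cannot literally bound $|\phi*f|^2$ by $|\phi|*|f|^2$ unless $\phi \geq 0$, so one must instead write $\chi_\pi = \chi_\pi^{1/2}\cdot \chi_\pi^{1/2}$ (or use a fattened cutoff $\widetilde\chi_\pi$ with $\widetilde\chi_\pi \chi_\pi = \chi_\pi$ and $\widecheck{\widetilde\chi_\pi}$ of controlled sign/decay), apply Cauchy--Schwarz in the convolution, and carefully track that the resulting weight is still dominated by the same maximal operator up to the acceptable $r^{-\varepsilon}$ factor. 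Handling the Schwartz tails of $\check\chi_\pi$ uniformly over the (curvature-varying) family $\{\pi\}$ is a technical but routine part of this step.
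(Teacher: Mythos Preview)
Your bounded-overlap observation for the Fourier supports is correct, and your proposed maximal operator (a sup of single-tube averages) does satisfy the required $L^2$ bound by the C\'ordoba-type argument of Proposition~\ref{3d Nik prop}. The gap is in the reduction step. From the pointwise bound $|\chi_\pi(D)f|^2 \lesssim \psi_\pi * |f|^2$ you obtain, after Fubini,
\[
\sum_{\pi}\int |\chi_\pi(D)f|^2\,w \;\lesssim\; \int |f|^2\,\Big(\sum_{\pi} \psi_\pi^{\vee} * w\Big),
\]
and now you must control the \emph{sum} $\sum_\pi \psi_\pi^{\vee} * w$, not just the supremum. There are $\sim r^{-1}$ terms, and bounded overlap of the \emph{Fourier} supports of the $\chi_\pi$ says nothing about the \emph{spatial} averages $\psi_\pi^{\vee} * w$; for a generic weight (e.g.\ $w$ a small bump) each term is comparable and the sum is $\sim r^{-1}$ times the sup. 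So this route yields only $r^{-1}$ loss, not $r^{-\varepsilon}$. If instead you insert a fattened cutoff and write $|\chi_\pi(D)f|^2 \lesssim \psi_\pi * |\tilde\chi_\pi(D)f|^2$, you end up with $\int \big(\sum_\pi |\tilde\chi_\pi(D)f|^2\big)\,\widetilde{\mathcal N}w$, which is the same problem again --- the argument is circular.

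The paper closes this gap via an iteration in the spirit of Carbery--Seeger \cite{CS1995}, driven by the Rubio de Francia pointwise square-function inequality (Lemma~\ref{RdF lem}). The key new idea you are missing is an \emph{extra truncation} of the plates in the transverse $(\xi_2,\xi_3)$-variables to boxes of size $K^{-1}$ with $K = r^{-\varepsilon/8}$: after this truncation, any $K$ consecutive plates at scale $r$ (those with $|s_\nu - \tilde s_{\tilde\nu}| \lesssim Kr$) become essentially \emph{parallel translates} of one another (see~\eqref{sf it 1}), so the Rubio de Francia lemma collapses them into a single plate at scale $Kr$ at the cost of one application of a Nikodym average to $w$. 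One iteration therefore passes from scale $r$ to scale $Kr$ (Proposition~\ref{it prop}); after $M \sim 1/\varepsilon$ iterations one reaches scale $\sim 1$, where the sum is trivially finite. The maximal operator $\widetilde{\mathcal N}_{\gamma,r}^{(\varepsilon)}$ is then the $O_\varepsilon(1)$-fold composition of Nikodym operators accumulated along the way, and its $L^2$ bound follows from Proposition~\ref{3d Nik prop}. Your approach recovers only the \emph{unweighted} inequality (via Plancherel and bounded overlap); the weighted version genuinely needs this multi-scale mechanism.
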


The above proposition is related to a $L^2$-weighted version of the classical sectorial square function of C\'ordoba \cite{Cordoba1982}, due to Carbery and the fourth author \cite[Proposition 4.6]{CS1995}. The proof is presented in \S~\ref{forward SF sec} below.\medskip

 The definition of $\widetilde{\mathcal{N}}_{\,\gamma, r}^{\, (\varepsilon)}$ is rather complicated, involving a repeated composition of Nikodym-type maximal operators at different scales. For this reason, we do not provide an explicit description of the operator here. Further details of the definition and basic properties of this operator are provided in \S~\ref{forward SF sec}. \medskip




\subsection{A singular Nikodym-type maximal function}\label{Nikodym lem subsec} The bounds on the spatio-temporal frequency localised pieces of our operator $m[a](D;\cdot)$ are reduced to bounding a Nikodym maximal function mapping functions in $\R^4$ to functions in $\R^3$. Given $\br \in (0,1)^3$ and $s \in [-1,1]$, consider the \textit{plates}
\begin{equation*}
    \mathcal{T}_{\br}(s) := \big\{ (y,t) \in \R^3 \times [1,2] :   \big|\inn{y - t\gamma(s)}{\be_j(s)}\big| \leq r_j \, \textrm{ for $j=1,2,3$} \big\} \subset \R^4.
\end{equation*}
Using these sets, we define associated averaging and maximal operators
 \begin{equation*}
    \mathcal{A}_{\br}^{\,\mathrm{sing}} g(x; s) :=  \fint_{\mathcal{T}_{\br}(s)} g(x-y, t) \,\ud y \ud t \quad \textrm{and} \quad \mathcal{N}_{\br}^{\,\mathrm{sing}} g(x) := \sup_{-1 \leq s \leq 1} |\mathcal{A}_{\br}^{\,\mathrm{sing}} g(x; s)|. 
 \end{equation*}
Note that $\mathcal{N}_{\br}^{\,\mathrm{sing}}$ takes as its input some $g \in L^1_{\mathrm{loc}}(\R^4)$ and outputs a measurable function on $\R^3$. In particular, there is a discrepancy between the number of input and the number of output variables of the operator. 

\begin{proposition}\label{Nikodym prop} If $\br \in (0,1)^3$ satisfies $r_3 \leq r_2 \leq r_1 \leq r_2^{1/2}$ and $r_2 \leq r_{1}^{1/2} r_3^{1/2}$, then
  \begin{equation*}
      \|\mathcal{N}_{\br}^{\,\mathrm{sing}} g\|_{L^2(\R^3)} \lesssim |\log r_3|^3 \|g\|_{L^2(\R^4)}.
  \end{equation*}
\end{proposition}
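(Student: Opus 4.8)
\textbf{Proof plan for Proposition~\ref{Nikodym prop}.}

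The plan is to establish an $L^2 \to L^2$ bound for the singular Nikodym operator by a duality/$TT^*$ or direct covering argument that exploits the precise shape and algebra of the Frenet plates $\mathcal{T}_{\br}(s)$. The key structural point is that, although the supremum is over a continuum of parameters $s \in [-1,1]$, the plates $\mathcal{T}_{\br}(s)$ and $\mathcal{T}_{\br}(s')$ overlap in a controlled way: using the Frenet bounds \eqref{Frenet bound alt 1} together with the curvature/torsion non-degeneracy, one sees that $\mathcal{A}^{\,\mathrm{sing}}_{\br}g(x;s)$ varies slowly in $s$ on the scale $r_3$ (the smallest dimension), so it suffices to take the supremum over an $r_3$-net $S$ of $[-1,1]$ — there are $O(r_3^{-1})$ such values — and then the first logarithmic factor is absorbed by a standard $\ell^\infty \le \ell^2$ or Sobolev-in-$s$ argument, while the remaining two logarithmic factors come from dyadic pigeonholing among the three plate dimensions. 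First I would reduce to this discretised maximal function $\sup_{s \in S}|\mathcal{A}^{\,\mathrm{sing}}_{\br}g(x;s)|$ and linearise it by choosing a measurable selection $s = \mathfrak{s}(x)$, so the task becomes bounding $\|\int_{\mathcal{T}_{\br}(\mathfrak{s}(x))} g(x-y,t)\,\ud y\,\ud t / |\mathcal{T}_{\br}(\mathfrak{s}(x))|\|_{L^2_x}$.

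Next I would run the $TT^*$ computation: expanding the square gives a double integral whose kernel is
\begin{equation*}
  K(x,x') = \frac{1}{|\mathcal{T}_{\br}(\mathfrak{s}(x))|\,|\mathcal{T}_{\br}(\mathfrak{s}(x'))|}\,\big|\big(x + \mathcal{T}_{\br}(\mathfrak{s}(x))\big) \cap \big(x' + \mathcal{T}_{\br}(\mathfrak{s}(x'))\big)\big|
\end{equation*}
after absorbing the time-integration; by Schur's test it is enough to bound $\int_{\R^3} \big|\big(x + \mathcal{T}_{\br}(\mathfrak{s}(x))\big)\cap\big(x' + \mathcal{T}_{\br}(\mathfrak{s}(x'))\big)\big|\,\ud x'$ uniformly in $x$, where I am slightly abusing notation by also using $\mathcal{T}_{\br}(s)$ for the spatial slab $\{y : |\inn{y}{\be_j(s)}| \le r_j\}$. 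The core geometric estimate is then: for fixed $s$ and for each $s'$, the volume of overlap of two such slabs of thickness vector $\br$ in directions given by the two Frenet frames at $s$ and $s'$ is $\lesssim r_1 r_2 r_3 \cdot \min\{1, \text{(angle-dependent factor)}\}$, and — crucially — the measure of the set of centres $x'$ for which the overlap is non-empty, once we also account for the $t$-dependence and the fact that $\mathfrak{s}(x')$ is essentially free, is controlled by slicing according to $|s - s'|$. When $|s-s'| \sim \mu$ dyadically, the Frenet frame at $s'$ differs from that at $s$ by an angle $\sim \mu$ in the $\be_1\be_2$-plane and $\sim \mu^2$ in the $\be_2\be_3$-plane, so the two slabs intersect in a region of volume $\lesssim r_1 r_2 r_3 \cdot \tfrac{r_1}{\mu r_2}\wedge 1$ (from the $\be_1,\be_2$ directions) times a comparable factor from the $\be_2,\be_3$ directions, and the number of relevant $x'$ is governed by integrating this against the free spatial translation; the hypotheses $r_3 \le r_2 \le r_1 \le r_2^{1/2}$ and $r_2 \le r_1^{1/2}r_3^{1/2}$ are exactly what makes all the resulting geometric-series sums over dyadic $\mu \in [r_3,1]$ converge with only a $|\log r_3|$ loss per summation, yielding the claimed $|\log r_3|^3$ after the three dyadic sums (one in $s$-discretisation, two in the plate-dimension pigeonholing).

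The main obstacle I anticipate is the geometric heart of the Schur estimate: carefully quantifying, using the Frenet formulae and the non-degeneracy of curvature and torsion, how the triple slab $\mathcal{T}_{\br}(s')$ sweeps through space as $s'$ varies, and showing that the two curvature conditions on $\br$ precisely balance the three scales so that one never loses more than a logarithm. A convenient way to organise this is to change variables to Frenet coordinates adapted to $s$, writing the constraint from $\mathcal{T}_{\br}(s')$ in those coordinates and expanding $\be_j(s') = \be_j(s) + (s'-s)\be_j'(s) + \cdots$ via the Frenet equations; the bilinear/quadratic nature of the cross terms then shows the overlap lives in a curved slab whose volume is controlled by the stated hypotheses, and integrating in $x'$ (and in $t$) reduces, after the dyadic decomposition in $|s-s'|$, to elementary one-dimensional estimates. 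I would also double-check the reduction to the $r_3$-net: one needs that $\partial_s \mathcal{A}^{\,\mathrm{sing}}_{\br}g(x;s)$ is controlled in $L^2$ by $r_3^{-1}$ times the operator itself applied to a fattened plate, which follows from differentiating the plate indicator and noting $\be_j'(s)$ has a component of size $O(1)$ only in the $\be_{j\pm1}$ directions, so $\partial_s$ of the constraint $|\inn{y-t\gamma(s)}{\be_j(s)}|\le r_j$ costs at most $r_j/r_{j+1} \le r_2/r_3 \le r_3^{-1}$ — consistent with the single logarithm budgeted for that step.
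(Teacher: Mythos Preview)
Your proposal takes a genuinely different route from the paper: you attempt a physical-space $TT^*$/Schur argument based on plate-overlap geometry, whereas the paper works entirely on the Fourier side. It represents the average as an oscillatory integral, applies the Sobolev embedding \eqref{gen Sobolev} in $s$, and then decomposes the symbol $a_{\br}$ in frequency (dyadic scale $k$; then, in the delicate $J=3$ region, further parameters $\ell$ and $m$ governing the distance to the roots $\theta_2,\theta_1^\pm$). On each piece stationary phase gives $\|\mathcal{A}[\mathfrak{d}_s^{\iota}a_{\mathrm{piece}}]\|\lesssim B^{\iota-1/2}$ for a piece-dependent $B$, so the Sobolev product is $O(1)$ \emph{per piece}; the three logarithms are simply the count $O(|\log r_3|^3)$ of pieces.

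The central gap in your plan is the Sobolev/discretisation step. You assert that this costs only a single logarithm, but in fact it loses polynomially. Differentiating the constraint $\inn{y-t\gamma(s)}{\be_j(s)}$ in $s$ produces, in addition to the Frenet rotation term you identified, the translation term $-t\inn{\gamma'(s)}{\be_j(s)}$, which for $j=1$ is of size $\sim 1$; dividing by the thickness $r_1$ gives a contribution $r_1^{-1}$, which dominates your $r_j/r_{j+1}$ terms (and incidentally, among those the largest is $r_1/r_2$, not $r_2/r_3$). Since the undifferentiated operator only satisfies $\|\mathcal{A}^{\mathrm{sing}}_{\br}(\cdot;s)\|_{L^2\to L^2}\lesssim 1$ with no compensating gain, the Sobolev product $\|\mathcal{A}\|^{1/2}\|\partial_s\mathcal{A}\|^{1/2}$ is $\gtrsim r_1^{-1/2}$. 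The paper's frequency decomposition is precisely what manufactures a \emph{gain} on each piece for $\iota=0$ to cancel the $\iota=1$ loss; without it the Sobolev step cannot be salvaged.

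If instead you bypass Sobolev and run Schur directly on the linearised operator, the difficulty resurfaces: for fixed $s,s'$ one has $\int_{\R^3}K_s^{s'}(x,x')\,\ud x'\sim 1$, so bounding $\sup_{s'}$ by $\sum_{s'}$ over an $r_3$-net already costs $r_3^{-1}$. Beating this requires showing that for typical $x'$ only $O(1)$ choices of $s'$ give a significant kernel; your sketch (``the overlap lives in a curved slab'') does not establish this, and it is not clear how to do so without an ingredient playing the role of the paper's oscillatory gain.
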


This result can be thought of as a higher dimensional analogue of a Nikodym maximal estimate from \cite{MSS1992}, which is used to study the circular maximal function in the plane. Note that the parameter triple $\br = (r, r, r)$ for some $0 < r < 1$ satisfies the hypothesis of Proposition~\ref{Nikodym prop}, corresponding to the case of tubes former around the rays $t \mapsto t\gamma(s)$. More relevant to our study, however, is the highly anisotropic situation where $\br = (r, r^2, r^3)$; note that this case is also covered by the proposition. It is remarked that the situation here is somewhat different to that appearing in Proposition~\ref{f SF prop} (which will be defined in \S\ref{forward SF sec}), owing to the aforementioned disparity between the number of input and output variables. The proof of Proposition~\ref{Nikodym prop}, which is based on an oscillatory integral argument, is presented in \S\ref{Nikodym sec} below.




\section{Proof of Theorem~\ref{LS thm}: the slow decay cone}\label{sec:slow decay cone}

Throughout the remainder of the paper, we work with some fixed $0 < \delta_0 \ll 1$, chosen to satisfy the forthcoming requirements of the proofs. For the sake of concreteness, the choice of $\delta_0 := 10^{-10}$ is more than enough for our purposes. It suffices to prove Theorem~\ref{LS thm} in the special case where $\gamma \in \mathfrak{G}_3(\delta_0)$ and $\supp \chi \subseteq I_0 := [-\delta_0,\delta_0]$. Indeed, using the observations of \S\ref{curve sym sec}, we may decompose and rescale the operator $m[a_k](D;\,\cdot\,)$ to reduce to this situation.  

 Suppose $\gamma \in \mathfrak{G}_3(\delta_0)$ and $a \in C^{\infty}(\widehat{\R}^3\setminus \{0\} \times \R \times \R)$ satisfies the hypotheses Theorem~\ref{LS thm}. In view of Theorem~\ref{PS LS J=2}, we may further assume that 
\begin{equation}\label{4 derivative bound_old}
 \left\{\begin{array}{ll}
    |\inn{\gamma^{(3)}(s)}{\xi}| \ge \frac{9}{10}\, |\xi| \\[5pt]
|\inn{\gamma^{(j)}(s)}{\xi}| \le 8\delta_0 |\xi| & \textrm{for $j = 1,2$}
\end{array}
\right. \qquad \textrm{for all $(\xi;t; s)\in \supp a$.} 
\end{equation}
 We note two further consequences of this technical reduction:
\begin{itemize}
    \item Since $\gamma \in \mathfrak{G}_3(\delta_0)$, we have $\gamma^{(j)}(0)=\vec{e}_j$ for $1 \leq j \leq 3$ and so \eqref{4 derivative bound_old} immediately implies that 
    \begin{equation*}
      |\xi_3|\ge \tfrac{9}{10} \, |\xi|    \quad \textrm{and} \quad |\xi_j|\le 8 \delta_0 |\xi| \quad \textrm{for $j=1, 2$,}  \qquad \textrm{for all $\xi \in \xisupp a$.}
    \end{equation*} 
    \item Since $\gamma \in \mathfrak{G}_3(\delta_0)$, we have $\|\gamma^{(4)}\|_{\infty} \leq \delta_0$. Thus, provided $\delta_0$ is sufficiently small,
\begin{equation}\label{4 derivative bound}
    |\inn{\gamma^{(3)}(s)}{\xi}| \geq \tfrac{1}{2} \, |\xi| \qquad \textrm{for all $(\xi; s) \in \xisupp a \times [-1,1]$}.
\end{equation}
Observe that this inequality holds on the large interval $[-1,1]$, rather than just $I_0$. 
\end{itemize}
Henceforth, we also assume that $\xi_3>0$ for all $\xi \in \xisupp a$. In particular,
\begin{equation}\label{convex}
    \inn{\gamma^{(3)}(s)}{\xi} > 0 \qquad \textrm{for all $(\xi; s ) \in \xisupp a \times [-1,1]$}
\end{equation}
and thus, for each $\xi \in \xisupp a$, the function $s \mapsto \inn{\gamma'(s)}{\xi}$ is strictly convex on $[-1,1]$. The analysis for the portion of the symbol supported on the set $\{\xi_3<0\}$ follows by symmetry.\medskip

The first step is to isolate regions of the frequency space where the multiplier $m[a]$ decays relatively slowly. Owing to stationary phase considerations, this corresponds to a region around the conic variety
\begin{equation*}
    \Gamma :=\{\xi \in \xisupp a:  \inn{\gamma^{(j)}(s)}{\xi}=0, \,\, 1 \leq j \leq 2, \text{ for some } s\in I_0 \}.
\end{equation*}
To analyse this cone, we begin with the following observation.

\begin{lemma}\label{theta2 lem}
If $\xi\in \xisupp a$, then the equation $\inn{\gamma''(s)}{\xi}= 0$ has a unique solution in $s  \in [-1,1]$, which corresponds to the unique global minimum of the function $s \mapsto \inn{\gamma'(s)}{\xi}$. Furthermore, the solution has absolute value $O(\delta_0)$.
\end{lemma}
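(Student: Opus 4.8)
The plan is to analyze the function $\phi_\xi(s) := \inn{\gamma'(s)}{\xi}$ on $[-1,1]$ and show its derivative $\phi_\xi'(s) = \inn{\gamma''(s)}{\xi}$ has a unique zero. The key structural input is the convexity observation \eqref{convex}: since $\inn{\gamma^{(3)}(s)}{\xi} > 0$ on all of $[-1,1]$ for $\xi \in \xisupp a$, the function $\phi_\xi'$ is strictly increasing on $[-1,1]$, hence has at most one zero, and at such a zero $\phi_\xi$ attains its unique global minimum. So the entire content of the lemma is the \emph{existence} of the zero together with the location bound $|s| = O(\delta_0)$.

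First I would establish existence and the location bound simultaneously by evaluating $\phi_\xi'$ at the two endpoints (or, more efficiently, near $s=0$) and invoking the intermediate value theorem. Concretely, write $\inn{\gamma''(s)}{\xi} = \inn{\gamma''(0)}{\xi} + \int_0^s \inn{\gamma^{(3)}(u)}{\xi}\,\ud u$. Since $\gamma \in \mathfrak{G}_3(\delta_0)$ we have $\gamma''(0) = \vec{e}_2$, so $\inn{\gamma''(0)}{\xi} = \xi_2$, and the bulleted consequence of \eqref{4 derivative bound_old} gives $|\xi_2| \le 8\delta_0|\xi|$. On the other hand \eqref{4 derivative bound} gives $\inn{\gamma^{(3)}(u)}{\xi} \ge \tfrac12|\xi|$ for all $u \in [-1,1]$, so the integral term grows at rate at least $\tfrac12|\xi|$ in $|s|$. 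Therefore at $s_{\pm} := \pm 20\delta_0$ (say) the integral term has absolute value at least $10\delta_0|\xi|$, which dominates $|\xi_2| \le 8\delta_0|\xi|$, forcing $\phi_\xi'(s_+) > 0$ and $\phi_\xi'(s_-) < 0$. By the intermediate value theorem there is a zero in $(-20\delta_0, 20\delta_0)$, giving both existence and the claimed $O(\delta_0)$ bound; uniqueness and the global-minimum assertion follow from strict monotonicity of $\phi_\xi'$ as noted above. (One should also check $20\delta_0 \le 1$, which holds since $\delta_0 = 10^{-10}$.)

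I do not expect any serious obstacle here — the lemma is essentially a bookkeeping consequence of the normalisations built into $\mathfrak{G}_3(\delta_0)$ and the derivative bounds \eqref{4 derivative bound_old}, \eqref{4 derivative bound} already extracted. The only mild subtlety is making sure the constants line up: one needs the lower bound $\tfrac12|\xi|$ on $\inn{\gamma^{(3)}}{\xi}$ (valid on $[-1,1]$, per the remark following \eqref{4 derivative bound}) rather than just the bound on $I_0$, so that the zero — which a priori could sit anywhere in $[-1,1]$ — is pinned down; but since the Taylor/linear-growth argument already places it within $O(\delta_0)$ of the origin, consistency is automatic. It is worth stating the final constant in the $O(\delta_0)$ explicitly (e.g. the zero lies in $[-20\delta_0, 20\delta_0] \subset I_0$) since later parts of the argument will want the minimiser to lie inside the support interval $I_0 = [-\delta_0,\delta_0]$ — though here one may need to track that the minimiser of $s \mapsto \inn{\gamma'(s)}{\xi}$ relevant to the definition of $\Gamma$ is required to lie in $I_0$, so either the constant must be improved to show it lies in $I_0$, or $\Gamma$ is analyzed with the slightly larger interval; I would flag this and use whichever the subsequent sections require.
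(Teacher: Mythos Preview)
Your proposal is correct and essentially identical to the paper's proof: both use convexity (from \eqref{convex}) for uniqueness, then write $\inn{\gamma''(s)}{\xi} = \xi_2 + (\text{remainder of size} \gtrsim |\xi||s|)$ via Taylor expansion at $0$ (the paper uses the mean value theorem, you use the integral form) to force a sign change within $O(\delta_0)$ of the origin. The paper gets the constant $16\delta_0$ rather than your $20\delta_0$, and your closing remark about whether the zero lies in $I_0$ itself is a fair observation but not something the paper addresses here.
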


\begin{proof}
Given $\xi \in \xisupp a$, let 
\begin{equation}\label{theta2 lem 1}
  \phi \colon [-1,1] \to \R, \quad \phi \colon s \mapsto \inn{\gamma'(s)}{\xi}.
\end{equation}
By \eqref{convex}, $\phi''(s)>0$ for all $s \in [-1,1]$  and the equation $\phi'(s)=\inn{\gamma^{(2)}(s)}{\xi}=0$ has at most one solution on that interval.

On the other hand, by the mean value theorem,
\begin{equation*}
    \phi'(s)=\inn{\gamma^{(2)}(s)}{\xi}= \xi_2 + \omega(\xi;s) \, s,
\end{equation*}
where $\omega$ satisfies $|\omega(\xi;s)|\geq  \tfrac{1}{2}|\xi| >0$. As $|\xi_2| \leq 8 \delta_0 |\xi|$, it follows that $|\omega(\xi;s)| |s| > |\xi_2|$ if $|s| > 16 \delta_0$, and so the equation $\inn{\gamma^{(2)}(s)}{\xi}=0$ has a unique solution in the interval $[-16\delta_0,16\delta_0]$. Moreover, it immediately follows from \eqref{convex} that this solution is the unique global minimum of $\phi$ on $[-1, 1]$. 
\end{proof}

Using Lemma~\ref{theta2 lem}, we construct a smooth mapping $\theta_2 \colon \xisupp a  \to [-1,1]$ such that
\begin{equation*}
    \inn{\gamma'' \circ \theta_2(\xi)}{\xi} = 0 \qquad \textrm{for all $\xi \in \xisupp a$.}
\end{equation*}
It is easy to see that $\theta_2$ is homogeneous of degree 0. This function can be used to construct a natural Whitney decomposition with respect to the cone $\Gamma$ defined above. In particular, let 
\begin{equation}\label{J=3 u function}
    \bu(\xi) := \inn{\gamma' \circ \theta_2(\xi)}{\xi} \qquad \textrm{for all $\xi \in \xisupp a$.}
\end{equation}
This quantity plays a central r\^ole in our analysis. If $u(\xi)=0$, then $\xi \in \Gamma$ and so, roughly speaking, $u(\xi)$ measures the distance of $\xi$ from $\Gamma$.

\begin{lemma}\label{theta1 lem}
Let $\xi \in \xisupp a$ and consider the equation \begin{equation}\label{0404e3.29}
    \inn{\gamma'(s)}{\xi}=0.
\end{equation}
\begin{enumerate}[i)]
    \item If $u(\xi)>0$, then the equation \eqref{0404e3.29} has no solution on $[-1, 1]$.
\item If $u(\xi)=0$, then the equation \eqref{0404e3.29} has only the solution $s=\theta_2(\xi)$ on $[-1,1]$.
\item If $u(\xi)<0$, then the equation \eqref{0404e3.29} has precisely two solutions on $[-1,1]$. Both solutions have absolute value $O(\delta_0^{1/2})$.
\end{enumerate}
\end{lemma}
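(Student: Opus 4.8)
The plan is to leverage the convexity of $\phi(s) := \inn{\gamma'(s)}{\xi}$ established in \eqref{convex} and the fact, recorded in Lemma~\ref{theta2 lem}, that $\theta_2(\xi)$ is the unique global minimiser of $\phi$ on $[-1,1]$, with $u(\xi) = \phi(\theta_2(\xi))$ its minimum value. Since a strictly convex function attains its minimum at a single point, the equation $\phi(s) = 0$ can have at most two solutions on $[-1,1]$, one on each side of $\theta_2(\xi)$; moreover the number of solutions is governed entirely by the sign of the minimum value $u(\xi)$. If $u(\xi) > 0$ then $\phi > 0$ everywhere and there is no solution, giving i). If $u(\xi) = 0$ then $\phi \geq 0$ with equality only at the minimiser, giving ii). If $u(\xi) < 0$ then, since $\phi$ is continuous and strictly convex, it must cross zero exactly once on each of the intervals $[-1,\theta_2(\xi)]$ and $[\theta_2(\xi),1]$ — provided we first check that $\phi(\pm 1) > 0$ so that the crossings actually occur in the interior. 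This is where I would use the quantitative information: $\phi(\pm 1) = \inn{\gamma'(\pm 1)}{\xi}$, and since $\gamma'(\pm 1) = \vec{e}_1 + O(\delta_0)$ while $|\xi_1| \leq 8\delta_0|\xi|$ and $|\xi_3| \geq \tfrac{9}{10}|\xi|$, a Taylor expansion of $\phi$ around $0$ using \eqref{4 derivative bound} (the lower bound $|\inn{\gamma^{(3)}(s)}{\xi}| \geq \tfrac12|\xi|$ valid on all of $[-1,1]$) shows $\phi(\pm 1) \gtrsim |\xi|$, comfortably positive.

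For the size estimate on the solutions in case iii), I would Taylor-expand $\phi$ about its minimiser $s_0 := \theta_2(\xi)$, where $\phi'(s_0) = 0$. Writing $\phi(s) = \phi(s_0) + \tfrac12 \phi''(\xi;\tilde s)(s - s_0)^2$ for some intermediate $\tilde s$, and using the lower bound $\phi''(s) = \inn{\gamma''(s)}{\xi}' $... more precisely $\phi''(s) = \inn{\gamma^{(3)}(s)}{\xi} \gtrsim |\xi|$ from \eqref{4 derivative bound}, any solution $s$ of $\phi(s) = 0$ satisfies $|s - s_0|^2 \lesssim |u(\xi)|/|\xi|$. It then remains to bound $|u(\xi)|/|\xi|$. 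Here I would expand $\phi$ about $0$ instead: $u(\xi) = \phi(s_0) \leq \phi(0) = \inn{\gamma'(0)}{\xi} = \xi_1$ (using $\gamma'(0) = \vec e_1$), so when $u(\xi) < 0$ we get $0 > u(\xi) \geq$ ... actually one needs a two-sided bound; I would instead observe that $u(\xi) = \phi(s_0) = \xi_1 + \inn{\gamma''(\tilde s)}{\xi}\,\tfrac{s_0^2}{2}$ combined with $|s_0| = O(\delta_0)$ from Lemma~\ref{theta2 lem} and $|\xi_1| \leq 8\delta_0|\xi|$, yielding $|u(\xi)| \lesssim \delta_0 |\xi|$. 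Feeding this back gives $|s - s_0|^2 \lesssim \delta_0$, hence $|s| \lesssim \delta_0^{1/2}$ since $|s_0| = O(\delta_0) = O(\delta_0^{1/2})$.

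The main obstacle — really the only delicate point — is ensuring the zero crossings in case iii) land in the interior $(-1,1)$ rather than at or beyond the endpoints, i.e. verifying $\phi(\pm1) > 0$; everything else is a direct consequence of strict convexity plus the routine Taylor estimates already set up in the proof of Lemma~\ref{theta2 lem}. I would organise the write-up as: (1) recall $\phi$ strictly convex with unique minimiser $s_0 = \theta_2(\xi)$ and minimum value $u(\xi)$; (2) dispose of cases i) and ii) immediately from these facts; (3) in case iii), check $\phi(\pm1) \gtrsim |\xi| > 0$ via Taylor expansion around $0$ using \eqref{4 derivative bound_old}--\eqref{4 derivative bound}; (4) conclude existence of exactly one zero in each of $(-1,s_0)$ and $(s_0,1)$ by the intermediate value theorem and strict monotonicity of $\phi$ on either side of $s_0$; (5) derive the quantitative bound $|s| = O(\delta_0^{1/2})$ from the second-order Taylor expansion about $s_0$ together with $|u(\xi)| \lesssim \delta_0|\xi|$ and $\phi'' \gtrsim |\xi|$.
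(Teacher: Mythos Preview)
Your approach is essentially the same as the paper's: strict convexity of $\phi$ plus the characterisation of $\theta_2(\xi)$ as global minimiser handles i) and ii) immediately, and in iii) a second-order Taylor expansion about $\theta_2(\xi)$ combined with the bound $|u(\xi)| \lesssim \delta_0|\xi|$ gives the $O(\delta_0^{1/2})$ localisation. Two minor points: the claim $\gamma'(\pm 1) = \vec{e}_1 + O(\delta_0)$ is false (for $\gamma$ close to the moment curve, $\gamma'(\pm 1) \approx (1,\pm 1,\tfrac12)$), but you do not actually use it --- your Taylor expansion of $\phi$ around $0$ is the correct argument and suffices; and the paper organises case iii) a bit more economically by using the \emph{single} Taylor expansion $\phi(s) = u(\xi) + \omega(\xi;s)(s-\theta_2(\xi))^2$ with $\omega \gtrsim |\xi|$ to show directly that $\phi(s) > 0$ whenever $|s - \theta_2(\xi)| \geq 20\delta_0^{1/2}$, which simultaneously yields existence (since $\phi(\pm 1) > 0$ follows) and the location bound, rather than treating these as separate steps.
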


\begin{proof}
Given $\xi \in \xisupp a$, define $\phi$ as in \eqref{theta2 lem 1}.\medskip 

\noindent i)  In this case, Lemma~\ref{theta2 lem} implies that
\begin{equation*}
 \phi(s) = \inn{\gamma'(s)}{\xi} \geq u (\xi) >0 \quad \text{ for all $s \in [-1,1]$,} 
\end{equation*}
and so \eqref{0404e3.29} has no solutions.\medskip

\noindent ii) This case also follows immediately from Lemma~\ref{theta2 lem}, since $s=\theta_2(\xi)$ is the only global minimum for $\phi$ on $[-1,1]$. \medskip 

\noindent iii) Recall, by \eqref{convex}, the function $\phi$ is strictly convex on $[-1,1]$, and therefore $\phi(s)=0$ has at most two solutions on that interval. \smallskip

On the other hand, by (the proof of) Lemma~\ref{theta2 lem} we know that $|\theta_2(\xi)| \leq 16 \delta_0$. Moreover, the mean value theorem implies
\begin{equation}\label{theta1 lem 1}
   |u(\xi)| \leq |\xi_1| + \sup_{|s| \leq 16 \delta_0}|\gamma^{(2)}(s)||\xi| |\theta_2(\xi)| \leq 8 \Big(1+ 2\sup_{|s| \leq 16 \delta_0}| \gamma^{(2)}(s) |\Big)  \delta_0 |\xi|  \leq 40 \delta_0 |\xi|,
\end{equation}
since $\gamma \in \mathfrak{G}_3(\delta_0)$. By Taylor expansion of $\phi$ around $\theta_2(\xi)$, one obtains
\begin{equation}\label{theta1 lem 2}
 \phi(s) = u(\xi) + \omega(\xi;s) \, (s-\theta_2(\xi))^2,
\end{equation}
where $\omega$ arises from the remainder term and satisfies $\omega(\xi;s) \geq \tfrac{1}{4} \, |\xi|$. Combining \eqref{theta1 lem 1} and \eqref{theta1 lem 2}, it follows that if $|s-\theta_2(\xi)|\geq 20\delta_0^{1/2}$, then $\phi(s)>0$. Recall that $\phi \circ \theta_2(\xi)=u(\xi)<0$. Consequently, the equation $\phi(s) =0$ has exactly two solutions on the interval 
\begin{equation*}
   [-16\delta_0, 16\delta_0] + [-20\delta_0^{1/2},  20\delta_0^{1/2}] \subseteq [-36 \delta_0^{1/2}, 36 \delta_0^{1/2}],
\end{equation*} 
as required. 
\end{proof}

Using Lemma~\ref{theta1 lem}, we construct a (unique) pair of smooth mappings
\begin{equation*}
    \theta_1^{\pm} \colon \{ \xi \in \xisupp a :  u(\xi) <0 \} \to [-1, 1]
\end{equation*}
with $\theta_1^-(\xi)\le \theta_1^+(\xi)$ which satisfies
\begin{equation*}
    \inn{\gamma' \circ \theta_1^{\pm}(\xi)}{\xi}= 0 \quad \text{ for all $\xi \in \xisupp a$ with  $u(\xi) <0$.}
\end{equation*}
Define the functions 
\begin{equation*}
        v^{\pm}(\xi):=\inn{\gamma'' \circ \theta_1^\pm(\xi)}{\xi}  \qquad \text{ for all $\xi \in \supp a$ with  $u(\xi) <0$.}
\end{equation*}

\begin{lemma}\label{root control lem}
Let $\xi \in \supp a$ with $u(\xi)<0$. Then the following hold:
\begin{equation*}
    \big|v^{\pm}\big(\xip\big)\big| \sim |\theta_1^{\pm}(\xi) - \theta_2(\xi)| \sim  |\theta_1^+(\xi)-\theta_1^-(\xi)|\sim \big|u\big(\xip\big)\big|^{1/2}.
\end{equation*}
\end{lemma}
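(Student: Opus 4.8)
The idea is to work entirely with the function $\phi(s) := \inn{\gamma'(s)}{\xi}$ from \eqref{theta2 lem 1}, exploiting the normalisations available from $\gamma \in \mathfrak{G}_3(\delta_0)$ together with the Taylor expansions \eqref{theta1 lem 2} for $\phi$ around $\theta_2(\xi)$ and the analogous expansion around the roots $\theta_1^\pm(\xi)$. Since all four quantities in the claim are homogeneous of degree $0$ in $\xi$ (indeed $\theta_1^\pm$, $\theta_2$ are homogeneous of degree $0$, while $v^\pm$ and $u$ are homogeneous of degree $1$, so dividing by $|\xi|$ makes the relevant ratios scale-invariant), it suffices to prove the estimate for $\xi$ in, say, the unit sphere, or equivalently to track all $|\xi|$-factors and cancel them at the end; I will just write $\xi \in \xisupp a$ and keep the $|\xi|$'s explicit.

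\textbf{Step 1: $|u(\xip)|^{1/2} \sim |\theta_1^+(\xi) - \theta_1^-(\xi)|$ and $\sim |\theta_1^\pm(\xi) - \theta_2(\xi)|$.} Evaluate \eqref{theta1 lem 2} at $s = \theta_1^\pm(\xi)$, where $\phi$ vanishes: this gives $0 = u(\xi) + \omega(\xi; \theta_1^\pm(\xi))\,(\theta_1^\pm(\xi) - \theta_2(\xi))^2$, and since $\omega \geq \tfrac14 |\xi|$ and $|\omega| \lesssim_\gamma |\xi|$ (the latter because $\omega$ is an averaged second derivative of $\phi$, hence $\lesssim \|\gamma^{(3)}\|_\infty |\xi| \lesssim |\xi|$), we obtain $|\theta_1^\pm(\xi) - \theta_2(\xi)|^2 \sim |u(\xi)|/|\xi|$, i.e. $|\theta_1^\pm(\xi) - \theta_2(\xi)| \sim |u(\xip)|^{1/2}$. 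Since $\theta_2(\xi)$ lies strictly between $\theta_1^-(\xi)$ and $\theta_1^+(\xi)$ (as $\phi$ is strictly convex with minimum value $u(\xi) < 0$ attained at $\theta_2(\xi)$), the triangle inequality gives $|\theta_1^+(\xi) - \theta_1^-(\xi)| \leq |\theta_1^+(\xi) - \theta_2(\xi)| + |\theta_2(\xi) - \theta_1^-(\xi)| \lesssim |u(\xip)|^{1/2}$; for the reverse bound, note $|\theta_1^+(\xi) - \theta_1^-(\xi)| \geq |\theta_1^\pm(\xi) - \theta_2(\xi)| \gtrsim |u(\xip)|^{1/2}$ using that $\theta_2$ is between the two roots.

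\textbf{Step 2: $|v^\pm(\xip)| \sim |\theta_1^\pm(\xi) - \theta_2(\xi)|$.} By definition $v^\pm(\xi) = \inn{\gamma''\circ\theta_1^\pm(\xi)}{\xi} = \phi'(\theta_1^\pm(\xi))$. Since $\phi'(\theta_2(\xi)) = 0$, the mean value theorem (or a first-order Taylor expansion of $\phi'$ around $\theta_2(\xi)$) gives $v^\pm(\xi) = \phi'(\theta_1^\pm(\xi)) - \phi'(\theta_2(\xi)) = \phi''(\zeta)\,(\theta_1^\pm(\xi) - \theta_2(\xi))$ for some $\zeta$ between $\theta_2(\xi)$ and $\theta_1^\pm(\xi)$, and $\phi''(\zeta) = \inn{\gamma^{(3)}(\zeta)}{\xi}$ satisfies $\tfrac12|\xi| \leq |\phi''(\zeta)| \lesssim |\xi|$ by \eqref{4 derivative bound} and the $C^4$ bound on $\gamma$. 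Hence $|v^\pm(\xi)| \sim |\xi|\,|\theta_1^\pm(\xi) - \theta_2(\xi)|$, i.e. $|v^\pm(\xip)| \sim |\theta_1^\pm(\xi) - \theta_2(\xi)|$, which combined with Step 1 closes the chain of equivalences.

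\textbf{Main obstacle.} There is no serious obstacle here: the whole argument is a bookkeeping exercise with Taylor's theorem, and the only point requiring a little care is keeping the homogeneity degrees straight so that the stated equivalences (which mix a degree-$0$ quantity $|\theta_1^\pm - \theta_2|$ with $|u(\xip)|^{1/2}$ and $|v^\pm(\xip)|$) come out dimensionally consistent — this is exactly why the claim is phrased in terms of $\xip$ rather than $\xi$. One should double-check that $\theta_1^\pm(\xi)$, $\theta_2(\xi)$ all lie in an $O(\delta_0^{1/2})$-neighbourhood of $0$ (guaranteed by Lemmas~\ref{theta2 lem} and \ref{theta1 lem}) so that $\phi''$ is controlled above and below on the relevant subinterval, which in turn requires $\delta_0$ small enough that \eqref{4 derivative bound} applies throughout $[-36\delta_0^{1/2}, 36\delta_0^{1/2}]$ — but this is already built into our standing assumption $0 < \delta_0 \ll 1$.
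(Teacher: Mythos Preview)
Your proof is correct and follows essentially the same approach as the paper: both arguments reduce to Taylor expansions of $\phi(s)=\inn{\gamma'(s)}{\xi}$ around $\theta_2(\xi)$ (yielding $|\theta_1^\pm-\theta_2|\sim|u(\xip)|^{1/2}$) together with the mean value theorem applied to $\phi'$ (yielding $|v^\pm(\xip)|\sim|\theta_1^\pm-\theta_2|$). The only minor difference is that for $|\theta_1^+-\theta_1^-|$ the paper instead Taylor-expands $\phi$ around $\theta_1^-(\xi)$ and divides out the nonzero factor $(\theta_1^+-\theta_1^-)$, whereas you use the cleaner observation that $\theta_2$ lies strictly between the two roots; both routes are equally valid and equally elementary.
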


\begin{proof} By Taylor expansion around $\theta_2(\xi)$, we obtain
\begin{align*}
    v^{\pm}(\xi)& = \omega_{1}^{\pm}(\xi) \, (\theta_1^{\pm}(\xi)-\theta_2(\xi)),\\
    0&=\inn{\gamma' \circ \theta_1^{\pm}(\xi)}{\xi} = u(\xi) + \omega_2(\xi) \, (\theta_1^{\pm}(\xi)-\theta_2(\xi))^2 
\end{align*}    
where $|\omega_1^{\pm}(\xi)|  \sim |\omega_{2}(\xi)|\sim |\xi|$ by \eqref{4 derivative bound}. Similarly, Taylor expansion around $\theta_1^{\pm}(\xi)$ yields 
\begin{equation*}
     0 = \inn{\gamma' \circ \theta_1^{+}(\xi)}{\xi} = v^-(\xi)\, (\theta_1^{+}(\xi) - \theta_1^{-}(\xi)) + \omega_3 (\xi) \, (\theta_1^{+}(\xi) - \theta_1^{-}(\xi))^2 
\end{equation*}    
where again the remainder satisfies $|\omega_3(\xi)| \sim |\xi|$. As $\theta_1^+(\xi) \neq \theta_1^-(\xi)$, we can combine the identities above to obtain the desired bounds.
\end{proof}




\section{Proof of Theorem~\ref{LS thm}: Local smoothing relative to \texorpdfstring{$\Gamma$}{}}\label{LS rel G sec}




For $k \geq 1$, consider the frequency localised symbols $a_k:=a \, \beta^k$, as introduced in \S\ref{sec:bandlimited}. We decompose each $a_k$ with respect to the size of $|\bu(\xi)|$.
In particular, write\footnote{Here $\beta$ function should be defined slightly differently compared with \eqref{beta def} and, in particular, here $\beta(r) := \eta(2^{-2}r) - \eta(r)$. Such minor changes are ignored in the notation.}
    \begin{equation}\label{J=3 akell def}
       a_k = \sum_{\ell = 0}^{\floor{k/3}}  a_{k,\ell}
       \qquad \textrm{where} \qquad
    a_{k,\ell}(\xi; t; s) := 
    \left\{\begin{array}{ll}
       \displaystyle a_k(\xi; t; s) \, \beta\big(2^{-k+ 2\ell}\bu(\xi)\big) \quad   & \textrm{if $0 \leq \ell < \floor{k/3}$}  \\[8pt]
       \displaystyle a_{k}(\xi; t; s)\, \eta\big(2^{-k + 2\floor{k/3}}\bu(\xi)\big) & \textrm{if $\ell = \floor{k/3}$}
    \end{array}\right. .
    \end{equation}
Here $\floor{k/3}$ denotes the greatest integer less than or equal to $k/3$.\medskip

To prove Theorem~\ref{LS thm}, we establish local smoothing estimates for each of the operators $m[a_{k,\ell}](D;\,\cdot\,)$. The main result is as follows.

\begin{proposition}\label{J=3 LS prop} Let $0 \leq \ell \leq  \floor{k/3}$.
For all $2 \leq p \leq  4$ and $\varepsilon > 0$,
\begin{equation*}
    \|m[a_{k,\ell}](D; \,\cdot\,) f\|_{L^p(\R^{3+1})} \lesssim_{\varepsilon} 2^{-k/p - \ell(1 - 3/p)} 2^{ \varepsilon k}\| f \|_{L^p(\R^3)}.
\end{equation*}
\end{proposition}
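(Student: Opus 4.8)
\textbf{Proof plan for Proposition~\ref{J=3 LS prop}.}

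The plan is to reduce the estimate for $m[a_{k,\ell}](D;\cdot)$ to an application of the reverse square function estimate (Theorem~\ref{Frenet reverse SF theorem}) on a lifted curve in $\R^{3+1}$, combined with the forward square function estimate (Proposition~\ref{f SF prop}) and the singular Nikodym bound (Proposition~\ref{Nikodym prop}). First I would analyse the microlocal support of $m[a_{k,\ell}]$: on the region $|\bu(\xi)| \sim 2^{k-2\ell}$, stationary phase in the $s$-integral \eqref{multiplier definition} localises the phase to a neighbourhood of $s = \theta_1^\pm(\xi)$ (or $s = \theta_2(\xi)$ when $\ell = \floor{k/3}$), where by Lemma~\ref{root control lem} the relevant second derivative $\inn{\gamma''\circ\theta_1^\pm(\xi)}{\xi}$ has size $\sim 2^{k}\cdot 2^{(k-2\ell)/2\cdot(1/k)}$... more precisely $|v^\pm| \sim |u(\xi/|\xi|)|^{1/2}$, so after the $2^k$ scaling the effective angular width in the $\be_1$ direction is $\sim 2^{-\ell}$ and in the $\be_2$ direction is $\sim 2^{-k/3}$-ish; tracking these scales carefully shows that, after lifting $(x,t)\mapsto$ spatio-temporal frequency, $m[a_{k,\ell}](D;\cdot)f$ is Fourier-supported in a $2^{-\ell}$-neighbourhood of the codimension-$2$ Frenet cone $\wt\Gamma_2$ of a lift $\bar\gamma\colon I\to\R^4$, decomposed into $(2,r)$-Frenet boxes with $r = 2^{-\ell}$ (modulo the separate $\be_3$ scale, which is treated by the ambient $2^k$ localisation).

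Next I would apply Theorem~\ref{Frenet reverse SF theorem} with this box decomposition $\mathcal P_2(2^{-\ell})$ to pass to an $L^4$ square function estimate at a loss $2^{\varepsilon k}$, then interpolate with a trivial $L^2$ estimate (orthogonality of the pieces) to cover the full range $2 \le p \le 4$; the expected gain of $2^{-k/p - \ell(1-3/p)}$ is exactly the geometric/interpolated exponent one reads off from the $L^2$ and $L^4$ endpoints, where the $L^2$ bound is $2^{-k/2 - \ell/2}$-type and the $L^4$ bound is $2^{-k/4}$-type (up to $\varepsilon$-losses). Squaring the individual plate pieces produces a weighted $L^2$ expression $\int \sum_\pi |m[a_{k,\ell},\pi](D;\cdot)f|^2 w$, and here I would use Proposition~\ref{f SF prop} (the forward square function, which peels off the $\be_1$-direction projections $\chi_\pi$) together with Proposition~\ref{Nikodym prop} applied to the weight $w$: each plate piece is, after freezing the oscillatory factor, an average over a plate $\mathcal T_{\br}(s)$ with $\br \approx (2^{-\ell}, 2^{-k/3}, 2^{-k})$ (or the rescaled anisotropic triple consistent with the hypotheses $r_3 \le r_2 \le r_1 \le r_2^{1/2}$, $r_2 \le r_1^{1/2}r_3^{1/2}$), so the $L^2\to L^2$ bound with logarithmic loss closes the estimate after summing over plates.

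\textbf{Expected main obstacle.} The hardest part will be the bookkeeping in the first step: producing the \emph{clean} reduction of the oscillatory multiplier $m[a_{k,\ell}]$ to a sum of plate-localised pieces whose spatio-temporal Fourier supports genuinely lie in $(2,r)$-Frenet boxes of a lifted curve. This requires (i) non-stationary phase to truncate the $s$-integral to intervals of the correct length around $\theta_1^\pm(\xi)$, controlling the Schwartz tails; (ii) verifying that the resulting phase, after the $2^k$-dilation and the change of variables straightening the Frenet frame, matches the model geometry of Definition~\ref{def Frenet box} up to acceptable errors coming from $\gamma - \gamma_\circ$ and from the curvature functions $\tilde\kappa_j$; and (iii) handling the intermediate range $0 < \ell < \floor{k/3}$ where both roots $\theta_1^+$ and $\theta_1^-$ contribute and must be separated (their separation $|\theta_1^+ - \theta_1^-| \sim 2^{-\ell+\cdots}$ by Lemma~\ref{root control lem}), as well as the boundary case $\ell = \floor{k/3}$ where the two roots coalesce at $\theta_2(\xi)$ and the decomposition degenerates. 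Once the geometry is correctly set up, the three cited ingredients combine essentially mechanically, but the derivative and support estimates needed to justify the set-up are where the real work lies.
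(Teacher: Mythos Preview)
Your high-level strategy matches the paper's exactly: localise in $s$ near the roots, verify spatio-temporal Fourier support in $(2,2^{-\ell})$-Frenet boxes for a lift $\bar\gamma\colon I\to\R^4$, apply the reverse square function (Theorem~\ref{Frenet reverse SF theorem}), then use the Nikodym bound (Proposition~\ref{Nikodym prop}) and the forward square function (Proposition~\ref{f SF prop}) by duality, and finally interpolate $L^2$/$L^4$. Your identification of the main obstacle---the $s$-truncation and support verification---is also on target.

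However, your numerology contains concrete errors that would prevent the argument from closing. First, your stated endpoints are wrong: at $p=2$ the bound is $2^{-(k-\ell)/2}$ (i.e.\ $-k/2+\ell/2$, not $-k/2-\ell/2$), and at $p=4$ it is $2^{-(k+\ell)/4}$, not $2^{-k/4}$; these are precisely the values of $-k/p-\ell(1-3/p)$. Second, and more structurally, you are missing the \emph{two-scale} decomposition that generates these exponents. The paper decomposes at the $\mu$-scale $2^{-\ell}$ (where the reverse square function in $\R^4$ is applied) \emph{and} at a finer $\nu$-scale $2^{-(k-\ell)/2}$; after the reverse square function one passes from $\mu$-pieces to $\nu$-pieces by Cauchy--Schwarz at a cost of $2^{(k-3\ell)/4}$, and it is the $\nu$-pieces whose kernels are pointwise bounded by $2^{-(k-\ell)/2}$ times $L^1$-normalised bumps adapted to plates $\mathcal T_{\br}(s_\nu)$. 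Third, the correct plate parameters for Proposition~\ref{Nikodym prop} are $\br=(2^{-(k-\ell)/2},\,2^{-(k-\ell)},\,2^{-k})$, not your $(2^{-\ell},2^{-k/3},2^{-k})$; your choice fails the hypothesis $r_2\le r_1^{1/2}r_3^{1/2}$ for every $\ell\ge 0$, whereas the paper's choice satisfies both conditions exactly on the range $0\le\ell\le\lfloor k/3\rfloor$. The $L^4$ arithmetic is then $2^{(k-3\ell)/4}\cdot 2^{-(k-\ell)/2}=2^{-(k+\ell)/4}$, as required.
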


Proposition~\ref{J=3 LS prop} provides an effective bound in the large $\ell$ regime (in particular, for $\floor{k/5} \leq \ell \leq \floor{k/3}$). This corresponds to those pieces of the multiplier which are supported close to the binormal cone $\Gamma$, and therefore lie in a neighbourhood of the most significant singularity. 

In addition to Proposition~\ref{J=3 LS prop}, we also use results from \cite{PS2007} to deal with the less singular pieces of the multiplier.

\begin{proposition}[\cite{PS2007}]\label{PS LS prop} Let $0 \leq \ell \leq  \floor{k/3}$.
For all $2 \leq p \leq  6$ and $\varepsilon > 0$,
\begin{equation*}
    \|m[a_{k,\ell}](D; \,\cdot\,) f\|_{L^p(\R^{3+1})} \lesssim_{\varepsilon} 2^{-\frac{k-\ell}{2}(\frac{1}{2} + \frac{1}{p}) + \varepsilon k}\| f \|_{L^p(\R^3)}.
\end{equation*}
\end{proposition}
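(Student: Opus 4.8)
The plan is to deduce this proposition from Theorem~\ref{PS LS J=2} via the anisotropic rescaling of \S\ref{curve sym sec}, following the strategy of \cite{PS2007}. Recall that on $\xisupp a_{k,\ell}$ one has $|\xi|\sim 2^k$, with $|\bu(\xi)|\sim 2^{k-2\ell}$ when $\ell<\floor{k/3}$ and $|\bu(\xi)|\lesssim 2^{k-2\ell}$ (so that $k-3\ell=O(1)$) when $\ell=\floor{k/3}$. First I would fix a large integer $\ell_0=\ell_0(\delta_0)$ and dispose of the range $\ell\le\ell_0$: there $|\bu(\xi)|\gtrsim_{\delta_0}|\xi|$, and since $s\mapsto\inn{\gamma'(s)}{\xi}$ is convex with minimum $\bu(\xi)$ (cf. the proof of Lemma~\ref{theta1 lem}) this forces $|\inn{\gamma'(s)}{\xi}|\gtrsim_{\delta_0}|\xi|$ for all $s$, so \eqref{J=2 condition} holds and Theorem~\ref{PS LS J=2} yields the bound $2^{-\frac{k}{2}(\frac{1}{2}+\frac{1}{p})+\varepsilon k}\|f\|_{L^p}$, which gives the claim since $2^{-\frac{k}{2}(\frac{1}{2}+\frac{1}{p})}\le 2^{-\frac{k-\ell}{2}(\frac{1}{2}+\frac{1}{p})}$ for $\ell\ge 0$. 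Henceforth assume $\ell>\ell_0$ and set $\lambda:=2^{-\ell}$.

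Next I would partition $I_0$ into $O(2^\ell)$ intervals $J$ of length $\lambda$ with a subordinate smooth partition of unity and write $a_{k,\ell}=\sum_J a_{k,\ell,J}$. On the part of $a_{k,\ell,J}$ where $\theta_2(\xi)$ lies at distance $\gtrsim\lambda$ from $J$ one has $|\partial_s\inn{\gamma(s)}{\xi}|=|\inn{\gamma''(s)}{\xi}|\gtrsim 2^{k}\lambda=2^{k-\ell}$ on the relevant $s$-support (by the proof of Lemma~\ref{theta2 lem}); since $k-\ell\ge 2k/3$, repeated integration by parts in $s$ bounds this part by $O_N(2^{-Nk})$, negligible after summation. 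So one may assume $a_{k,\ell,J}$ is supported where $\theta_2(\xi)\in CJ$ for a fixed dilate. For each $J$, with centre $\sigma_J$, I would apply the $(\sigma_J,\lambda)$-rescaling of Definition~\ref{rescaled curve def}. This combines a measure-preserving affine change of variables in $(x,t)$ with the linear substitution $\tilde\xi=[\gamma]_{\sigma_J,\lambda}^{\mathsf{T}}\xi$ (the Jacobian factors cancelling in the operator norm), and converts $m[a_{k,\ell,J}](D;\cdot)$ into a constant multiple of $m[\tilde a_J](D;\cdot)$ for the model curve $\gamma_{\sigma_J,\lambda}\in\mathfrak{G}_3(\delta_0)$ (here $\ell>\ell_0$ ensures $\|\gamma_{\sigma_J,\lambda}-\gamma_\circ\|_{C^4}\lesssim\lambda\le\delta_0$). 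Using the Taylor expansions of \S\ref{sec:slow decay cone} one checks $|\tilde\xi|\sim 2^{\tilde k}$ with $\tilde k:=k-3\ell$, and that the $\bu$-function of the rescaled curve satisfies $|\bu_{\gamma_{\sigma_J,\lambda}}(\tilde\xi)|=\lambda|\bu(\xi)|\sim 2^{\tilde k}$; consequently $\tilde a_J$ (after absorbing the harmless anisotropy, the ``rescaling fixes the symbol'' phenomenon) obeys the symbol bounds of Theorem~\ref{PS LS J=2} and, by the convexity argument once more, the hypothesis \eqref{J=2 condition}. (When $\tilde k=O(1)$ one simply uses the trivial $L^p\to L^p$ bound in place of Theorem~\ref{PS LS J=2}.)

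Applying Theorem~\ref{PS LS J=2} to each rescaled piece and undoing the rescaling would give
\[
    \|m[a_{k,\ell,J}](D;\cdot)f\|_{L^p(\R^{3+1})}\lesssim_{\varepsilon,p}\lambda\,2^{-\frac{\tilde k}{2}(\frac{1}{2}+\frac{1}{p})+\varepsilon\tilde k}\,\|P_Jf\|_{L^p(\R^3)},
\]
where $P_J$ is a Fourier projection (a product of cutoffs in the Frenet directions at $\sigma_J$) onto a fixed enlargement of $\xisupp a_{k,\ell,J}$. Since the $m[a_{k,\ell,J}](D;\cdot)f$ have $(x,t)$-Fourier supports in finitely overlapping plates of a neighbourhood of the codimension-one cone $\widetilde{\Gamma}_1\subseteq\widehat{\R}^4$, the Bourgain--Demeter $\ell^2$-decoupling \cite{BD2015} (valid for $2\le p\le 6$) gives, up to the negligible error, $\|m[a_{k,\ell}](D;\cdot)f\|_{L^p}\lesssim_\varepsilon 2^{\varepsilon\ell}\big(\sum_J\|m[a_{k,\ell,J}](D;\cdot)f\|_{L^p}^2\big)^{1/2}$. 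Combining this with the last display, Minkowski's inequality, and the elementary bound $\big(\sum_J\|P_Jf\|_{L^p}^2\big)^{1/2}\le\big\|\big(\sum_J|P_Jf|^2\big)^{1/2}\big\|_{L^p}\lesssim 2^{\ell(\frac{1}{2}-\frac{1}{p})}\|f\|_{L^p}$ (complex interpolation between $p=2$ and $p=\infty$), and using the identity $\lambda\,2^{-\frac{\tilde k}{2}(\frac{1}{2}+\frac{1}{p})}=2^{\ell(\frac{1}{p}-\frac{1}{2})}\,2^{-\frac{k-\ell}{2}(\frac{1}{2}+\frac{1}{p})}$ (valid since $\tilde k=k-3\ell$), the two factors $2^{\pm\ell(\frac{1}{2}-\frac{1}{p})}$ cancel and one arrives at the claimed bound $2^{-\frac{k-\ell}{2}(\frac{1}{2}+\frac{1}{p})+\varepsilon k}\|f\|_{L^p}$ after relabelling $\varepsilon$.

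The hard part will be organisational rather than any single estimate: setting up the $s$-localisation with its non-stationary-phase error, verifying that the anisotropic $(\sigma_J,\lambda)$-rescaling carries the symbol class of $a_{k,\ell,J}$ into that of Theorem~\ref{PS LS J=2}, and checking that the decoupling and interpolation losses over the $O(2^\ell)$ pieces are precisely cancelled by the arithmetic $\tilde k=k-3\ell$ when $p\ge 2$. These steps are carried out in \cite{PS2007}.
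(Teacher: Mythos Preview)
Your proposal is correct and follows precisely the route the paper indicates (and that \cite[\S5]{PS2007} carries out): decompose at scale $2^{-\ell}$, apply the anisotropic rescaling of \S\ref{curve sym sec} to reduce each piece to Theorem~\ref{PS LS J=2} at the effective scale $\tilde k=k-3\ell$, and recombine via the Bourgain--Demeter $\ell^2$-decoupling for the cone, with the arithmetic $\lambda\,2^{-\frac{\tilde k}{2}(\frac12+\frac1p)}=2^{\ell(\frac1p-\frac12)}2^{-\frac{k-\ell}{2}(\frac12+\frac1p)}$ cancelling the interpolation loss. Two small slips worth flagging: (i) you wrote $\partial_s\inn{\gamma(s)}{\xi}=\inn{\gamma''(s)}{\xi}$ in the non-stationary step, which should read $\inn{\gamma'(s)}{\xi}$; (ii) the decomposition is cleaner if carried out in $\xi$ via $\theta_2$ (as in \S\ref{J=3 Fourier loc subsec}) rather than in $s$, since this directly forces the rescaled frequency to lie at scale $2^{\tilde k}$ without an auxiliary localisation argument.
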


This proposition follows from Theorem~\ref{PS LS J=2} via the sharp Wolff inequality for the light cone \cite{BD2015} and a rescaling argument (c.f. \S\ref{overview subsec}). The details of the proof can be found in~\cite[\S5]{PS2007}.

\begin{proof}[Proof of Theorem~\ref{LS thm}, assuming Proposition~\ref{J=3 LS prop}] Applying the decomposition \eqref{J=3 akell def} and the triangle inequality, 
\begin{equation*}
    \|m[a_k](D; \,\cdot\,) f\|_{L^p(\R^{3+1})} \leq \sum_{\ell=0}^{\floor{k/5}} \|m[a_{k,\ell}](D; \,\cdot\,) f\|_{L^p(\R^{3+1})} + \sum_{\ell=\floor{k/5} + 1}^{\floor{k/3}} \|m[a_{k,\ell}](D; \,\cdot\,) f\|_{L^p(\R^{3+1})}. 
\end{equation*}
For $2 \leq p \leq 4$ we may bound the terms of the first sum using Proposition~\ref{PS LS prop} and the terms of the second using Proposition~\ref{J=3 LS prop}. If, in addition, we assume $p \geq 3$, then the geometric series resulting from the constants can be evaluated to give the desired bound.
\end{proof}




\section{Proof of Theorem~\ref{LS thm}: the main argument}\label{J=3 sec} 

By the observations of the previous section, the problem is reduced to establishing Proposition~\ref{J=3 LS prop}. In this section we provide the details of the proof, following the scheme sketched in~\S\ref{overview subsec}. 

\subsection{Localisation along the curve}\label{loc curv subsec}
We begin by further decomposing the symbols with respect to the distance of the $s$-variable to the roots $\theta_1^{\pm}$ and $\theta_2(\xi)$. Here it is convenient to introduce a `fine tuning' constant $\rho > 0$. This is a small (but absolute) constant which plays a minor technical r\^ole in the forthcoming arguments: taking $\rho := 10^{-6}$ more than suffices for our purposes.

Recall from Lemma~\ref{theta1 lem} that the two distinct roots $\theta_1^{\pm}(\xi)$ only occur when $u(\xi) < 0$. In view of this, let $\beta^{>0}$, $\beta^{<0} \in C_c^\infty(\R)$ be the unique functions with $\supp \beta^{>0} \subset (0,\infty)$ and $\supp \beta^{<0} \subset (-\infty, 0)$ such that $\beta = \beta^{>0} +\beta^{<0}$. This induces a corresponding decomposition $a_{k,\ell} = a_{k,\ell}^{>0} + a_{k, \ell}^{<0}$ for $0 \leq \ell < \floor{k/3}$, where $u(\xi)$ is positive (respectively, negative) on the support of $a_{k,\ell}^{>0}$ (respectively, $a_{k,\ell}^{<0}$). Given $\varepsilon>0$, define
\begin{equation*}
a_{k,\ell}^{(\varepsilon), \pm}(\xi;t;s):=a_{k,\ell}^{<0}(\xi;t;s) \, \eta \big(\rho^{-1}2^{(k-\ell)/2}2^{-k\varepsilon}|s- \theta_1^{\pm}(\xi)|\big) \quad \textrm{if $0 \leq \ell < \floor{k/3}_{\,\varepsilon}$} 
\end{equation*}
and
\begin{equation}\label{akell dec} 
    a_{k,\ell}^{(\varepsilon)}(\xi;t;s):=   \left\{\begin{array}{ll}
       \displaystyle \sum_\pm a_{k,\ell}^{(\varepsilon), \pm}(\xi;t;s)  \quad
       & \textrm{if $0 \leq \ell < \floor{k/3}_{\,\varepsilon}$}  \\[8pt]
       \displaystyle a_{k,\ell}(\xi;t;s) \, \eta \big(\rho 2^{\ell(1-\varepsilon)}|s-\theta_2(\xi)|\big) & \textrm{if $\floor{k/3}_{\,\varepsilon} \leq \ell \leq \floor{k/3}$}
    \end{array}\right. ,
\end{equation}
where $\floor{k/3}_{\,\varepsilon} := \floor{\big(\frac{1 - \varepsilon}{3}\big) \cdot k }$ is a number we think of as being slightly smaller than $\floor{k/3}$. Note that 
\begin{equation*}
 \min_\pm |s-\theta_1^\pm(\xi)| \lesssim \rho 2^{-(k-\ell)/2 + k \varepsilon} \qquad  \textrm{for all $(\xi;t;s) \in \supp a_{k,\ell}^{(\varepsilon)}$ \quad  if $0 \leq \ell < \floor{k/3}_\varepsilon$}.   
\end{equation*}

\begin{remark} 
The symbols $a_{k,\ell}^{(\varepsilon), +}$ and $a_{k,\ell}^{(\varepsilon), -}$ have disjoint supports if $0 \leq \ell < \floor{k/3}_{\varepsilon}$. Indeed, the decomposition ensures that $|u(\xi)| \sim 2^{k-2\ell}$ for all $\xi \in \xisupp a_{k,\ell}^{(\varepsilon)}$ and so Lemma~\ref{root control lem} implies
\begin{equation*}
 |\theta_1^-(\xi) - \theta_1^+(\xi)| \gtrsim 2^{-\ell} \gtrsim 2^{-(k-\ell)/2}2^{k\varepsilon}.
\end{equation*}
Here we use the hypothesis $\ell < \floor{k/3}_{\,\varepsilon}$. Provided $\rho$ is chosen to be sufficiently small, the above separation condition ensures that the disjointness of the supports of $a_{k,\ell}^{(\varepsilon), +}$ and $a_{k,\ell}^{(\varepsilon), -}$. Consequently,
\begin{equation*}
    \min_\pm|s-\theta_1^\pm(\xi)| \gtrsim 2^{-(k-\ell)/2 + k \varepsilon} \qquad \textrm{for all $(\xi;t;s) \in \supp (a_{k,\ell}^{<0} - a_{k,\ell}^{(\varepsilon)})$}
\end{equation*}
if $0 \leq \ell < \floor{k/3}_\varepsilon$.
\end{remark}

The main contribution to $m[a_{k,\ell}]$ comes from the symbols $a_{k,\ell}^{(\varepsilon)}$.

\begin{lemma}\label{J=3 s loc lem} Let $2 \leq p < \infty$ and $\varepsilon>0$.
For all $0 \leq \ell \leq \floor{k/3}$
    \begin{equation*}
        \|m[a_{k,\ell} - a_{k,\ell}^{(\varepsilon)}](D; \,\cdot\,) f \|_{L^p(\R^{3+1})}  \lesssim_{N,\varepsilon,p} 2^{-kN} \| f \|_{L^p(\R^3)} \qquad \textrm{for all $N \in \N$.}
    \end{equation*}
\end{lemma}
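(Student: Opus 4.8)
The statement asserts that the difference symbol $a_{k,\ell} - a_{k,\ell}^{(\varepsilon)}$ contributes only a rapidly-decaying operator, so the strategy is the standard one for such "non-stationary region" error terms: exploit that on $\supp(a_{k,\ell}-a_{k,\ell}^{(\varepsilon)})$ the $s$-variable is quantitatively separated from the critical points of the phase $s \mapsto \langle \gamma(s),\xi\rangle$, and then integrate by parts in $s$ many times to gain arbitrary negative powers of $2^k$. First I would recall from Lemma~\ref{theta2 lem} and Lemma~\ref{theta1 lem} that, after the $\bu$-localisation in \eqref{J=3 akell def}, the critical points of $\phi(s) = \langle \gamma'(s),\xi\rangle$ are exactly $\theta_1^\pm(\xi)$ (when $u(\xi)<0$) or $\theta_2(\xi)$ (the degenerate case), and that the relevant curvature-type quantity $\omega(\xi;s)$ in the Taylor expansions \eqref{theta1 lem 2} satisfies $\omega(\xi;s)\sim |\xi| \sim 2^k$ on the support. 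Using the explicit cutoffs appearing in \eqref{akell dec} together with the Remark following it, on the support of the difference one has the lower bound
\begin{equation*}
 \min_\pm |s - \theta_1^\pm(\xi)| \gtrsim \rho^{-1}2^{-(k-\ell)/2 + k\varepsilon} \quad (0 \le \ell < \floor{k/3}_\varepsilon), \qquad |s - \theta_2(\xi)| \gtrsim \rho^{-1} 2^{-\ell(1-\varepsilon)} \quad (\ell \ge \floor{k/3}_\varepsilon).
\end{equation*}

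Next I would estimate $\partial_s$ of the phase $t\langle \gamma(s),\xi\rangle$ from below. Since $\partial_s\big(t\langle\gamma(s),\xi\rangle\big) = t\,\phi(s)$ and, by the Taylor expansion \eqref{theta1 lem 2} around the nearest critical point, $|\phi(s)| = |u(\xi)| + \omega(\xi;s)\,\mathrm{dist}(s,\mathrm{crit})^2 \gtrsim 2^k \cdot \mathrm{dist}(s,\mathrm{crit})^2$ on the support of the difference (using $t \sim 1$ from $\supp\rho$, and that the $u(\xi)$ term has the same sign and thus only helps). Plugging in the two regimes of the lower bound on the distance gives, respectively, $|\partial_s(\text{phase})| \gtrsim 2^k \cdot 2^{-(k-\ell)+2k\varepsilon} = 2^{\ell + 2k\varepsilon}$ and $|\partial_s(\text{phase})| \gtrsim 2^k \cdot 2^{-2\ell(1-\varepsilon)} \gtrsim 2^{k/3 + c k\varepsilon}$ (using $\ell \le \floor{k/3}$ in the second). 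In either case the lower bound is $\gtrsim 2^{c\varepsilon k}$ — a positive power of $2^k$ — while the higher $s$-derivatives of the phase and all $s$-derivatives of the amplitude are controlled by the symbol bounds on $a$ and the smoothness of the $\theta_i$ (which are homogeneous of degree $0$ and smooth, with derivatives that cost at most bounded powers of the distance, hence at most polynomial-in-$k$ factors). Repeated integration by parts in $s$ — or, equivalently, applying a non-stationary phase lemma uniformly in $\xi$ — then shows that $m[a_{k,\ell} - a_{k,\ell}^{(\varepsilon)}](\xi;t)$ together with all its $\xi$- and $t$-derivatives is $O_N(2^{-kN'})$ for any $N$ (with $N'$ linear in $N$), uniformly in $\xi$ in the annulus $|\xi|\sim 2^k$.

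Finally, once the multiplier and its derivatives satisfy such pointwise bounds on the dyadic annulus, I would convert this into the claimed $L^p(\R^3)\to L^p(\R^{3+1})$ operator bound: writing $m[a_{k,\ell}-a_{k,\ell}^{(\varepsilon)}](D;t)$ as convolution with a kernel $K_t$ on $\R^3$ (depending smoothly on $t$, supported in $t\in[1/2,4]$), the derivative bounds give $\|K_t\|_{L^1(\R^3)} \lesssim_N 2^{-kN}$ uniformly in $t$ (e.g. via a crude Bernstein/stationary-phase estimate on the Fourier inversion of a symbol of size $2^{-kN'}$ and $\xi$-derivative size $2^{-kN'}2^{-k|\alpha|}$ supported on $|\xi|\sim 2^k$, which has rapidly-decaying kernel), whence $\|m[\cdots](D;t)f\|_{L^p(\R^3)} \lesssim_N 2^{-kN}\|f\|_{L^p(\R^3)}$ for every $t$, and integrating over $t\in[1,2]$ costs nothing. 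The only place requiring genuine care — the main obstacle — is bookkeeping the derivatives of the auxiliary functions $\theta_1^\pm$ and $\theta_2$ (and of $\bu$) in the integration-by-parts argument: one must check that $|\partial_\xi^\alpha \theta_i(\xi)| \lesssim_\alpha |\xi|^{-|\alpha|}$ and that near-critical-point derivatives of $\phi$ do not destroy the gain, so that each integration by parts in $s$ nets a factor $2^{-c\varepsilon k}$ up to at most $2^{Ck\varepsilon'}$ losses that can be absorbed by lowering $\varepsilon$. This is routine given the quantitative inverse function theorem applied to the (uniformly non-degenerate, by \eqref{4 derivative bound}) equations defining $\theta_1^\pm,\theta_2$, but it is the step that demands the most attention.
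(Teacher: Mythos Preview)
Your overall plan --- reduce to a uniform $L^\infty$ bound on the multiplier via non-stationary phase in $s$ --- is the same as the paper's. The case $\floor{k/3}_{\varepsilon}\le\ell\le\floor{k/3}$ is essentially correct, though your reasoning that ``$u(\xi)$ has the same sign and only helps'' is not the point: rather, on that support the quadratic term $\omega(\xi;s)(s-\theta_2(\xi))^2\gtrsim\rho^{-2}2^{k-2\ell(1-\varepsilon)}$ simply dominates $|u(\xi)|\lesssim 2^{k-2\ell}$ for small $\rho$, so one gets $|\langle\gamma'(s),\xi\rangle|\gtrsim 2^k|s-\theta_2(\xi)|^2$ by domination, not by sign.

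The genuine gap is in the range $0\le\ell<\floor{k/3}_{\varepsilon}$. There the relevant piece of the difference is $a_{k,\ell}^{<0}-a_{k,\ell}^{(\varepsilon)}$, on whose support $u(\xi)<0$ with $|u(\xi)|\sim 2^{k-2\ell}$. In the expansion $\phi(s)=u(\xi)+\omega(\xi;s)(s-\theta_2(\xi))^2$ the two terms therefore have \emph{opposite} signs and cancel exactly at $s=\theta_1^\pm(\xi)$; your ``same sign'' claim is false here, and your display $|\phi(s)|=|u(\xi)|+\omega\cdot\mathrm{dist}(s,\mathrm{crit})^2$ is not a valid consequence of \eqref{theta1 lem 2}. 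Moreover you are mixing an expansion centred at $\theta_2$ with a distance measured to $\theta_1^\pm$. The correct lower bound is \emph{linear} in the distance to $\theta_1^\pm$: the paper obtains
\[
|\langle\gamma'(s),\xi\rangle|\ \ge\ \min_\pm\frac{|u(\xi)|\,|s-\theta_1^\pm(\xi)|}{|\theta_2(\xi)-\theta_1^\pm(\xi)|}\ \gtrsim\ 2^{k-\ell}\min_\pm|s-\theta_1^\pm(\xi)|\ \gtrsim\ 2^{(k-\ell)/2+k\varepsilon}
\]
via a convexity argument (the secant function $s\mapsto(\phi(s)-\phi(t))/(s-t)$ is increasing because $\phi''>0$, applied with $t=\theta_1^\pm$). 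This stronger bound is what makes the non-stationary phase hypotheses close: with your weaker claimed bound $|\langle\gamma'(s),\xi\rangle|\gtrsim 2^{\ell+2k\varepsilon}$, the condition $|\langle\gamma''(s),\xi\rangle|\lesssim R^{-1}|\langle\gamma'(s),\xi\rangle|^2$ would force $R\lesssim 2^{2\ell-k+O(k\varepsilon)}<1$ for small $\ell$, so no gain is possible. You also omit the separate (easier) treatment of the $a_{k,\ell}^{>0}$ contribution, where $u(\xi)>0$ and $\phi$ has no zeros at all. Finally, the paper bypasses your final kernel-$L^1$ step entirely: a trivial $L^\infty$ bound of size $O(2^{Ck})$ plus interpolation reduces matters to $p=2$, where the uniform $L^\infty$ multiplier bound suffices directly.
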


\begin{proof}
It is clear that the multipliers satisfy a trivial $L^{\infty}$-estimate with operator norm $O(2^{Ck})$ for some absolute constant $C \geq 1$. Thus, by interpolation, it suffices to prove the rapid decay estimate for $p = 2$ only. This amounts to showing that, under the hypotheses of the lemma, 
\begin{equation}\label{J=3 curve loc 0}
     \|m[a_{k,\ell}-a_{k,\ell}^{(\varepsilon)}](\,\cdot\,; t)\|_{L^{\infty}(\widehat{\R}^3)} \lesssim_{N,\varepsilon} 2^{-kN} \qquad \textrm{for all $N \in \N$}
 \end{equation}
 uniformly in $1/2 \leq t \leq 4$.\medskip

\noindent \underline{Case: $\floor{k/3}_{\,\varepsilon} \leq \ell \leq \floor{k/3}$}. Here the localisation of the $a_{k,\ell}$ and $a_{k,\ell}^{(\varepsilon)}$ symbols ensures that
\begin{equation}\label{J=3 curve loc 1}
     |\bu(\xi)| \lesssim 2^{k-2\ell} \quad \text{ and } \quad |s-\theta_2(\xi)| \gtrsim \rho^{-1} 2^{-\ell(1 - \varepsilon)}  \quad \textrm{for all $(\xi;t;s) \in \supp (a_{k,\ell}-a_{k,\ell}^{(\varepsilon)})$,}
\end{equation}
where $u$ is the function introduced in \eqref{J=3 u function}.

Fix $\xi \in \xisupp (a_{k,\ell}-a_{k,\ell}^{(\varepsilon)})$ and consider the oscillatory integral $m[a_{k,\ell} - a_{k,\ell}^{(\varepsilon)}](\xi;t)$, which has phase $s \mapsto t \, \inn{\gamma(s)}{\xi}$. Taylor expansion around $\theta_2(\xi)$ yields 
\begin{align}\label{J=3 curve loc 3}
    \inn{\gamma'(s)}{\xi} &= \bu(\xi) + \omega_1(\xi;s) \, (s-\theta_2(\xi))^2  \\
\label{J=3 curve loc 4}
    \inn{\gamma''(s)}{\xi} &=\omega_2(\xi;s) \, (s-\theta_2(\xi))
\end{align}
where $\omega_i$ arise from the remainder terms and satisfy $|\omega_i(\xi;s)| \sim  2^k$. Provided $\rho$ is sufficiently small, \eqref{J=3 curve loc 1} implies that the $\omega_1(\xi;s) \, (s-\theta_2(\xi))^2$ term dominates the right-hand side of \eqref{J=3 curve loc 3} and therefore 
 \begin{equation}\label{J=3 curve loc 5}
     |\inn{\gamma'(s)}{\xi}| \gtrsim  2^{k} |s-\theta_2(\xi)|^2\qquad  \textrm{for all $(\xi;t;s) \in \supp (a_{k,\ell}-a_{k,\ell}^{(\varepsilon)})$.}
 \end{equation}
Furthermore, \eqref{J=3 curve loc 4}, \eqref{J=3 curve loc 5} and the localisation \eqref{J=3 curve loc 1} immediately imply
\begin{align*}
    |\inn{\gamma''(s)}{\xi}| &  \lesssim  2^{-k+3\ell(1-\varepsilon)}|\inn{\gamma'(s)}{\xi}|^2, \\
    |\inn{\gamma^{(j)}(s)}{\xi}| & \lesssim 2^{k} \lesssim_j 2^{-(k-3\ell(1-\varepsilon))(j-1)} |\inn{\gamma'(s)}{\xi}|^j
     \qquad \text{for all $j \geq 3$}
\end{align*}
and all $(\xi;t;s) \in \supp (a_{k,\ell}-a_{k,\ell}^{(\varepsilon)})$,
 where in the last inequality we have used $|s-\theta_2(\xi)|^{j-3} \lesssim 1$ for all $j \geq 3$.

On the other hand, by the definition of the symbols, \eqref{J=3 curve loc 5} and the localisation in \eqref{J=3 curve loc 1},
  \begin{equation*}
 |\partial_s^N (a_{k,\ell}-a_{k,\ell}^{(\varepsilon)})(\xi;s)| \lesssim_N 2^{\ell(1-\varepsilon) N} \lesssim 2^{-(k-3\ell)N - 3\varepsilon \ell N}|\inn{\gamma'(s)}{\xi}|^N \qquad \textrm{for all $N \in \N$}
 \end{equation*}
 and  all  $(\xi; t; s) \in \supp (a_{k,\ell} - a_{k,\ell}^{(\varepsilon)})$.
 Thus, by repeated integration-by-parts (via Lemma~\ref{non-stationary lem}, with $r=2^{k-3\ell+ 3\varepsilon \ell } \geq 1$ for $0 \leq \ell \leq k/3$), one concludes that
\begin{equation*}
    |m[a_{k,\ell}-a_{k,\ell}^{(\varepsilon)}](\xi;t)| \lesssim_N 2^{-(k-3\ell)N - 3\varepsilon \ell N} \qquad \textrm{for all $N \in \N$}
\end{equation*}
uniformly in $1/2 \leq t \leq 4$. Since $\floor{k/3}_{\,\varepsilon} \leq \ell \leq \floor{k/3} \leq k/3$, the desired bound follows.\medskip

\noindent \underline{Case: $0 \leq \ell < \floor{k/3}_{\,\varepsilon}$}. If $u(\xi) > 0$, then \eqref{convex} and \eqref{J=3 curve loc 3} imply 
\begin{equation*}
    |\inn{\gamma'(s)}{\xi}| \gtrsim |u(\xi)| + 2^k|s - \theta_2(\xi)|^2 \qquad \textrm{for all $(\xi;s) \in \supp a_{k,\ell}^{>0}$.}
\end{equation*}
Furthermore, the localisation of the symbol $a_{k,\ell}^{>0}$ guarantees that $u(\xi) \sim 2^{k-\ell}$ for all $\xi \in \supp a_{k,\ell}^{>0}$.  It is then a straightforward exercise to adapt the argument used in the previous case to show $
    \|m[a_{k,\ell}^{>0}](\,\cdot\,; t)\|_{\infty} \lesssim_{N,\varepsilon} 2^{-kN}$, splitting the analysis into the cases $|s-\theta_2(\xi)|\geq 2^{-\ell}$ and $|s-\theta_2(\xi)|\leq 2^{-\ell}$. Here we use the fact that $2^{-(k-3\ell)} \leq 2^{-\varepsilon k}$. 
    
Thus, the problem is reduced to proving 
\begin{equation*}
    \|m[a_{k,\ell}^{<0} -a_{k,\ell}^{(\varepsilon)}](\,\cdot\,; t)\|_{L^{\infty}(\widehat{\R}^3)} \lesssim_{N,\varepsilon} 2^{-kN}. 
\end{equation*}    
Here the localisation of the $a_{k,\ell}^{<0}$ and $a_{k,\ell}^{(\varepsilon)}$ symbols ensures that
\begin{equation}\label{J=3 curve loc 1a}
     |\bu(\xi)| \sim 2^{k-2\ell} \quad \text{ and } \quad \min_{\pm}|s-\theta_1^{\pm}(\xi)| \gtrsim  2^{-(k-\ell)/2 + k \varepsilon}  \quad \textrm{for all $(\xi;t;s) \in \supp (a_{k,\ell}^{<0} - a_{k,\ell}^{(\varepsilon)})$,}
\end{equation}
where $u$ is the function introduced in \eqref{J=3 u function}.

Fix $\xi \in \xisupp (a_{k,\ell}^{<0} - a_{k,\ell}^{(\varepsilon)})$ and consider the oscillatory integral $m[a_{k,\ell}^{<0} - a_{k,\ell}^{(\varepsilon)}](\xi;t)$, which has phase $s \mapsto t \, \inn{\gamma(s)}{\xi}$. If we define 
\begin{equation*}
    \phi \colon [-1,1] \to \R, \quad \phi \colon s \mapsto \inn{\gamma'(s)}{\xi},
\end{equation*}
then, by \eqref{convex}, this function is strictly convex. Thus, given $t \in [-1,1]$, the auxiliary function
\begin{equation*}
    q_t \colon [-1,1] \to \R, \quad q_t \colon s \mapsto \frac{\phi(s) - \phi(t)}{s - t} \quad \textrm{for $s \neq t$} \quad \textrm{and} \quad q_t \colon t \mapsto \phi'(t)
\end{equation*}
is increasing. Setting $t := \theta_1^-(\xi)$ and noting that $\phi\circ\theta_1^-(\xi)=0$, it follows that
\begin{equation*}
    \frac{\phi(s)}{s-\theta_1^-(\xi)}  \leq \frac{\phi\circ \theta_2(\xi)}{\theta_2(\xi)-\theta_1^-(\xi)} = \frac{u(\xi)}{\theta_2(\xi)-\theta_1^-(\xi)} < 0 \qquad \text{ for all } -1 \leq s \leq  \theta_2(\xi),
\end{equation*}
where we have used the fact that $u(\xi) < 0$ on the support of $a_{k,\ell}^{<0}$. If $s \in [\theta_2(\xi), 1]$, then we can carry out the same argument with respect to $t = \theta_1^+(\xi)$ to obtain a similar inequality. From this, we deduce the bound 
\begin{equation}\label{J=3 curve loc 2a}
    |\inn{\gamma'(s)}{\xi}| \geq \min_{\pm} \frac{|u(\xi)| |s-\theta_1^{\pm}(\xi)|} {|\theta_2(\xi)- \theta_1^{\pm}(\xi)|}  \qquad \text{ for all } -1 \leq s \leq 1.
\end{equation}
Recall from \eqref{J=3 curve loc 1a} that $|u(\xi)| \sim  2^{k-2\ell}$ and  therefore $|\theta_2(\xi)- \theta_1^{\pm}(\xi)| \sim 2^{-\ell}$ by Lemma~\ref{root control lem}. Substituting these bounds and the second bound in \eqref{J=3 curve loc 1a} into \eqref{J=3 curve loc 2a}, we conclude that 
\begin{equation}\label{J=3 curve loc 3a}
    |\inn{\gamma'(s)}{\xi}| \gtrsim 2^{k-\ell}\min_{\pm}|s-\theta_1^{\pm}(\xi)| \gtrsim 2^{(k-\ell)/2 + \varepsilon k} \qquad  \textrm{for all $(\xi;t;s) \in \supp (a_{k,\ell}^{<0} - a_{k,\ell}^{(\varepsilon)})$.}
\end{equation}
Furthermore, by the mean value theorem,
\begin{equation*}
    |\inn{\gamma''(s)}{\xi}| \lesssim \max_{\pm} |v^{\pm}(\xi)| + 2^k\min_{\pm}|s - \theta_1^{\pm}(\xi)| \lesssim 2^{k-\ell} + 2^{\ell}|\inn{\gamma'(s)}{\xi}| \lesssim 2^{-k\varepsilon}|\inn{\gamma'(s)}{\xi}|^2,
\end{equation*}
where we have used \eqref{J=3 curve loc 3a}, the condition $|v^{\pm}(\xi)| \sim 2^{k-\ell}$ for $\xi \in \supp a_{k,\ell}^{<0}$ from Lemma~\ref{root control lem} and $0 \leq \ell \leq k/3$ in the last inequality. For higher order derivatives, 
\begin{equation*}
    |\inn{\gamma^{(j)}(s)}{\xi}| \lesssim_j 2^k \lesssim_j 2^{-(j-1)k\varepsilon } |\inn{\gamma'(s)}{\xi}|^j \qquad \textrm{for all $j \geq 3$}
\end{equation*}
and all $(\xi;t;s) \in \supp (a_{k,\ell}^{<0} - a_{k,\ell}^{(\varepsilon)})$. On the other hand, by the definition of the symbols and \eqref{J=3 curve loc 3a} we have
  \begin{equation*}
 |\partial_s^N (a_{k,\ell}-a_{k,\ell}^{(\varepsilon)})(\xi;s)| \lesssim_N 2^{N(k-\ell)/2} 2^{-Nk \varepsilon} \lesssim 2^{-2Nk \varepsilon}|\inn{\gamma'(s)}{\xi}|^N \qquad \textrm{for all $N \in \N$}
 \end{equation*}
 and  all  $(\xi; t; s) \in \supp (a_{k,\ell}^{<0}-a_{k,\ell}^{(\varepsilon)})$.
 Thus, by repeated integration-by-parts (via Lemma~\ref{non-stationary lem}, with $r := 2^{k\varepsilon/2} \geq 1$), one obtains the desired bound \eqref{J=3 curve loc 0}.
 \end{proof}




\subsection{Fourier localisation}\label{J=3 Fourier loc subsec} We perform a radial decomposition of the symbols $a_{k,\ell}^{(\varepsilon)}$ with respect to the homogeneous functions $\theta_2$ and $\theta_1^{\pm}$.  Fix $\zeta \in C^{\infty}(\R)$ with $\supp \zeta \subseteq [-1,1]$ such that $\sum_{l \in \Z} \zeta(\,\cdot\, - l) \equiv 1$. 
For $k \in \N$ and $0 \leq \ell < \floor{k/3}_\varepsilon$, write
\begin{equation*}
    a_{k,\ell}^{(\varepsilon)} = \sum_\pm \sum_{\nu \in \Z}  a_{k,\ell}^{\nu, (\varepsilon), \pm}
\end{equation*}
where 
\begin{equation*}
    a_{k,\ell}^{\nu, (\varepsilon),\pm}(\xi;t;s) := 
       a_{k,\ell}^{(\varepsilon), \pm}(\xi;t;s) \,  \zeta\big(\rho^{-1}(2^{(k-\ell)/2}\theta_1^{\pm}(\xi) - \nu)\big) \qquad \textrm{if $0 \leq \ell < \floor{k/3}_{\varepsilon}$}.
\end{equation*}
Each of the two terms in $\sum_\pm$ can be treated analogously. In order to simplify the notation, we drop the symbol $\pm$ from $a_{k,\ell}^{\nu, (\varepsilon)}$ and $\theta_1^\pm$ and adopt the convention
\begin{equation}\label{dec nu}
    a_{k,\ell}^{(\varepsilon)} = \sum_{\nu \in \Z}  a_{k,\ell}^{\nu, (\varepsilon)}.
\end{equation}
The key properties of this decomposition are that
\begin{equation}\label{akellnu dec 1}
    |s-\theta_1(\xi)| \lesssim \rho 2^{-(k-\ell)/2 + k \varepsilon} \quad \text{ and } \quad  |\theta_1(\xi) - s_\nu| \lesssim \rho 2^{-(k-\ell)/2 } \quad  \text{for all $(\xi; t; s) \in \supp a_{k,\ell}^{\nu,(\varepsilon)} $},
\end{equation}
where $s_\nu := 2^{-(k-\ell)/2} \nu$ and $\theta_1 \in \{\theta_1^+(\xi), \theta_1^-(\xi)\}$. The decomposition \eqref{dec nu} is also extended to the range $ \floor{k/3}_\varepsilon \leq \ell \leq \floor{k/3}$, with 
\begin{equation}\label{akellnu dec 2}
    a_{k,\ell}^{\nu, (\varepsilon)}(\xi;t;s) :=a_{k,\ell}^{(\varepsilon)}(\xi;t;s) \, \zeta(2^{\ell}\theta_2(\xi) - \nu) \qquad  \textrm{if $\floor{k/3}_{\,\varepsilon} \leq \ell \leq \floor{k/3}$}.
\end{equation}

In the case $0 \leq \ell < \floor{k/3}_{\,\varepsilon}$ we also consider symbols formed by grouping the $a_{k,\ell}^{\nu, (\varepsilon)}$ into pieces at the larger scale $2^{-\ell}$. Given $0 \leq \ell < \floor{k/3}_{\,\varepsilon}$ we write $\Z = \bigcup_{\mu \in \Z} \mathfrak{N}_{\ell}(\mu)$, where the sets $\mathfrak{N}_{\ell}(\mu)$ are disjoint and satisfy
\begin{equation*}
    \mathfrak{N}_{\ell}(\mu)\subseteq \{\nu \in \Z: |\nu-2^{(k-3\ell)/2} \mu| \leq 2^{(k-3\ell)/2} \}.
\end{equation*}
For each $\mu \in \Z$, we then define
\begin{equation*}
    a_{k,\ell}^{*,\mu, (\varepsilon)} := \sum_{\nu \in \mathfrak{N}_{\ell}(\mu)} a_{k,\ell}^{\nu, (\varepsilon)}
\end{equation*}
and note that $|\theta_1^\pm(\xi)-s_{\mu}|\lesssim 2^{-\ell}$ on $\xisupp a_{k,\ell}^{*,\mu, (\varepsilon)}$, where $s_{\mu}:=2^{-\ell}\mu$. Of course, by the definition of the sets $\mathfrak{N}_{\ell}(\mu)$,
\begin{equation*}
    a_{k,\ell}^{(\varepsilon)}=\sum_{\mu \in \Z} a_{k,\ell}^{*,\mu, (\varepsilon)} = \sum_{\mu \in \Z} \sum_{\nu \in \mathfrak{N}_{\ell}(\mu)} a_{k,\ell}^{\nu, (\varepsilon)}.
\end{equation*}
It is notationally convenient to trivially extend these definitions by setting $\mathfrak{N}_{\ell}(\mu) := \{\mu\}$ for $\floor{k/3}_{\,\varepsilon} \leq \ell \leq \floor{k/3}$ and, in this case, defining $a_{k,\ell}^{*,\mu, (\varepsilon)} := a_{k,\ell}^{\mu, (\varepsilon)}$ accordingly.\medskip


Given $0 < r \leq 1$ and $s \in I$, recall the definition of the $(1,r)$-\textit{Frenet boxes} $\pi_{1}(s;\,r)$ introduced in Definition~\ref{def Frenet box}:
\begin{equation*}
    \pi_1(s;\,r):= \big\{ \xi \in \widehat{\R}^3: |\inn{\be_j(s)}{\xi}| \lesssim r^{3-j} \,\, \textrm{for $j=1,\,2$}, \quad 
    |\inn{\be_{3}(s)}{\xi}| \sim 1\big\}. 
\end{equation*}
It is also convenient to consider 2-parameter variants of the $(0,r)$-Frenet boxes. Given $0 < r_1, r_2$ and $s \in I$, define the set
\begin{equation*}
     \pi_0(s;\,r_1, r_2)\!:=\! \big\{\xi \in \widehat{\R}^3: |\inn{\be_1(s)}{\xi}| \lesssim r_1, \,\,  |\inn{\be_2(s)}{\xi}| \sim 1, \,\,
    |\inn{\be_3(s)}{\xi}| \lesssim r_2\big\}.
\end{equation*}
The geometric significance of these sets is made apparent in \S\ref{f freq loc subsec} (and, in particular, Lemma~\ref{freq resc lem}) below. 

 The multipliers $a_{k,\ell}^{*, \mu, (\varepsilon)}$ and $a_{k,\ell}^{\nu, (\varepsilon)}$ satisfy the following support properties.
 
\begin{lemma}\label{J=3 supp lem} For all $0 \leq \ell \leq \floor{k/3}$, $\varepsilon > 0$ and $\mu, \nu \in \Z$, 
\begin{enumerate}[a)]
    \item If $\nu \in \mathfrak{N}_{\ell}(\mu)$, then $\xisupp a_{k,\ell}^{\nu, (\varepsilon)} \subseteq 2^k \cdot \pi_1(s_{\mu}; 2^{-\ell})$, where $s_{\mu} := 2^{-\ell} \mu$;
\item If $\ell < \floor{k/3}_{\,\varepsilon}$, then $\xisupp a_{k,\ell}^{\nu, (\varepsilon)} \subseteq 2^{k-\ell} \cdot \pi_0(s_{\nu}; 2^{-(k- \ell)/2}, 2^{\ell})$, where $s_{\nu} := 2^{-(k-\ell)/2 } \nu$. 
\end{enumerate}
\end{lemma}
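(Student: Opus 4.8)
The plan is to verify the two support inclusions directly from the definitions of the symbols, the Frenet boxes, and the bounds \eqref{Frenet bound alt 1} relating Frenet frames at different parameters.

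\medskip

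First I would recall what the decomposition forces on $\xisupp a_{k,\ell}^{\nu,(\varepsilon)}$. From the dyadic piece $a_k = a\beta^k$ we have $|\xi| \sim 2^k$. From \eqref{J=3 akell def} we have $|\bu(\xi)| \sim 2^{k-2\ell}$ (or $\lesssim 2^{k-2\ell}$ when $\ell = \floor{k/3}$). From the $\theta_1$- or $\theta_2$-localisations in \eqref{akell dec} and \eqref{akellnu dec 1}, \eqref{akellnu dec 2}, the $s$-parameter on the support is within $O(2^{-(k-\ell)/2+k\varepsilon})$ of $\theta_1(\xi)$ (when $\ell < \floor{k/3}_\varepsilon$), and $\theta_1(\xi)$ itself is within $O(2^{-(k-\ell)/2})$ of $s_\nu$; likewise $\theta_2(\xi)$ is within $O(2^{-\ell})$ of $s_\nu$ in the other regime. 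The crucial point is to translate the defining relations $\inn{\gamma'\circ\theta_1(\xi)}{\xi} = 0$ and $\inn{\gamma''\circ\theta_2(\xi)}{\xi} = 0$ (together with the bound on $\bu(\xi) = \inn{\gamma'\circ\theta_2(\xi)}{\xi}$) into statements about $\inn{\be_j(s_\mu)}{\xi}$ and $\inn{\be_j(s_\nu)}{\xi}$.

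\medskip

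For part a), set $s_\mu := 2^{-\ell}\mu$ and recall that on $\xisupp a_{k,\ell}^{*,\mu,(\varepsilon)}$ one has $|\theta_1^\pm(\xi) - s_\mu| \lesssim 2^{-\ell}$ (and, in the regime $\floor{k/3}_\varepsilon \le \ell \le \floor{k/3}$, $|\theta_2(\xi) - s_\mu| \lesssim 2^{-\ell(1-\varepsilon)} \lesssim 2^{-\ell+k\varepsilon}$, which is the relevant scale after renormalising by $2^{-k}$). I would then expand $\inn{\be_3(s_\mu)}{\xi}$, $\inn{\be_2(s_\mu)}{\xi}$, $\inn{\be_1(s_\mu)}{\xi}$ in terms of their values at a root of $\phi$ or $\phi'$ using Taylor's theorem and the Frenet formul\ae. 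Concretely: $\inn{\be_1(s_\mu)}{\xi}$ is, up to the normalising factor $|\gamma'\circ\theta_2(\xi)|^{-1}\sim 1$, close to $\inn{\gamma'(s_\mu)}{\xi}$, which by Taylor expansion around $\theta_2(\xi)$ and \eqref{4 derivative bound} equals $\bu(\xi) + O(2^k|s_\mu-\theta_2(\xi)|^2) = O(2^{k-2\ell})$; plus a correction from the angle between $\be_1(s_\mu)$ and $\gamma'(s_\mu)$, but these differ only in the $\be_2,\be_3$ directions with the discrepancy controlled by \eqref{Frenet bound alt 1}. One has to be a little careful: $\be_1(s_\mu) = \gamma'(s_\mu)/|\gamma'(s_\mu)|$ exactly, so that term is clean; $\inn{\be_2(s_\mu)}{\xi}$ is comparable to $\inn{\gamma''(s_\mu)}{\xi}$ modulo a $\be_1$-component of size $O(2^{k-2\ell})\cdot O(|s_\mu|)$, and $\inn{\gamma''(s_\mu)}{\xi}$ is $O(2^k|s_\mu-\theta_2(\xi)|) = O(2^{k-\ell})$ by \eqref{J=3 curve loc 4}; finally $\inn{\be_3(s_\mu)}{\xi} \sim 2^k$ because of \eqref{4 derivative bound} and the fact that $\be_3$ is essentially $\gamma^{(3)}$ modulo lower-order directions. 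Dividing through by $2^k$ gives exactly $|\inn{\be_1(s_\mu)}{\xi}| \lesssim 2^{-2\ell}\cdot 2^k = 2^{k}\cdot 2^{-2\ell}$, $|\inn{\be_2(s_\mu)}{\xi}| \lesssim 2^k\cdot 2^{-\ell}$, $|\inn{\be_3(s_\mu)}{\xi}| \sim 2^k$, i.e. $\xi \in 2^k\cdot\pi_1(s_\mu;2^{-\ell})$ with the box defined with implicit constants as in the displayed formula for $\pi_1(s;r)$. For part b), set $s_\nu := 2^{-(k-\ell)/2}\nu$; now on the support $|s-\theta_1(\xi)| \lesssim 2^{-(k-\ell)/2+k\varepsilon}$ and $|\theta_1(\xi)-s_\nu|\lesssim 2^{-(k-\ell)/2}$, so $|s_\nu - \theta_1(\xi)| \lesssim 2^{-(k-\ell)/2}$. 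Since $\inn{\gamma'\circ\theta_1(\xi)}{\xi}=0$, Taylor expansion around $\theta_1(\xi)$ gives $\inn{\gamma'(s_\nu)}{\xi} = \inn{\gamma''\circ\theta_1(\xi)}{\xi}(s_\nu-\theta_1(\xi)) + O(2^k|s_\nu-\theta_1(\xi)|^2)$; by Lemma~\ref{root control lem}, $|v^\pm(\xi)| = |\inn{\gamma''\circ\theta_1(\xi)}{\xi}| \sim 2^{k-\ell}$, so $\inn{\gamma'(s_\nu)}{\xi} = O(2^{k-\ell}\cdot 2^{-(k-\ell)/2}) = O(2^{(k-\ell)/2})$. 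This gives $|\inn{\be_1(s_\nu)}{\xi}| \lesssim 2^{(k-\ell)/2} = 2^{k-\ell}\cdot 2^{-(k-\ell)/2}$. Next $\inn{\gamma''(s_\nu)}{\xi}$: by the mean value theorem $\inn{\gamma''(s_\nu)}{\xi} = v^\pm(\xi) + O(2^k|s_\nu - \theta_1(\xi)|) = O(2^{k-\ell})$, which after normalising by the natural scale $2^{k-\ell}$ gives $|\inn{\be_2(s_\nu)}{\xi}| \sim 2^{k-\ell}$ (the lower bound coming again from Lemma~\ref{root control lem}, $|v^\pm|\sim 2^{k-\ell}$, modulo the $\be_1$-correction which is strictly smaller). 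Finally $\inn{\be_3(s_\nu)}{\xi} \sim 2^k = 2^{k-\ell}\cdot 2^\ell$ by \eqref{4 derivative bound}. Dividing by $2^{k-\ell}$ places $\xi$ in $2^{k-\ell}\cdot\pi_0(s_\nu; 2^{-(k-\ell)/2}, 2^\ell)$, as claimed.

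\medskip

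The main obstacle I anticipate is bookkeeping the change-of-frame errors cleanly: one must consistently replace $\gamma^{(j)}(s)$ by $\be_j(s)$ (up to a bounded, bounded-below scalar in the $\be_1$ case; up to lower-triangular mixing controlled by $|s|^{|i-j|}$ via \eqref{Frenet bound alt 1} in general) and check that each such correction term is dominated by the bound claimed for that coordinate — for instance that the $\be_2$-component of $\gamma'(s_\nu)$, which could a priori be of size $2^{k}\cdot|s_\nu|$, is in fact smaller than $2^{(k-\ell)/2}$ because $|s_\nu| = O(\delta_0^{1/2})$ times the relevant factors, i.e. one needs to exploit that $|\theta_1^\pm(\xi)|, |\theta_2(\xi)| = O(\delta_0^{1/2})$ from Lemmas~\ref{theta2 lem} and \ref{theta1 lem}, so these off-diagonal errors come with a power of the small parameter $\delta_0$ and are absorbed into the implicit constants in the definition of the Frenet boxes. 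I would organise the computation as a short lemma extracting, for each $j$, the expansion of $\inn{\be_j(s_\ast)}{\xi}$ around the relevant root, then read off the three inequalities in each case.
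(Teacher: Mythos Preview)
Your proposal is correct and follows essentially the same route as the paper: bound $\inn{\gamma^{(i)}(s_\ast)}{\xi}$ by Taylor expanding around $\theta_2(\xi)$ (for part a) or $\theta_1(\xi)$ (for part b), using the root control Lemma~\ref{root control lem} to place $s_\mu$ or $s_\nu$ within the correct distance of the relevant root, and then convert from the $\gamma^{(i)}$-frame to the Frenet frame. The paper streamlines your ``main obstacle'' by noting that the change-of-basis matrix from $(\be_i(s))$ to $(\gamma^{(i)}(s))$ is lower triangular (by Gram--Schmidt) and an $O(\delta_0)$ perturbation of the identity; since the bounds $2^{k-(3-i)\ell}$ are increasing in $i$, the triangular structure immediately gives the desired $\be_i$-bounds without tracking individual off-diagonal terms---so your worry about ``the $\be_2$-component of $\gamma'(s_\nu)$'' does not actually arise. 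One small correction: in the regime $\floor{k/3}_\varepsilon \le \ell \le \floor{k/3}$, the bound $|\theta_2(\xi)-s_\mu|\lesssim 2^{-\ell}$ comes directly from the $\zeta$-localisation in \eqref{akellnu dec 2} (where $s_\mu=s_\nu$), not from the $\eta$ $s$-localisation, so there is no $\varepsilon$-loss there.
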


As an immediate consequence of part a), we see that $\xisupp a_{k,\ell}^{*, \mu, (\varepsilon)} \subseteq 2^k \cdot \pi_1(s_{\mu}; 2^{-\ell})$.

\begin{proof}[Proof of Lemma~\ref{J=3 supp lem}]

\noindent a) For $\xi \in \xisupp a_{k,\ell}^{\nu, (\varepsilon)}$ observe that the localisation in \eqref{J=3 akell def} implies
\begin{equation*}
    |\inn{\gamma^{(i)}\circ \theta_2(\xi)}{\xi}| \lesssim  2^{k-(3-i)\ell} \qquad \textrm{for $i = 1$, $2$,} \qquad  |\inn{\gamma^{(3)}\circ \theta_2(\xi)}{\xi}| \sim 2^k.
\end{equation*}
If $0 \leq \ell < \floor{k/3}_{\,\varepsilon}$, then $|s_{\nu} - \theta_1(\xi)| \lesssim 2^{-(k-\ell)/2}$ and so
\begin{equation*}
 |s_{\mu} - \theta_2(\xi)| \leq |s_{\mu} - s_{\nu}| + |s_{\nu} - \theta_1(\xi)| + |\theta_1(\xi) - \theta_2(\xi)| \lesssim 2^{-(k-\ell)/2} + 2^{-\ell} \lesssim 2^{-\ell}
\end{equation*} 
by Lemma~\ref{root control lem}. Note that the inequality $|s_{\mu} - \theta_2(\xi)| \lesssim 2^{-\ell}$ also extends to the case $\floor{k/3}_{\,\varepsilon} \leq \ell \leq \floor{k/3}$ in view of the definition of the symbol from \eqref{akellnu dec 2}. Taylor expansion around $\theta_2(\xi)$ therefore yields 
\begin{equation*}
    |\inn{\gamma^{(i)}(s_{\mu})}{\xi}| \lesssim  2^{k-(3-i)\ell} \qquad \textrm{for $i = 1$, $2$,} \qquad  |\inn{\gamma^{(3)}(s_{\mu})}{\xi}| \sim 2^k.
\end{equation*}

Since the Frenet vectors $\be_i(s_{\mu})$ are obtained from the $\gamma^{(i)}(s_{\mu})$ via the Gram--Schmidt process, the matrix corresponding to change of basis from $\big(\be_i(s_{\mu})\big)_{i=1}^3$ to $\big(\gamma^{(i)}(s_{\mu})\big)_{i=1}^3$ is lower triangular. Furthermore, the initial localisation implies that this matrix is an $O(\delta_0)$ perturbation of the identity. Consequently,
\begin{equation*}
   |\inn{\be_i(s_{\mu})}{\xi}| \lesssim 2^{k - (3 - i)\ell} \qquad \textrm{for $1 \leq i \leq 3$}.
   \end{equation*}
  Provided the parameter $\delta_0 > 0$ is sufficiently small, the argument can easily be adapted to prove the remaining lower bound $|\inn{\be_3(s_\mu)}{\xi}|\gtrsim 1$.\medskip

\noindent 
b) Let $0 \leq \ell < \floor{k/3}_{\,\varepsilon}$. For $\xi \in \xisupp a_{k,\ell}^{\nu, (\varepsilon)}$ observe that the localisation in \eqref{J=3 akell def} and Lemma~\ref{root control lem} imply
\begin{equation*}
    |\inn{\gamma'\circ \theta_1(\xi)}{\xi}| = 0, \qquad |\inn{\gamma''\circ \theta_1(\xi)}{\xi}| \sim 2^{k-\ell},  \qquad  |\inn{\gamma^{(3)}\circ \theta_1(\xi)}{\xi}| \sim 2^k.
\end{equation*}
It then follows from Taylor expansion around $\theta_1(\xi)$ that
\begin{equation*}
    |\inn{\gamma'(s_{\nu})}{\xi}| \lesssim  2^{(k-\ell)/2}, \quad |\inn{\gamma''(s_{\nu})}{\xi}| \sim  2^{k-\ell} \quad \textrm{and} \quad |\inn{\gamma^{(3)}(s_{\nu})}{\xi}| \sim 2^k,
\end{equation*}
provided $\rho$ is chosen sufficiently small. The $\gamma^{(j)}(s_{\nu})$ in the above estimates can then be replaced with the Frenet vectors $\be_j(s_{\nu})$ by a similar argument to that used in part a). 
\end{proof}




\subsection{Spatio-temporal Fourier localisation}\label{spatio temp subsec}
The symbols are further localised with respect to the Fourier transform of the $t$-variable. In particular, let
\begin{equation*}
    q(\xi) := \inn{\gamma\circ\theta_2(\xi)}{\xi} \qquad \textrm{and} \qquad \chi_{k,\ell}^{(\varepsilon)}(\xi, \tau) := \eta\big(2^{-(k-3\ell)-4\varepsilon k} (\tau + q(\xi))\big)
\end{equation*}
and define the multiplier $m_{k,\ell}^{\nu, (\varepsilon)}$ by
\begin{equation*}
    \mathcal{F}_t \big[m_{k,\ell}^{\nu, (\varepsilon)}(\xi;\,\cdot\,)\big](\tau):=    \chi_{k,\ell}^{(\varepsilon)}(\xi, \tau) \, \mathcal{F}_t\big[ m[a_{k,\ell}^{\nu, (\varepsilon)}](\xi; \,\cdot\,) \big](\tau).
\end{equation*}
Here $\mathcal{F}_t$ denotes the Fourier transform acting in the $t$ variable. Define $m_{k,\ell}^{*,\mu,(\varepsilon)}$ and $m_{k,\ell}^{(\varepsilon)}$ accordingly by setting
\begin{equation*}
m_{k,\ell}^{*,\mu,(\varepsilon)} := \sum_{\nu \in \mathfrak{N}_{\ell}(\mu)} m_{k,\ell}^{\nu, (\varepsilon)} \qquad \textrm{and} \qquad  m_{k,\ell}^{(\varepsilon)} := \sum_{\mu \in \Z} m_{k,\ell}^{*,\mu, (\varepsilon)}.
\end{equation*}
The main contribution to  $m[a_{k,\ell}^{\nu, (\varepsilon)}]$ comes from the multipliers $m_{k,\ell}^{\nu, (\varepsilon)}$.

\begin{lemma}\label{J=3 spatio temp loc lem}
Let $1 \leq p \leq \infty$ and $\varepsilon>0$.
For all $0 \leq \ell \leq \floor{k/3}$,
    \begin{equation*}
        \big\|\big(m[a_{k,\ell}^{\nu, (\varepsilon)}] - m_{k,\ell}^{\nu, (\varepsilon)}\big)(D;  \,\cdot\,) f \big\|_{L^p(\R^{3+1})}  \lesssim_{N,\varepsilon} 2^{-kN} \| f \|_{L^p(\R^3)} \qquad \textrm{for all $N \in \N$.}
    \end{equation*}
\end{lemma}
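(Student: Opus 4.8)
The plan is to follow the template of Lemma~\ref{J=3 s loc lem}, but to perform the oscillatory-integral surgery in the temporal frequency variable $\tau$ rather than in $s$. First observe that $S := m[a_{k,\ell}^{\nu, (\varepsilon)}](D;\,\cdot\,) - m_{k,\ell}^{\nu, (\varepsilon)}(D;\,\cdot\,)$ is a convolution operator in the spatial variable, with some kernel $K(x,t)$, so that
\begin{equation*}
    \|S\|_{L^p(\R^3) \to L^p(\R^{3+1})} \lesssim \sup_{t \in \R} (1+|t|)^2 \|K(\,\cdot\,,t)\|_{L^1(\R^3)} \qquad \textrm{uniformly in $1 \leq p \leq \infty$,}
\end{equation*}
which disposes of all exponents at once; the weight $(1+|t|)^2$ is harmless because $K$ will in fact decay rapidly in $t$. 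The $\xi$-side symbol of $S$ is
\begin{equation*}
    g(\xi,t) := m[a_{k,\ell}^{\nu, (\varepsilon)}](\xi;t) - m_{k,\ell}^{\nu, (\varepsilon)}(\xi;t) = \mathcal{F}_\tau^{-1}\Big[\big(1 - \chi_{k,\ell}^{(\varepsilon)}(\xi,\,\cdot\,)\big)\, \mathcal{F}_t\big[m[a_{k,\ell}^{\nu, (\varepsilon)}](\xi;\,\cdot\,)\big]\Big](t),
\end{equation*}
which is supported in $|\xi| \sim 2^k$, a set of measure $O(2^{3k})$. Since $\theta_1^{\pm}$, $\theta_2$, $q$ and the various frequency cutoffs defining $a_{k,\ell}^{\nu, (\varepsilon)}$ and $\chi_{k,\ell}^{(\varepsilon)}$ have $\xi$-derivatives of size $O(2^{Ck})$, standard (if lengthy) symbol calculus — combining Fourier inversion in $\xi$ with the support information — reduces matters to the uniform-in-$\xi$ bound
\begin{equation*}
    \big\|\big(1 - \chi_{k,\ell}^{(\varepsilon)}(\xi,\,\cdot\,)\big)\, \mathcal{F}_t\big[m[\tilde a](\xi;\,\cdot\,)\big]\big\|_{W^{N,1}(\widehat{\R}_\tau)} \lesssim_{N,M} 2^{-kM} \qquad \textrm{for all $N$, $M \in \N$,}
\end{equation*}
where $\tilde a$ ranges over the finitely many amplitudes obtained from $a_{k,\ell}^{\nu, (\varepsilon)}$ by multiplication against bounded powers of $t$ (these arise from the differentiations in $\xi$ and $\tau$ and all share the $(\xi;s)$-support of $a_{k,\ell}^{\nu, (\varepsilon)}$). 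The powers of $2^{Ck}$ accumulated along the way are absorbed by taking $M$ large.

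This bound is the heart of the matter, and it comes down to non-stationary phase in $t$ together with one geometric estimate. Writing $\mathcal{F}_t[m[\tilde a](\xi;\,\cdot\,)](\tau) = \int_\R\!\int_\R e^{-it(\tau + \inn{\gamma(s)}{\xi})}\, \tilde a(\xi;t;s)\, \chi(s)\, \rho(t)\, \ud s\, \ud t$, the phase in the $t$-integral is $t \mapsto -t(\tau + \inn{\gamma(s)}{\xi})$. On the support of $1 - \chi_{k,\ell}^{(\varepsilon)}(\xi,\,\cdot\,)$ one has $|\tau + q(\xi)| > 2^{k - 3\ell + 4\varepsilon k}$, so the decisive claim is that
\begin{equation*}
    |\inn{\gamma(s)}{\xi} - q(\xi)| \ll 2^{k - 3\ell + 4\varepsilon k} \qquad \textrm{for all $(\xi;t;s) \in \supp a_{k,\ell}^{\nu, (\varepsilon)}$.}
\end{equation*}
Granting this, $|\partial_t[-t(\tau + \inn{\gamma(s)}{\xi})]| = |\tau + \inn{\gamma(s)}{\xi}| \gtrsim |\tau + q(\xi)| \geq 2^{4\varepsilon k} \geq 1$, uniformly over $s \in \supp a_{k,\ell}^{\nu, (\varepsilon)}$ and $t \in \supp \rho$. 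Since $\tilde a(\xi;\,\cdot\,;s)\, \rho(\,\cdot\,)$ has $t$-derivatives of all orders bounded by $O(1)$ — the $\xi$- and $s$-cutoffs building $a_{k,\ell}^{\nu, (\varepsilon)}$ carry no $t$-dependence, $a$ obeys the symbol hypothesis of Theorem~\ref{LS thm}, and the auxiliary powers of $t$ are innocuous on $\supp \rho$ — repeated integration by parts in $t$ (via Lemma~\ref{non-stationary lem}, with parameter $r := 2^{4\varepsilon k}$) gives $|\mathcal{F}_t[m[\tilde a](\xi;\,\cdot\,)](\tau)| \lesssim_{N} |\tau + q(\xi)|^{-N}$ for all $N$; the same reasoning applied after differentiating in $\tau$ (which only replaces $\tilde a$ by $(-it)^i\tilde a$ and produces support factors for $\chi_{k,\ell}^{(\varepsilon)}$) controls the higher Sobolev norms. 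Integrating over $\{|\tau + q(\xi)| > 2^{4\varepsilon k}\}$ then yields the required $O_{N,M}(2^{-kM})$, because $k - 3\ell \geq 0$ for $0 \leq \ell \leq \floor{k/3}$ forces the region of integration to begin at scale $2^{4\varepsilon k}$.

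The only genuinely delicate ingredient — and the step I expect to be the main obstacle — is the geometric estimate on $|\inn{\gamma(s)}{\xi} - q(\xi)|$. Taylor-expanding $s \mapsto \inn{\gamma(s)}{\xi}$ about $\theta_2(\xi)$, using $\inn{\gamma'' \circ \theta_2(\xi)}{\xi} = 0$ (the defining property of $\theta_2$) and $|\inn{\gamma^{(3)} \circ \theta_2(\xi)}{\xi}| \lesssim |\xi| \sim 2^k$,
\begin{equation*}
    \inn{\gamma(s)}{\xi} = q(\xi) + u(\xi)\,(s - \theta_2(\xi)) + O\big(2^k |s - \theta_2(\xi)|^3\big).
\end{equation*}
The localisation \eqref{J=3 akell def} forces $|u(\xi)| \lesssim 2^{k - 2\ell}$ on $\supp a_{k,\ell}^{\nu, (\varepsilon)}$. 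For $\floor{k/3}_{\,\varepsilon} \leq \ell \leq \floor{k/3}$, the cutoff in \eqref{akell dec} gives $|s - \theta_2(\xi)| \lesssim 2^{-\ell(1 - \varepsilon)}$, so both error terms are $\lesssim 2^{k - 3\ell + 3\varepsilon \ell} \leq 2^{k - 3\ell + \varepsilon k}$, using $\ell \leq k/3$. For $0 \leq \ell < \floor{k/3}_{\,\varepsilon}$, combining \eqref{akellnu dec 1} with Lemma~\ref{root control lem} — which, since $|u(\xi)| \sim 2^{k - 2\ell}$ here, gives $|\theta_1^{\pm}(\xi) - \theta_2(\xi)| \sim 2^{-\ell}$ — yields $|s - \theta_2(\xi)| \lesssim 2^{-\ell} + 2^{-(k - \ell)/2 + \varepsilon k}$, and therefore $|u(\xi)\,(s - \theta_2(\xi))| \lesssim 2^{k - 3\ell} + 2^{(k - 3\ell)/2 + \varepsilon k}$ and $2^k|s - \theta_2(\xi)|^3 \lesssim 2^{k - 3\ell} + 2^{-(k - 3\ell)/2 + 3\varepsilon k}$. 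Since $k - 3\ell \geq 0$ throughout the relevant range, each of these four terms is smaller than $2^{k - 3\ell + 4\varepsilon k}$ by a factor of at least $2^{\varepsilon k}$. This establishes the claim in every regime. The delicate part is precisely this accounting — weighing the competing scales $2^{-\ell}$, $2^{-(k - \ell)/2}$ and $2^{-\ell(1 - \varepsilon)}$ against the $2^{4\varepsilon k}$ margin built into $\chi_{k,\ell}^{(\varepsilon)}$ — and it is exactly the need to absorb these scales that dictates the exponent $2^{-(k - 3\ell) - 4\varepsilon k}$ in the definition of $\chi_{k,\ell}^{(\varepsilon)}$.
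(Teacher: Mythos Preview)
Your proof is correct and follows essentially the same approach as the paper's. Both arguments reduce to showing rapid decay of $\mathcal{F}_t\big[m[a_{k,\ell}^{\nu,(\varepsilon)}](\xi;\,\cdot\,)\big](\tau)$ on the region $|\tau + q(\xi)| \gtrsim 2^{k-3\ell+4\varepsilon k}$ via integration by parts in $t$, and both rely on the identical geometric estimate $|\inn{\gamma(s)}{\xi} - q(\xi)| \lesssim 2^{k-3\ell+3\varepsilon k}$ obtained from the Taylor expansion around $\theta_2(\xi)$; the only cosmetic difference is that the paper first splits the phase as $\inn{\gamma(s)}{\xi} = q(\xi) + \big(\inn{\gamma(s)}{\xi} - q(\xi)\big)$ and absorbs the second term into the amplitude (so the $t$-phase is $s$-independent), whereas you keep the full $s$-dependent phase and invoke the lower bound $|\tau + \inn{\gamma(s)}{\xi}| \gtrsim |\tau + q(\xi)|$ uniformly in $s$.
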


\begin{proof}
It suffices to show that
\begin{equation}\label{spatio temporal 1}
    |\partial_\xi^\alpha \big( m[a_{k,\ell}^{\nu, (\varepsilon)}] - m_{k,\ell}^{\nu, (\varepsilon)} \big) (\xi;t)| \lesssim_{N,\varepsilon} 2^{-kN}(1+|t|)^{-10} \qquad \text{ for $\alpha \in \N_0^3$, $\,\,|\alpha|\leq 10,\,\,$ and $\,N \in \N$. }
\end{equation}
Indeed, if \eqref{spatio temporal 1} holds, then Fourier inversion and repeated integration-by-parts imply
\begin{equation*}
    |\big( m[a_{k,\ell}^{\nu, (\varepsilon)}] - m_{k,\ell}^{\nu, (\varepsilon)} \big)(D;t) f(x)| \lesssim_{N,\varepsilon} 2^{-kN} (1+|t|)^{-10} (1+|\cdot|)^{-10} \ast f (x).
\end{equation*}
Taking the $L^p(\R^{3+1})$-norm of both sides of this inequality immediately yields the desired result.

By the Fourier inversion formula
\begin{equation*}
    \big( m[a_{k,\ell}^{\nu, (\varepsilon)}] - m_{k,\ell}^{\nu, (\varepsilon)} \big)(\xi;t) = \frac{1}{2\pi} \int_\R e^{i t \tau} \big(1-\chi_{k,\ell}^{(\varepsilon)}(\xi, \tau) \big) \, \mathcal{F}_t \big[ m[a_{k,\ell}^{\nu, (\varepsilon)}](\xi;\,\cdot\,)\big](\tau) \, \ud \tau.
\end{equation*}
Let $\Xi = (\xi, \tau) \in \widehat{\R}^{3+1}$ denote the spatio-temporal frequency variables. Clearly, there exists a constant $C\geq 1$ such that $| \partial_\Xi^{\alpha}\, \chi_{k,\ell}^{(\varepsilon)}(\Xi) | \lesssim 2^{Ck}$ for all $\alpha \in \N_0^4$ with $|\alpha|\leq 20$. Furthermore, if $(\xi, \tau) \in \supp  \partial_\Xi^{\alpha}\, \big(1-\chi_{k,\ell}^{(\varepsilon)}\big)$, then $|\tau + q(\xi)| \gtrsim 2^{-k+3\ell +4 \varepsilon k}$. Thus, by integration-by-parts in the $\tau$-variable, to prove \eqref{spatio temporal 1} it suffices to show
\begin{equation}\label{spatio temporal 2}
| \partial_\Xi^{\alpha}\, \mathcal{F} \big[ m[a_{k,\ell}^{\nu, (\varepsilon)}](\xi;\,\cdot\,)\big](\tau)| \lesssim_{N,\varepsilon} 2^{Ck} \big(1 + 2^{-k + 3\ell + 3\varepsilon k}|\tau + q(\xi)|\big)^{-N}, \quad \alpha \in \N_0^4,\,\, |\alpha|\leq 20, \,\, N \in \N,
\end{equation}
for some choice of absolute constant $C \geq 1$ (not necessarily the same as above).

By the Leibniz rule, 
\begin{equation}\label{spatio temporal 3}
    \partial_\Xi^{\alpha}\, \mathcal{F}_t \big[ m[a_{k,\ell}^{\nu, (\varepsilon)}](\xi;\,\cdot\,)\big](\tau)=\int_\R e^{-i r (\tau + q(\xi))}  m[b_{k,\ell}^{\nu, (\varepsilon),\alpha}] (\xi; r)\, \ud r
\end{equation}
where $b_{k,\ell}^{\nu, (\varepsilon),\alpha}(\xi;r;s) := e^{i r q(\xi)}a_{k,\ell}^{\nu, (\varepsilon),\alpha}(\xi;r;s)$  for some symbol $a_{k,\ell}^{\nu, (\varepsilon), \alpha}$ satisfying 
\begin{equation}\label{spatio temporal 9}
    \big|\partial_r^j\, a_{k,\ell}^{\nu, (\varepsilon), \alpha}(\xi;r;s)\big|\lesssim_j  2^{Ck} \qquad \text{ for all $j \in \N_0$, $\alpha \in \N_0^4$, $|\alpha| \leq 20$,  $|r| \lesssim 1$}
\end{equation}
and with $\supp a_{k,\ell}^{\nu,(\varepsilon), \alpha} \subseteq \supp a_{k,\ell}^{\nu,(\varepsilon)}$.
Note, in particular, that 
\begin{equation}\label{spatio temporal 4}
    m[b_{k,\ell}^{\nu, (\varepsilon),\alpha}](\xi;r) = \int_\R e^{-i r \inn{\gamma(s) - \gamma\, \circ\, \theta_2 (\xi)}{\xi}} a_{k,\ell}^{\nu, (\varepsilon), \alpha} (\xi;r;s) \rho(r) \chi(s)\, \ud s.
\end{equation}

By Taylor expansion around $\theta_2(\xi)$, the phase in \eqref{spatio temporal 4} can be written as
\begin{equation}\label{spatio temporal 6}
\inn{\gamma(s) - \gamma \circ \theta_2 (\xi)}{\xi} = u(\xi) \, (s-\theta_2(\xi)) + \omega(\xi;s) \, (s-\theta_2(\xi))^3
\end{equation}
where $\omega$ arises from the remainder term and satisfies $|\omega(\xi;s)|\sim 2^k$. Recall,
\begin{equation}\label{spatio temporal 7}
    |u(\xi)|\lesssim 2^{k-2\ell} \qquad \text{ and } \qquad |s-\theta_2(\xi)|\lesssim 2^{-\ell +  \varepsilon k} \qquad \text{ for all $\,\,(\xi;r;s) \in \supp a_{k,\ell}^{\nu, (\varepsilon)}$,}
\end{equation}
which follows from the definition of $a_{k,\ell}^{\nu, (\varepsilon)}$. Here, in the case $0 \leq \ell < \floor{k/3}_{\varepsilon}$, we use Lemma~\ref{root control lem} to deduce that
\begin{equation*}
    |s-\theta_2(\xi)|\leq  |s-\theta_1(\xi)| + |\theta_1(\xi) - \theta_2(\xi)| \lesssim 2^{-\ell}.
\end{equation*}
Combining the expansion \eqref{spatio temporal 6} and the localisation \eqref{spatio temporal 7} yields
\begin{equation}\label{spatio temporal 8}
    |\inn{\gamma(s) - \gamma \circ \theta_2 (\xi)}{\xi}| \lesssim 2^{k-3\ell +3\varepsilon k} \qquad \text{for all $(\xi;r;s) \in \supp a_{k,\ell}^{\nu, (\varepsilon)}$.}
\end{equation}

By \eqref{spatio temporal 8}, \eqref{spatio temporal 9} and integration by parts in \eqref{spatio temporal 3}, one obtains
\begin{equation*}
    |\partial_\Xi^{\alpha}\, \mathcal{F}_t \big[ m[a_{k,\ell}^{\nu, (\varepsilon)}](\xi;\,\cdot\,)\big](\tau)| \lesssim_{M} 2^{C k} |\tau + q(\xi)|^{-M} 2^{(k-3\ell +3\varepsilon k)M}  \qquad \text{for all $M \in \N$}
\end{equation*}
and all $\alpha \in \N_0^4$, $|\alpha|\leq 20$. This implies \eqref{spatio temporal 2} and concludes the proof.
\end{proof}


To understand the support properties of the multipliers $m_{k,\ell}^{*,\mu,(\varepsilon)}$, we introduce the primitive curve
\begin{equation*}
   \bar{\gamma} \colon I \to \R^4, \qquad \bar{\gamma} \colon s \mapsto \begin{bmatrix}
    \int_0^s \gamma \\
    s
    \end{bmatrix}.
\end{equation*}
Here $\int_0^s \gamma$ denotes the vector in $\R^3$ with $i$th component $\int_0^s \gamma_i$ for $1 \leq i \leq 3$. Note that $\bar{\gamma}$ is a non-degenerate curve in $\R^4$ and, in particular, $|\det(\bar{\gamma}^{(1)} \cdots \bar{\gamma}^{(4)})| = |\det(\gamma^{(1)}\cdots \gamma^{(3)})|$. Let $(\bar{\be}_j(s))_{j=1}^4$ denote the Frenet frame associated to $\bar{\gamma}$ and consider the $(2,r)$-Frenet boxes for $\bar \gamma$
\begin{equation*}
    \pi_{2,\bar{\gamma}}(s;\,r) := \big\{\Xi = (\xi, \tau) \in \widehat{\R}^3 \times \widehat{\R} : |\inn{\bar{\be}_j(s)}{\Xi}| \lesssim r^{4 - j} \textrm{ for $1 \leq j \leq 3$, } |\inn{\bar{\be}_4(s)}{\Xi}| \sim 1 \big\},
\end{equation*}
as introduced in Definition~\ref{def Frenet box}. 

\begin{lemma}\label{J=3 4d supp lem} For all $\ceil{4 \varepsilon k} \leq \ell \leq \floor{k/3}$ and $\mu \in \Z$, 
    \begin{equation*}
        \supp \mathcal{F}_t \big[m_{k,\ell}^{\ast,\mu,(\varepsilon)}\big] \subseteq 2^k \cdot \pi_{2,\bar{\gamma}}(s_{\mu}; 2^{4\varepsilon k}2^{-\ell}),
    \end{equation*} 
     where $s_{\mu}:=2^{-\ell }\mu$ and $\mathcal{F}_t$ denotes the Fourier transform in the $t$-variable.
\end{lemma}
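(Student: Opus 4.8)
The plan is to unwind the definitions so that the claimed inclusion reduces to a short list of elementary bounds on the pairings $\langle\bar{\be}_j(s_\mu),\Xi\rangle$, $j=1,\dots,4$, valid for every $\Xi=(\xi,\tau)\in\supp\mathcal{F}_t\big[m_{k,\ell}^{\ast,\mu,(\varepsilon)}\big]$. First I would record the two localisations at hand. Since $\mathcal{F}_t[m_{k,\ell}^{\nu,(\varepsilon)}](\xi;\tau)$ has the same $\xi$-support as $m[a_{k,\ell}^{\nu,(\varepsilon)}]$, Lemma~\ref{J=3 supp lem}(a) and the remark following it give $\xi\in 2^k\cdot\pi_1(s_\mu;2^{-\ell})$; unwinding this, and passing from the Frenet vectors $\be_i(s_\mu)$ of $\gamma$ to $\gamma^{(i)}(s_\mu)$ through the triangular Gram--Schmidt change of basis (an $O(\delta_0)$-perturbation of the identity for $\gamma\in\mathfrak{G}_3(\delta_0)$), yields $|\langle\gamma^{(i)}(s_\mu),\xi\rangle|\lesssim 2^{k-(3-i)\ell}$ for $i=1,2$ and $|\langle\be_3(s_\mu),\xi\rangle|\sim|\xi|\sim 2^k$, exactly as in the proof of Lemma~\ref{J=3 supp lem}(a); the same proof also records $|s_\mu-\theta_2(\xi)|\lesssim 2^{-\ell}$. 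Secondly, the cutoff $\chi_{k,\ell}^{(\varepsilon)}$ forces $|\tau+q(\xi)|\lesssim 2^{k-3\ell+4\varepsilon k}$ with $q(\xi)=\langle\gamma\circ\theta_2(\xi),\xi\rangle$; since $|\theta_2(\xi)|=O(\delta_0)$ and $\gamma(0)=0$ we get $|q(\xi)|\lesssim\delta_0 2^k$, hence also $|\tau|\lesssim 2^k$. (The hypothesis $\ell\ge\ceil{4\varepsilon k}$ is used both to make $2^{k-3\ell+4\varepsilon k}\lesssim 2^k$ here, and to guarantee $2^{4\varepsilon k}2^{-\ell}\le 1$, so that $\pi_{2,\bar{\gamma}}(s_\mu;2^{4\varepsilon k}2^{-\ell})$ is genuinely a Frenet box.)

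The crucial estimate is the bound on $\langle\bar{\gamma}'(s_\mu),\Xi\rangle=\langle\gamma(s_\mu),\xi\rangle+\tau$. I would write this as $\langle\gamma(s_\mu)-\gamma\circ\theta_2(\xi),\xi\rangle+(\tau+q(\xi))$ and Taylor expand around $\theta_2(\xi)$ as in \eqref{spatio temporal 6}; the quadratic term disappears because $\langle\gamma''\circ\theta_2(\xi),\xi\rangle=0$, leaving $\langle\gamma(s_\mu)-\gamma\circ\theta_2(\xi),\xi\rangle=u(\xi)(s_\mu-\theta_2(\xi))+O(2^k)(s_\mu-\theta_2(\xi))^3$. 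Using $|u(\xi)|\lesssim 2^{k-2\ell}$ (from \eqref{J=3 akell def}) and $|s_\mu-\theta_2(\xi)|\lesssim 2^{-\ell}$, this is $O(2^{k-3\ell})$, so $|\langle\bar{\gamma}'(s_\mu),\Xi\rangle|\lesssim 2^{k-3\ell+4\varepsilon k}$.

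Next I would compute the Frenet frame of $\bar{\gamma}$ at $s_\mu$. From $\bar{\gamma}'(s)=(\gamma(s),1)$ and $\bar{\gamma}^{(j)}(s)=(\gamma^{(j-1)}(s),0)$ for $j=2,3,4$, Gram--Schmidt produces an upper-triangular change of basis $\bar{\be}_j(s_\mu)=\sum_{i\le j}\tilde r_{ij}\bar{\gamma}^{(i)}(s_\mu)$ with $|\tilde r_{ij}|\lesssim 1$ (using $|\det[\bar{\gamma}]_{s_\mu}|=|\det[\gamma]_{s_\mu}|\ge 1/2$ and $\|\bar{\gamma}\|_{C^4}\lesssim 1$). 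Since $\langle\bar{\gamma}^{(i)}(s_\mu),\Xi\rangle=\langle\gamma^{(i-1)}(s_\mu),\xi\rangle$ for $i\ge 2$, expanding $\langle\bar{\be}_j(s_\mu),\Xi\rangle$ and feeding in the bounds above gives $|\langle\bar{\be}_j(s_\mu),\Xi\rangle|\lesssim 2^{k-(4-j)\ell+4\varepsilon k}\le 2^k(2^{4\varepsilon k}2^{-\ell})^{4-j}$ for $j=1,2,3$, which are precisely conditions \eqref{neighbourhood 1} for $2^k\cdot\pi_{2,\bar{\gamma}}(s_\mu;2^{4\varepsilon k}2^{-\ell})$. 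For $j=4$ the generic triangular expansion is insufficient, because the intermediate pairings $\langle\gamma'(s_\mu),\xi\rangle$ and $\langle\gamma''(s_\mu),\xi\rangle$ are only $O(2^{k-2\ell})$ and $O(2^{k-\ell})$ and need not be negligible compared to $2^k$ when $\ell$ is small. Instead I would use that $\bar{\be}_4(s)$, being orthogonal to $(\gamma(s),1)$, $(\gamma'(s),0)$ and $(\gamma''(s),0)$, is necessarily $\pm(1+c_s^2)^{-1/2}(\be_3(s),-c_s)$ with $c_s:=\langle\be_3(s),\gamma(s)\rangle$ and $|c_s|\lesssim\delta_0$; therefore $\langle\bar{\be}_4(s_\mu),\Xi\rangle=\pm(1+c_{s_\mu}^2)^{-1/2}(\langle\be_3(s_\mu),\xi\rangle-c_{s_\mu}\tau)$, and since $|\langle\be_3(s_\mu),\xi\rangle|\sim 2^k$ while $|c_{s_\mu}\tau|\lesssim\delta_0 2^k$, this is $\sim 2^k$ for $\delta_0$ small, which is \eqref{neighbourhood 2}. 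No conditions of type \eqref{neighbourhood 3} occur here (the codimension equals $n-1$), so this completes the verification.

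The main obstacle is precisely the $j=4$ lower bound: one cannot afford to pass through the intermediate lifted-derivative pairings, and must instead exploit the rigid form of $\bar{\be}_4$ dictated by orthogonality to the lower-order lifted derivative vectors together with the smallness of $\langle\be_3(s),\gamma(s)\rangle$ coming from $\gamma\in\mathfrak{G}_3(\delta_0)$. A secondary point requiring care is that the vanishing of the quadratic Taylor coefficient at $\theta_2(\xi)$ is what makes the sharp exponent in the $\bar{\be}_1$ bound available; everything else is bookkeeping with the support and symbol estimates already assembled in \S\ref{J=3 sec}.
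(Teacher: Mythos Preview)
Your proof is correct and follows essentially the same approach as the paper: identify the $\xi$-support constraint from Lemma~\ref{J=3 supp lem}(a) and the $\tau$-constraint from $\chi_{k,\ell}^{(\varepsilon)}$, Taylor-expand $\langle\gamma(s_\mu)-\gamma\circ\theta_2(\xi),\xi\rangle$ around $\theta_2(\xi)$ to bound $\langle\bar{\gamma}'(s_\mu),\Xi\rangle$, and then pass to the Frenet frame of $\bar{\gamma}$ through the upper-triangular Gram--Schmidt structure. The one place where you diverge from the paper is the $j=4$ lower bound: the paper argues that the change of basis from $(\bar{\be}_j(s_\mu))_{j=1}^4$ to $(\gamma_\uparrow^{(j-1)}(s_\mu))_{j=1}^4$ is an $O(\delta_0)$-perturbation of the identity (so that $\langle\bar{\be}_4(s_\mu),\Xi\rangle=\langle\gamma^{(3)}(s_\mu),\xi\rangle+O(\delta_0\cdot 2^k)$), whereas you compute $\bar{\be}_4$ explicitly as $\pm(1+c_s^2)^{-1/2}(\be_3(s),-c_s)$; both arguments are valid and yours is arguably more transparent about why the potentially large intermediate pairings $\langle\gamma'(s_\mu),\xi\rangle,\langle\gamma''(s_\mu),\xi\rangle$ do not interfere.
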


\begin{proof} If $\Xi = (\xi, \tau) \in \supp \mathcal{F}_t \big[m_{k,\ell}^{\ast,\mu,(\varepsilon)}\big]$, then $\xi \in \xisupp a_{k,\ell}^{\ast,\mu,(\varepsilon)}$ and $|q(\xi) + \tau| \lesssim 2^{4\varepsilon k} 2^{k-3\ell}$. The former condition implies $|u(\xi)| \lesssim 2^{k-2\ell}$ and $|s - \theta_2(\xi)| \lesssim  2^{-\ell + \varepsilon k}$ (see \eqref{spatio temporal 7}) and so, by Taylor expansion around $\theta_2(\xi)$,
\begin{equation}\label{J=3 4d supp 1}
|\inn{\gamma(s_{\mu})}{\xi} + \tau| \lesssim |q(\xi) + \tau| + |u(\xi)||s-\theta_2(\xi)| + 2^k|s-\theta_2(\xi)|^3 \lesssim 2^{4\varepsilon k} 2^{k-3\ell}.
\end{equation}

Define the lifted curve and frame
\begin{equation*}
    \gamma_{\uparrow} \colon I \to \R^4, \quad \gamma_{\uparrow} \colon s \mapsto \begin{bmatrix}
    \gamma(s) \\
    1
    \end{bmatrix}
    \quad \textrm{and} \quad \be_{j,\uparrow} \colon I \to S^3, \quad \be_{j, \uparrow} \colon s \mapsto \begin{bmatrix}
    \be_j(s) \\
    0
    \end{bmatrix} 
    \quad \textrm{for $1 \leq j \leq 3$,}
\end{equation*}
respectively. This definition is motivated by our related
 work on $L^p$ Sobolev estimates for the moment curve in four  dimensions \cite{BGHS-Sobolev}.
Note that $\bar{\gamma}$ is a primitive for $\gamma_{\uparrow}$ in the sense that $\bar{\gamma}' = \gamma_{\uparrow}$. By the definition of the Frenet frame, it follows that
\begin{equation*}
    \bar{\be}_j(s) \in \langle \gamma_{\uparrow}(s), \gamma_{\uparrow}'(s), \dots, \gamma_{\uparrow}^{(j-1)}(s) \rangle \qquad \textrm{and} \qquad \gamma_{\uparrow}^{(i)}(s) \in  \langle \be_{1, \uparrow}(s), \dots, \be_{i,\uparrow}(s) \rangle
\end{equation*}
for $1 \leq i < j \leq 4$. Thus, one readily deduces that
\begin{equation*}
    |\inn{\bar{\be}_j(s)}{\Xi}| \lesssim |\inn{\gamma_{\uparrow}(s)}{\Xi}| + \sum_{i = 1}^{j-1} |\inn{\be_i(s)}{\xi}| \qquad \textrm{for $\, \Xi = (\xi, \tau) \in \widehat{\R}^{3+1}$ and $1 \leq j \leq 4$.} 
\end{equation*}
If $\Xi = (\xi, \tau) \in \supp \mathcal{F}_t \big[m_{k,\ell}^{\ast,\mu,(\varepsilon)}\big]$, then it follows from \eqref{J=3 4d supp 1} that
\begin{equation*}
   |\inn{\gamma_{\uparrow}(s_{\mu})}{\Xi}| = | \inn{\gamma(s_{\mu})}{\xi}+\tau| \lesssim 2^{4\varepsilon k} 2^{k-3\ell}.
\end{equation*}
On the other hand, since $\xi \in \xisupp  a_{k,\ell}^{\ast,\mu,(\varepsilon)}$, Lemma~\ref{J=3 supp lem} yields
\begin{equation*}
   |\inn{\be_i(s_{\mu})}{\xi}| \lesssim 2^{k - (3-i)\ell} \qquad \textrm{for $i = 1$, $2$, } \qquad |\inn{\be_3(s_{\mu})}{\xi}| \sim 2^k.
\end{equation*}
Combining these observations, $|\inn{\bar{\be}_j(s)}{\Xi}| \lesssim 2^{4\varepsilon k}2^{k-(4-j)\ell}$ for $1 \leq j \leq 3$ and therefore it suffices to prove $|\inn{\bar{\be}_4(s_{\mu})}{\Xi}| \sim 2^k$. Since our hypothesis $\ell \geq \ceil{4\varepsilon k}$ implies that $2^{4\varepsilon k}2^{k-(3-i)\ell} \leq 2^k$ for $0 \leq i \leq 2$, the above argument directly yields the upper bound. On the other hand, since $\gamma \in \mathfrak{G}_3(\delta_0)$ and we are localised to $|s_{\mu}| \lesssim \delta_0$, the change of basis mapping $(\bar{\be}_j(s_{\mu}))_{j=1}^4$ to $(\gamma_{\uparrow}^{(j-1)}(s_{\mu}))_{j=1}^4$ is an $O(\delta_0)$ perturbation of the identity. In view of this, the above argument can also be adapted to give the required lower bound. 
\end{proof}




\subsection{Reverse square function estimates in \texorpdfstring{$\R^{3+1}$}{}} In view of the Fourier localisation described in the previous subsection, Theorem~\ref{Frenet reverse SF theorem} implies the following square function estimate.

\begin{proposition}\label{J=3 rev sf prop}
Let $k \in \N$, $0 \leq \ell \leq \floor{k/3}$. For all $2 \leq p \leq 4$ and $\varepsilon>0$, one has
\begin{equation*}
    \| m[a_{k,\ell}^{(\varepsilon)}](D;\, \cdot \,) f \|_{L^p(\R^{3+1})} \lesssim_{\varepsilon, N} 2^{(k-3\ell)/4} 2^{O(\varepsilon k)} \Big\| \big(\sum_{\nu \in \Z} |m[a_{k,\ell}^{\nu,(\varepsilon)}](D; \,\cdot\,) f|^2 \big)^{1/2} \Big\|_{L^p(\R^{3+1})} + 2^{-kN} \|f\|_{L^p(\R^3)}.
\end{equation*}
\end{proposition}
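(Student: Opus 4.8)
The plan is to derive the claimed estimate from the reverse square function inequality in $\R^4$ (Theorem~\ref{Frenet reverse SF theorem}) applied to the primitive curve $\bar{\gamma}$, exploiting the spatio-temporal Fourier localisation provided by Lemmas~\ref{J=3 spatio temp loc lem} and~\ref{J=3 4d supp lem}. The range $0 \le \ell < \lceil 4\varepsilon k\rceil$ is trivial: there the number of indices $\mu$ with $s_\mu = 2^{-\ell}\mu \in I_0$ is $O(2^{\ell}) = O(2^{4\varepsilon k})$, so the triangle inequality together with the pointwise bound $|g_\mu| \le \big(\sum_{\mu'}|g_{\mu'}|^2\big)^{1/2}$ already gives $\|\sum_\mu m[a_{k,\ell}^{*,\mu,(\varepsilon)}](D;\cdot)f\|_{L^p(\R^{3+1})} \lesssim 2^{4\varepsilon k}\big\|\big(\sum_\mu |m[a_{k,\ell}^{*,\mu,(\varepsilon)}](D;\cdot)f|^2\big)^{1/2}\big\|_{L^p(\R^{3+1})}$, after which the passage to the $\nu$-square function described below (with the $a$-symbols in place of the $m_{k,\ell}$'s) finishes the case. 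So assume $\lceil 4\varepsilon k\rceil \le \ell \le \lfloor k/3\rfloor$. Using Lemma~\ref{J=3 spatio temp loc lem} and summing the resulting rapidly decaying errors over the $O(2^{Ck})$ relevant indices, one replaces $m[a_{k,\ell}^{(\varepsilon)}] = \sum_\mu m[a_{k,\ell}^{*,\mu,(\varepsilon)}]$ by $m_{k,\ell}^{(\varepsilon)} = \sum_\mu m_{k,\ell}^{*,\mu,(\varepsilon)}$ up to an $O(2^{-kN})\|f\|_{L^p}$ error, and likewise replaces $m_{k,\ell}^{\nu,(\varepsilon)}$ by $m[a_{k,\ell}^{\nu,(\varepsilon)}]$ inside the square function on the right (using the triangle inequality in $\ell^2_\nu$). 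It thus suffices to prove $\|\sum_\mu m_{k,\ell}^{*,\mu,(\varepsilon)}(D;\cdot)f\|_{L^p(\R^{3+1})} \lesssim 2^{(k-3\ell)/4}2^{O(\varepsilon k)}\big\|\big(\sum_\nu |m_{k,\ell}^{\nu,(\varepsilon)}(D;\cdot)f|^2\big)^{1/2}\big\|_{L^p(\R^{3+1})}$.

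For this, I would first bound the $\mu$-sum by the $\mu$-square function. By Lemma~\ref{J=3 4d supp lem}, $m_{k,\ell}^{*,\mu,(\varepsilon)}(D;\cdot)f$ has full $(x,t)$-Fourier support in $2^k\cdot\pi_{2,\bar{\gamma}}(s_\mu;r)$ with $r := 2^{4\varepsilon k}2^{-\ell} \lesssim 1$ and $s_\mu = 2^{-\ell}\mu \in I_0 \subset I$; rescaling the spatio-temporal frequency variables by $2^{-k}$ — an $L^p(\R^4)$ isometry up to a factor $2^{-4k/p}$ common to both sides of the inequality — turns these into $(2,r)$-Frenet boxes for the non-degenerate curve $\bar{\gamma}$. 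Since the $s_\mu$ are only $2^{-\ell}$-separated, I would split the $\mu$-sum into $O(2^{4\varepsilon k})$ subfamilies, each indexed by an $r$-separated set of parameters (hence forming a Frenet box decomposition along $\bar{\gamma}$), apply Theorem~\ref{Frenet reverse SF theorem} to $\bar{\gamma}$ on each subfamily, and recombine by the triangle inequality; this yields the $L^4$ version of the sought bound with constant $O_\varepsilon(2^{O(\varepsilon k)})$. At $p=2$ the same inequality holds with a better constant, by Plancherel and the $O(2^{O(\varepsilon k)})$ overlap of the supports $2^k\cdot\pi_{2,\bar{\gamma}}(s_\mu;r)$. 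Interpolating — most conveniently after dualising, so that the reverse square function estimate becomes a linear, vector-valued \emph{forward} square function estimate for the dual Frenet box decomposition, which interpolates in the standard way, and then dualising back — gives $\|\sum_\mu m_{k,\ell}^{*,\mu,(\varepsilon)}(D;\cdot)f\|_{L^p(\R^{3+1})} \lesssim 2^{O(\varepsilon k)}\big\|\big(\sum_\mu |m_{k,\ell}^{*,\mu,(\varepsilon)}(D;\cdot)f|^2\big)^{1/2}\big\|_{L^p(\R^{3+1})}$ for all $2 \le p \le 4$.

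It then remains to pass from the $\mu$-square function to the $\nu$-square function. Since $m_{k,\ell}^{*,\mu,(\varepsilon)} = \sum_{\nu\in\mathfrak{N}_\ell(\mu)}m_{k,\ell}^{\nu,(\varepsilon)}$ with $\#\mathfrak{N}_\ell(\mu) \lesssim 2^{(k-3\ell)/2}$, Cauchy--Schwarz in the $\nu$-sum gives $|m_{k,\ell}^{*,\mu,(\varepsilon)}(D;\cdot)f|^2 \lesssim 2^{(k-3\ell)/2}\sum_{\nu\in\mathfrak{N}_\ell(\mu)}|m_{k,\ell}^{\nu,(\varepsilon)}(D;\cdot)f|^2$; summing over the disjoint families $\mathfrak{N}_\ell(\mu)$, taking square roots and $L^p(\R^{3+1})$-norms, one picks up exactly the factor $2^{(k-3\ell)/4}$, and combining with the previous paragraph (and the trivial range) completes the proof. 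The main technical point is the middle step: verifying that the rescaled spatio-temporal Fourier supports are genuine Frenet boxes along $\bar{\gamma}$ — precisely the content of Lemma~\ref{J=3 4d supp lem} — then organising the $2^{4\varepsilon k}$-fold overcounting of the parameters $s_\mu$ into boundedly many legitimate Frenet box decompositions, and justifying the interpolation of the reverse square function inequality between $L^2$ and $L^4$. The reductions via Lemma~\ref{J=3 spatio temp loc lem} and the final Cauchy--Schwarz step are routine.
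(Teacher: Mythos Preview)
Your proposal is correct and follows essentially the same approach as the paper's proof: reduce to the $m_{k,\ell}$ multipliers via Lemma~\ref{J=3 spatio temp loc lem}, use Lemma~\ref{J=3 4d supp lem} to place the spatio-temporal Fourier supports in $(2,r)$-Frenet boxes for $\bar{\gamma}$, split into $O(2^{4\varepsilon k})$ genuine Frenet box decompositions, apply Theorem~\ref{Frenet reverse SF theorem}, and then pass from the $\mu$- to the $\nu$-square function by Cauchy--Schwarz with $\#\mathfrak{N}_\ell(\mu)\lesssim 2^{(k-3\ell)/2}$. The paper treats the small-$\ell$ range by the same Cauchy--Schwarz argument and handles the transition between the $a$- and $m$-symbols exactly as you do. The one place where you are more explicit is the interpolation: the paper writes the $\mu$-square function bound directly in $L^p$ after citing Theorem~\ref{Frenet reverse SF theorem}, implicitly using the trivial $L^2$ case (Plancherel with bounded overlap) together with interpolation of the linear map $(f_\pi)\mapsto\sum f_\pi$ from $L^p(\ell^2)$ to $L^p$; your dualisation remark makes this step precise.
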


\begin{proof} First suppose $\ceil{4 \varepsilon k} \leq \ell$ so that Lemma~\ref{J=3 4d supp lem} applies. Thus,
\begin{equation*}
    m_{k,\ell}^{(\varepsilon)}(D;\,\cdot\,)f = \sum_{\mu \in \Z} m_{k,\ell}^{*,\mu, (\varepsilon)}(D;\,\cdot\,)f
\end{equation*}
where each $m_{k,\ell}^{*,\mu, (\varepsilon)}(D;\,\cdot\,)f$ has spatio-temporal Fourier support in $2^k\cdot \pi_{2,\bar{\gamma}}(s_{\mu}; 2^{4\varepsilon k}2^{-\ell})$. The family of sets $\pi_{2,\bar{\gamma}}(s_{\mu}; 2^{4\varepsilon k}2^{-\ell})$ for $|\mu| \leq 2^{\ell}$ may be partitioned into $O(2^{4\varepsilon k})$ subfamilies, each forming a $(2, 2^{4\varepsilon k}2^{-\ell})$-Frenet box decomposition for the non-degenerate curve $\bar{\gamma}$ in $\R^4$.  Consequently, by Theorem~\ref{Frenet reverse SF theorem} and pigeonholing, 
\begin{equation*}
    \| m_{k,\ell}^{(\varepsilon)}(D; \,\cdot\,) f \|_{L^p(\R^4)} \lesssim_{\varepsilon} 2^{O(\varepsilon k)}  \Big\| \big(\sum_{\mu \in \Z} |m_{k,\ell}^{*, \mu,(\varepsilon)}(D; \,\cdot\,) f|^2 \big)^{1/2} \Big\|_{L^p(\R^4)}.
\end{equation*}
By a pointwise application of the Cauchy--Schwarz inequality, using the fact that $\# \mathfrak{N}_{\ell}(\mu) \lesssim 2^{(k-3\ell)/2}$ for all $\mu \in \Z$, we conclude that 
\begin{equation}\label{J=3 reverse sf 1}
    \| m_{k,\ell}^{(\varepsilon)}(D; \,\cdot\,) f \|_{L^p(\R^{3+1})} \lesssim_{\varepsilon} 2^{(k-3\ell)/4}2^{O(\varepsilon k)}  \Big\| \big(\sum_{\nu \in \Z} |m_{k,\ell}^{\nu,(\varepsilon)}(D; \,\cdot\,) f|^2 \big)^{1/2} \Big\|_{L^p(\R^{3+1})}.
\end{equation}
The desired estimate, involving the $m[a_{k,\ell}^{\nu,(\varepsilon)}]$ multipliers rather than the $m_{k,\ell}^{\nu,(\varepsilon)}$, now follows by combining \eqref{J=3 reverse sf 1} and Lemma~\ref{J=3 spatio temp loc lem}. 

On the other hand, if $0 \leq \ell \leq \ceil{4 \varepsilon k}$, then the result follows directly from the Cauchy--Schwarz inequality. 
\end{proof}

\begin{remark}
The above square function estimate is not very effective \textit{away} from the binormal cone ($\ell=0$ or small values of $\ell$), as in that case it essentially amounts to a trivial application of the Cauchy--Schwarz inequality. However, as noted in \S\ref{LS rel G sec}, Proposition~\ref{J=3 LS prop} is only used \textit{close} to the binormal cone ($\ell=\floor{k/3}$ or large values of $\ell$), for which Proposition~\ref{J=3 rev sf prop} is most effective. The small values of $\ell$ in proving Theorem~\ref{LS thm} are handled via Proposition~\ref{PS LS prop}.
\end{remark}

For $p = 2$ a stronger square function estimate is available simply due to Plancherel's theorem. In particular, this avoids the loss induced by the Cauchy--Schwarz inequality in the proof above.

\begin{lemma}\label{J=3 L2 rev sf prop}
Let $k \in \N$, $0 \leq \ell \leq \floor{k/3}$. For all $\varepsilon>0$, 
\begin{equation*}
    \| m[a_{k,\ell}^{(\varepsilon)}](D;\, \cdot \,) f \|_{L^2(\R^{3+1})} \lesssim_{\varepsilon}   \Big\| \big(\sum_{\nu \in \Z} |m[a_{k,\ell}^{\nu,(\varepsilon)}](D; \,\cdot\,) f|^2 \big)^{1/2} \Big\|_{L^2(\R^{3+1})}.
\end{equation*}
\end{lemma}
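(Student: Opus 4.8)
The plan is to exploit the fact that at the exponent $p=2$ neither decoupling nor a reverse square function estimate is needed: Plancherel's theorem reduces the claim to an elementary pointwise Cauchy--Schwarz inequality, the only structural ingredient being that the pieces $a_{k,\ell}^{\nu,(\varepsilon)}$ have bounded overlap in the frequency variable $\xi$.

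Concretely, I would first record this overlap property of the decomposition set up in \S\ref{J=3 Fourier loc subsec}. Examining \eqref{dec nu} and \eqref{akellnu dec 2}, the symbol $a_{k,\ell}^{\nu,(\varepsilon)}$ is obtained from $a_{k,\ell}^{(\varepsilon)}$ by multiplying by a cutoff depending on $\xi$ alone --- namely $\zeta\big(\rho^{-1}(2^{(k-\ell)/2}\theta_1(\xi)-\nu)\big)$ when $0\le\ell<\floor{k/3}_{\,\varepsilon}$, and $\zeta\big(2^{\ell}\theta_2(\xi)-\nu\big)$ when $\floor{k/3}_{\,\varepsilon}\le\ell\le\floor{k/3}$ --- and in either case, since $\supp\zeta\subseteq[-1,1]$ (and since the two sign choices $\theta_1^{\pm}$ have disjoint supports by the Remark following \eqref{akellnu dec 2}), for each fixed $\xi$ at most $O(1)$ indices $\nu$ render $a_{k,\ell}^{\nu,(\varepsilon)}(\xi;\,\cdot\,;\,\cdot\,)$ nonzero. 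As
\begin{equation*}
 m[a_{k,\ell}^{\nu,(\varepsilon)}](\xi;t)=\int_{\R}e^{-it\inn{\gamma(s)}{\xi}}a_{k,\ell}^{\nu,(\varepsilon)}(\xi;t;s)\chi(s)\rho(t)\,\ud s,
\end{equation*}
this forces $\#\{\nu\in\Z:m[a_{k,\ell}^{\nu,(\varepsilon)}](\xi;t)\neq0\}=O(1)$ uniformly in $(\xi;t)$.

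Given this, I would argue as follows. By the linearity of $a\mapsto m[a]$ and the identity $a_{k,\ell}^{(\varepsilon)}=\sum_{\nu\in\Z}a_{k,\ell}^{\nu,(\varepsilon)}$ (a locally finite sum in $\xi$, by the previous paragraph), one has $m[a_{k,\ell}^{(\varepsilon)}](\xi;t)=\sum_{\nu\in\Z}m[a_{k,\ell}^{\nu,(\varepsilon)}](\xi;t)$, and Cauchy--Schwarz applied to this $O(1)$-term sum yields
\begin{equation*}
 \big|m[a_{k,\ell}^{(\varepsilon)}](\xi;t)\big|^{2}\lesssim\sum_{\nu\in\Z}\big|m[a_{k,\ell}^{\nu,(\varepsilon)}](\xi;t)\big|^{2}\qquad\text{for all }(\xi;t).
\end{equation*}
For each fixed $t$, Plancherel's theorem in the $x$-variable gives $\|m[b](D;t)f\|_{L^{2}(\R^{3})}^{2}=(2\pi)^{-3}\int_{\widehat{\R}^{3}}|m[b](\xi;t)|^{2}|\widehat f(\xi)|^{2}\,\ud\xi$ for any bounded symbol $b$; applying this with $b=a_{k,\ell}^{(\varepsilon)}$, inserting the pointwise bound above, integrating in $t$ (the factor $\rho(t)$ makes this a compact integral), and then running Plancherel backwards term by term in $\nu$ (legitimate by Tonelli's theorem, all integrands being nonnegative) produces
\begin{equation*}
 \|m[a_{k,\ell}^{(\varepsilon)}](D;\,\cdot\,)f\|_{L^{2}(\R^{3+1})}^{2}\lesssim\sum_{\nu\in\Z}\|m[a_{k,\ell}^{\nu,(\varepsilon)}](D;\,\cdot\,)f\|_{L^{2}(\R^{3+1})}^{2}=\Big\|\Big(\sum_{\nu\in\Z}|m[a_{k,\ell}^{\nu,(\varepsilon)}](D;\,\cdot\,)f|^{2}\Big)^{1/2}\Big\|_{L^{2}(\R^{3+1})}^{2},
\end{equation*}
which is the asserted inequality, with an absolute implied constant (so the stated $\varepsilon$-dependence is a harmless over-statement).

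There is no genuine obstacle here; the only point needing care is the bounded-overlap claim, and that has effectively been arranged already in \S\ref{J=3 Fourier loc subsec}, where the $\nu$-cutoffs were deliberately built as functions of $\xi$ alone, subordinate to partitions of unity at compactly supported bump scales. The lemma is lossless at $p=2$ precisely because Plancherel substitutes for the reverse square function input (Theorem~\ref{Frenet reverse SF theorem}) together with the Cauchy--Schwarz step in the proof of Proposition~\ref{J=3 rev sf prop}, which jointly cost a factor $2^{(k-3\ell)/4}2^{O(\varepsilon k)}$ in the range $2<p\le4$.
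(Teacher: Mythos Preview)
Your proof is correct and follows essentially the same approach as the paper: both use Plancherel's theorem together with the fact that the $\xi$-supports of the symbols $a_{k,\ell}^{\nu,(\varepsilon)}$ have bounded overlap. The paper phrases the overlap via the essentially disjoint Frenet boxes of Lemma~\ref{J=3 supp lem}, whereas you read it off directly from the $\zeta$ partition-of-unity structure in \S\ref{J=3 Fourier loc subsec}; these are the same observation, and your route is arguably the more elementary of the two.
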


\begin{proof} This is simply a consequence of Plancherel's theorem and the fact that the symbols $a_{k,\ell}^{\nu,(\varepsilon)}$ are supported on the essentially disjoint sets $2^{k-\ell} \cdot \pi_0(s_{\nu}; 2^{-(k-\ell)/2}, 2^{\ell})$ by Lemma~\ref{J=3 supp lem}. 
\end{proof}




\subsection{Kernel estimates}

Given a symbol $a \in C^{\infty}(\widehat{\R}^n\setminus\{0\} \times \R \times \R)$, define the associated convolution kernel 
\begin{equation*}
    K[a](x,t) := \frac{1}{(2\pi)^n}\int_{\widehat{\R}^n} e^{i \inn{x}{\xi}} m[a](\xi;t)\,\ud \xi. 
\end{equation*}

Each of the localised symbols $a_{k,\ell}^{\nu, (\varepsilon)}$ satisfies the following kernel estimate, which yields a gain due to the localisation of the symbols in the $s$-variable introduced in \eqref{akell dec}.

\begin{lemma}\label{J=3 ker lem} For $k \in \N$ and $0 \leq \ell \leq \floor{k/3}$,
\begin{equation*}
    |K[a_{k,\ell}^{\nu, (\varepsilon)}](x,t)| \lesssim 2^{-(k-\ell)/2}2^{O(\varepsilon k)}  \,  \psi_{\,\mathcal{T}_{k,\ell}(s_{\nu})}(x,t)\, \rho(t)
\end{equation*}
where
\begin{equation*}
   \psi_{\,\mathcal{T}_{k,\ell}(s_{\nu})}(x,t) := 2^{(5k-3\ell)/2} \bigg(1 + \sum_{j=1}^3 2^{j(k-\ell)/2 \wedge k}|\inn{x - t \gamma(s_{\nu})}{\be_j(s_{\nu})}|\bigg)^{-100}.
\end{equation*} 
\end{lemma}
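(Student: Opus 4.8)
The plan is to identify $K[a_{k,\ell}^{\nu,(\varepsilon)}]$ as an $s$-average of (rescaled) Fourier transforms of bump functions adapted to an anisotropic Frenet box, and then to exploit the smallness of the effective $s$-support. Writing out the definitions in \eqref{multiplier definition}, one has
\[
  K[a_{k,\ell}^{\nu,(\varepsilon)}](x,t)=\frac{\rho(t)}{(2\pi)^{3}}\int_{\R}\chi(s)\,G_{s}\big(x-t\gamma(s)\big)\,\ud s,\qquad G_{s}(z):=\int_{\widehat{\R}^{3}}e^{i\inn{z}{\xi}}a_{k,\ell}^{\nu,(\varepsilon)}(\xi;t;s)\,\ud\xi,
\]
so the estimate splits into a pointwise bound with spatial decay for the inner integral $G_{s}$ and a trivial bound for the outer $s$-integral. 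By Lemma~\ref{J=3 supp lem}, for each fixed $s$ and $t$ the symbol $a_{k,\ell}^{\nu,(\varepsilon)}(\,\cdot\,;t;s)$ is supported in a box which in the Frenet coordinates $\xi^{(j)}:=\inn{\be_{j}(s_{\nu})}{\xi}$ has dimensions $2^{d_{1}}\times2^{d_{2}}\times2^{d_{3}}$, where $(d_{1},d_{2},d_{3})=\big(\tfrac{k-\ell}{2},k-\ell,k\big)$ when $\ell<\floor{k/3}_{\varepsilon}$ (part b) and $(d_{1},d_{2},d_{3})=(k-2\ell,k-\ell,k)$ when $\floor{k/3}_{\varepsilon}\le\ell\le\floor{k/3}$ (part a). In both cases one checks $d_{1}+d_{2}+d_{3}\le\tfrac{5k-3\ell}{2}+O(\varepsilon k)$ and $d_{j}\ge c_{j}:=\min\{j(k-\ell)/2,\,k\}$ for $j=1,2,3$; the latter inequality (for $j=1$ in the second regime) is precisely where the hypothesis $\ell\le\floor{k/3}$ enters.

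The technical core is to show that, after the linear change of variables $\xi=L_{s_{\nu}}\eta$ with $L_{s_{\nu}}$ the matrix whose columns are $2^{d_{j}}\be_{j}(s_{\nu})$, the rescaled symbol $\widetilde{a}_{s}(\eta):=a_{k,\ell}^{\nu,(\varepsilon)}(L_{s_{\nu}}\eta;t;s)$ behaves like a standard bump: it is supported in a ball of radius $O(2^{O(\varepsilon k)})$ and satisfies $|\partial_{\eta}^{\alpha}\widetilde{a}_{s}(\eta)|\lesssim_{\alpha}2^{O(\varepsilon k)}$, uniformly for $s\in\supp\chi$ and $1/2\le t\le4$. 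This reduces to proving anisotropic derivative estimates for the cutoff factors defining $a_{k,\ell}^{\nu,(\varepsilon)}$: the original symbol $a$ obeys $|\partial_{\xi}^{\alpha}a|\lesssim|\xi|^{-|\alpha|}\sim2^{-k|\alpha|}$, and the remaining factors $\beta^{k}$, $\beta(2^{-k+2\ell}\bu(\xi))$, and the $s$- and $\nu$-localising cutoffs built from $|s-\theta_{1}^{\pm}(\xi)|$, $|s-\theta_{2}(\xi)|$, $\theta_{1}^{\pm}(\xi)$ and $\theta_{2}(\xi)$ are handled once one knows that each Frenet-directional derivative $\inn{\be_{j}(s_{\nu})}{\nabla_{\xi}}$ applied to $\bu$, $\theta_{2}$ or $\theta_{1}^{\pm}$ gains a factor $2^{-d_{j}}$ up to $2^{O(\varepsilon k)}$. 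These gains follow by differentiating the defining relations $\inn{\gamma''\circ\theta_{2}(\xi)}{\xi}=0$ and $\inn{\gamma'\circ\theta_{1}^{\pm}(\xi)}{\xi}=0$ — using in particular the identity $\nabla_{\xi}\bu(\xi)=\gamma'\circ\theta_{2}(\xi)$ that drops out of the first relation — together with the lower bound \eqref{4 derivative bound}, the estimate $|\inn{\gamma''\circ\theta_{1}^{\pm}(\xi)}{\xi}|\sim2^{k-\ell}$ from Lemma~\ref{root control lem}, and the Frenet Taylor inequalities \eqref{Frenet bound alt 1}; higher-order derivatives are dispatched by a routine induction on the order of differentiation. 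I expect this verification to be the main obstacle: it is elementary but bookkeeping-intensive, and the correct choice of box dimensions in each of the two regimes must be tracked carefully.

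Granting the bump bounds for $\widetilde{a}_{s}$, repeated integration by parts in $\eta$ in $G_{s}(z)=(2\pi)^{-3}|\det L_{s_{\nu}}|\int e^{i\inn{L_{s_{\nu}}^{\top}z}{\eta}}\widetilde{a}_{s}(\eta)\,\ud\eta$ yields, for every $N$,
\[
  |G_{s}(z)|\lesssim_{N}2^{O(\varepsilon k)}\,2^{d_{1}+d_{2}+d_{3}}\Big(1+\sum_{j=1}^{3}2^{d_{j}}|\inn{\be_{j}(s_{\nu})}{z}|\Big)^{-N},
\]
since $|\det L_{s_{\nu}}|=2^{d_{1}+d_{2}+d_{3}}$ and $|L_{s_{\nu}}^{\top}z|\gtrsim\sum_{j}2^{d_{j}}|\inn{\be_{j}(s_{\nu})}{z}|$. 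To finish I would then: (a) replace $z=x-t\gamma(s)$ by $x-t\gamma(s_{\nu})$ inside the decay factor, using that by \eqref{Frenet bound alt 1} one has $|\inn{\gamma(s)-\gamma(s_{\nu})}{\be_{j}(s_{\nu})}|\lesssim|s-s_{\nu}|^{j}$, while the localisations in \eqref{akell dec}, \eqref{akellnu dec 1} and \eqref{akellnu dec 2} force $|s-s_{\nu}|\lesssim2^{-(k-\ell)/2+O(\varepsilon k)}$ on $\supp a_{k,\ell}^{\nu,(\varepsilon)}$ (in the regime $\ell\ge\floor{k/3}_{\varepsilon}$ one uses the sharper bound $|s-s_{\nu}|\lesssim2^{-\ell+O(\varepsilon k)}$), so that $2^{d_{j}}t\,|\inn{\gamma(s)-\gamma(s_{\nu})}{\be_{j}(s_{\nu})}|\lesssim2^{O(\varepsilon k)}$ and the decay in $x-t\gamma(s_{\nu})$ persists; (b) integrate trivially in $s$, picking up the length $\lesssim2^{-(k-\ell)/2+O(\varepsilon k)}$ of the effective $s$-support; and (c) combine $d_{1}+d_{2}+d_{3}\le\tfrac{5k-3\ell}{2}+O(\varepsilon k)$ with $d_{j}\ge c_{j}$ to bound the result by $2^{-(k-\ell)/2+O(\varepsilon k)}\,2^{(5k-3\ell)/2}\big(1+\sum_{j}2^{c_{j}}|\inn{\be_{j}(s_{\nu})}{x-t\gamma(s_{\nu})}|\big)^{-100}\rho(t)$, which is exactly the claimed estimate; the power $100$ is recovered by choosing $N$ large enough in step (a).
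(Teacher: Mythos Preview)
Your proposal is correct and follows essentially the same route as the paper: both arguments integrate by parts in $\xi$ along the Frenet directions $\be_j(s_\nu)$, with the main technical work being the anisotropic derivative bounds on $u$, $\theta_2$ and $\theta_1^{\pm}$ (which the paper packages into the appendix Lemma~\ref{imp deriv lem}, and which you describe as a ``routine induction'' on the defining relations). The only cosmetic difference is that you pull out the $s$-integral first and correct the centre $\gamma(s)\to\gamma(s_\nu)$ afterwards, whereas the paper writes the phase as $e^{i\inn{x-t\gamma(s_\nu)}{\xi}}\cdot e^{-it\inn{\gamma(s)-\gamma(s_\nu)}{\xi}}$ and differentiates the second factor directly; both organisations lead to the same bounds.
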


\begin{proof} Let $\nabla_{\bm{v}_j}$ denote the directional derivative with respect to the $\xi$ variable in the direction of the vector $\bm{v}_j := \be_j(s_\nu)$, so that
\begin{equation*}
    \Big(\frac{1}{i \inn{x - t \gamma(s_{\nu})}{\be_j(s_\nu)}}\nabla_{\bm{v_j}} - 1 \Big) e^{i\inn{x - t \gamma(s_{\nu})}{\xi}} = 0.
\end{equation*}
Thus, by repeated integration-by-parts, it follows that
\begin{align*}
    |K[a_{k,\ell}^{\nu, (\varepsilon)}](x,t)| &\leq |\inn{x - t \gamma(s_{\nu})}{\be_j(s_{\nu})}|^{-N} \int_{\widehat{\R}^3} \big|\nabla_{\bm{v}_j}^N\big[ e^{it\inn{\gamma(s_{\nu})}{\xi}}m[a_{k,\ell}^{\nu, (\varepsilon)}](\xi;t)\big]\big|\,\ud \xi \\
    &\lesssim  2^{(5k - 3\ell)/2} 2^{O(\varepsilon k)}|\inn{x - t \gamma(s_{\nu})}{\be_j(s_{\nu})}|^{-N} \sup_{\xi \in \widehat{\R}^3}\big|\nabla_{\bm{v}_j}^N\big[ e^{it\inn{\gamma(s_{\nu})}{\xi}}m[a_{k,\ell}^{\nu, (\varepsilon)}](\xi;t)\big]\big|;
\end{align*}
here the second inequality follows from the $\xi$-support properties of the symbols $a_{k,\ell}^{\nu, (\varepsilon)}$ from Lemma~\ref{J=3 supp lem} b) if $0 \leq \ell < \floor{k/3}_{\,\varepsilon}$ (in which case there is no $2^{O(\varepsilon k)}$ loss) or Lemma~\ref{J=3 supp lem} a) if $\floor{k/3}_{\,\varepsilon} \leq \ell \leq \floor{k/3}$. Observe that 
\begin{equation*}
 e^{it\inn{\gamma(s_{\nu})}{\xi}} m[a_{k,\ell}^{\nu, (\varepsilon)}](\xi;t) = \int_{\R} e^{-it \inn{\gamma(s) - \gamma(s_{\nu})}{\xi}} a_{k,\ell}^{\nu, (\varepsilon)}(\xi;s) \chi(s)\rho(t)\,\ud s.
\end{equation*}
Passing the differential operator $\nabla_{\bm{v}_j}$ into the $s$-integral, we therefore have
\begin{equation}\label{J3 ker 0}
\big|\nabla_{\bm{v}_j}^N\big[ e^{it\inn{\gamma(s_{\nu})}{\xi}}m[a_{k,\ell}^{\nu, (\varepsilon)}](\xi;t)\big]\big| \lesssim 2^{-(k-\ell)/2} 2^{O(\varepsilon k)} \sup_{s \in \R} \big|\nabla_{\bm{v}_j}^N\big[ e^{-it \inn{\gamma(s) - \gamma(s_{\nu})}{\xi}} a_{k,\ell}^{\nu, (\varepsilon)}(\xi;t;s)\big]\big| \, \rho(t).
\end{equation}
Here we have used the $s$-support properties of $a_{k,\ell}^{\nu, (\varepsilon)}$; in particular, the definition \eqref{akell dec}. 
\medskip

Consider the oscillatory factor $e^{-it \inn{\gamma(s) - \gamma(s_{\nu})}{\xi}}$ on the right-hand side of \eqref{J3 ker 0}. The $\xi$ derivatives of this function can be controlled on $\supp a_{k,\ell}^{\nu, (\varepsilon)}$ by noting that
\begin{equation*}
    |\inn{\gamma(s) - \gamma(s_{\nu})}{\bm{v}_j}| \leq \int_{s_{\nu}}^s |\inn{\gamma'(\sigma)}{\bm{v}_j}|\,\ud \sigma \lesssim |s - s_{\nu}|^j \lesssim 2^{-j(k-\ell)/2} 2^{O(\varepsilon k)} \qquad \textrm{for $1 \leq j \leq 3$,}
\end{equation*}
where we have used \eqref{Frenet bound alt 1} and triangle inequality and \eqref{akellnu dec 1} in the last inequality. Thus, by the Leibniz rule, the problem is reduced to showing
\begin{equation}\label{J3 ker 1}
   |\nabla_{\bm{v}_j}^{N} a_{k,\ell}^{\nu, (\varepsilon)}(\xi;t;s)| \lesssim_N 2^{-(j(k-\ell)/2 \wedge k)N} 2^{\varepsilon \ell N} \qquad \textrm{for all $1 \leq j \leq 3$ and all $N \in \N$.} 
\end{equation}

For all $N \in \N$, we claim the following: 
\begin{itemize}
    \item For $\xi \in \xisupp a_{k, \ell}^{\nu, (\varepsilon)}$ with $0 \leq \ell \leq \floor{k/3}$,
\begin{equation}\label{J3 ker 2a}
    2^{\ell}|\nabla_{\bm{v}_j}^N \theta_2(\xi)|, \quad  2^{-k+2\ell}|\nabla_{\bm{v}_j}^N u(\xi)|\;\; \lesssim_N \;\; 2^{-(j(k-\ell)/2 \wedge k)N} 2^{\varepsilon \ell N};
\end{equation}
    \item For $\xi \in \xisupp a_{k, \ell}^{\nu, (\varepsilon)}$ with $0 \leq \ell < \floor{k/3}_{\,\varepsilon}$,
\begin{equation}\label{J3 ker 2b}
    2^{(k-\ell)/2}|\nabla_{\bm{v}_j}^N \theta_1(\xi)| \lesssim_N 2^{-(j(k-\ell)/2 \wedge k)N}.
\end{equation}

\end{itemize}
Assuming that this is so, the derivative bounds \eqref{J3 ker 1} follow directly from the chain and Leibniz rule, applying \eqref{J3 ker 2a} and \eqref{J3 ker 2b}.\medskip

The claimed bound \eqref{J3 ker 2a} follows from repeated application of the chain rule, provided 
\begin{subequations}
\begin{align}\label{J3 ker 3a}
    |\inn{\gamma^{(3)}\circ \theta_2(\xi)}{\xi}| &\gtrsim 2^k, \\
    \label{J3 ker 3b}
    |\inn{\gamma^{(K)}\circ \theta_2(\xi)}{\xi}| &\lesssim_K 2^{k +\ell(K-3)},  \\
    \label{J3 ker 3c}
    |\inn{\gamma^{(K)}\circ \theta_2(\xi)}{\bm{v}_j}| &\lesssim_K 2^{-(j(k-\ell)/2 \wedge k) + k +\ell(K-3)}2^{\varepsilon \ell}
\end{align}
\end{subequations}
hold for all $K \geq 2$ and all $\xi \in \xisupp a_{k,\ell}^{\nu,(\varepsilon)}$. In particular, assuming \eqref{J3 ker 3a}, \eqref{J3 ker 3b} and \eqref{J3 ker 3c}, the bounds in \eqref{J3 ker 2a} are then a consequence of Lemma~\ref{imp deriv lem} in the appendix: \eqref{J3 ker 2a} corresponds to \eqref{multi imp der bound}
and \eqref{multi Faa di Bruno eq 2} 
whilst the hypotheses in the above display correspond to \eqref{multi imp deriv 2}
and \eqref{multi imp deriv 3} (see Example~\ref{deriv ex}). Here the parameters featured in the appendix are chosen as follows: 
\begin{center}
    \begin{tabular}{|c|c|c|c|c|c|c|} 
\hline
 & & & & & & \\[-0.8em]
$g$ & $h$ & $A$ & $B$ & $M_1$ & $M_2$ & $\be$ \\
 & & & & & & \\[-0.8em]
\hline
  & & & & & & \\[-0.8em]
$\gamma''$ & $\gamma'$ & 
$2^{k-\ell}$ & $2^{k-2\ell}$ & $2^{-(j(k-\ell)/2 \wedge k)} 2^{\varepsilon \ell}$  &  $2^{\ell}$  & $\bm{v}_j$ \\
 & & & & & & \\[-0.8em]
 \hline
\end{tabular}
\end{center}

The conditions \eqref{J3 ker 3a}, \eqref{J3 ker 3b} and \eqref{J3 ker 3c} are direct consequences of the support properties of the $a_{k,\ell}^{\nu, (\varepsilon)}$. 
Indeed, \eqref{J3 ker 3a} and the $K \geq 3$ case of \eqref{J3 ker 3b} are trivial consequences of the localisation of the symbol $a_k$. The remaining $K = 2$ case of \eqref{J3 ker 3b} follows immediately since $\inn{\gamma'' \circ \theta_2(\xi)}{\xi}=0$. Finally, the right-hand side of \eqref{J3 ker 3c} is always greater than 1 unless $j = 3$ and $K = 2$, and so we can immediately reduce to this case. If $0 \leq \ell < \floor{k/3}_{\varepsilon}$, then \eqref{Frenet bound alt 1} together with Lemma~\ref{root control lem} and the $\theta_1$ localisation in \eqref{akellnu dec 1} implies
\begin{equation*}
      |\inn{\gamma^{(2)}\circ \theta_2(\xi)}{\bm{v}_3}| \lesssim |\theta_2(\xi) - s_{\nu}| \leq |\theta_2(\xi) - \theta_1(\xi)| + |\theta_1(\xi) - s_{\nu}| \lesssim 2^{-\ell}.
\end{equation*}
On the other hand, if $\floor{k/3}_{\varepsilon} \leq \ell \leq \floor{k/3}$, then, by a similar argument, $|\inn{\gamma^{(2)}\circ \theta_2(\xi)}{\bm{v}_3}| \lesssim 2^{-\ell(1 -\varepsilon)}$. This concludes the proof of \eqref{J3 ker 3c}.\medskip

Similarly, the claimed bound \eqref{J3 ker 2b} follows from repeated application of the chain rule, provided 
\begin{subequations}
\begin{align}\label{J3 ker 4a}
    |\inn{\gamma^{(2)}\circ \theta_1(\xi)}{\xi}| &\gtrsim 2^{k-\ell}, \\
    \label{J3 ker 4b}
    |\inn{\gamma^{(K)}\circ \theta_1(\xi)}{\xi}| &\lesssim_K 2^{K(k-\ell)/2},  \\
    \label{J3 ker 4c}
   |\inn{\gamma^{(K)}\circ \theta_1(\xi)}{\bm{v}_j}| &\lesssim_K 2^{-(j(k-\ell)/2 \wedge k) + K(k-\ell)/2}
\end{align}
\end{subequations}
hold for all $K \geq 2$ and all $\xi \in \supp a_{k,\ell}^{\nu,(\varepsilon)}$ when $0 \leq \ell < \floor{k/3}_{\,\varepsilon}$. This again follows by Lemma~\ref{imp deriv lem} in the appendix. Here the parameters are chosen as follows: 
\begin{center}
    \begin{tabular}{|c|c|c|c|c|c|c|} 
\hline
 & & & & \\[-0.8em]
$g$  & $A$ & $M_1$ & $M_2$ & $\be$ \\
 & & & & \\[-0.8em]
\hline
  & & & & \\[-0.8em]
$\gamma'$  & 
$2^{(k-\ell)/2}$ & $2^{-(j(k-\ell)/2 \wedge k)}$  &  $2^{(k-\ell)/2}$  & $\bm{v}_j$ \\
 & & & & \\[-0.8em]
 \hline
\end{tabular}
\end{center}

The conditions \eqref{J3 ker 4a}, \eqref{J3 ker 4b} and \eqref{J3 ker 4c} are direct consequences of the support properties of the $a_{k,\ell}^{\nu, (\varepsilon)}$ for $0 \leq \ell < \floor{k/3}_{\,\varepsilon}$. Indeed, \eqref{J3 ker 4a} and the $K = 2$ case of \eqref{J3 ker 4b} is just a restatement of the condition $|v^{\pm}(\xi)| \sim 2^{k-\ell}$, which holds due to Lemma~\ref{root control lem}. The $K \geq 3$ case of \eqref{J3 ker 4b} follows immediately from the localisation of the symbols $a_k$. Finally, the right-hand side of \eqref{J3 ker 4c} is always greater than 1 unless $j = 3$ and $K = 2$, and so we can immediately reduce to this case. However, \eqref{Frenet bound alt 1} 
together with the $\theta_1$ localisation in \eqref{akellnu dec 1} implies
\begin{equation*}
      |\inn{\gamma^{(2)}\circ \theta_1(\xi)}{\bm{v}_3}| \lesssim  |\theta_1(\xi) - s_{\nu}|  \lesssim  2^{-(k-\ell)/2}2^{\varepsilon k}  \lesssim 2^{-\ell},
\end{equation*}
which concludes the proof of \eqref{J3 ker 4c}.
\end{proof}




\subsection{Localising the input function}\label{f freq loc subsec} At this juncture it is useful to note some further geometric properties of the support of the multipliers $m[a_{k,\ell}^{\nu, (\varepsilon)}]$ featured in the decomposition.\medskip

Recall from Lemma~\ref{J=3 supp lem} a) that
\begin{equation}\label{rec supp eq a}
  \xisupp a_{k,\ell}^{\nu, (\varepsilon)} \subseteq 2^k \cdot \pi_1(s_{\mu}; 2^{-\ell}) \qquad \textrm{for all $\nu \in \mathfrak{N}_{\ell}(\mu)$,}
\end{equation}
where $s_\mu:=2^{-\ell}\mu$. The right-hand set is contained in a certain sector in the frequency space. In particular, given $0 \leq \ell \leq \floor{k/3}$ and $m \in \Z$ define
\begin{equation}\label{sector def}
\Delta_{k, \ell}(m) := \big\{ \xi \in \widehat{\R}^3 : |\xi_2 - \xi_3 2^{-\ell}m| \leq C 2^{-\ell} \xi_3 \textrm{ and } C^{-1} 2^k \leq \xi_3 \leq C 2^k \big\},
\end{equation}
where $C \geq 1$ is an absolute constant, chosen sufficiently large so as to satisfy the requirements of the forthcoming argument. 

\begin{lemma}\label{sec supp lem} If $\mu$, $\nu \in \Z$ satisfy $\nu \in \mathfrak{N}_{\ell}(\mu)$, then there exists some $ m(\mu) \in \Z$ such that
\begin{equation}\label{f freq claim} 
    2^k \cdot \pi_1(s_{\mu}; 2^{-\ell}) \subseteq \Delta_{k, \ell}\big(m(\mu)\big).
\end{equation}
Furthermore, for each fixed $k$ and $\ell$, given $m \in \Z$ there are only $O(1)$ values of $\mu \in \Z$ such that $m = m(\mu)$.
\end{lemma}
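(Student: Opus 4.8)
The plan is to produce $m(\mu)$ explicitly from the binormal direction of $\gamma$ and then verify both assertions by comparison with the moment curve. Only values $s_\mu = 2^{-\ell}\mu$ with $|s_\mu| \lesssim \delta_0$ are relevant: for larger $s_\mu$ the box $2^k\cdot\pi_1(s_\mu;2^{-\ell})$ is disjoint from $\xisupp a$ (which, by the reductions of \S\ref{sec:slow decay cone}, lies in $\{|\xi_2|\le 8\delta_0|\xi_3|\}$, forcing $|s_\mu|\lesssim\delta_0$ through the box constraints), so the inclusion is vacuous; we restrict to this range. Set
\begin{equation*}
    h(s) := \frac{\inn{\vec{e}_2}{\be_3(s)}}{\inn{\vec{e}_3}{\be_3(s)}}.
\end{equation*}
A direct computation gives the binormal of the moment curve as $\be_3^{\circ}(s) = (s^2/2,\,-s,\,1)\,(1 + s^2 + s^4/4)^{-1/2}$, so $\inn{\vec{e}_3}{\be_3^{\circ}(s)} > 1/2$ on $[-1,1]$ and the associated ratio is exactly $-s$. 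Since $\gamma \in \mathfrak{G}_3(\delta_0)$, the Frenet data of $\gamma$ is $O(\delta_0)$-close in $C^1([-1,1])$ to that of $\gamma_{\circ}$; hence $\inn{\vec{e}_3}{\be_3(s)} > 1/2$, the function $h$ is well-defined and $C^1$ on $[-1,1]$, and $h(s) = -s + O(\delta_0)$, $h'(s) = -1 + O(\delta_0)$. I take $m(\mu)$ to be the nearest integer to $2^{\ell}h(s_\mu)$.

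To prove the inclusion \eqref{f freq claim}, fix $\xi \in 2^k \cdot \pi_1(s_\mu; 2^{-\ell})$; it suffices to treat the part where $\inn{\be_3(s_\mu)}{\xi} > 0$, compatibly with the standing assumption $\xi_3 > 0$, the opposite sign being symmetric. Expand $\xi = \sum_{j=1}^3 \inn{\be_j(s_\mu)}{\xi}\,\be_j(s_\mu)$ and extract Euclidean coordinates. The defining conditions of $\pi_1(s_\mu; 2^{-\ell})$ give $|\inn{\be_1(s_\mu)}{\xi}| \lesssim 2^{k-2\ell}$, $|\inn{\be_2(s_\mu)}{\xi}| \lesssim 2^{k-\ell}$ and $\inn{\be_3(s_\mu)}{\xi} \sim 2^k$; since $|\inn{\vec{e}_3}{\be_j(s_\mu)}| \lesssim \delta_0$ for $j=1,2$ and $\inn{\vec{e}_3}{\be_3(s_\mu)} = 1 + O(\delta_0)$, this already yields $\xi_3 \sim 2^k$. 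Moreover $\inn{\vec{e}_2}{\be_3(s_\mu)} - h(s_\mu)\inn{\vec{e}_3}{\be_3(s_\mu)} = 0$ by construction, whence
\begin{equation*}
    \xi_2 - h(s_\mu)\,\xi_3 = \sum_{j=1}^{2}\inn{\be_j(s_\mu)}{\xi}\,\big(\inn{\vec{e}_2}{\be_j(s_\mu)} - h(s_\mu)\inn{\vec{e}_3}{\be_j(s_\mu)}\big) = O(2^{k-\ell}),
\end{equation*}
the bracketed factors being of size $O(1)$. Combined with $|h(s_\mu) - 2^{-\ell}m(\mu)| \leq 2^{-\ell-1}$ this gives $|\xi_2 - 2^{-\ell}m(\mu)\,\xi_3| \lesssim 2^{k-\ell} \lesssim 2^{-\ell}\xi_3$, so $\xi \in \Delta_{k,\ell}(m(\mu))$ provided the absolute constant $C$ in \eqref{sector def} is taken large enough.

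For the finite-overlap statement, note that $m(\mu)$ is the nearest integer to $G(\mu) := 2^{\ell}h(2^{-\ell}\mu)$, and over the admissible range $G'(\mu) = h'(2^{-\ell}\mu) = -1 + O(\delta_0) \in [-3/2,-1/2]$. Hence $\tfrac12|\mu_1 - \mu_2| \leq |G(\mu_1) - G(\mu_2)|$, so if $m(\mu_1) = m(\mu_2) = m$ then $\tfrac12|\mu_1 - \mu_2| \leq |G(\mu_1) - m| + |m - G(\mu_2)| \leq 1$, i.e. $|\mu_1 - \mu_2| \leq 2$. Thus each $m$ equals $m(\mu)$ for at most $O(1)$ values of $\mu$.

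The Frenet and Taylor estimates above are routine; the point that genuinely needs care is the choice of $h$ as the \emph{ratio} $\inn{\vec{e}_2}{\be_3(s)}/\inn{\vec{e}_3}{\be_3(s)}$ rather than just $\inn{\vec{e}_2}{\be_3(s)}$. This is exactly what keeps the error $\xi_2 - h(s_\mu)\xi_3$ of size $O(2^{k-\ell})$ uniformly in $0 \leq \ell \leq \floor{k/3}$; the naive choice would leave an unwanted term of size $\sim \delta_0^2\, 2^k$ stemming from $\inn{\vec{e}_3}{\be_3(s_\mu)} \neq 1$, which is not controlled by the available margin $2^{-\ell}\xi_3$ once $\ell$ is large.
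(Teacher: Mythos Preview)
Your proof is correct and follows essentially the same approach as the paper: your function $h(s)=\be_{32}(s)/\be_{33}(s)$ is precisely the paper's $G_2(s)$, and both arguments establish $|\xi_2-h(s_\mu)\xi_3|\lesssim 2^{k-\ell}$ and then use $|h'(s)|\sim 1$ for the $O(1)$-to-1 claim. Your direct expansion $\xi=\sum_j\inn{\be_j(s_\mu)}{\xi}\be_j(s_\mu)$ with the observation that the $j=3$ term cancels by construction of $h$ is a cleaner route to the same estimate than the paper's $2\times 2$ matrix setup, and your derivation of $|h'|\sim 1$ via comparison with the moment curve replaces the paper's appeal to the Frenet equations, but these are presentational differences only.
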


\begin{proof} Define $G \colon I_0 \to \R^3$ by $G(s) := \be_{33}(s)^{-1} \be_3(s)$. As a consequence of the Frenet equations, the vectors $G'(s)$, $G''(s)$ span $\R^2 \times \{0\}$. Given $\xi \in \widehat{\R}^3$, it follows that there exist $\eta_1$, $\eta_2 \in \R$ such that 
\begin{equation}\label{f freq loc 1}
    \xi - \xi_3 G(s) = \sum_{j=1}^2 2^{- \ell j} \eta_j G^{(j)}(s). 
\end{equation}
Taking the inner product of both sides of this identity with respect to the $\be_j(s)$ for $j = 1$, $2$ and applying the Frenet equations
\begin{equation}\label{f freq loc 2}
    \begin{bmatrix}
    \inn{\xi}{\be_1(s)} \\
    \inn{\xi}{\be_2(s)}
    \end{bmatrix}
    =
    \begin{bmatrix}
    0 & \inn{G^{(2)}(s)}{\be_1(s)} \\
    \inn{G^{(1)}(s)}{\be_2(s)} & \inn{G^{(2)}(s)}{\be_2(s)}
    \end{bmatrix}
    \begin{bmatrix}
    2^{-\ell} \eta_1 \\
    2^{-2\ell} \eta_2
    \end{bmatrix}
\end{equation}
where the anti-diagonal entries of the right-hand $2 \times 2$ matrix have size $\sim 1$.\footnote{A similar computation is carried out in more detail in \S\ref{geo obs sec}.}\smallskip

 Let $\xi \in 2^k \cdot \pi_1(s; 2^{-\ell})$ so that $|\inn{\xi}{\be_1(s_{\mu})}| \lesssim 2^{k-2\ell}$ and $|\inn{\xi}{\be_2(s_{\mu})}| \lesssim 2^{k-\ell}$. Combining these bounds with \eqref{f freq loc 1} and \eqref{f freq loc 2}, it follows that 
\begin{equation*}
    |\xi_2 - \xi_3G_2(s_{\mu})| \leq |\xi - \xi_3G(s_{\mu})| \lesssim 2^{k-\ell} \sim 2^{-\ell} \xi_3.
\end{equation*}
If we take $m(\mu)$ to be the integer which minimises $|2^{-\ell}m -G_2(s_{\mu})|$, then we obtain \eqref{f freq claim}. On the other hand, the Frenet equations ensure that $G_2(s) = \be_{33}(s)^{-1}\be_{32}(s)$ satisfies $|G_2'(s)| \sim 1$ for all $s \in I_0$. Consequently, the assignment $\mu \mapsto m(\mu)$ is $O(1)$-to-1, as claimed.  
\end{proof}

For each $\mu \in \Z$ define the smooth cutoff function
\begin{equation*}
    \chi_{k,\ell}^{*,\mu}(\xi) := \eta\big(C^{-1}|2^{\ell}\xi_2/\xi_3 - m(\mu)|\big) \, \big( \eta(C^{-1}2^{-k}\xi_3) - \eta(2C2^k \xi_3)\big).
\end{equation*}
If $\xi \in \xisupp a_{k,\ell}^{\nu, (\varepsilon)}$ for $\nu \in \mathfrak{N}(\mu)$, then \eqref{rec supp eq a} and Lemma~\ref{sec supp lem} imply $\chi_{k,\ell}^{*,\mu}(\xi) = 1$. Thus, if we define the corresponding frequency projection
\begin{equation*}
    f^{*, \mu}_{k,\ell} := \chi_{k,\ell}^{*,\mu}(D)f,
\end{equation*}
it follows that
\begin{equation*}
    m[a_{k,\ell}^{\nu, (\varepsilon)}](D;\,\cdot\,)f = m[a_{k,\ell}^{\nu, (\varepsilon)}](D;\,\cdot\,)f^{*, \mu}_{k,\ell} \qquad \textrm{for all $\nu \in \mathfrak{N}_{\ell}(\mu)$.}
\end{equation*}

Recall from Lemma~\ref{J=3 supp lem} b) that we also have
\begin{equation}\label{rec supp eq b}
  \xisupp a_{k,\ell}^{\nu, (\varepsilon)} \subseteq 2^{k-\ell} \cdot \pi_0(s_{\nu}; 2^{-(k-\ell)/2}, 2^{\ell}), \qquad \textrm{where $s_\nu:=2^{-(k-\ell)/2}\nu$.}
\end{equation}
Fix some $0 \leq \ell \leq \floor{k/3}$ and $\mu \in \Z$ with $s_{\mu} := 2^{-\ell} \mu \in [-1,1]$. To simplify notation, let $\sigma := s_{\mu}$, $\lambda := 2^{-\ell}$ and let $\widetilde{\gamma} := \gamma_{\sigma, \lambda}$ denote the rescaled curve, as defined in Definition~\ref{rescaled curve def}, so that
\begin{equation}\label{J4 gamma resc}
    \widetilde{\gamma}(s) := \big([\gamma]_{\sigma, \lambda}\big)^{-1} \big( \gamma(\sigma + \lambda s) - \gamma(\sigma)\big). 
\end{equation}
Let $(\tilde{\be}_j)_{j=1}^4$ denote the Frenet frame defined with respect to $\widetilde{\gamma}$. Given $0 < r \leq 1$ and $s \in I_0$, recall the definition of the $(0,r)$-\textit{Frenet boxes} (with respect to $(\tilde{\be}_j)_{j=1}^3$) introduced in Definition~\ref{def Frenet box}: 
\begin{equation*}
     \pi_{0,\widetilde{\gamma}}(s;r):= \big\{\xi \in \widehat{\R}^3: |\inn{\tilde{\be}_1(s)}{\xi}| \lesssim r, \quad |\inn{\tilde{\be}_2(s)}{\xi}| \sim 1,  \quad |\inn{\tilde{\be}_3(s)}{\xi}| \lesssim 1\big\}.
\end{equation*}
 Note that all these definitions depend of the choice of $\mu$ and $\ell$, but we suppress this dependence in the notation. 

\begin{lemma}\label{freq resc lem} With the above setup, and $\nu \in \mathfrak{N}(\mu)$,
\begin{equation*}
    [\gamma]_{\sigma,\lambda}^{\top}  \cdot 2^{k-\ell} \cdot \pi_{0,\gamma}(s_{\nu}; 2^{-(k-\ell)/2}, 2^{\ell}) \subseteq 2^{k-3\ell} \cdot \pi_{0,\widetilde{\gamma}}(\tilde{s}_{\nu}; 2^{-(k-3\ell)/2}),
\end{equation*}
 where $\tilde{s}_{\nu} := 2^{\ell} (s_{\nu} - s_{\mu})$ for $s_{\nu} := 2^{-(k - \ell)/2}\nu$. 
\end{lemma}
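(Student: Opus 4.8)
The plan is to reduce the whole statement to the single covariance identity for the \emph{derivative} frames of $\gamma$ and of the rescaled curve $\widetilde{\gamma} = \gamma_{\sigma,\lambda}$, and then to pass between derivative frames and Frenet frames using bounded triangular changes of basis.

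First I would differentiate \eqref{J4 gamma resc} $i$ times to obtain $[\gamma]_{\sigma,\lambda}\,\widetilde{\gamma}^{(i)}(s) = \lambda^i\,\gamma^{(i)}(\sigma+\lambda s)$ for $1 \leq i \leq 3$. Since $\sigma + \lambda \tilde{s}_{\nu} = s_{\mu} + 2^{-\ell}\cdot 2^{\ell}(s_{\nu}-s_{\mu}) = s_{\nu}$, evaluating at $s = \tilde{s}_{\nu}$ gives $[\gamma]_{\sigma,\lambda}\widetilde{\gamma}^{(i)}(\tilde{s}_{\nu}) = 2^{-i\ell}\gamma^{(i)}(s_{\nu})$. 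Dualising: if $\zeta = [\gamma]_{\sigma,\lambda}^{\top}\xi$ and $v = \sum_{i \leq j}c_i \widetilde{\gamma}^{(i)}(\tilde{s}_{\nu})$ lies in the span of the first $j$ derivatives of $\widetilde{\gamma}$ at $\tilde{s}_{\nu}$, then $\inn{v}{\zeta} = \sum_{i \leq j}c_i\,2^{-i\ell}\inn{\gamma^{(i)}(s_{\nu})}{\xi}$. This will be applied to $v = \tilde{\be}_j(\tilde{s}_{\nu})$.

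Next I would record two elementary change-of-basis facts. Since $\widetilde{\gamma} \in \mathfrak{G}_3(\delta_0)$ (see \S\ref{curve sym sec}), the vectors $\widetilde{\gamma}^{(i)}(\tilde{s}_{\nu})$ are uniformly bounded and linearly independent with $\det[\widetilde{\gamma}]_{\tilde{s}_{\nu}} \geq 1/2$; hence $\tilde{\be}_j(\tilde{s}_{\nu}) = \sum_{i=1}^j c_{ji}\widetilde{\gamma}^{(i)}(\tilde{s}_{\nu})$ with $|c_{ji}| \lesssim 1$ and $c_{jj} \sim 1$, uniformly for $\tilde{s}_{\nu} \in [-1,1]$ — and $|\tilde{s}_{\nu}| = 2^{\ell}|s_{\nu}-s_{\mu}| \leq 1$ precisely because $\nu \in \mathfrak{N}_{\ell}(\mu)$. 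Likewise, since $\gamma \in \mathfrak{G}_3(\delta_0)$, the change of basis from the Frenet frame $(\be_i(s_{\nu}))$ to $(\gamma^{(i)}(s_{\nu}))$ is lower triangular with diagonal $\sim 1$ and off-diagonal entries $O(1)$; together with $k - \ell \geq 2k/3$ (so that $2^{(k-\ell)/2} \ll 2^{k-\ell}$ for $k$ large, the small-$k$ case being trivial) this turns the defining conditions of $\xi \in 2^{k-\ell}\cdot\pi_{0,\gamma}(s_{\nu};2^{-(k-\ell)/2},2^{\ell})$ into the equivalent system $|\inn{\gamma'(s_{\nu})}{\xi}| \lesssim 2^{(k-\ell)/2}$, $|\inn{\gamma''(s_{\nu})}{\xi}| \sim 2^{k-\ell}$, $|\inn{\gamma^{(3)}(s_{\nu})}{\xi}| \lesssim 2^k$. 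Feeding these bounds into the dualised identity yields $|\inn{\tilde{\be}_1(\tilde{s}_{\nu})}{\zeta}| \lesssim 2^{-\ell}2^{(k-\ell)/2} = 2^{(k-3\ell)/2}$ and, for $j = 2,3$, $|\inn{\tilde{\be}_j(\tilde{s}_{\nu})}{\zeta}| \lesssim 2^{-\ell}2^{(k-\ell)/2} + 2^{-2\ell}2^{k-\ell} + 2^{-3\ell}2^{k} \lesssim 2^{k-3\ell}$, where $k - 3\ell \geq 0$ absorbs the lower-order term $2^{(k-3\ell)/2}$. For the two-sided bound on the $j=2$ component, the term $c_{22}2^{-2\ell}\inn{\gamma''(s_{\nu})}{\xi}$ has modulus $\sim 2^{k-3\ell}$ while the remaining term has modulus $\lesssim 2^{(k-3\ell)/2}$, giving $|\inn{\tilde{\be}_2(\tilde{s}_{\nu})}{\zeta}| \sim 2^{k-3\ell}$. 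These are exactly the conditions defining $2^{k-3\ell}\cdot\pi_{0,\widetilde{\gamma}}(\tilde{s}_{\nu};2^{-(k-3\ell)/2})$ (Definition~\ref{def Frenet box}), so $\zeta = [\gamma]_{\sigma,\lambda}^{\top}\xi$ lies in that set, which is the claim.

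The routine parts are the two triangular change-of-basis estimates and the summation of geometric exponents. The one place requiring genuine care is the lower bound $|\inn{\tilde{\be}_2(\tilde{s}_{\nu})}{\zeta}| \gtrsim 2^{k-3\ell}$ in the borderline regime where $k - 3\ell = O(1)$, so that the perturbation $2^{(k-3\ell)/2}$ is comparable to the main term $2^{k-3\ell}$; there the plate $\pi_{0,\widetilde{\gamma}}(\tilde{s}_{\nu};2^{-(k-3\ell)/2})$ is at unit scale and the inclusion holds after fixing the implicit constants in the definition of $\pi_0$ appropriately. Everything else is bookkeeping with the support properties already established in Lemma~\ref{J=3 supp lem}.
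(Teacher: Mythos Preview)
Your proof is correct and follows essentially the same route as the paper's: convert the Frenet-box conditions for $\gamma$ into derivative-frame bounds via the lower-triangular Gram--Schmidt change of basis, apply the covariance identity $\inn{\widetilde{\gamma}^{(j)}(\tilde s_{\nu})}{[\gamma]_{\sigma,\lambda}^{\top}\xi} = 2^{-j\ell}\inn{\gamma^{(j)}(s_{\nu})}{\xi}$, and then convert back to the Frenet frame of $\widetilde{\gamma}$.

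One point worth noting: your handling of the lower bound $|\inn{\tilde{\be}_2(\tilde s_{\nu})}{\zeta}| \gtrsim 2^{k-3\ell}$ is in fact more careful than the paper's. The paper asserts that the change-of-basis matrix from $(\tilde{\be}_i(\tilde s_{\nu}))$ to $(\widetilde{\gamma}^{(i)}(\tilde s_{\nu}))$ is an $O(\delta_0)$ perturbation of the identity; but since $\tilde s_{\nu}$ ranges over $[-1,1]$ rather than the small interval $I_0$, the off-diagonal Gram--Schmidt coefficients for $\widetilde{\gamma}$ are only $O(1)$ in general (e.g.\ already for the moment curve $\inn{\gamma_{\circ}''(s)}{\gamma_{\circ}'(s)} = s + s^3/2$). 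Your argument, which instead uses the scale gap $2^{(k-3\ell)/2} \ll 2^{k-3\ell}$ and then fixes the implicit constants in the borderline regime $k-3\ell = O(1)$, is the cleaner justification.
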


\begin{proof} Let $\xi \in 2^{k-\ell} \cdot \pi_{0,\gamma}(s_{\nu}; 2^{-(k-\ell)/2}, 2^{\ell})$ so that 
\begin{equation*}
     |\inn{\be_1(s_{\nu})}{\xi}| \lesssim 2^{(k-\ell)/2} \quad, \quad |\inn{\be_2(s_{\nu})}{\xi}| \sim 2^{k-\ell}, \quad
    |\inn{\be_4(s_{\nu})}{\xi}| \sim 2^k.
\end{equation*}
Since the matrix corresponding to the change of basis from $\big(\be_j(s_{\nu})\big)_{j=1}^3$ to $\big(\gamma^{(j)}(s_{\nu})\big)_{j=1}^3$ is lower triangular and an $O(\delta_0)$ perturbation of the identity, provided $\delta_0$ is sufficiently small,
\begin{equation*}
     |\inn{\gamma^{(1)}(s_{\nu})}{\xi}| \lesssim 2^{(k-\ell)/2}, \quad |\inn{\gamma^{(2)}(s_{\nu})}{\xi}| \sim 2^{k-\ell}, \quad
    |\inn{\gamma^{(3)}(s_{\nu})}{\xi}| \sim 2^k.
\end{equation*}

Now define $\tilde{\xi} := \big([\gamma]_{\sigma, \lambda}\big)^{\top} \cdot \xi$. Since $\lambda := 2^{-\ell}$, it follows from the definition of $\widetilde{\gamma}$ from \eqref{J4 gamma resc} that
\begin{equation*}
   \inn{\widetilde{\gamma}^{(j)}(\tilde{s}_{\nu})}{\tilde{\xi}\,} = 2^{-j\ell}\inn{\gamma^{(j)}(s_{\nu})}{\xi}  \qquad \textrm{for $j \geq 1$}.
\end{equation*}
Combining the above observations,
\begin{equation*}
     |\inn{\widetilde{\gamma}^{(1)}(\tilde{s}_{\nu})}{\tilde{\xi}\,}| \lesssim 2^{(k-3\ell)/2}, \quad |\inn{\widetilde{\gamma}^{(2)}(\tilde{s}_{\nu})}{\tilde{\xi}\,}| \sim 2^{k-3\ell}, \quad
    |\inn{\widetilde{\gamma}^{(3)}(\tilde{s}_{\nu})}{\tilde{\xi}\,}| \sim 2^{k-3\ell}.
\end{equation*}
Provided $\delta_0$ is sufficiently small, the desired result now follows since the matrix corresponding to the change of basis from $\big(\tilde{\be}_i(\tilde{s}_{\nu})\big)_{i=1}^3$ to $\big(\widetilde{\gamma}^{(i)}(\tilde{s}_{\nu})\big)_{i=1}^3$ is also lower triangular and an $O(\delta_0)$ perturbation of the identity. 
\end{proof}

For $\nu \in \mathfrak{N}_{\ell}(\mu)$ define the smooth cutoff
\begin{equation}\label{nu smooth cutoff}
    \chi_{k, \ell}^{\nu}(\xi) := \chi_{\tilde{\pi}}\big(C^{-1} 2^{-(k-3\ell)} [\gamma]_{\sigma, \lambda}^{\top} \cdot \xi \big)
\end{equation}
where $\chi_{\tilde{\pi}}$ is as defined in \eqref{chi pi} for $\tilde{\pi} := \pi_{0,\widetilde{\gamma}}(\tilde{s}_{\nu}; 2^{-(k-3\ell)/2})$ as above. If $\xi \in \xisupp a_{k,\ell}^{\nu, (\varepsilon)}$, then \eqref{rec supp eq b} and Lemma~\ref{freq resc lem} imply $\chi_{k,\ell}^{\nu}(\xi) = 1$. Thus if we define the corresponding frequency projection 
\begin{equation*}
    f_{k,\ell}^{\nu} := \chi_{k,\ell}^{\nu}(D)f_{k,\ell}^{*,\mu},
\end{equation*}
it follows that
\begin{equation*}
    m[a_{k,\ell}^{\nu, (\varepsilon)}](D;\,\cdot\,)f = m[a_{k,\ell}^{\nu, (\varepsilon)}](D;\,\cdot\,)f_{k,\ell}^{*, \mu} = m[a_{k,\ell}^{\nu, (\varepsilon)}](D;\,\cdot\,)f_{k,\ell}^{\nu}  \qquad \textrm{for all $\nu \in \mathfrak{N}_{\ell}(\mu)$.}
\end{equation*}




\subsection{\texorpdfstring{$L^2$}{}-weighted bounds in \texorpdfstring{$\R^{3+1}$}{}}\label{L2 wtd 3+1 sec} We apply a standard duality argument to analyse the square function appearing in Proposition~\ref{J=3 rev sf prop}. In particular, we use $L^2$-weighted approach and the key ingredient is the Nikodym-type maximal inequality from \S\ref{Nikodym lem subsec}.\medskip

By duality, there exists a non-negative $g \in L^2(\R^{3+1})$ with $\|g\|_{L^2(\R^{3+1})} = 1$ such that
\begin{equation*}
     \Big\| \big(\sum_{\nu \in \Z} |m[a_{k,\ell}^{\nu,(\varepsilon)}](D; \,\cdot\,) f|^2 \big)^{1/2} \Big\|_{L^4(\R^{3+1})}^2 = \sum_{\nu \in \Z} \int_{\R^{3+1}}  |m[a_{k,\ell}^{\nu,(\varepsilon)}](D;t)f(x)|^2 g(x;t)\,\ud x \ud t.
\end{equation*}
By the observations of the previous subsection,
\begin{equation*}
    m[a_{k,\ell}^{\nu,(\varepsilon)}](D;t)f = m[a_{k,\ell}^{\nu,(\varepsilon)}](D;t) f_{k,\ell}^{\nu}.
\end{equation*}
Let $\psi_{\,\mathcal{T}_{k,\ell}(s_{\nu})}$ be the weight introduced in Lemma~\ref{J=3 ker lem}. Since the $\psi_{\,\mathcal{T}_{k,\ell}(s_{\nu})}(\,\cdot\,;t)$ are $L^1$-normalised uniformly in $t$, it follows from Lemma~\ref{J=3 ker lem} and the Cauchy--Schwarz inequality that
\begin{equation}\label{L2 weighted 3+1 1}
   |m[a_{k,\ell}^{\nu,(\varepsilon)}](D;t)f(x)|^2 \lesssim 2^{-(k-\ell)} 2^{O(\varepsilon k)}  \psi_{\,\mathcal{T}_{k,\ell}(s_{\nu})}(\,\cdot\,;t) \ast |f_{k,\ell}^{\nu}|^2(x) \, \rho(t).
\end{equation}

Define the Nikodym-type maximal operator
\begin{equation*}
    \widetilde{\mathcal{N}}_{k,\ell}^{\,\mathrm{sing}}\,g(x) := \max_{\nu \in \Z \,:\, |s_{\nu}| \leq \delta_0} \int_{\R^4} |g(x - y,t)|  \psi_{\,\mathcal{T}_{k,\ell}(s_{\nu})}(y,t) \, \rho(t)\,\ud y \ud t.
\end{equation*}
By \eqref{L2 weighted 3+1 1} and Fubini's theorem, it follows that
 \begin{equation*}
     \sum_{\nu \in \Z} \int_{\R^{3+1}}  | m[a_{k,\ell}^{\nu, (\varepsilon)}](D;t)f(x)|^2 g(x;t)\,\ud x \ud t \lesssim 2^{-(k-\ell)}2^{O(\varepsilon k)} \int_{\R^{3}}\sum_{\nu \in \Z}  |f_{k,\ell}^{\nu}(x)|^2 \widetilde{\mathcal{N}}_{k,\ell}^{\,\mathrm{sing}}\,g(x)\,\ud x.
  \end{equation*}

Note that $\widetilde{\mathcal{N}}_{k,\ell}^{\,\mathrm{sing}}$ is essentially a smooth version of the maximal operator $\mathcal{N}_{\mathbf{r}}^{\,\mathrm{sing}}$ from \S\ref{Nikodym lem subsec} with parameters $r_1 := 2^{-(k-\ell)/2}$, $r_2 := 2^{-(k-\ell)}$ and $r_3 := 2^{-k}$. By the restriction $0 \leq \ell \leq \floor{k/3}$, it follows that this choice of $\br$ satisfies the hypotheses 
\begin{equation*}
   r_3 \leq r_2 \leq r_1 \leq r_2^{1/2} \qquad \textrm{and} \qquad  r_2 \leq r_{1}^{1/2} r_3^{1/2}
\end{equation*}
from the statement of Proposition~\ref{Nikodym prop}. Thus, by pointwise dominating $\psi_{\,\mathcal{T}_{k,\ell}(s_{\nu})}$ by a weighted series of indicator functions and applying Proposition~\ref{Nikodym prop}, one readily deduces the norm bound 
\begin{equation*}
    \big\|\widetilde{\mathcal{N}}_{k,\ell}^{\,\mathrm{sing}}\big\|_{L^2(\R^{3+1}) \to L^2(\R^3)} \lesssim_{\varepsilon} 2^{\varepsilon k}.
\end{equation*}

By combining the above observations with an application of the Cauchy--Schwarz inequality,
\begin{equation}\label{L2 weighted 3+1 2}
    \Big\| \big(\sum_{\nu \in \Z} |m[a_{k,\ell}^{\nu,(\varepsilon)}](D; \,\cdot\,) f|^2 \big)^{1/2} \Big\|_{L^4(\R^{3+1})} \lesssim 2^{-(k-\ell)/2 + O(\varepsilon k)} \big\| \big(\sum_{\nu \in \Z}  |f_{k,\ell}^{\nu}|^2\big)^{1/2}\big\|_{L^4(\R^3)}.
\end{equation}
It remains to bound the right-hand square function, which involves only functions of $3$ variables.




\subsection{\texorpdfstring{$L^2$}{}-weighted bounds in \texorpdfstring{$\R^3$}{}} A similar $L^2$-weighted approach is now applied one dimension lower to estimate the square function appearing in the right-hand side of \eqref{L2 weighted 3+1 2}.

\begin{proposition}\label{L4 forward SF prop}
Let $k \in \N$, $0 \leq \ell \leq \floor{k/3}$ and $\varepsilon>0$. Then
\begin{equation}\label{L2 wtd 3 main} 
   \big\| \big(\sum_{\nu \in \Z}  |f_{k,\ell}^{\nu}|^2\big)^{1/2}\big\|_{L^4(\R^3)} \lesssim_{\varepsilon} 2^{O(\varepsilon k)} \|f\|_{L^4(\R^3)}.
\end{equation}
\end{proposition}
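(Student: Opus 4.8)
\textbf{Plan for the proof of Proposition~\ref{L4 forward SF prop}.} The strategy is to deduce \eqref{L2 wtd 3 main} from the forward square function estimate in $\R^3$ (Proposition~\ref{f SF prop}) via a rescaling argument, exactly mirroring the passage from $\gamma$ to the model curve $\widetilde\gamma = \gamma_{\sigma,\lambda}$ carried out in \S\ref{f freq loc subsec}. First I would observe that the cutoff $\chi_{k,\ell}^{\nu}$ from \eqref{nu smooth cutoff} is, by construction, an affinely rescaled copy of the function $\chi_{\widetilde\pi}$ attached to the $(0,2^{-(k-3\ell)/2})$-Frenet box $\widetilde\pi = \pi_{0,\widetilde\gamma}(\widetilde s_\nu; 2^{-(k-3\ell)/2})$ for the model curve. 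Since the affine map $\xi \mapsto C^{-1}2^{-(k-3\ell)}[\gamma]_{\sigma,\lambda}^{\top}\xi$ is a fixed linear change of variables (depending only on $\mu$ and $\ell$), the $L^4$ square-function norm in \eqref{L2 wtd 3 main} is invariant under it up to a harmless constant Jacobian factor; so it suffices to prove the analogous estimate with $f$ replaced by its rescaled version and with the projections $\chi_{\widetilde\pi}(D)$ associated to the box decomposition $\mathcal P_0(\widetilde r)$ along $\widetilde\gamma$, where $\widetilde r := 2^{-(k-3\ell)/2}$.

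Second, I would split the sum over $\nu \in \Z$ according to which $\mu$ it belongs to (i.e. which $\mathfrak{N}_\ell(\mu)$ contains it). For fixed $\mu$, the functions $f_{k,\ell}^{\nu} = \chi_{k,\ell}^{\nu}(D) f_{k,\ell}^{*,\mu}$ are frequency-localised to the rescaled Frenet boxes $\widetilde\pi$, and these boxes for $\nu \in \mathfrak{N}_\ell(\mu)$ — after rescaling — run over (a fixed dilate of) a genuine $(0,\widetilde r)$-Frenet box decomposition for the non-degenerate curve $\widetilde\gamma$. Applying Proposition~\ref{f SF prop} with the weight $w \equiv 1$ (or, rather, dualising against an arbitrary non-negative $w \in L^2$), one gets for each $\mu$
\begin{equation*}
   \int_{\R^3} \sum_{\nu \in \mathfrak{N}_\ell(\mu)} |f_{k,\ell}^\nu(x)|^2\, w(x)\,\ud x \lesssim_{\varepsilon} 2^{O(\varepsilon k)} \int_{\R^3} |f_{k,\ell}^{*,\mu}(x)|^2\, \widetilde{\mathcal N}_{\widetilde\gamma,\widetilde r}^{\,(\varepsilon)} w(x)\,\ud x,
\end{equation*}
using $\widetilde r^{-\varepsilon_\circ} \lesssim 2^{O(\varepsilon k)}$ since $\widetilde r = 2^{-(k-3\ell)/2} \geq 2^{-k/2}$ for $0 \leq \ell \leq \floor{k/3}$, together with the operator bound \eqref{f SF eq} for $\widetilde{\mathcal N}_{\widetilde\gamma,\widetilde r}^{\,(\varepsilon)}$ on $L^2(\R^3)$. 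Summing over $\mu$, Cauchy--Schwarz in $x$ against the chosen $w = g^{1/2}$-type weight (standard $L^2$-weighted duality: write the $L^4$ square-function norm squared as a pairing against a unit-norm $L^2$ weight $w$), and invoking $\|\widetilde{\mathcal N}_{\widetilde\gamma,\widetilde r}^{\,(\varepsilon)}\|_{L^2\to L^2}\lesssim 2^{O(\varepsilon k)}$ gives
\begin{equation*}
   \big\|\big(\sum_\nu |f_{k,\ell}^\nu|^2\big)^{1/2}\big\|_{L^4(\R^3)}^2 \lesssim_\varepsilon 2^{O(\varepsilon k)} \big\|\big(\sum_\mu |f_{k,\ell}^{*,\mu}|^2\big)^{1/2}\big\|_{L^4(\R^3)} \|w\|_{L^2(\R^3)}.
\end{equation*}

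Third, I must control $\big\|\big(\sum_\mu |f_{k,\ell}^{*,\mu}|^2\big)^{1/2}\big\|_{L^4(\R^3)}$ by $\|f\|_{L^4(\R^3)}$. Recall $f_{k,\ell}^{*,\mu} = \chi_{k,\ell}^{*,\mu}(D) f$ where $\chi_{k,\ell}^{*,\mu}$ is the cutoff adapted to the sector $\Delta_{k,\ell}(m(\mu))$ of \eqref{sector def}, and by Lemma~\ref{sec supp lem} the map $\mu\mapsto m(\mu)$ is $O(1)$-to-one. The sectors $\Delta_{k,\ell}(m)$, $m\in\Z$, are essentially the angular sectors of aperture $\sim 2^{-\ell}$ in the $(\xi_2,\xi_3)$-variables inside the slab $\xi_3\sim 2^k$; thus the square function $\big(\sum_m |\chi_{k,\ell}^{*,m}(D)f|^2\big)^{1/2}$ is precisely (a frequency-localised version of) C\'ordoba's sectorial square function in two of the three variables, which is bounded on $L^4(\R^3)$ with an $O_\varepsilon(2^{\varepsilon k})$-type constant — either by C\'ordoba's original $L^4$ argument \cite{Cordoba1982} applied in the $(\xi_2,\xi_3)$-plane (with Fubini in $\xi_1$) or, in the operator-theoretic form already needed above, by a second application of Proposition~\ref{f SF prop} (now with the weight $w\equiv 1$) adapted to the coarser $(0,2^{-\ell})$-decomposition. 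This yields the bound with $w$ the dual weight, completing the estimate after absorbing $\|w\|_{L^2}=1$.

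\textbf{Main obstacle.} The genuinely delicate point is the bookkeeping in the rescaling step: one must check that, after applying $[\gamma]_{\sigma,\lambda}^\top$ and dilating by $2^{-(k-3\ell)}$, the boxes $2^{k-\ell}\cdot\pi_{0,\gamma}(s_\nu;2^{-(k-\ell)/2},2^\ell)$ for $\nu\in\mathfrak N_\ell(\mu)$ really do form (up to bounded overlap and fixed dilation) a legitimate $(0,\widetilde r)$-Frenet box \emph{decomposition} along $\widetilde\gamma$ in the precise sense required by Proposition~\ref{f SF prop} — i.e. that the centres $\widetilde s_\nu = 2^\ell(s_\nu - s_\mu)$ are $\widetilde r$-separated and lie in $I$, and that the anisotropic box shapes match. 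This is exactly the content of Lemma~\ref{freq resc lem} together with the counting in Lemma~\ref{J=3 supp lem}, so the work is largely in place; the remaining care is to verify the parameter constraint $0\le\ell\le\floor{k/3}$ makes $\widetilde r\le 1$ and keeps all the $r^{-\varepsilon_\circ}$ losses of the form $2^{O(\varepsilon k)}$, and to handle cleanly the two-layer $L^2$-weighted duality (first against $w$ for the fine scale, then the sectorial bound for the coarse scale) without the weights proliferating.
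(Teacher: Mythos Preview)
Your overall architecture matches the paper's: dualise the $L^4$ square function against a weight $w\in L^2$, split $\nu$ by $\mu$, rescale each $\mu$-block via $[\gamma]_{\sigma,\lambda}$ to a $(0,\widetilde r)$-Frenet box decomposition along $\widetilde\gamma=\gamma_{\sigma,\lambda}$, apply Proposition~\ref{f SF prop}, and then finish with C\'ordoba's sectorial square function for the coarse $\mu$-pieces. The box-matching step you flag as the ``main obstacle'' is indeed handled by Lemma~\ref{freq resc lem} and is not where the real difficulty lies.

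The genuine gap is the $\mu$-dependence of the maximal operator. After undoing the change of variables, Proposition~\ref{f SF prop} produces, for each $\mu$, a bound
\[
\int \sum_{\nu\in\mathfrak N_\ell(\mu)}|f^{\nu}_{k,\ell}|^2\,w
\;\lesssim_\varepsilon\; 2^{O(\varepsilon k)}\int |f^{*,\mu}_{k,\ell}|^2\,
\big([\gamma]_{\sigma,\lambda}^{-1}\circ\widetilde{\mathcal N}^{\,\mu,(\varepsilon)}_{k,\ell}\circ[\gamma]_{\sigma,\lambda}\big)w,
\]
where both the conjugating matrix $[\gamma]_{\sigma,\lambda}$ (through $\sigma=s_\mu$) and the operator $\widetilde{\mathcal N}^{\,\mu,(\varepsilon)}_{k,\ell}$ (through $\widetilde\gamma=\gamma_{s_\mu,\lambda}$) depend on $\mu$. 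Your displayed step ``summing over $\mu$, Cauchy--Schwarz \ldots\ gives $\|(\sum_\mu|f^{*,\mu}|^2)^{1/2}\|_{L^4}\,\|w\|_{L^2}$'' requires first replacing this $\mu$-dependent weight by a single function independent of $\mu$; otherwise Cauchy--Schwarz per $\mu$ only yields $\sum_\mu\|f^{*,\mu}\|_{L^4}^2$, the $\ell^2L^4$ quantity, which is \emph{larger} than the $L^4\ell^2$ square function and is not controlled by C\'ordoba's theorem. Merely knowing $\|\widetilde{\mathcal N}_{\widetilde\gamma,\widetilde r}^{\,(\varepsilon)}\|_{L^2\to L^2}\lesssim 2^{O(\varepsilon k)}$ uniformly in $\mu$ is not enough.

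The paper isolates exactly this point as a Claim: there exists a \emph{single} maximal operator $\mathcal N^{(\varepsilon)}_{k,\ell}$, independent of $\mu$, with $\big([\gamma]_{\sigma,\lambda}\big)^{-1}\circ\widetilde{\mathcal N}^{\,\mu,(\varepsilon)}_{k,\ell}\circ[\gamma]_{\sigma,\lambda}\,w\lesssim \mathcal N^{(\varepsilon)}_{k,\ell}w$ pointwise and $\|\mathcal N^{(\varepsilon)}_{k,\ell}\|_{L^2\to L^2}\lesssim_\varepsilon 2^{\varepsilon k}$. This is not automatic; it rests on a scaling lemma (Lemma~\ref{gen Nik scale lem} in \S\ref{Nik scale subsec}) showing that conjugating the iterated Nikodym operators $\widetilde{\mathcal N}_{\tilde\be,\tilde\br}$ by $[\gamma]_{\sigma,\lambda}$ is dominated by $\widetilde{\mathcal N}_{\be,\br}$ for the original Frenet frame with an appropriate $\br$ satisfying $r_i\le\lambda r_{i+1}$. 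Your sketch needs this missing ingredient to close; the phrase ``without the weights proliferating'' is precisely where it belongs.
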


\begin{proof}
By duality, there exists a non-negative $w \in L^2(\R^3)$ with $\|w\|_{L^2(\R^3)} = 1$ such that
\begin{equation}\label{L2 wtd 3 0}
   \big\| \big(\sum_{\nu \in \Z}  |f_{k,\ell}^{\nu}|^2\big)^{1/2}\big\|_{L^4(\R^3)}^2 = \sum_{\mu \in \Z} \int_{\R^3} \sum_{\nu \in \mathfrak{N}_{\ell}(\mu)} |f_{k,\ell}^{\nu}(x)|^2 w(x)\,\ud x. 
\end{equation}

Recall that the $f_{k,\ell}^{\nu}$ are defined by
\begin{equation*}
    f_{k,\ell}^{\nu} := \chi_{k,\ell}^{\nu}(D)f_{k,\ell}^{*,\mu}  \qquad \textrm{for $\nu \in \mathfrak{N}_{\ell}(\mu)$} 
\end{equation*}
where the smooth cutoff function $\chi_{k,\ell}^{\nu}$ is as defined in \eqref{nu smooth cutoff}. Fix $\mu$ and, as in \S\ref{f freq loc subsec}, let $\sigma := s_{\mu}$ and $\lambda := 2^{-\ell}$. Define $\tilde{f}_{k,\ell}^{\nu} := f_{k,\ell}^{\nu} \circ [\gamma]_{\sigma,\lambda}$, $\tilde{f}_{k,\ell}^{\,*, \mu} := f_{k,\ell}^{*, \mu} \circ [\gamma]_{\sigma,\lambda}$ and $\tilde{w} := w \circ [\gamma]_{\sigma, \lambda}$ so, by a change of variables, 
\begin{equation}\label{L2 wtd 3 1}
   \int_{\R^3} \sum_{\nu \in \mathfrak{N}_{\ell}(\mu)} |f_{k,\ell}^{\nu}(x)|^2\, w(x)\,\ud x = |\det[\gamma]_{\sigma, \lambda}| \int_{\R^3} \sum_{\nu \in \mathfrak{N}_{\ell}(\mu)} |\tilde{f}_{k,\ell}^{\nu}(x)|^2\, \tilde{w}(x)\,\ud x.
\end{equation}
By the definition of $\chi_{k,\ell}^{\nu}$ and Lemma \ref{freq resc lem}, each of the $\tilde{f}_{k,\ell}^{\nu}$ is Fourier supported in a $2^{k-3\ell}$ dilate of a $(0, 2^{-(k-3\ell)/2})$-Frenet box. In view of this, we may apply  Proposition~\ref{f SF prop} to deduce that
\begin{equation}\label{L2 wtd 3 1 b}
    \int_{\R^3} \sum_{\nu \in \mathfrak{N}_{\ell}(\mu)} |f_{k,\ell}^{\nu}(x)|^2 \, w(x)\,\ud x \lesssim_{\varepsilon} 2^{\varepsilon k} |\det[\gamma]_{\sigma, \lambda}| \int_{\R^3} |\tilde{f}_{k,\ell}^{\,*,\,\mu}(x)|^2 \, \widetilde{\mathcal{N}}^{\,\mu, (\varepsilon)}_{k,\ell}\tilde{w}(x)\,\ud x
\end{equation}
where the operator $\widetilde{\mathcal{N}}^{\,\mu, (\varepsilon)}_{k,\ell}$ is defined by
\begin{equation*}
    \widetilde{\mathcal{N}}^{\,\mu, (\varepsilon)}_{k,\ell} := \mathrm{Dil}_{2^{k-3\ell}} \circ \widetilde{\mathcal{N}}^{\,(\varepsilon)}_{\widetilde{\gamma},\tilde{r}} \circ \mathrm{Dil}_{2^{-(k-3\ell)}} 
\end{equation*}
for $\tilde{r} := 2^{-(k-3\ell)/2}$ and $\mathrm{Dil}_{\rho} \colon L^2(\R^3) \to L^2(\R^3)$ the dilation operator $\mathrm{Dil}_{\rho}\, f := f(\rho \;\cdot\,)$ for $\rho > 0$. Here $\widetilde{\mathcal{N}}^{\,(\varepsilon)}_{\widetilde{\gamma},\tilde{r}}$ is the maximal operator featured in the statement of Proposition~\ref{f SF prop} (the precise definition is given in \S\ref{forward SF sec}).
Note that $\widetilde{\mathcal{N}}^{\,\mu, (\varepsilon)}_{k,\ell}$ depends on the choice of $\mu$. By reversing the change of variables in \eqref{L2 wtd 3 1}, we can show the following.

\begin{claim} There exists a maximal function $\mathcal{N}_{k,\ell}^{(\varepsilon)}$, independent of $\mu$, such that
\begin{equation*}
    \big([\gamma]_{\sigma,\lambda}\big)^{-1} \circ \widetilde{\mathcal{N}}_{k, \ell}^{\mu, (\varepsilon)} \circ [\gamma]_{\sigma,\lambda} \cdot  w(x) \lesssim_\gamma \mathcal{N}_{k, \ell}^{\,(\varepsilon)} w(x) \qquad \textrm{for all $x \in \R^3$},
\end{equation*}
where $[\gamma]_{\sigma, \lambda} \cdot f:= f \circ [\gamma]_{\sigma,\lambda}$, and
\begin{equation}\label{rescaled max L2 bound}
    \|\mathcal{N}^{\, (\varepsilon)}_{k,\ell}\|_{L^2(\R^3) \to L^2(\R^3)} \lesssim_{\varepsilon} 2^{\varepsilon k}.
\end{equation}
\end{claim}

The proof of the above claim requires additional information on the form of the maximal operators arising from Proposition~\ref{f SF prop}. Since the definitions involved are somewhat unwieldy, the details are postponed until \S\ref{Nik scale subsec}.\smallskip

Assuming the claim, changing variables in \eqref{L2 wtd 3 1 b} yields
\begin{equation*}
    \int_{\R^3} \sum_{\nu \in \mathfrak{N}_{\ell}(\mu)} |f_{k,\ell}^{\nu}(x)|^2 \, w(x)\,\ud x \lesssim 2^{\varepsilon k}  \int_{\R^3} |f_{k,\ell}^{*,\,\mu}(x)|^2 \, \mathcal{N}^{\, (\varepsilon)}_{k,\ell} w(x)\,\ud x.
\end{equation*}
Recalling \eqref{L2 wtd 3 0}, one can sum the above inequality in $\mu \in \Z$, and use \eqref{rescaled max L2 bound} and the Cauchy--Schwarz inequality to obtain
\begin{equation}\label{L2 wtd 3 3a}
     \big\| \big(\sum_{\nu \in \Z}  |f_{k,\ell}^{\nu}|^2\big)^{1/2}\big\|_{L^4(\R^3)}^2 \lesssim_{\varepsilon} 2^{2\varepsilon k}  \big\| \big(\sum_{\mu \in \Z}  |f_{k,\ell}^{*,\mu}|^2\big)^{1/2}\big\|_{L^4(\R^3)}^2 .
\end{equation}
Recall that each $f_{k,\ell}^{*,\mu}$ corresponds to a (smooth) frequency projection of $f$ onto the set $\Delta_{\ell}(m(\mu))$, as defined in \eqref{sector def}. Furthermore, by Lemma~\ref{sec supp lem} the assignment $m \mapsto m(\mu)$ is $O(1)$-to-1. Thus, the right-hand square function in \eqref{L2 wtd 3 3a} falls under the scope of the classical sectorial square function of C\'ordoba \cite{Cordoba1982}. In particular, by \cite[Theorem 1]{Cordoba1982} (see also \cite{CS1995}) and a Fubini argument, we have
\begin{equation}\label{L2 wtd 3 3b}
    \big\| \big(\sum_{\mu \in \Z}  |f_{k,\ell}^{*,\mu}|^2\big)^{1/2}\big\|_{L^4(\R^3)}^2  \lesssim_\varepsilon 2^{O(\varepsilon k)} \|f\|_{L^4(\R^3)}.
\end{equation}
The inequalities \eqref{L2 wtd 3 3a} and \eqref{L2 wtd 3 3b} imply the desired estimate \eqref{L2 wtd 3 main}.
\end{proof}




\subsection{Putting everything together} We combine our observations to establish favourable $L^4$ and $L^2$ estimates for the localised multipliers $m[a_{k,\ell}]$.\medskip

\noindent \textit{$L^4$ estimates}. By Lemma~\ref{J=3 s loc lem},
\begin{equation*}
\|m[a_{k,\ell}](D;\,\cdot\,)f\|_{L^4(\R^{3+1})} \lesssim_{\varepsilon, N} \|m[a_{k,\ell}^{(\varepsilon)}](D;\,\cdot\,)f\|_{L^4(\R^{3+1})} + 2^{-kN} \|f\|_{L^4(\R^3)}.
\end{equation*}
Decompose each $m[a_{k,\ell}^{(\varepsilon)}]$ as a sum of multipliers $m[a_{k,\ell}^{\nu,(\varepsilon)}]$ as defined in \S\ref{J=3 Fourier loc subsec}. By Proposition~\ref{J=3 rev sf prop}, it follows that 
\begin{equation*}
\|m[a_{k,\ell}](D;\,\cdot\,)f\|_{L^4(\R^{3+1})} \lesssim_{\varepsilon, N} 2^{(k-3\ell)/4} 2^{O(\varepsilon k)} \Big\| \big(\sum_{\nu \in \Z} |m[a_{k,\ell}^{\nu,(\varepsilon)}](D; \,\cdot\,) f|^2 \big)^{1/2} \Big\|_{L^4(\R^{3+1})} + 2^{-kN} \|f\|_{L^4(\R^3)} .  
\end{equation*}
Thus, \eqref{L2 weighted 3+1 2} and \eqref{L2 wtd 3 main} combine with the previous display to yield the $L^4$ estimate
\begin{equation*}
 \|m[a_{k,\ell}](D;\,\cdot\,)f\|_{L^4(\R^{3+1})} \lesssim_{\varepsilon} 2^{(k-3\ell)/4} 2^{-(k-\ell)/2} 2^{O(\varepsilon k)} \|f\|_{L^4(\R^3)}. 
\end{equation*}
Since $\varepsilon > 0$ may be chosen arbitrarily, this corresponds to the $p=4$ case of Proposition~\ref{J=3 LS prop}.\medskip

\noindent \textit{$L^2$ estimates}. Arguing as in the proof of the $L^4$ estimate, but now using Lemma~\ref{J=3 L2 rev sf prop} rather than Proposition~\ref{J=3 rev sf prop}, it follows that 
\begin{equation*}
\|m[a_{k,\ell}](D;\,\cdot\,)f\|_{L^2(\R^{3+1})} \lesssim_{\varepsilon, N} \Big\| \big(\sum_{\nu \in \Z} |m[a_{k,\ell}^{\nu,(\varepsilon)}](D; \,\cdot\,) f|^2 \big)^{1/2} \Big\|_{L^2(\R^{3+1})} + 2^{-kN} \|f\|_{L^2(\R^3)} .  
\end{equation*}
Recall from \eqref{L2 weighted 3+1 1} that
\begin{equation*}
    |m[a_{k,\ell}^{\nu,(\varepsilon)}](D;t)f(x)|^2 \lesssim 2^{-(k-\ell)/2} 2^{O(\varepsilon k)} \psi_{\,\mathcal{T}_{k,\ell}(s_{\nu})}(\,\cdot\,;t) \ast |f_{k,\ell}^{\nu}|^2.
\end{equation*}
Thus, by Young's convolution inequality and the fact that the $\psi_{\,\mathcal{T}_{k,\ell}(s_{\nu})}(\,\cdot\,;t)$ are $L^1$-normalised,
\begin{equation*}
\|m[a_{k,\ell}](D;\,\cdot\,)f\|_{L^2(\R^{3+1})} \lesssim_{\varepsilon, N} 2^{-(k-\ell)/2} 2^{O(\varepsilon k)} \big\| \big(\sum_{\nu \in \Z} |f_{k,\ell}^{\nu}|^2 \big)^{1/2} \big\|_{L^2(\R^3)} + 2^{-kN} \|f\|_{L^2(\R^3)} .  
\end{equation*}
Finally, as the $f_{k,\ell}^{\nu}$ have essentially disjoint Fourier support, by Plancherel's theorem,
\begin{equation*}
\|m[a_{k,\ell}](D;\,\cdot\,)f\|_{L^2(\R^{3+1})} \lesssim_{\varepsilon} 2^{-(k-\ell)/2} 2^{O(\varepsilon k)} \|f\|_{L^2(\R^3)} .  
\end{equation*}
Since $\varepsilon > 0$ may be chosen arbitrarily, this corresponds to the $p=2$ case of Proposition~\ref{J=3 LS prop}.\medskip

Interpolating the above estimates, given $2 \leq p \leq 4$ and $\varepsilon > 0$, it follows that
\begin{equation*}
    \|m[a_{k,\ell}](D;\,\cdot\,)f\|_{L^p(\R^{3+1})} \lesssim_{\varepsilon, N} 2^{-(k-\ell)/2}2^{(k-3\ell)(1/2 -1/p)} 2^{\varepsilon k} \|f\|_{L^p(\R^3)}, 
\end{equation*}
which is precisely the desired inequality from Proposition~\ref{J=3 LS prop}.




\section{Proof of the reverse square function inequality in \texorpdfstring{$\R^{3+1}$}{}}\label{reverse SF sec}




\subsection{Geometric observations}\label{geo obs sec} 
The first step is to relate the Frenet boxes $\pi_{2,\gamma}(s;r)$ to a codimension 2 cone $\widetilde{\Gamma}_2$ in the $(\xi, \tau)$-space. \medskip




\noindent \textit{The underlying cone.} Let $\gamma \in \mathfrak{G}_4(\delta_0)$ for $0 < \delta_0 \ll 1$ and $\be_j \colon[-1,1] \to S^3$ for $1 \leq j \leq 4$ be the associated Frenet frame. Without loss of generality, in proving Theorem~\ref{Frenet reverse SF theorem} we may always localise so that we only consider the portion of the curve lying over the interval $I_0 = [-\delta_0, \delta_0]$. In this case
\begin{equation}\label{Frenet loc}
\be_j(s) = \vec{e}_j + O(\delta_0) \qquad \textrm{for $1 \leq j \leq 4$}
\end{equation}
where, as usual, the $\vec{e}_j$ denote the standard basis vectors. 

We consider the conic surface $\widetilde{\Gamma}_2$ `generated' over the curve $s \mapsto \be_4(s)$. This is similar to the analysis of \cite{PS2007}, where a cone in $\R^3$ generated by the binormal $\be_3$ features prominently in the arguments. Define $G \colon I_0 \to \R^4$ by $G(s) := \be_{44}(s)^{-1} \be_4(s)$ for all $s \in I_0$ (note that $\be_{44}(s)$ is bounded away from $0$ by \eqref{Frenet loc}), so that $G$ is of the form 
\begin{equation*}
    G(s) =
    \begin{bmatrix}
    g(s) \\
    1
    \end{bmatrix} 
    \qquad \textrm{for} \qquad g(s) := 
    \Big(
    \frac{\be_{41}(s)}{\be_{44}(s)},\; \frac{\be_{42}(s)}{\be_{44}(s)},\; \frac{\be_{43}(s)}{\be_{44}(s)} 
    \Big)^{\top}.
\end{equation*}
For $U := [1/4, 4]\times I_0$, the $2$-dimensional cone $\widetilde{\Gamma}_2$ is parametrised by the function
\begin{equation*}
    \widetilde{\Gamma}_2 \colon U \to \R^4, \qquad (\rho, s) \mapsto \rho\, G(s). 
\end{equation*}



\noindent \textit{Non-degeneracy conditions.} We claim that the curve $g \colon I_0 \to \R^3$ is non-degenerate. To see this, first note that
\begin{equation*}
 G^{(i)}(s) \in \langle \be_4(s), \be_4^{(1)}(s), \dots, \be_4^{(i)}(s) \rangle
\end{equation*}
where the right-hand expression denotes the linear span of the vectors $\be_4(s), \be_4^{(1)}(s), \dots, \be_4^{(i)}(s)$. Thus, one concludes from the Frenet formul\ae\ that
\begin{equation}\label{g a non deg 3}
    G^{(i)}(s)  \in \langle \be_{4 -i}(s), \dots, \be_4(s)  \rangle \qquad \textrm{for $0 \leq i \leq 3$.}
\end{equation}
On the other hand, the Frenet formul\ae\ together with the Leibniz rule show that 
\begin{equation*}
   \inn{G^{(i)}(s)}{\be_{4 -i}(s)} = (-1)^i\Big( \prod_{\ell = 4 -i}^3 \tilde{\kappa}_{\ell}(s) \Big) \, \be_{44}(s)^{-1}
\end{equation*}
and, consequently, 
\begin{equation}\label{g a non deg 5}
   |\inn{G^{(i)}(s)}{\be_{4 -i}(s)} | \sim 1 \qquad \textrm{for all $1 \leq i \leq 3$.}
\end{equation}
Thus, combining \eqref{g a non deg 3} and \eqref{g a non deg 5}, it follows that the vectors $G^{(i)}(s)$, $1 \leq i \leq 3$, are linearly independent. From this, we immediately conclude that 
\begin{equation*}
    |\det [g]_{s}| \gtrsim 1
\end{equation*}
for all $s \in I_0$, which is the claimed non-degeneracy condition.
\medskip




\noindent \textit{Frenet boxes revisited.} By the preceding observations, the vectors $G^{(i)}(s)$ for $1 \leq i \leq 3$ form a basis of $\R^3 \times \{0\}$. Fixing $\xi \in \widehat{\R}^3$ and $r > 0$, one may write
\begin{equation}\label{Frenet box 1}
    \xi -  \xi_4 G(s) =  \sum_{i=1}^3 r^{i} \eta_i G^{(i)}(s)
\end{equation}
for some vector of coefficients $(\eta_1, \eta_2, \eta_3) \in \R^3$. The powers of $r$ appearing in the above expression play a normalising r\^ole below. For each $1 \leq k \leq 3$ form the inner product of both sides of the above identity with the Frenet vector $\be_k(s)$. Combining the resulting expressions with the linear independence relations inherent in \eqref{g a non deg 3}, the coefficients $\eta_k$ can be related to the numbers $\inn{\xi}{\be_k(s)}$ via a lower anti-triangular transformation, viz. 
\begin{equation}\label{Frenet box 2}
    \begin{bmatrix}
    \inn{\xi}{\be_1(s)} \\
    \inn{\xi}{\be_2(s)} \\
    \inn{\xi}{\be_3(s)}
    \end{bmatrix}
    =
    \begin{bmatrix}
    0 & 0 & \inn{G_{\ba}^{(3)}(s)}{\be_1(s)} \\
    0 & \inn{G_{\ba}^{(2)}(s)}{\be_2(s)} & \inn{G_{\ba}^{(3)}(s)}{\be_2(s)} \\
    \inn{G_{\ba}^{(1)}(s)}{\be_3(s)} & \inn{G_{\ba}^{(2)}(s)}{\be_3(s)} & \inn{G_{\ba}^{(3)}(s)}{\be_3(s)}
    \end{bmatrix}
    \begin{bmatrix}
    r\eta_1 \\
    r^2\eta_2 \\
    r^3 \eta_3
    \end{bmatrix}.
\end{equation}
Recall that
\begin{equation*}
    \pi_{2,\bar{\gamma}}(s;\,r) := \big\{\xi \in \widehat{\R}^4 : |\inn{\be_j(s)}{\xi}| \lesssim r^{4 - j} \textrm{ for $1 \leq j \leq 3$, } \,  |\inn{\be_4(s)}{\xi}| \sim 1 \big\}.
\end{equation*}
Thus, if $\xi \in \pi_{2,\gamma}(s;r)$, then it follows from combining the above definition and \eqref{g a non deg 5} with \eqref{Frenet box 2}  that $|\eta_i| \lesssim_{\gamma} 1$ for $1 \leq i \leq 3$. Similarly,
the localisation \eqref{Frenet loc} implies that 
\begin{equation*}
 \pi_{2,\gamma}(s;r) \subseteq   \mathcal{R} := [-2,2]^3 \times [1/4,4].
 \end{equation*}

The identity \eqref{Frenet box 1} can be succinctly expressed using matrices. In particular, for $s \in I_0$ and $r > 0$, define the $4 \times 4$ matrix
\begin{equation}\label{Frenet box 3}
    [g]_{\mathcal{C}, s,r} :=
    \begin{pmatrix}
    [g]_{s,r} & g(s) \\
    0 & 1
    \end{pmatrix}.
\end{equation}
Here the block $[g]_{s,r}$ is the $3 \times 3$ matrix as defined in \eqref{gamma transformation}. With this notation, the identity \eqref{Frenet box 1} may be written as
\begin{equation*}
    \xi = [g]_{\mathcal{C},s,r} \cdot \eta \qquad \textrm{where $\eta = (\eta_1, \eta_2, \eta_3, \xi_4)$.}
\end{equation*}
Moreover, if $\xi \in \pi_{2,\gamma}(s;r)$, then the preceding observations show that $\eta$ in the above equation may be taken to lie in a bounded region and so
\begin{equation}\label{Frenet box 4}
    \pi_{2,\gamma}(s;r) \subseteq [g]_{\mathcal{C},s,C r}\big([-2,2]^4\big) \cap \mathcal{R} ,
\end{equation}
where $C \geq 1$ is a suitably large dimensional constant.\medskip




\subsection{A square function estimate for cones generated by non-degenerate curves} Here the geometric setup described in \S\ref{geo obs sec} is abstracted. 

\begin{definition}
For $g \colon [-1,1] \to \R^3$ a smooth curve, let $\Gamma_g $ denote the codimension $2$ cone in $\R^4$  parametrised by
\begin{equation*}
 (\rho,s) \mapsto \rho \,
 \begin{pmatrix}
    g(s) \\
    1
    \end{pmatrix} \qquad \textrm{for } (\rho, s) \in U := [1/4,4] \times [-1,1].
\end{equation*}
In this case, $\Gamma_g $ is referred to as the \textit{cone generated by $g$}.
\end{definition}

In view of \eqref{Frenet box 4}, one wishes to establish a reverse square function estimate with respect to the $r$-\textit{plates}
\begin{equation*}
  \theta(s;r) :=  [g]_{\mathcal{C}, s,r} \big( [-2,2]^4 \big) \cap \mathcal{R}. 
\end{equation*}
In some cases it will be useful to highlight the choice of function $g$ by writing $\theta(g; s;r)$ for $\theta(s;r)$. Note that each of these plates lies in a neighbourhood of the cone $\Gamma_g$. We think of the union of all plates $\theta(s;r)$ as $s$ varies over the domain $[-1,1]$ as forming an anisotropic neighbourhood of $\Gamma_g$.

\begin{definition} A collection $\Theta (r)$ of $r$-plates is a \textit{plate family for $\Gamma_g$} if it consists of $\theta(g;s;r)$ for $s$ varying over an $r$-separated subset of $[-1,1]$.
\end{definition}

In view of the preceding observations, Theorem~\ref{Frenet reverse SF theorem} is a consequence of the following result.

\begin{theorem}\label{reverse sf thm} Suppose $g \colon [-1,1] \to \R^3$ is a smooth, non-degenerate curve and $\Theta(r)$ is an $r$-plate family for $\Gamma_g$ for some dyadic $0 < r \leq 1$. For all $\varepsilon > 0$ the inequality
  \begin{equation*}
 \Big\|\sum_{\theta \in \Theta(r)}f_{\theta}\Big\|_{L^4(\R^4)} \lesssim_{\varepsilon} r^{-\varepsilon}   \Big\| \big(\sum_{\theta \in \Theta(r)}|f_{\theta}|^2\big)^{1/2}\Big\|_{L^4(\R^4)}
\end{equation*}
holds whenever $(f_{\theta})_{\theta \in \Theta(r)}$ is a sequence of functions satisfying $\supp \widehat{f}_{\theta} \subseteq \theta$ for all $\theta \in \Theta(r)$.
\end{theorem}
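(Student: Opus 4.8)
The strategy mirrors that of Lee--Vargas \cite{LV2012} for the Mockenhaupt square function, lifting it to the codimension $2$ cone $\Gamma_g$ in $\R^4$. The plan is to first establish a multilinear (here $4$-linear) version of the reverse square function estimate via the multilinear Fourier restriction inequality of Bennett--Bez--Flock--Lee \cite{BBFL2018}, and then to deduce the linear estimate from the multilinear one through a broad/narrow analysis in the spirit of Bourgain--Guth \cite{Bourgain2011}, using the abstract decomposition lemma from Appendix~\ref{BG appendix}.

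\textbf{Step 1: Wave packet / physical reductions.} First I would rescale and localise so that $g$ lies in a small $C^{N}$ neighbourhood of the moment curve $\gamma_\circ \colon s \mapsto (s, s^2/2, s^3/6)$ in $\R^3$; this is justified by the parabolic rescaling structure already used in \S\ref{curve sym sec}, and it reduces matters to a single explicit model cone at the cost of harmless constants. I would then reduce to a scale $r = \delta^{1/2}$ (or iterate dyadically) and decompose each $f_\theta$ into wave packets adapted to the dual plate $\theta^*$; the relevant geometry is that $\theta(s;r) = [g]_{\mathcal C, s, r}([-2,2]^4) \cap \mathcal R$, so the dual tubes have dimensions roughly $r^{-1} \times r^{-2} \times r^{-3} \times 1$ in the moving Frenet frame $\be_1(s), \be_2(s), \be_3(s), \be_4(s)$.

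\textbf{Step 2: The $4$-linear estimate.} The core analytic input is a $4$-linear restriction estimate. Suppose $\theta_1, \dots, \theta_4$ are plates whose parameters $s_1, \dots, s_4$ are $\gtrsim 1$-separated. Then the non-degeneracy $|\det[g]_s| \gtrsim 1$ (established in \S\ref{geo obs sec}) guarantees that the cone pieces $\Gamma_g$ lying above these $s_i$ are transversal in a suitable quantitative sense, so that \cite{BBFL2018} yields
\begin{equation*}
  \Big\| \prod_{i=1}^4 |f_{\theta_i}|^{1/4} \Big\|_{L^{4}(\R^4)} \lesssim_{\varepsilon} r^{-\varepsilon} \prod_{i=1}^4 \Big( \sum_{\theta \subset \theta_i} \|f_\theta\|_{L^4(\R^4)}^2 \Big)^{1/8},
\end{equation*}
or rather its square-function form, after summing over sub-plates at scale $r$. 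The passage from the $L^4 \to L^\infty$-type restriction inequality to a square function statement is done by a standard Córdoba-type argument (expand $|\sum f_\theta|^2$, use almost-orthogonality of the products $f_\theta \overline{f_{\theta'}}$ whose frequency supports tile $\widehat{\R}^4$ with bounded overlap, and apply the multilinear bound to the diagonal-type terms). The $1/q$ exponent $4$ is exactly the critical one where $\R^4$ and the $4$-linear estimate match, which is why the argument is clean here.

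\textbf{Step 3: Broad/narrow decomposition.} Fix a large parameter $K$. At each point $x$, either the square function is dominated by $O(K)$ plates whose $s$-parameters cluster in a single $K^{-1}$-arc (the \emph{narrow} case), or there exist four plates with $K^{-1}$-separated parameters carrying a comparable share of the mass (the \emph{broad} case). In the broad case, apply Step 2 (with $r$-separation replaced by $K^{-1}$-separation, absorbing the loss into $K^{O(1)}$). In the narrow case, rescale the $K^{-1}$-arc: the restriction of $\Gamma_g$ over a short arc, after the affine map $[g]_{\mathcal C, s_0, K^{-1}}^{-1}$, is again (a perturbation of) the model cone $\Gamma_{g}$, with the plates at scale $r$ becoming plates at scale $rK$. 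This sets up an induction on scales, and summing the geometric series in the number of iterations $\sim \log_K(1/r)$ produces only an $r^{-\varepsilon}$ loss once $K$ is chosen appropriately in terms of $\varepsilon$. The bookkeeping of this induction is exactly what Appendix~\ref{BG appendix} is designed to handle abstractly, so I would invoke that lemma rather than reproving the iteration by hand.

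\textbf{Main obstacle.} The delicate point is Step 2: one must check that the non-degeneracy of $g$ translates into the precise transversality hypothesis required by \cite{BBFL2018} — namely a lower bound on the relevant $4 \times 4$ determinant (or wedge product) formed from tangent data of the four cone pieces, uniform over the $K^{-1}$-separated configuration — and that the anisotropic plate geometry $\theta(s;r) = [g]_{\mathcal C, s, r}([-2,2]^4) \cap \mathcal R$ is compatible with the Córdoba-type almost-orthogonality used to go from a restriction inequality to a square function inequality. A secondary technical annoyance is ensuring the rescaling in the narrow case genuinely returns a curve in the same model class $\mathfrak{G}$ (up to controlled perturbation) so that the induction closes; this is where the $C^{N}$-closeness to $\gamma_\circ$ and the Taylor-expansion estimates of \S\ref{curve sym sec} are used repeatedly.
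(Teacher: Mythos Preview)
Your proposal is correct and follows essentially the same route as the paper: a $4$-linear restriction estimate via \cite{BBFL2018} (the transversality check is exactly the verification of the Brascamp--Lieb datum, done by showing $V_{\ell_1}^\perp \cap V_{\ell_2}^\perp = \{0\}$ for any pair of tangent planes), upgraded to a multilinear square function inequality by localising to $r^{-1}$-balls and using Plancherel/H\"older, then passed to the linear inequality through the abstract broad/narrow lemma of Appendix~\ref{BG appendix}.

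Two small organisational corrections. First, the wave-packet reduction in your Step~1 is not needed here; the argument stays entirely on the frequency side. Second, the roles of rescaling and induction are slightly transposed in your Step~3: because the appendix lemma already iterates the Ham--Lee decomposition, its output narrow intervals satisfy $r_{\mathrm n}\sim_{\varepsilon} r$, so the narrow term is bounded \emph{trivially} by Cauchy--Schwarz (only $O_\varepsilon(1)$ sub-plates). The parabolic rescaling $[g]_{\mathcal C,\sigma,\lambda}^{-1}$ is instead applied in the \emph{broad} case, to upgrade the multilinear inequality so that the constant depends only on the ratio $\lambda_1/\lambda_2$ (separation relative to the parent interval) rather than on $\lambda_1$ itself---this is what lets the broad term sum cleanly over all scales $r_{\mathrm b}$.
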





\subsection{Multilinear estimates}\label{multilinear subsec}

The proof of Theorem~\ref{reverse sf thm} follows an argument of Lee--Vargas~\cite{LV2012} which relies on first establishing a multilinear variant of the desired square function inequality.\medskip

Let $\fI$ denote the collection of all dyadic subintervals of $[-1,1]$ and for any dyadic number $0 < r \leq 1$ let $\fI(r)$ denote the subset of $\fI$ consisting of all intervals of length $r$. Given any pair of dyadic scales $0 < \lambda_1 \leq \lambda_2 \leq 1$ and $J \in \fI(\lambda_2)$, let $\fI(J;\,\lambda_1)$ denote the collection of all $I \in \fI(\lambda_1)$ which satisfy $I \subseteq J$.\smallskip

Fix $0 < r \leq 1$ and for each $0 \leq \lambda \leq 1$ decompose $\Theta(r)$ as a disjoint union of subsets $\Theta(I;\,r)$ for $I \in \fI(\lambda)$ such that:
\begin{enumerate}[i)]
    \item If $\theta(s;r) \in \Theta(I;\,r)$, then $s \in I$;
    \item If $r \leq \lambda_1 \leq \lambda_2$ and $J \in \fI(\lambda_2)$, then $ \Theta(J;\,r) = \bigcup_{I \in \fI(J;\lambda_1)} \Theta(I;\,r)$.
\end{enumerate}
Thus, if for all $r \leq \lambda \leq 1$ we define
\begin{equation}\label{rsq dyadic dec}
    f_I := \sum_{\theta \in \Theta(I;\,r)} f_{\theta} \qquad \textrm{for all $I \in \fI(\lambda)$},
\end{equation}
then for all $r \leq \lambda_1 \leq \lambda_2$ it follows that 
\begin{equation*}
    f_J = \sum_{I \in \fI(J;\,\lambda_1)} f_I \qquad \textrm{for all $J \in \fI(\lambda_2)$.} 
\end{equation*}

For each dyadic number $0 < \lambda \leq 1$ let $\fI^4_{\mathrm{sep}}(\lambda)$ denote the collection of $4$-tuples of intervals $\vec{I} = (I_1, \dots, I_4) \in \fI(\lambda)^4$ which satisfy the separation condition
\begin{equation*}
    \mathrm{dist}(I_1, \dots, I_4) := \min_{1 \leq \ell_1 < \ell_2 \leq 4} \mathrm{dist}(I_{\ell_1}, I_{\ell_2}) \geq \lambda.
\end{equation*}

\begin{proposition}\label{multi sf prop} Let $0 < r \leq \lambda < 1$ be dyadic. If $(I_1, \dots, I_4) \in \fI^4_{\mathrm{sep}}(\lambda)$ and $\varepsilon > 0$, then
  \begin{equation*}
 \Big\| \prod_{\ell=1}^4|\sum_{\theta \in \Theta(I_{\ell}; r)}f_{\theta}|^{1/4}\Big\|_{L^4(\R^4)} \lesssim_{\varepsilon} M(\lambda) r^{-\varepsilon} \prod_{\ell=1}^4  \Big\| \big(\sum_{\theta \in \Theta(I_{\ell}; r)}|f_{\theta}|^2\big)^{1/2}\Big\|_{L^4(\R^4)}^{1/4}
\end{equation*}
holds whenever $(f_{\theta})_{\theta \in \Theta(r)}$ is a sequence of functions satisfying $\supp \widehat{f}_{\theta} \subseteq \theta$ for all $\theta \in \Theta(r)$, where $\sup_{\lambda \in [\lambda_0, 1]} M(\lambda) <\infty$ for all $\lambda_0 > 0$.
\end{proposition}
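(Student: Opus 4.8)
\textbf{Proof proposal for Proposition~\ref{multi sf prop}.}

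The plan is to deduce the multilinear square function inequality from a multilinear \emph{Fourier restriction} estimate for the cone $\Gamma_g$, following the strategy of Lee--Vargas~\cite{LV2012} adapted to the codimension $2$ setting. The starting point is the observation that each plate $\theta(s;r) \in \Theta(I_\ell;r)$ is, after the affine change of variables given by $[g]_{\mathcal{C},s,r}$, a box of dimensions comparable to $r \times r^2 \times r^3 \times 1$ sitting in an $O(r)$-neighbourhood of $\Gamma_g$. Since $g$ is non-degenerate, the portion of $\Gamma_g$ lying over an interval $I_\ell$ of length $\lambda$ has, after rescaling, the same curvature properties as the whole cone; and the separation hypothesis $(I_1,\dots,I_4) \in \fI^4_{\mathrm{sep}}(\lambda)$ guarantees that the four conical sheets over $I_1,\dots,I_4$ are in \emph{transversal position} with transversality $\gtrsim \lambda^{O(1)}$. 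This is precisely the geometric input needed to invoke the four-linear Fourier restriction estimate of Bennett--Bez--Flock--Lee~\cite{BBFL2018} (a $k$-linear Kakeya/restriction estimate valid for families of transversal submanifolds, here four $2$-dimensional conical sheets in $\R^4$). First I would record this transversality carefully, tracking how the lower bound on the transversality constant depends on $\lambda$; this is where the finite factor $M(\lambda)$ (with $\sup_{\lambda \geq \lambda_0} M(\lambda) < \infty$) enters.

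Next I would pass from the multilinear restriction estimate to the multilinear \emph{square function} estimate. The mechanism is standard (cf.~\cite{LV2012}): given the functions $f_\theta$ with $\supp \widehat{f}_\theta \subseteq \theta$, one groups them according to the plate $\theta$, applies the $4$-linear restriction inequality to the product $\prod_{\ell=1}^4 |\sum_{\theta \in \Theta(I_\ell;r)} f_\theta|^{1/4}$ after a Córdoba-type $L^4$ orthogonality/Rubio de Francia argument to replace the linear sums over $\theta$ inside by square functions, and uses an $\varepsilon$-removal / Córdoba--Fefferman $L^4$ pigeonholing to absorb the $r^{-\varepsilon}$ loss coming from the number of intermediate scales. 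Concretely: one decomposes dyadically from scale $\lambda$ down to scale $r$ using the hierarchical structure \eqref{rsq dyadic dec}, at each stage applying the multilinear restriction bound at the current scale together with a local $L^2 \to L^4$ ``local restriction = local square function'' estimate, and the geometric factors multiply to give a bound of the form $M(\lambda) \cdot (\log(1/r))^{O(1)} \lesssim_\varepsilon M(\lambda) r^{-\varepsilon}$. The non-degeneracy of $g$ is used at every scale to ensure the rescaled sheets remain genuinely curved.

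The main obstacle I anticipate is the \emph{bookkeeping of the anisotropic rescaling combined with the codimension $2$ geometry}. Unlike the classical light-cone case (codimension $1$ in $\R^3$) treated in~\cite{MSS1992, Wolff2000, LV2012}, here the plates $\theta(s;r)$ are genuinely four-dimensional boxes with three distinct small directions $r, r^2, r^3$, and the affine maps $[g]_{\mathcal{C},s,r}$ from \eqref{Frenet box 3} do not commute with the parabolic rescaling that implements the passage from scale $\lambda$ to scale $1$. One must verify that rescaling a plate $\theta(g;s;r)$ with $s \in I_\ell$, $|I_\ell| = \lambda$, produces (a bounded number of) plates $\theta(\tilde g; \tilde s; r/\lambda)$ for a rescaled non-degenerate curve $\tilde g$ with uniformly controlled non-degeneracy constant — this is the analogue of Definition~\ref{rescaled curve def} and the estimates of \S\ref{curve sym sec}, but carried out in the cone rather than moment-curve picture, and it requires the Frenet-frame change-of-basis computations \eqref{Frenet box 1}--\eqref{Frenet box 4} to be uniform in $s$. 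Once this rescaling lemma is in place, the induction-on-scales runs as in~\cite{LV2012} and the $4$-linear restriction estimate of~\cite{BBFL2018} supplies the base case. I would also need to check that the separation condition is preserved under rescaling, which follows from the definition of $\fI^4_{\mathrm{sep}}(\lambda)$ and the fact that rescaling by $\lambda^{-1}$ expands distances between the $I_\ell$'s by exactly $\lambda^{-1}$.
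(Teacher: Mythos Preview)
Your identification of the key external input---the four-linear restriction estimate of Bennett--Bez--Flock--Lee for the transversal conical sheets $\Gamma_{I_\ell}$, with the transversality constant accounting for $M(\lambda)$---is correct and matches the paper. However, the mechanism you propose for passing from multilinear restriction to the multilinear square function is substantially more complicated than what is actually needed, and your anticipated ``main obstacle'' is a red herring for this particular proposition.

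The paper's argument is a direct, single-step reduction with no induction on scales and no rescaling. One localises to a ball $B$ of radius $r^{-1}$ using a bump $\eta_B$ with $\supp \widehat{\eta}_B$ of width $O(r)$, sets $F_\ell := \big(\sum_{\theta \in \Theta(I_\ell;r)} f_\theta\big)\eta_B$, and observes that $\supp \widehat{F}_\ell$ lies in an $O(r)$-neighbourhood of $\Gamma_{I_\ell}$. Applying the multilinear restriction estimate (Proposition~\ref{mult rest prop}) gives
\[
\Big\|\prod_{\ell=1}^4 |F_\ell|^{1/4}\Big\|_{L^4(\R^4)} \lesssim_\varepsilon M(\lambda)\, r^{1-\varepsilon} \prod_{\ell=1}^4 \|F_\ell\|_{L^2(\R^4)}^{1/4}.
\]
Since the functions $f_\theta\,\eta_B$ for $\theta \in \Theta(I_\ell;r)$ have essentially disjoint Fourier supports, Plancherel converts $\|F_\ell\|_{L^2}$ into $\big\|(\sum_\theta |f_\theta|^2)^{1/2}|\eta_B|\big\|_{L^2}$, and H\"older on the ball $B$ (of volume $\sim r^{-4}$) turns this $L^2$ norm into $r^{-1}$ times the weighted $L^4$ norm. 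The factor $r^{1-\varepsilon}$ from restriction cancels against this $r^{-1}$, and summing over a finitely-overlapping cover by $r^{-1}$-balls yields the global inequality.

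Thus there is no dyadic iteration from scale $\lambda$ to scale $r$, and no need to verify that plates rescale correctly under $[g]_{\mathcal{C},s,r}$ at intermediate scales. The anisotropic rescaling you worry about does enter the paper, but only later, in Corollary~\ref{resc multilin sf cor}, where Proposition~\ref{multi sf prop} is upgraded to allow the intervals to sit inside a small parent interval $J$; that is a separate (and short) argument.
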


Using a standard argument, Proposition~\ref{multi sf prop} will follow from a $4$-linear Fourier restriction estimate. To state the latter inequality, given an interval $J \subseteq [-1,1]$ let $\Gamma_J$ denote the image of $\Gamma_g \colon (\rho, s) \mapsto \rho \, (g(s),1)^\top$ restricted to the set $U_J :=  [1/4,4] \times J$ and, for $r > 0$, let $N_r \Gamma_J$ denote the $r$-neighbourhood of $\Gamma_J$.

\begin{proposition}\label{mult rest prop} If $(I_1, \dots, I_4) \in \mathfrak{I}_{\mathrm{sep}}^4(\lambda)$ , then for all $0 < r \leq \lambda$ and all $\varepsilon > 0$ the inequality
\begin{equation*}
 \big\| \prod_{\ell =1}^4 |F_{\ell}|^{1/4}\big\|_{L^4(\R^4)} \lesssim_{\varepsilon} M(\lambda) r^{1-\varepsilon} \prod_{\ell=1}^4 \|F_{\ell}\|_{L^2(\R^4)}^{1/4}
\end{equation*}
 holds for all $F_{\ell} \in L^2(\R^4)$ with $\mathrm{supp}\, \widehat{F}_{\ell} \subseteq N_r \Gamma_{I_\ell}$ for $1 \leq \ell \leq 4$.
\end{proposition}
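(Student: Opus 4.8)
The plan is to prove Proposition~\ref{mult rest prop} as a consequence of the $k$-linear Fourier restriction inequality of Bennett--Bez--Flock--Lee \cite{BBFL2018}, applied to the curve $g\colon[-1,1]\to\R^3$ together with the cone construction $\Gamma_g$. Since $g$ is non-degenerate, the cone $\Gamma_g$ in $\R^4$ has the property that at $4$ points $\rho_\ell\,(g(s_\ell),1)^\top$ with the $s_\ell$ pairwise separated, the tangent spaces are in ``general position'' in the transversality sense required by multilinear restriction. The first step is therefore to verify this transversality: one checks that the $4\times 4$ determinant formed by picking one normal (or tangent) direction from each of the $4$ plates is bounded below in terms of $\lambda$ and the non-degeneracy constant of $g$. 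This is where the factor $M(\lambda)$ enters — it absorbs the (possibly $\lambda$-dependent but, crucially, uniform for $\lambda$ bounded away from $0$) transversality constant. Concretely, the tangent space to $\Gamma_g$ at $(\rho,s)$ is spanned by $(g(s),1)$ and $\rho(g'(s),0)$, so a normal vector to $\Gamma_{I_\ell}$ at a point over $s_\ell$ lies (up to $O(r)$) in the $2$-plane orthogonal to these; the determinant condition reduces, by the argument already carried out in \S\ref{geo obs sec} for the Frenet description, to a Wronskian-type quantity $\det[g',g'',g''']$ evaluated at nearby points, which is $\gtrsim 1$, times appropriate powers of the separation.

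Second, I would invoke the Bennett--Bez--Flock--Lee theorem in the form: for hypersurfaces (here, the $2$-dimensional cone sitting in $\R^4$, thickened) which are transversal in the multilinear sense, the $k$-linear extension estimate holds at the exponent dictated by the $\eps$-removal/Bennett--Carbery--Tao framework. For a $d$-dimensional variety in $\R^n$ with $n=4$ and the relevant geometry, the sharp $4$-linear $L^4$ estimate gives exactly the $r^{1-\eps}$ gain claimed (this matches the Lee--Vargas \cite{LV2012} use of the trilinear cone estimate in $\R^3$, scaled up by one dimension). The passage from the extension operator formulation of \cite{BBFL2018} to the ``Fourier support in $N_r\Gamma_{I_\ell}$'' formulation is the standard one: slice $N_r\Gamma_{I_\ell}$ into $\sim r^{-1}$ parallel slabs (foliating by the cone), apply the $L^2\to L^4$ extension bound on each slab after rescaling to unit thickness, and reassemble using an $L^2$-orthogonality (Plancherel) argument in the normal direction together with Minkowski's inequality; this contributes a factor $r^{\,(\text{number of codimensions})/2}$-type power which, combined with the extension constant, produces the stated $r^{1-\eps}$. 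The $\eps$-loss comes from the $\eps$-removal step needed to go from the ``local'' (ball of radius $r^{-1}$) multilinear restriction estimate to the global one on $\R^4$, exactly as in \cite{Bourgain2011, BBFL2018}.

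The main obstacle I anticipate is bookkeeping the transversality uniformly and tracking the exact power of $r$. Two subtleties: (i) the cone $\Gamma_g$ is only $2$-dimensional in $\R^4$, so ``$N_r\Gamma_{I_\ell}$'' is an anisotropic (codimension-$2$) neighbourhood, and one must be careful that the multilinear restriction estimate one cites is for the correct dimension of the variety — here \cite{BBFL2018} handles exactly this lower-dimensional situation, which is precisely why it (rather than Bennett--Carbery--Tao) is the right tool; (ii) the separation hypothesis $\mathrm{dist}(I_{\ell_1},I_{\ell_2})\ge\lambda$ must be shown to imply the quantitative transversality needed, with the dependence on $\lambda$ matching the definition of $M(\lambda)$ (finite for $\lambda$ bounded below). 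Both of these are essentially the $\R^4$ analogue of what Lee--Vargas did in $\R^3$, and the non-degeneracy of $g$ established in \S\ref{geo obs sec} is exactly the input that makes the transversality work. Once Proposition~\ref{mult rest prop} is in hand, Proposition~\ref{multi sf prop} follows by the standard device of writing each $f_\theta$ via its (essentially disjoint) Fourier support, applying Proposition~\ref{mult rest prop} with $F_\ell$ a suitable superposition, and then converting the resulting $L^2$ norms $\|F_\ell\|_2$ into the square-function expressions $\|(\sum_\theta|f_\theta|^2)^{1/2}\|_{L^4}$ using the $L^2$-orthogonality of the pieces over $I_\ell$ together with the $L^4$ Córdoba-type argument; this last reduction is routine and I would state it briefly rather than in full detail.
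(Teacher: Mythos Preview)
Your approach is essentially the paper's: reduce Proposition~\ref{mult rest prop} to a multilinear extension estimate (the paper's Proposition~\ref{multilin ex prop}) via standard uncertainty-principle/Plancherel arguments, and then invoke Bennett--Bez--Flock--Lee~\cite{BBFL2018} for the extension estimate. One correction worth flagging: the transversality required by~\cite{BBFL2018} is \emph{not} a single $4\times 4$ determinant built from one tangent (or normal) vector at each of the four points---for $2$-dimensional surfaces in $\R^4$ the relevant condition is the Brascamp--Lieb dimension inequality $\dim V \le \tfrac12\sum_{\ell=1}^4 \dim(\pi_\ell V)$ for every subspace $V\subseteq\R^4$, where $\pi_\ell$ is orthogonal projection onto the tangent plane $V_\ell$ at $\Gamma(u_\ell)$. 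The paper verifies this case-by-case over $\dim V\in\{0,1,2,3,4\}$, and the key geometric input is that the tangent planes at any \emph{two} separated points already span $\R^4$, i.e.\ $V_{\ell_1}+V_{\ell_2}=\R^4$; this is shown via the determinant
\[
\det\begin{bmatrix} g(s_{\ell_1}) & g'(s_{\ell_1}) & g(s_{\ell_2}) & g'(s_{\ell_2}) \\ 1 & 0 & 1 & 0 \end{bmatrix},
\]
which after column reduction becomes an integral of $\det[g'(s_{\ell_1})\ g'(s)\ g'(s_{\ell_2})]$ and is bounded below by $\lambda^4$ times the non-degeneracy constant of $g$. Your Wronskian intuition is in the right direction, but the pairwise spanning property (two $2$-planes filling $\R^4$), not a four-vector determinant, is what drives the BL verification.
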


  Given an interval $J \subset [-1,1]$, define the extension operator
\begin{equation*}
E_Jf(x) := \int_{U_J} e^{i \inn{\Gamma(u)}{x}}  f(u) \,\ud u \qquad \textrm{for all $f \in L^1(U_J)$,}
\end{equation*}
where $U_J := [1/4,4] \times J$ as above. By standard uncertainty principle techniques and Plancherel's theorem (see, for instance, \cite{Bennett2006} or \cite[Appendix]{Tao2020}), Proposition~\ref{mult rest prop} is a consequence of the following multilinear extension estimate.

\begin{proposition}\label{multilin ex prop} If $(I_1, \dots, I_4) \in \mathfrak{I}_{\mathrm{sep}}^4(\lambda)$, then for all $R \geq 1$ and all $\varepsilon > 0$ the inequality
\begin{equation*}
 \big\| \prod_{\ell=1}^4 |E_{I_{\ell}} f_{\ell}|^{1/4}\big\|_{L^4(B_R)} \lesssim_{\varepsilon}  M(\lambda) R^{\varepsilon} \prod_{\ell=1}^4 \|f_{\ell}\|_{L^2(U_{I_\ell})}^{1/4}
\end{equation*}
 holds for all $f_{\ell} \in L^2(U)$ for $1 \leq \ell \leq 4$, where $B_R$ denotes a ball of radius $R$.
\end{proposition}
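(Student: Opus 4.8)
The statement to be proved is Proposition~\ref{multilin ex prop}, the $4$-linear extension estimate for the cone $\Gamma_g$ generated by a non-degenerate curve $g\colon[-1,1]\to\R^3$. The plan is to deduce this from the $4$-linear Fourier restriction theorem of Bennett--Bez--Flock--Lee~\cite{BBFL2018} for hypersurfaces (or, more precisely, for families of lower-dimensional submanifolds of $\R^4$) satisfying a transversality hypothesis. The cone $\Gamma_g$ is a $2$-dimensional surface in $\R^4$, so the relevant normal structure at a point $\rho\,(g(s),1)^\top$ is the $2$-plane $N(\rho,s)$ orthogonal to the tangent space $\langle \partial_\rho\Gamma, \partial_s\Gamma\rangle = \langle (g(s),1)^\top,\, \rho(g'(s),0)^\top\rangle$. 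First I would record the relevant transversality/non-degeneracy estimate: because $g$ is non-degenerate, for any four parameters $s_1,\dots,s_4$ with pairwise separation $\gtrsim\lambda$, a suitable lower bound holds for the relevant determinant/volume form built from the normals $N(\rho_\ell,s_\ell)$, and this lower bound is uniform for $(\rho_\ell,s_\ell)\in U_{I_\ell}$ once $s_\ell\in I_\ell$ and $(I_1,\dots,I_4)\in\mathfrak I^4_{\mathrm{sep}}(\lambda)$. The quantitative dependence on $\lambda$ is exactly what gets absorbed into the constant $M(\lambda)$; one should check $\sup_{\lambda\in[\lambda_0,1]}M(\lambda)<\infty$ for each $\lambda_0>0$, which follows from continuity of the determinant and compactness.

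\textbf{Key steps.} (i) Parametrise: write $\Phi(\rho,s):=\rho\,(g(s),1)^\top$ for $(\rho,s)\in U=[1/4,4]\times[-1,1]$, and note $E_J f(x)=\int_{U_J}e^{i\langle\Phi(u),x\rangle}f(u)\,\ud u$. (ii) Verify the hypotheses of the multilinear restriction theorem in~\cite{BBFL2018}: compute the first and second fundamental forms of the $2$-surface $\Phi$; the non-degeneracy of $g$ (established in \S\ref{geo obs sec}, $|\det[g]_s|\gtrsim 1$) guarantees that $\Phi$ is a genuine $2$-surface with the curvature needed so that the affine subspaces $\Phi(U_{I_\ell})+N(U_{I_\ell})$ are quantitatively transversal when $(I_1,\dots,I_4)\in\mathfrak I^4_{\mathrm{sep}}(\lambda)$; in the homogeneous (conic) direction one uses that $g(s_1),\dots,g(s_4)$, together with the Wronskian-type non-degeneracy, span enough of $\R^4$. (iii) Apply the $\varepsilon$-loss multilinear restriction estimate: for the $k$-transversal configuration of $d_1,\dots,d_k$-dimensional pieces in $\R^n$ with $\sum d_\ell = \ldots$, one gets $\|\prod_{\ell} |E_{I_\ell}f_\ell|^{1/k}\|_{L^{2k/(k+\cdots)}}$... — here with $n=4$, $k=4$ and the surfaces of dimension $2$ this produces precisely the $L^4(B_R)$ estimate with exponent $1/4$ on each factor and an $R^\varepsilon$ loss, which is the claimed inequality. (iv) Track the transversality constant through the argument of~\cite{BBFL2018} to see that it enters only polynomially, yielding the factor $M(\lambda)$ with the stated boundedness property.

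\textbf{Main obstacle.} The principal technical point will be step (ii): verifying that the cone $\Gamma_g$ over a non-degenerate curve fits the transversality framework of~\cite{BBFL2018} with constants controlled by $\lambda$. This requires a careful computation of the Gauss map / normal bundle of the $2$-dimensional conic surface and a lower bound, in terms of $\lambda$, for the relevant Brascamp--Lieb/transversality quantity associated to four $\lambda$-separated parameter windows. The non-degeneracy condition $|\det[g]_s|\gtrsim 1$ does the heavy lifting here — it is precisely the analogue of the curvature hypothesis for the moment curve — but one must be attentive to the homogeneous (scaling) direction $\rho$, which is shared by all plates and hence contributes nothing to transversality; the transversality must come entirely from the separation of the $s$-parameters together with the curvature of $g$. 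A clean way to organise this is to first prove the estimate after freezing $\rho_\ell=1$ (a genuinely $k$-linear restriction estimate for a curve-like object in $\R^4$) and then restore homogeneity by a standard Minkowski/integration-in-$\rho$ argument, at the cost of an inessential constant; I would also invoke the reduction to $f_\ell$ localised near a single point of $U_{I_\ell}$ via a bush/decomposition argument so that the transversality constant is evaluated at one configuration. Once (ii) is in hand, steps (i), (iii), (iv) are essentially bookkeeping following the now-standard deduction of multilinear restriction for curves/cones from~\cite{BBFL2018}, as in~\cite{LV2012} and~\cite{BBFL2018} itself.
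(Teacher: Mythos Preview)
Your overall approach---reduce to the multilinear restriction theorem of \cite{BBFL2018} and verify the transversality/Brascamp--Lieb hypothesis---is exactly what the paper does. However, your step (ii) contains a misconception that would make the verification harder than it needs to be. You write that the homogeneous direction $\rho$ is ``shared by all plates and hence contributes nothing to transversality''; this is not correct. The radial tangent vector at $(\rho,s)$ is $(g(s),1)^\top$, which depends on $s$, so different plates do \emph{not} share a radial direction. In fact the radial direction is essential: the paper's key computation shows that for any two separated parameters $s_{\ell_1},s_{\ell_2}$ the $4\times 4$ matrix formed from the tangent vectors satisfies
\[
\Big|\det\begin{bmatrix} g(s_{\ell_1}) & g'(s_{\ell_1}) & g(s_{\ell_2}) & g'(s_{\ell_2}) \\ 1 & 0 & 1 & 0 \end{bmatrix}\Big| \gtrsim |s_{\ell_1}-s_{\ell_2}|^4 \gtrsim \lambda^4,
\]
so any \emph{pair} of tangent $2$-planes $V_{\ell_1},V_{\ell_2}$ already spans $\R^4$. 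Consequently your proposed workaround of freezing $\rho_\ell=1$ and restoring homogeneity afterwards is unnecessary (and would reduce the surfaces to $1$-dimensional curves, for which the relevant multilinear exponents are different).

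The paper's verification is therefore more direct than you anticipate: rather than computing Gauss maps, one works with projections onto the tangent spaces $V_\ell$ (not the normals) and checks the BCCT finiteness criterion for the Brascamp--Lieb datum $\big((\pi_1,\dots,\pi_4),(1/2,\dots,1/2)\big)$. The scaling condition is immediate, and the dimension condition $\dim V \le \tfrac12\sum_\ell \dim(\pi_\ell V)$ is checked by a short case analysis on $\dim V\in\{0,1,2,3,4\}$, using only the pairwise spanning property $V_{\ell_1}+V_{\ell_2}=\R^4$ (equivalently $V_{\ell_1}^\perp\cap V_{\ell_2}^\perp=\{0\}$). Your plan would work once the radial-direction concern is removed, but the BCCT route is both shorter and conceptually cleaner.
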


We refer to the above references for the argument use to pass from Proposition~\ref{multilin ex prop} to Proposition~\ref{mult rest prop} and turn to the proof of the extension estimate.

\begin{proof}[Proof of Proposition~\ref{multilin ex prop}] This inequality is a special case (via a compactness argument) of the recent generalisation of the  Bennett--Carbery--Tao restriction theorem \cite{Bennett2006} due to Bennett--Bez--Flock--Lee \cite[Theorem 1.3]{BBFL2018}; an improved version with $R^\varepsilon$ replaced by $(\log R)^{O(d)}$ has also been obtained by Zhang \cite[(1.8)]{Zhang}, although the $R^\varepsilon$ loss suffices for our purposes. In order to see this, we must verify a certain linear-algebraic condition on the tangent planes to $\Gamma$. The setup is recalled presently.\medskip

Fix $u_{\ell} = (\rho_\ell, s_{\ell}) \in U_{I_{\ell}}$ for $1 \leq \ell \leq 4$. We construct a \textit{Brascamp--Lieb datum} $(\mathbf{L}, \mathbf{p})$ by taking
\begin{equation*}
    \mathbf{L} := (\pi_1, \dots, \pi_4) \qquad \textrm{and} \qquad \mathbf{p} := (p_1, \dots, p_4) := (1/2, \dots, 1/2) 
\end{equation*} 
where each $\pi_{\ell} \colon \R^4 \to V_{\ell}$ is the orthogonal projection map from $\R^4$ to the 2-dimensional tangent space $V_{\ell}$ to $\Gamma$ at $\Gamma(u_{\ell})$. With this definition, the problem is to show that $\mathrm{BL}(\mathbf{L}, \mathbf{p}) < \infty$, where the \textit{Brascamp--Lieb constant} $\mathrm{BL}(\mathbf{L}, \mathbf{p})$ is as defined in, for instance, \cite{BBFL2018}. By the characterisation of finiteness of the Brascamp--Lieb constant from \cite{BCCT2008} and our choice of datum, it suffices to verify the following two conditions:
\begin{enumerate}[i)]
\item $\displaystyle \sum_{\ell = 1}^4 (\dim \mathrm{Im}\,\pi_{\ell}) \, p_{\ell} = 4$.
\item $\displaystyle \dim V \leq \frac{1}{2} \sum_{\ell = 1}^4 \dim \big(\pi_{\ell} V\big)$ holds for all linear subspaces $V \subseteq \R^4$. 
\end{enumerate}
The scaling condition i) is immediate from the choice of datum and it remains to prove the dimension condition ii).\medskip

Clearly one may replace $\pi_{\ell}$ with the linear map associated to the $2 \times 4$ Jacobian matrix $\ud \Gamma |_{(\rho_\ell,s_{\ell})}$. By subtracting the first column from the third column and applying the fundamental theorem of calculus,
\begin{equation*}
    \det
    \begin{bmatrix}
    g(s_{\ell_1}) & g'(s_{\ell_1}) & g(s_{\ell_2}) & g'(s_{\ell_2}) \\
    1 & 0 & 1 & 0
    \end{bmatrix}
    = -\int_{s_{\ell_1}}^{s_{\ell_2}} 
    \det
    \begin{bmatrix}
    g'(s_{\ell_1}) & g'(s) & g'(s_{\ell_2})
    \end{bmatrix}
    \,\ud s.
\end{equation*}
Furthermore, by repeated application of column reduction and the fundamental theorem of calculus, it follows from the non-degeneracy hypothesis and the initial localisation that
\begin{equation*}
    \big|\det
    \begin{bmatrix}
    g'(s_{\ell_1}) & g'(s) & g'(s_{\ell_2})
    \end{bmatrix}\big| \gtrsim |s_{\ell_2} - s_{\ell_1}||s-s_{\ell_1}||s_{\ell_2}-s|;
\end{equation*}
see, for instance, \cite[Proposition 4.1]{GGPRY}. Consequently, the determinant has constant sign and
\begin{equation}\label{dim cond 1}
   \big| \det 
   \begin{bmatrix}
   \ud \Gamma |_{(\rho_{\ell_1}, s_{\ell_1})} &
   \ud \Gamma |_{(\rho_{\ell_2}, s_{\ell_2})}
   \end{bmatrix}\big| \gtrsim |\rho_{\ell_1}||\rho_{\ell_2}||s_{\ell_2} - s_{\ell_1}|^4 \gtrsim \lambda^4,
\end{equation}
where the final bound is due to the separation between the $I_{\ell}$. Note that \eqref{dim cond 1} is equivalent to the geometric condition that $V_{\ell_1} + V_{\ell_2} = \R^4$ and therefore
\begin{equation}\label{dim cond 2}
V_{\ell_1}^{\perp} \cap V_{\ell_2}^{\perp} = \big(V_{\ell_1} + V_{\ell_2}\big)^{\perp} = \{0\}.
\end{equation}
With this observation, it is now a simple matter to verify the dimension condition ii) above. 
\begin{itemize}
  \item If $\dim V = 4$ or $\dim V = 0$, then ii) is trivial.
    
  \item If $\dim V = 1$, then it suffices to show that $\dim\pi_{\ell} V = 1$ for at least two values of $\ell$. Suppose $\dim\pi_{\ell_1} V = \dim\pi_{\ell_2} V  = 0$ for some $1 \leq \ell_1 < \ell_2 \leq 4$, so that 
  \begin{equation*}
      V \subseteq \ker \pi_{\ell_1} \cap \ker \pi_{\ell_2} = V_{\ell_1}^{\perp} \cap V_{\ell_2}^{\perp}.
  \end{equation*} 
  However, in this case it follows from \eqref{dim cond 2} that $V = \{0\}$, which contradicts our dimension hypothesis. Thus, $\dim\pi_{\ell} V = 0$ for at most a single value of $\ell$, which more than suffices for our purpose.
  \item If $\dim V = 2$, then we may assume that $\dim \pi_{\ell_0} V = 0$ for some $1 \leq \ell_0 \leq 4$, since otherwise ii) is immediate. By dimensional considerations, it follows that $V = V_{\ell_0}^{\perp}$. Now let $1 \leq \ell \leq 4$ with $\ell \neq \ell_0$. By \eqref{dim cond 2}, it follows that $V \cap V_{\ell}^{\perp} = \{0\}$. Thus, by the rank-nullity theorem applied to the mapping $\pi_{\ell}|_V \colon V \to V_{\ell}$, we deduce that $\dim \pi_{\ell} V = 2$. Since this is true for three distinct values of $\ell$, property ii) holds.  
    \item If $\dim V = 3$, then it is clear that $\dim \pi_{\ell} V \geq 1$ for all $1 \leq \ell \leq 4$. Suppose there exist $1 \leq \ell_1 < \ell_2 \leq 4$ such that $\dim (\pi_{\ell_1} V) = \dim (\pi_{\ell_2} V) = 1$. In this case, by the rank-nullity theorem applied to $\pi_{\ell_i}|_V \colon V \to V_{\ell_i}$ and dimensional considerations,
    \begin{equation*}
        V_{\ell_1}^{\perp} + V_{\ell_2}^{\perp} = \ker \pi_{\ell_1} + \ker \pi_{\ell_2} \subseteq V.
    \end{equation*}
    However, in this case it follows from \eqref{dim cond 2} that $V = \R^4$, which contradicts our dimension hypothesis. Thus, $\dim \pi_{\ell} V = 1$ for at most a single value of $\ell$, and for the remaining values of $\ell$ the dimension is at least 2. This again more than suffices for our purpose. 
\end{itemize}
This establishes the finiteness of the Brascamp--Lieb constant and concludes the proof.
\end{proof}

Having established the multilinear restriction estimate, it is a simple matter to deduce the desired multilinear square function bound. 

\begin{proof}[Proof of Proposition~\ref{multi sf prop}] Let $B$ be a ball of radius $r^{-1}$ in $\R^4$ with centre $x_0$. Fix $\eta \in \mathcal{S}(\R^4)$ with $\supp \widehat{\eta} \subset B(0,1)$ and $|\eta(x)| \gtrsim 1$ on $B(0,1)$ and define $\eta_{B}(x) := \eta\big(r(x-x_0)\big)$. By the rapid decay of $\eta$, it suffices to show that 
  \begin{equation*}
 \Big\| \prod_{\ell=1}^4|\sum_{\theta \in \Theta(I_{\ell}; r)}f_{\theta}|^{1/4}\Big\|_{L^4(B)} \lesssim_{\varepsilon} r^{-\varepsilon} \prod_{\ell=1}^4  \Big\| \big(\sum_{\theta \in \Theta(I_{\ell}; r)}|f_{\theta}|^2\big)^{1/2}\, |\eta_B|^2\Big\|_{L^4(\R^4)}^{1/4}.
\end{equation*}
Indeed, once established, this inequality can be summed over a collection of finitely-overlapping balls $B$ which cover $\R^4$ to obtained the desired global estimate. 

For $1 \leq \ell \leq 4$ define 
\begin{equation*}
    F_{\ell} :=\sum_{\theta \in \Theta(I_{\ell}; r)}f_{\theta} \, \eta_B
\end{equation*}
so that each $F_{\ell}$ is Fourier supported in an $O(r)$-neighbourhood of $\Gamma_{I_{\ell}}$. Applying Proposition~\ref{mult rest prop} to these functions, it follows that 
  \begin{equation*}
  \Big\| \prod_{\ell=1}^4|\sum_{\theta \in \Theta(I_{\ell}; r)}f_{\theta}|^{1/4}\Big\|_{L^4(B)} \lesssim \Big\| \prod_{\ell=1}^4|F_{\ell}|^{1/4}\Big\|_{L^4(\R^4)} \lesssim_{\varepsilon} M(\lambda) r^{1-\varepsilon} \prod_{j=1}^4 \Big\|\sum_{\theta \in \Theta(I_{\ell}; r)}f_{\theta} \, \eta_B\Big\|_{L^2(\R^4)}^{1/4}.
\end{equation*}
Note that the functions $f_{\theta}\, \eta_B$ appearing in the right-hand sum have essentially disjoint Fourier support. Consequently, by Plancherel's theorem and H\"older's inequality,
\begin{align*}
 \Big\|\sum_{\theta \in \Theta(I_{\ell}; r)}f_{\theta} \, \eta_B\Big\|_{L^2(\R^4)}^{1/4} &\lesssim \Big\|\big(\sum_{\theta \in \Theta(I_{\ell}; r)}|f_{\theta}|^2\big)^{1/2}\,|\eta_B|\Big\|_{L^2(\R^4)} \\
  &\lesssim r^{-1} \Big\|\big(\sum_{\theta \in \Theta(I_{\ell}; r)}|f_{\theta}|^2\big)^{1/2}\,|\eta_B|^2\Big\|_{L^4(\R^4)}.
\end{align*}
Combining the previous two displays completes the proof. 
\end{proof}

\subsection{Rescaling} By combining Proposition~\ref{multilin ex prop}  with an affine rescaling argument, one may deduce a useful refined version of the multilinear inequality. This improves the dependence on separation parameter $\lambda$ under an additional localisation hypothesis on the intervals $J_1, \dots, J_4$.

Given dyadic scales $0 < \lambda_1 \leq \lambda_2 \leq 1$ and $J \in \fI(\lambda_2)$, let $\fI^4_{\mathrm{sep}}(J;\,\lambda_1)$ denote the collection of all $4$-tuples of intervals $\vec{I} = (I_1, \dots, I_4) \in \fI^4_{\mathrm{sep}}(\lambda_1)$ such that $I_\ell \subseteq J$ for all $1 \leq \ell \leq 4$. 

With this definition, the refined version of Proposition~\ref{multi sf prop}  reads as follows. 

\begin{corollary}\label{resc multilin sf cor} 
Fix dyadic scales $0 < r \leq \lambda_1 \leq \lambda_2 \leq 1$. If $J \in \fI(\lambda_2)$, $(I_1, \dots, I_4) \in \fI_{\mathrm{sep}}^4(J;\,\lambda_1)$ and $\varepsilon > 0$, then 
\begin{equation*}
 \Big\| \prod_{\ell=1}^4|\sum_{\theta \in \Theta(I_{\ell}; r)}f_{\theta}|^{1/4}\Big\|_{L^4(\R^4)} \lesssim_{\varepsilon} M(\lambda_1/\lambda_2) r^{-\varepsilon} \prod_{j=1}^4  \Big\| \big(\sum_{\theta \in \Theta(I_{\ell}; r)}|f_{\theta}|^2\big)^{1/2}\Big\|_{L^4(\R^4)}^{1/4}
\end{equation*}
holds whenever $(f_{\theta})_{\theta \in \Theta(r)}$ is a sequence of functions satisfying $\supp \widehat{f}_{\theta} \subseteq \theta$ for all $\theta \in \Theta(r)$.
\end{corollary}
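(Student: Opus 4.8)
The plan is to deduce Corollary~\ref{resc multilin sf cor} from Proposition~\ref{multi sf prop} by an affine rescaling that maps the curve portion $g|_J$ to a non-degenerate curve on all of $[-1,1]$, after which the $4$-tuple $(I_1,\dots,I_4)$ becomes $(\lambda_1/\lambda_2)$-separated in the rescaled interval and the multilinear square function bound applies with $M(\lambda_1/\lambda_2)$ in place of $M(\lambda_1)$.

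First I would set up the rescaling. Write $J = [\sigma - \lambda_2, \sigma + \lambda_2]$ (up to the obvious adjustment at the endpoints of $[-1,1]$) and pass to the $(\sigma,\lambda_2)$-rescaling $g_{\sigma,\lambda_2}$ of $g$ in the sense of Definition~\ref{rescaled curve def}; since $g$ is non-degenerate on $[-1,1]$, the rescaled curve $\tilde g := g_{\sigma,\lambda_2}$ is non-degenerate on $[-1,1]$ with constants depending only on those of $g$. This rescaling of the base curve induces a \emph{linear} change of variables $L \colon \R^4 \to \R^4$ on frequency space of the block form \eqref{Frenet box 3}, namely $L = [g]_{\mathcal C, \sigma, \lambda_2}^{\top}$ (acting on frequency), together with the corresponding dual change of variables on the physical side. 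The key algebraic point, which I would verify by a short computation using $\tilde{\gamma}^{(j)}(\tilde s) = \lambda_2^{-j}[\gamma]_{\sigma,\lambda_2}^{-1}(\cdots)$ as in the proof of Lemma~\ref{freq resc lem}, is that $L$ maps the plate $\theta(g; s; r)$ associated to the original curve to (a bounded dilate of) the plate $\theta(\tilde g; \tilde s; r/\lambda_2)$ associated to the rescaled curve, where $\tilde s := \lambda_2^{-1}(s - \sigma)$; the anisotropic powers $r^i$ in the definition of the plates are exactly what makes this work cleanly, and the powers of $\lambda_2$ produced by differentiating $g(\sigma + \lambda_2 s)$ are absorbed by the diagonal block $D_{\lambda_2}$ in $[g]_{\mathcal C,\sigma,\lambda_2}$.

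Next I would track what happens to the data. Setting $F_\theta := f_{\theta'} \circ L^{-\top}$ where $\theta'$ is the image plate, the hypotheses $\supp \widehat{f}_\theta \subseteq \theta$ become $\supp \widehat{F}_\theta \subseteq \theta(\tilde g; \tilde s; r/\lambda_2)$, and $|\det L| = |\det [g]_{\sigma,\lambda_2}| \sim 1$ ensures all $L^4$ and $L^2$ norms are comparable before and after the substitution. Because each $I_\ell \subseteq J$ and $\mathrm{dist}(I_{\ell_1}, I_{\ell_2}) \geq \lambda_1$, the rescaled intervals $\tilde I_\ell := \lambda_2^{-1}(I_\ell - \sigma) \subseteq [-1,1]$ satisfy $\mathrm{dist}(\tilde I_{\ell_1}, \tilde I_{\ell_2}) \geq \lambda_1/\lambda_2$, so $(\tilde I_1, \dots, \tilde I_4) \in \fI^4_{\mathrm{sep}}(\lambda_1/\lambda_2)$ for the rescaled curve $\tilde g$. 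Now I apply Proposition~\ref{multi sf prop} to $\tilde g$ at scale $r/\lambda_2$ with separation $\lambda_1/\lambda_2$: this yields the bound with constant $M(\lambda_1/\lambda_2)(r/\lambda_2)^{-\varepsilon}$. Since $\lambda_2 \leq 1$ we have $(r/\lambda_2)^{-\varepsilon} \leq r^{-\varepsilon}$, and undoing the substitution $L$ gives precisely the claimed inequality. One should note that Proposition~\ref{multi sf prop} was stated for a single fixed non-degenerate $g$ with implicit constants depending on $g$; here I am applying it to the family $\{g_{\sigma,\lambda_2}\}$, so I should remark that these curves have non-degeneracy constants and $C^{N}$ norms bounded uniformly (by the discussion in \S\ref{curve sym sec}, they are $O(\lambda_2)$ perturbations of the moment curve), whence the implicit constant and the function $M$ can be taken uniform over the family.

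The main obstacle is the bookkeeping in the rescaling identity for the plates: one must check that the linear map $L$ genuinely intertwines $\theta(g;s;r)$ with $\theta(\tilde g; \tilde s; r/\lambda_2)$ up to absolute dilation constants, including the transversal direction $|\inn{\be_4(s)}{\xi}| \sim 1$, and that the neighbourhood structure $N_r\Gamma_{I_\ell}$ transforms correctly. This is essentially the same computation as in Lemma~\ref{freq resc lem} and the passage \eqref{Frenet box 1}--\eqref{Frenet box 4}, so while it is slightly tedious it presents no conceptual difficulty; the only subtlety is ensuring the dilation constants accrued do not depend on $r$, $\lambda_1$, or $\lambda_2$, which follows because they come only from the bounded perturbation bounds on $g_{\sigma,\lambda_2} - \gamma_\circ$.
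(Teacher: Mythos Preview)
Your approach is essentially the same as the paper's: both deduce the corollary from Proposition~\ref{multi sf prop} by applying the affine rescaling $[g]_{\mathcal{C},\sigma,\lambda_2}$, computing that plates for $g$ transform to plates for $g_{\sigma,\lambda_2}$ at scale $\tilde r = r/\lambda_2$, and observing that the separated tuple becomes $(\lambda_1/\lambda_2)$-separated after rescaling. The paper's proof is terser but records exactly the key identity $[g_{\sigma,\lambda_2}]_{\mathcal{C},\tilde s,\tilde r} = ([g]_{\mathcal{C},\sigma,\lambda_2})^{-1}\circ [g]_{\mathcal{C},s,r}$, which is the algebraic heart of your ``key algebraic point''.

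One small slip: your claim that $|\det L| = |\det [g]_{\sigma,\lambda_2}| \sim 1$ is false, since $[g]_{\sigma,\lambda_2} = [g]_\sigma D_{\lambda_2}$ has determinant $\sim \lambda_2^{6}$. This is harmless, however, because the same Jacobian factor appears on both sides of the inequality under a linear change of variables on physical space, so it cancels; you should phrase it that way rather than asserting the determinant is bounded.
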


\begin{proof} The result is a consequence of Proposition~\ref{multi sf prop} and a rescaling argument. Let $J = [\sigma - \lambda_2, \sigma + \lambda_2] \subseteq [-1,1]$ and recall the definition of the rescaled curve
\begin{equation*}
    g_{\sigma, \lambda_2}(\tilde{s}) := \big([g]_{\sigma, \lambda_2}\big)^{-1} ( g(\sigma + \lambda_2 \tilde{s}) - g(\sigma)).
\end{equation*}
Differentiating this expression, it follows that $g_{\sigma, \lambda_2}^{(j)}(\tilde{s}) = \lambda_2^j \big([g]_{\sigma, \lambda_2}\big)^{-1} g^{(j)}(\sigma + \lambda_2 \tilde{s})$ for $j \geq 1$ and so
\begin{equation*}
    [g_{\sigma, \lambda_2}]_{\tilde{s},\tilde{r}} = \big([g]_{\sigma, \lambda_2}\big)^{-1} \circ [g]_{s,r} \qquad \textrm{where $s = \sigma + \lambda_2 \tilde{s}$ and $r = \lambda_2 \tilde{r}$.}
\end{equation*}
From this and the definition \eqref{Frenet box 3}, it is not difficult to deduce that
\begin{equation*}
    [g_{\sigma, \lambda_2}]_{\mathcal{C}, \tilde{s},\tilde{r}} = \big([g]_{\mathcal{C},\sigma, \lambda_2}\big)^{-1} \circ [g]_{\mathcal{C},s,r}.
\end{equation*}
Suppose $\theta \in \Theta(J;\,r)$ and $\supp \widehat{F}_{\theta} \subseteq \theta$. If $\theta = \theta(s,r)$, then
\begin{equation*}
    \supp \widehat{F}_{\theta} \circ [g]_{\mathcal{C},\sigma, \lambda_2} \subseteq \tilde{\theta}(\tilde{s}, \tilde{r})
\end{equation*}
where $\tilde{\theta}(\tilde{s}, \tilde{r})$ is the $\tilde{r}$-plate centred at $\tilde{s}$ defined with respect to $\tilde{g} := g_{\sigma, \lambda_2}$.  Finally, note that the above rescaling maps the intervals $(I_1, \dots, I_4) \in \fI^4_{\mathrm{sep}}(J;\lambda_1)$ to intervals $(\tilde{I}_1, \dots \tilde{I}_4) \in \fI^4_{\mathrm{sep}}(\lambda_1/\lambda_2)$.
\end{proof}




\subsection{Broad/narrow analysis} Here arguments from \cite{Ham2014} are adapted to pass from the multilinear estimates of Proposition~\ref{multi sf prop} (or, more precisely, Corollary~\ref{resc multilin sf cor}) to the linear estimates in Theorem~\ref{reverse sf thm}.\medskip

The key ingredient is the following decomposition lemma, which follows by iteratively applying the decomposition scheme discussed in \cite{Ham2014}. 

\begin{lemma}\label{broad narrow lemma E} Let $\varepsilon > 0$ and $r > 0$. There exist dyadic numbers $C_\varepsilon \geq 1$, $r_{\mathrm{n}}$ and $r_{\mathrm{b}}$ satisfying
\begin{equation}\label{broad narrow equation E 1}
   r < r_{\mathrm{n}} \lesssim_{\varepsilon, 1} r, \qquad r < r_{\mathrm{b}} \leq 1
\end{equation} 
such that
\begin{equation}\label{brd nrw E}
      \big\|\sum_{\theta \in \Theta(r)} f_{\theta}\big\|_{L^4(\R^4)} \lesssim_{\varepsilon} r^{-\varepsilon} \Big( \sum_{I \in \fI(r_{\mathrm{n}})}\|f_I\|_{L^4(\R^4)}^4\Big)^{1/4} + r^{-\varepsilon} \Big(\sum_{\substack{J \in \fI(C_\varepsilon r_{\mathrm{b}}) \\ \vec{I} \in \fI^4_{\mathrm{sep}}(J;\,r_{\mathrm{b}})} }\big\|\prod_{\ell=1}^4|f_{I_{\ell}}|^{1/4}\big\|_{L^4(\R^4)}^4 \Big)^{1/4}
\end{equation}
holds whenever $(f_{\theta})_{\theta \in \Theta(r)}$ is a sequence of functions satisfying $\supp \widehat{f}_{\theta} \subseteq \theta$ for all $\theta \in \Theta(r)$.
\end{lemma}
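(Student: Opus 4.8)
The plan is to establish Lemma~\ref{broad narrow lemma E} via a Bourgain--Guth-type iteration of a single-step broad/narrow dichotomy, in the spirit of \cite{Ham2014}. First I would set up the one-step decomposition: fix a small dyadic parameter $K = K(\varepsilon)$ to be chosen at the end, and partition $[-1,1]$ into intervals of length $K^{-1}$. For a ball $B$ of radius $\sim (K r)^{-1}$ on which we localise (by the usual Fourier support considerations, $\sum_\theta f_\theta$ is essentially constant at this scale when grouped into the $f_I$ with $|I| = K^{-1}$), we either have one $K^{-1}$-interval $I^*$ carrying the bulk of $\|\sum_\theta f_\theta\|_{L^4(B)}$ --- the \emph{narrow} case, in which we bound $\|\sum_\theta f_\theta\|_{L^4(B)} \lesssim K^{O(1)}\, \|f_{I^*}\|_{L^4(B)}$ --- or no single interval dominates, in which case a pigeonholing argument (counting pairs and then $4$-tuples of $K^{-1}$-intervals) produces a $4$-tuple $\vec I = (I_1, \dots, I_4)$ of $K^{-1}$-separated intervals with
\[
\Big\|\sum_{\theta \in \Theta(r)} f_\theta\Big\|_{L^4(B)} \lesssim K^{O(1)} \big\|\textstyle\prod_{\ell=1}^4 |f_{I_\ell}|^{1/4}\big\|_{L^4(B)}.
\]
Summing the fourth powers of these local estimates over a finitely overlapping cover of $\R^4$ by such balls $B$ gives the clean global inequality
\[
\Big\|\sum_{\theta \in \Theta(r)} f_\theta\Big\|_{L^4(\R^4)}^4 \lesssim K^{O(1)} \sum_{I \in \fI(K^{-1})} \|f_I\|_{L^4(\R^4)}^4 + K^{O(1)}\sum_{\substack{J \in \fI(\lesssim K^{-1})\\ \vec I \in \fI^4_{\mathrm{sep}}(J; K^{-1})}} \big\|\textstyle\prod_{\ell=1}^4|f_{I_\ell}|^{1/4}\big\|_{L^4(\R^4)}^4.
\]
(The broad $4$-tuples can always be grouped under a parent interval $J$ of length a bounded multiple of $K^{-1}$, which is what the $J \in \fI(C_\varepsilon r_{\mathrm{b}})$ sum records.)

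Next I would iterate. The broad term is already in the desired form with $r_{\mathrm{b}} = K^{-1}$ and $C_\varepsilon$ absorbing the bounded ratio, so it is left alone. On the narrow term, each $f_I$ with $I \in \fI(K^{-1})$ is itself a sum $\sum_{\theta \in \Theta(I;r)} f_\theta$, and after rescaling $I$ to $[-1,1]$ (using the affine symmetry recorded in Corollary~\ref{resc multilin sf cor}'s proof, i.e. the map $[g]_{\mathcal C, \sigma, K^{-1}}$, which sends plates to plates of the rescaled curve $g_{\sigma, K^{-1}}$) the inner sum has the same structure at relative scale $rK$. Applying the one-step dichotomy again to each rescaled piece and undoing the rescaling produces, after $m$ steps, narrow intervals of length $K^{-m}$ and a collection of broad contributions from scales $K^{-1}, K^{-2}, \dots, K^{-m}$. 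The iteration terminates once $K^{-m}$ reaches $\sim r$; at that point the narrow intervals have length comparable to $r$ and the corresponding $f_I$ are single plate functions $f_\theta$, which trivially satisfy $\|f_\theta\|_{L^4} \le \big\|(\sum |f_\theta|^2)^{1/2}\big\|_{L^4}$ --- but in fact we stop the recursion at the level $K^{-m} =: r_{\mathrm{n}} \lesssim_\varepsilon r$ (so only $O_\varepsilon(1)$ many steps, since $m \sim \log(1/r)/\log K$ would be too many --- see the obstacle paragraph) and keep the narrow term as stated. The various broad terms across scales all have the form appearing in \eqref{brd nrw E}, with a single representative scale $r_{\mathrm{b}}$ selected by pigeonholing in the scale index; the accumulated constant is $K^{O(m)}$, which must be controlled by $r^{-\varepsilon}$.

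\textbf{Main obstacle.} The delicate point is the bookkeeping of the number of iteration steps against the allowed loss $r^{-\varepsilon}$. A naive iteration down to scale $r$ takes $m \sim \log_K(1/r)$ steps, each contributing a factor $K^{C}$, giving a total loss $K^{Cm} \sim (1/r)^{C/\log_2 K}$, which is $\le r^{-\varepsilon}$ only if $K \ge 2^{C/\varepsilon}$ --- this is fine, \emph{provided} the broad $4$-linear estimate one eventually invokes (Corollary~\ref{resc multilin sf cor}, with separation ratio $\lambda_1/\lambda_2$) is uniformly bounded, which it is since $M$ is bounded on $[\lambda_0,1]$ for each fixed $\lambda_0$ and here the ratio is $\ge K^{-1}$, a fixed constant. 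The subtlety the statement of Lemma~\ref{broad narrow lemma E} is set up to sidestep is that we do \emph{not} iterate all the way: we only peel off $O_\varepsilon(1)$ layers, stopping at $r_{\mathrm{n}} \lesssim_\varepsilon r$ and $r_{\mathrm{b}} > r$, so the loss is merely $K^{O_\varepsilon(1)} \lesssim_\varepsilon 1 \le r^{-\varepsilon}$. Thus the real work is (i) proving the clean one-step dichotomy with polynomial-in-$K$ constants, via the pigeonholing that converts "$\gtrsim$ two non-negligible intervals" into "$4$ separated non-negligible intervals" (standard, but one must handle the case of exactly two or three significant intervals by allowing repetition or subdividing, exactly as in \cite{Ham2014, Bourgain2011}), and (ii) verifying that the rescaling in the narrow step genuinely maps plate families to plate families of the rescaled (still non-degenerate) curve, so that the inductive structure is preserved. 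Both are routine given the affine-invariance computation already carried out in the proof of Corollary~\ref{resc multilin sf cor}, so I expect (i)'s pigeonholing to be the only genuinely fiddly part.
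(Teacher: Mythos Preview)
Your overall strategy is sound and close in spirit to the paper's, but the execution differs and contains two points worth flagging.

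\textbf{Comparison with the paper.} The paper does \emph{not} use any Fourier localisation, rescaling via $[g]_{\mathcal{C},\sigma,\lambda}$, or the curve structure at all in proving this lemma. It instead proves an abstract combinatorial statement (Lemma~\ref{broad narrow lemma}) valid for any ``dyadic decomposition'' $(F_I)_{I\in\fI_{\ge r}}$ of a measurable function on any measure space, then specialises. The iteration engine is the Ham--Lee multi-scale lemma (Lemma~\ref{Ham--Lee lemma}), which at each application uses a \emph{hierarchy} of scales $1=\ell_0\ge\ell_1\ge\cdots\ge\ell_{k-1}$ to boost an $m$-linear product to an $(m+1)$-linear one (or fall into a narrow term). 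This is exactly what resolves the ``two or three significant intervals'' case you flag: rather than a single $K$, one passes from bilinear to trilinear to $4$-linear through successively finer scales. Your single-scale pointwise dichotomy can also be made to work, but then ``narrow'' must mean \emph{no four pairwise $K^{-1}$-separated significant intervals}, which forces the significant set to have $O(1)$ elements; your stated version (``one interval $I^*$ dominates'') is the bilinear dichotomy and does not suffice for $k=4$. The rescaling step you describe is unnecessary: the one-step inequality can be applied directly to each $F_I$, subdividing $I$ into length-$K^{-1}|I|$ pieces, with no reference to the curve.

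\textbf{The iteration count.} Your obstacle paragraph contains a self-contradiction. You first (correctly) observe that reaching $r_{\mathrm n}\sim r$ takes $m\sim\log_K(1/r)$ steps, accumulating a loss $C^m\sim r^{-\log C/\log K}\le r^{-\varepsilon}$ once $K\ge C^{1/\varepsilon}$; this is exactly what the paper does, with the word-counting estimate \eqref{how many steps?}--\eqref{prod of ell's} tracking the constants. You then assert ``we only peel off $O_\varepsilon(1)$ layers'', which is incompatible with $r_{\mathrm n}\lesssim_\varepsilon r$: if $r_{\mathrm n}=K^{-m}$ with $K=K(\varepsilon)$ fixed and $m=O_\varepsilon(1)$, then $r_{\mathrm n}$ is bounded below independently of $r$. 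Drop the second claim and keep the first; the number of iterations genuinely grows with $|\log r|$, and it is the choice $K\gtrsim C^{1/\varepsilon}$ that keeps the accumulated constant below $r^{-\varepsilon}$. The broad scale $r_{\mathrm b}$ is then produced by pigeonholing over the $O(\log_K(1/r))$ intermediate scales, which costs an additional harmless $\log(1/r)$ factor.
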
 

We provide a proof of (an abstract version of) the above lemma in Appendix~\ref{BG appendix} (more precisely, Lemma \ref{broad narrow lemma E} follows from applying Lemma \ref{broad narrow lemma} to the decomposition $f:= \sum_{\theta \in \Theta(r)} f_\theta$ for a fixed dyadic scale $0< r \leq 1$ and $\varepsilon>0$).

We are now in position to prove the desired reverse square function estimate.

\begin{proof}[Proof of Theorem~\ref{reverse sf thm}] Fix $0 < r \leq 1$ a choice of dyadic scale and $\varepsilon > 0$, and apply Lemma \ref{broad narrow lemma E}.  The analysis splits into two cases depending on which of the right-hand terms in \eqref{brd nrw E} dominates. We refer to the first term as the \textit{narrow} term and to the second term as the \textit{broad} term.

\subsubsection*{The narrow case} Suppose the narrow term dominates the right-hand side of \eqref{brd nrw E} in the sense that
\begin{equation*}
     \big\|\sum_{\theta \in \Theta(r)} f_{\theta}\big\|_{L^4(\R^4)} \lesssim_{\varepsilon} r^{-\varepsilon} \Big( \sum_{I \in \fI(r_{\mathrm{n}})}\big\|\sum_{\theta \in \Theta(I;\,r)} f_{\theta}\big\|_{L^4(\R^4)}^4\Big)^{1/4}.
\end{equation*}
This case is dealt with using a trivial argument. If $I \in \fI(r_{\mathrm{n}})$, then 
\begin{equation}\label{narrow 1}
    \big|\sum_{\theta \in \Theta(I;\,r)} f_{\theta}\big| \lesssim_{\varepsilon} \big( \sum_{\theta \in \Theta(I;\,r)}  |f_{\theta}|^2 \big)^{1/2}
\end{equation}
by Cauchy--Schwarz, since the condition $r_{\mathrm{n}} \sim r$ from \eqref{broad narrow equation E 1} implies that there are only $O_{\varepsilon}(1)$ intervals belonging to $\fI(I;\,r)$. Thus,
\begin{align*}
        \big\|\sum_{\theta \in \Theta(r)} f_{\theta}\big\|_{L^4(\R^4)} \lesssim_{\varepsilon} r^{-\varepsilon} \Big\|\big( \sum_{I \in  \fI(r_{\mathrm{n}})}\sum_{\theta \in \Theta(I;\,r)}  |f_{\theta}|^2\big)^{1/2}\Big\|_{L^4(\R^4)} = r^{-\varepsilon} \Big\|\big( \sum_{\theta \in \Theta(r)}  |f_{\theta}|^2\big)^{1/2}\Big\|_{L^4(\R^4)},
\end{align*}
where the first step follows from \eqref{narrow 1} and the embedding $\ell^2 \hookrightarrow \ell^4$ and the last step from the definition of $\mathfrak{I}(r_{\mathrm{n}})$ and $\Theta(I;r)$.

\subsubsection*{The broad case} Suppose the broad term dominates the right-hand side of \eqref{brd nrw E} in the sense that
\begin{equation*}
      \big\|\sum_{\theta \in \Theta(r)} f_{\theta}\big\|_{L^4(\R^4)} \lesssim_{\varepsilon}  r^{-\varepsilon} \Big(\sum_{\substack{J \in \fI(C_\varepsilon r_{\mathrm{b}}) \\ \vec{I} \in \fI^4_{\mathrm{sep}}(J;\,r_{\mathrm{b}})} }\big\|\prod_{\ell=1}^4\big|\sum_{\theta \in \Theta(I_{\ell}; r)} f_{\theta}\big|^{1/4}\big\|_{L^4(\R^4)}^4 \Big)^{1/4}.
\end{equation*}
This case is treated using the rescaled multilinear inequality from Corollary~\ref{resc multilin sf cor}. Since $\#\fI^4(J;\,r_{\mathrm{b}}) \lesssim_{\varepsilon} 1$ for each $J \in \fI(C_{\varepsilon}r_{\mathrm{b}})$, by H\"older's inequality
 \begin{equation*}
        \big\|\sum_{\theta \in \Theta(r)} f_{\theta}\big\|_{L^4(\R^4)} \lesssim_{\varepsilon} r^{-\varepsilon}
\Big(\sum_{J \in \fI(C_\varepsilon r_{\mathrm{b}})} \Big(\sum_{\vec{I} \in \fI^4_{\mathrm{sep}}(J;\,r_{\mathrm{b}})} \Big\|\prod_{\ell=1}^4 \big|\sum_{\theta \in \Theta(I_{\ell}; r)} f_{\theta}\big|^{1/4}\Big\|_{L^4(\R^4)}^{16}\Big)^{1/4} \Big)^{1/4}. 
\end{equation*}
Applying Corollary~\ref{resc multilin sf cor} with $\lambda_1 := r_{\mathrm{b}}$ and $\lambda_2 := C_\varepsilon r_{\mathrm{b}}$, one deduces that
 \begin{equation*}
\big\|\sum_{\theta \in \Theta(r)} f_{\theta}\big\|_{L^4(\R^4)} \lesssim_{\varepsilon} r^{-\varepsilon} \Big(\sum_{J \in \fI(C_\varepsilon r_{\mathrm{b}})} \Big(\sum_{\vec{I} \in \fI^4_{\mathrm{sep}}(J;\,r_{\mathrm{b}})} \prod_{\ell = 1}^4\Big\| \big(\sum_{\theta \in \Theta(I_{\ell};\,r)} |f_{\theta}|^2 \big)^{1/2} \Big\|_{L^4(\R^4)}^4\Big)^{1/4} \Big)^{1/4}. 
\end{equation*}
Relaxing the inner range of summation to all $\vec{I} \in \fI(J;\,r_{\mathrm{b}})^4$ (that is, dropping the separation condition), 
 \begin{equation*}
        \big\|\sum_{\theta \in \Theta(r)} f_{\theta}\big\|_{L^4(\R^4)} \lesssim_{\varepsilon} r^{-\varepsilon}  
\Big(\sum_{I \in \fI(r_{\mathrm{b}})} \Big\| \big(\sum_{\theta \in \Theta(I;\,r)} |f_{\theta}|^2 \big)^{1/2} \Big\|_{L^4(\R^4)}^4 \Big)^{1/4}. 
\end{equation*}
Arguing as in the last steps of the narrow case, using the embedding $\ell^2 \hookrightarrow \ell^4$, now concludes the argument. 
\end{proof}




\section{Proof of the forward square function inequality in \texorpdfstring{$\R^3$}{}}\label{forward SF sec}

In this section we establish the $L^2$ weighted forward square function estimate from Proposition~\ref{f SF prop}. Before we commence, it is useful to recall the basic setup. Let $\gamma \in \mathfrak{G}_3(\delta_0)$ for $0 < \delta_0 \ll 1$ and $\be_j:[-1,1] \to S^3$ for $1 \leq j \leq 3$ be the associated Frenet frame. Recall that this satisfies
\begin{equation}\label{fsq prelim 1}
    \be_j(s) = \vec{e}_j + O(\delta_0) \qquad \textrm{for $1 \leq j \leq 3$ and $s \in I_0=[-\delta_0,\delta_0]$,}
\end{equation}
where the $\vec{e}_j$ denote the standard basis vectors. For $0 < r \leq 1$, recall that a \textit{$(0,r)$-Frenet box} is a set of the form
\begin{equation*}
    \pi_{0,\gamma}(s;\,r) := \big\{ \xi \in \widehat{\R}^3 : |\inn{\be_1(s)}{\xi}| \leq r,  \,\, 1/2 \leq |\inn{\be_2(s)}{\xi}| \leq 1,  \,\, |\inn{\be_3(s)}{\xi}| \leq 1 \big\}
\end{equation*}
for some $s \in [-1,1]$. Proposition~\ref{f SF prop} concerns smooth frequency projections $\chi_{\pi}(D)$ where $\chi_{\pi}$ is a bump function adapted to a $(0,r)$-Frenet box $\pi$.




\subsection{Geometric observations}\label{fsq geo obs sec} We begin by reparametrising the sets $\pi_{0,\gamma}(s;r)$ using an argument similar to that of \S\ref{geo obs sec}. 
Define the functions $g_j: I_0 \to \R^3$ by $g_j(s) := -\be_{1j}(s)\, \be_{11}(s)^{-1}$ for $j = 2$, $3$ (note that $\be_{1,1}(s)$ is bounded away from $0$ by \eqref{fsq prelim 1}) so that
\begin{equation}\label{fsq Frenet 1}
    \inn{\be_1(s)}{\xi} = \be_{11}(s) \big( \xi_1 - \xi_2g_2(s) - \xi_3g_3(s) \big). 
\end{equation}
Thus, we have the containment property
\begin{equation}\label{fsq Frenet 2}
    \pi_{0, \gamma}(s;\,r) \subseteq \theta(s;\,C r)
\end{equation}
where $\theta(s;\,r)$ is the region
\begin{equation*}
    \theta(s;\,r) := \Big\{ \xi \in \widehat{\R}^3 : \big|\xi_1 - \sum_{j=2}^3\xi_jg_j(s)\big| < r \textrm{ and } 1/4 \leq |\xi_2| \leq 4, \, |\xi_3| \leq 4\Big\}. 
\end{equation*}
We refer to the sets $\theta(s;\,r)$ as `plates'.\medskip 

It is useful to note that the curves $g_j \colon I_0 \to \R^3$ satisfy a certain regularity condition. In particular, for each $\ba = (a_2, a_3) \in \R^2$ define the function $g_{\ba}(s) := a_2g_2(s) + a_3g_3(s)$. By differentiating \eqref{fsq Frenet 1} with respect to $s$ and evaluating the result at $\xi = (0, a_2, a_3)$, provided the parameter $\delta_0 > 0$ featured in \eqref{fsq prelim 1} is chosen sufficiently small, it follows that
\begin{equation}\label{fsq Frenet 3}
     |g_{\ba}'(s)| \sim 1 \qquad \textrm{for all $\ba  \in [1/4, 4] \times [-1,1]$.}
\end{equation}
Indeed, this is a simple consequence of the Frenet equations.\medskip

We also observe a dual version of the containment condition \eqref{fsq Frenet 2}. In particular, if we define the dual Frenet box and dual plate
\begin{align*}
\pi^*_{0,\gamma}(s;r) &:= \big\{x\in \R^3 : |\inn{\be_1(s)}{x}| \leq r^{-1} \textrm{ and } |\inn{\be_j(s)}{x}| \leq 1 \textrm{ for $j=2$, $3$} \big\}, \\
\theta^*(s;r) &:= \big\{x\in \R^3 : |x_1| \leq r^{-1} \textrm{ and } |x_j + g_j(s)x_1| \leq 4 \textrm{ for $j=2$, $3$} \big\},
\end{align*}
then it follows that $\pi^*_{0,\gamma}(s; r) \subseteq \theta^*(s; C^{-1} r)$. To this, we first observe the identity
\begin{equation}\label{fsq Frenet 4}
    \begin{bmatrix}
    \inn{x}{\be_2(s)} \\
    \inn{x}{\be_3(s)}
    \end{bmatrix}
    =
    \begin{bmatrix}
    \be_{22}(s) & \be_{23}(s) \\
    \be_{32}(s) & \be_{33}(s)
    \end{bmatrix}
    \begin{bmatrix}
    x_2 + g_2(s)x_1 \\
    x_3 + g_3(s)x_1
    \end{bmatrix},
\end{equation}
which follows from the orthogonality between the Frenet vectors $\big(\be_j(s)\big)_{j=1}^3$. Since the right-hand $2\times 2$ matrix is a small perturbation of the identity, the claimed containment property follows. 




\subsection{The iteration scheme}


Our proof of Proposition~\ref{f SF prop} uses an iteration argument. This is based on the approach of Carbery and the fourth author in \cite[Proposition 4.6]{CS1995}, where a related inequality for the C\'ordoba sectorial square function was obtained. Driving the iteration scheme is an elementary pointwise square function bound due to Rubio de Francia~\cite{RdF1983}. Here it is convenient to state a slight generalisation of this result.

\begin{lemma}\label{RdF lem} Let $\psi \in \mathscr{S}(\widehat{\R}^n)$, $A \in \mathrm{GL}(\R, n)$ and $G \colon \Z^m \to \R^n$. For all $N \in \N$ the pointwise inequality \begin{equation*}
\sum_{\nu \in \Z^m} \big|\psi\big(AD - G(\nu)\big)f(x)\big|^2 \lesssim_{\psi, N} \sup_{\nu_2 \in \Z^m} \sum_{\nu_1 \in \Z^m} e^{- |G(\nu_1) - G(\nu_2)|/2} \int_{\R^n}  |f(x - A^{\top} y)|^2 (1 + |y|)^{-N}\,\ud y \end{equation*}
holds for all $f \in \mathscr{S}(\R^n)$. 
\end{lemma}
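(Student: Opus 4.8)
\textbf{Proof plan for Lemma~\ref{RdF lem}.} The plan is to follow the classical Rubio de Francia argument, adapted to the generality of an arbitrary linear change of variables $A$ and arbitrary labelling map $G$. First I would reduce to the case $A = \mathrm{Id}$ by the change of variables $f \mapsto f(A^{\top}\,\cdot\,)$: writing $f_A := f\circ A^{\top}$, one checks that $\psi(AD - G(\nu))f = \big[\psi(D - G(\nu))f_A\big]\circ (A^{\top})^{-1}$, so up to composing with an invertible linear map (which does not affect the pointwise-in-$x$ statement after relabelling the point) the inequality with general $A$ follows from the $A = \mathrm{Id}$ case with $f$ replaced by $f_A$. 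Concretely, the $A^{\top}y$ appearing on the right-hand side is exactly the artefact of undoing this substitution inside the convolution.

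With $A = \mathrm{Id}$, the key step is the pointwise identity $\psi(D - G(\nu))f(x) = (\check{\psi}_{G(\nu)} * f)(x)$, where $\check{\psi}_{w}(z) := e^{i\inn{z}{w}}\check{\psi}(z)$ and $\check{\psi}$ is the inverse Fourier transform of $\psi$. I would then expand the square, bring in the weight $(1+|z|)^{N/2}(1+|z|)^{-N/2}$ and apply Cauchy--Schwarz in the $z$-integration to obtain
\begin{equation*}
    |\psi(D - G(\nu))f(x)|^2 \lesssim_{\psi,N} \int_{\R^n} |f(x-z)|^2 (1+|z|)^{-N}\,\ud z \cdot \sup_{z}\big( (1+|z|)^{N}|\check\psi(z)|^2\big),
\end{equation*}
which is too lossy on its own; the point of the lemma is that summing in $\nu$ must only cost the exponential factor $e^{-|G(\nu_1)-G(\nu_2)|/2}$ rather than the full cardinality of the index set. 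To recover this, instead of estimating each term separately I would use the Schur test: sum $\sum_\nu |\psi(D-G(\nu))f(x)|^2$ by writing it as $\sum_{\nu_1,\nu_2}$ applied to an off-diagonal kernel built from $\check\psi_{G(\nu_1)} * \overline{\check\psi_{G(\nu_2)}}$ evaluated against $|f|^2$ after a Cauchy--Schwarz step, and exploit that $|\check\psi_{G(\nu_1)}(z) - \text{modulation}|$ forces cancellation; more cleanly, since $\psi \in \mathscr{S}$ one has the bound $\sum_{\nu_1} |\check\psi(z_1)||\check\psi(z_1 + (\text{shift}))| \lesssim_N e^{-|G(\nu_1)-G(\nu_2)|/2}(1+|z_1|)^{-N}$ after absorbing polynomial factors into the Schwartz decay — the rapid decay of $\check\psi$ is what converts a sum over a lattice-like index set into the stated exponentially-weighted supremum.

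The main obstacle I anticipate is bookkeeping the Schwartz decay to simultaneously produce (i) the weight $(1+|y|)^{-N}$ in the convolution against $|f|^2$ and (ii) the exponential gain $e^{-|G(\nu_1)-G(\nu_2)|/2}$, while (iii) reducing a genuine double sum over $\nu_1,\nu_2$ to a single sum times a supremum. The cleanest route is: apply Cauchy--Schwarz in $\nu$ pointwise in the Fourier variable to dominate $\sum_\nu |\psi(D-G(\nu))f(x)|^2$ by $\sup_{\nu_2}\sum_{\nu_1} e^{-|G(\nu_1)-G(\nu_2)|/2}|\Psi_{\nu_1,\nu_2}(D)f(x)|^2$ where $\Psi_{\nu_1,\nu_2}(\xi) := e^{|G(\nu_1)-G(\nu_2)|/4}\psi(\xi - G(\nu_1))\overline{\psi(\xi-G(\nu_2))}^{1/2}\cdots$ — more simply, one observes $|\psi(D-G(\nu))f(x)|^2 = \psi(D-G(\nu))f(x)\cdot\overline{\psi(D-G(\nu))f(x)}$ and writes each factor as a modulated convolution, so that after Cauchy--Schwarz the cross term carries $e^{i\inn{z_1-z_2}{G(\nu)}}$; summing a Gaussian-type partition-of-unity argument over $\nu$ is unnecessary if instead one notes the family $\{\psi(\cdot - G(\nu))\}_\nu$ need not be almost orthogonal and the exponential weight is simply inserted by hand, valid because $e^{-|G(\nu_1)-G(\nu_2)|/2}\leq 1$. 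In fact the statement as written is weak enough that after the pointwise Cauchy--Schwarz in $z$ one may take $\nu_1=\nu_2=\nu$ and sum trivially provided $\sum_\nu$ converges, i.e. the exponential factor is there precisely to guarantee summability of the right-hand side and no cancellation between distinct $\nu$ is actually needed; thus the real content is just the single-term bound $|\psi(D-w)f(x)|^2 \lesssim_{\psi,N} \int |f(x-z)|^2(1+|z|)^{-N}\,\ud z$ together with the trivial insertion of $e^{-|G(\nu_1)-G(\nu_2)|/2}$ upon setting $\nu_2=\nu_1$ in the supremum, and the change of variables handling $A$. I would present it in this streamlined form.
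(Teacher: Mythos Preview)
Your final ``streamlined'' claim is incorrect, and this is the crux. The single-term bound $|\psi(D-w)f(x)|^2 \lesssim_{\psi,N} \int |f(x-z)|^2(1+|z|)^{-N}\,\ud z$ is uniform in $w$, so summing it over $\nu$ produces a factor $\#\{\nu\}$ on the right, not $\sup_{\nu_2}\sum_{\nu_1} e^{-|G(\nu_1)-G(\nu_2)|/2}$. When the points $G(\nu)$ are $1$-separated---precisely the regime in which the lemma is applied---the latter quantity is $O(1)$ while the number of $\nu$'s can be arbitrarily large. Setting $\nu_1=\nu_2$ in the supremum only shows the constant is at least $1$; it does not let you absorb the count. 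Interaction between distinct $\nu$ is essential, contrary to what you assert.

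The correct mechanism is the one your middle paragraph gestures at (``Schur test'', ``$T^*T$'') but never executes. The paper argues by $\ell^2$-duality: for $a=(a_\nu)$ with $\|a\|_{\ell^2}=1$, write $\sum_\nu a_\nu\,\psi(D-G(\nu))f(x) = (\mathcal{K}*f)(x)$ where $\mathcal{K}(y) = \big[\sum_\nu a_\nu e^{i\inn{y}{G(\nu)}}\big]\widecheck{\psi}(y)$. Cauchy--Schwarz in the convolution integral separates $\int |f(x-y)|^2|\widecheck{\psi}(y)|\,\ud y$ from $\int \big|\sum_\nu a_\nu e^{i\inn{y}{G(\nu)}}\big|^2 |\widecheck{\psi}(y)|\,\ud y$. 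In the second factor one dominates $|\widecheck{\psi}|\lesssim \phi$ with $\phi(y)=(1+|y|^2)^{-n-1}$, whose Fourier transform satisfies $|\widehat{\phi}(\xi)|\lesssim e^{-|\xi|/2}$ by a contour shift; expanding the square then gives $\sum_{\nu_1,\nu_2} \overline{a_{\nu_1}}a_{\nu_2}\,\widehat{\phi}\big(G(\nu_1)-G(\nu_2)\big)$, which is bounded by $\sup_{\nu_2}\sum_{\nu_1} e^{-|G(\nu_1)-G(\nu_2)|/2}$ via the Schur test. The essential point you are missing is that the trigonometric polynomial in $y$ must be kept intact through Cauchy--Schwarz so that the off-diagonal coefficients $\widehat{\phi}\big(G(\nu_1)-G(\nu_2)\big)$ appear; estimating each $|\psi(D-G(\nu))f|$ in isolation destroys this structure. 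Your remark that ``$\check\psi_{G(\nu)}$ is a modulation, not a shift'' is exactly why no exponential gain can come from the kernel side alone---it has to come from integrating the oscillation of the trigonometric sum against the Schwartz weight.
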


\begin{proof} The case where $G \colon \Z^n \to \Z^n$ is the identity map is proven in \cite{RdF1983}. The argument can be generalised to prove the above lemma, by replacing an application of Plancherel's theorem with a $T^*T$ argument involving the Schur test. For convenience, the details of the argument are presented in Appendix~\ref{RdF appendix}.
\end{proof}

To describe the iteration step, we first define smooth cutoff functions adapted to the plates $\theta$ defined above. As usual, let $\eta \in C^{\infty}_c(\R)$ satisfy $\eta(u) = 1$ for $|u| \leq 1/2$ and $\supp \eta \subseteq [-1,1]$ and define the multipliers
\begin{equation}\label{m r nu}
 m_r^{\nu}(\xi) := \eta\Big(r^{-1}\big( \xi_1 - \sum_{j=2}^3 \xi_j g_j(s_{\nu})\big) \Big) \qquad \textrm{for $\nu \in \Z$ and $s_{\nu} := r \nu$.}
\end{equation}
Let $b(\xi) = \tilde{\beta}(4^{-1}\xi_2) \, \eta(4^{-1} \xi_3)$ where here $\tilde{\beta}$ is as defined in \eqref{chi pi} so that $(m_r^{\nu}\cdot b)(\xi) = 1$ if $\xi \in \theta(s_{\nu};r)$. 
For the iteration scheme, we in fact work with truncated versions of the plates. Given $K \geq 1$, $-1 \leq s \leq 1$, $0 < r \leq 1$ and $\ba=(a_2,a_3) \in \R^2$, consider the truncated plate 
\begin{equation*}
    \theta^{\ba, K}(s;\,r) := \Big\{ \xi \in \widehat{\R}^3 : \big|\xi_1 - \sum_{j=2}^3 \xi_j g_j(s)\big| \leq r  \textrm{ and } |\xi_j - a_j| \leq K^{-1} \textrm{ for $j=2$, $3$}\Big\}.
\end{equation*}
Correspondingly, we let $\zeta \in C^{\infty}_c(\R)$ satisfy $\supp \zeta \subseteq [-1,1]$ and $\sum_{k \in \Z} \zeta(\,\cdot - k) \equiv 1$ and decompose
\begin{equation}\label{fsq b dec}
    b = \sum_{\ba \in K^{-1}\Z^2} b_{\ba} \qquad \textrm{where} \qquad b_{\ba}(\xi) := \prod_{j=2}^3\zeta(K(\xi_j - a_j)\big) \, b(\xi).
\end{equation}

For $\br := (r_1, r_2,r_3) \in (0,1]^3$ and $s \in [-1,1]$ let  $T_{\be, \br}(s)$ denote the parallelepiped consisting of all vectors $x \in \R^3$ satisfying $|\inn{x}{\be_j(s)}| \leq r_j^{-1}$ for $1 \leq j \leq 3$. These sets should be thought of a scaled versions of the dual Frenet box $\pi_{0,\gamma}^*(s;r)$ introduced in \S\ref{fsq geo obs sec}. Consider the weighted averaging and Nikodym-type maximal operators associated to these sets, given by
\begin{equation}\label{wtd 3d Nik ops}
    \widetilde{\mathcal{A}}_{\be, \br} g (x;s) :=   \int_{\R^3}   g(x-y) \psi_{\,T_{\be,\br}(s)}(y) \, \ud y  \qquad \textrm{and} \qquad  \widetilde{\mathcal{N}}_{\be, \br} g(x) := \sup_{s \in [-1,1]} | \widetilde{\mathcal{A}}_{\be, \br} g (x;s)|
\end{equation}
where
\begin{equation}\label{it max fn}
 \psi_{\,T_{\be,\br}(s)}(x) :=  \big(\prod_{j=1}^3 r_j\big) \, \big(1 + \sum_{j=1}^3 r_j|\inn{\be_j(s)}{y}|\big)^{-300}.
\end{equation}
Here the subscript $\be$ refers to the Frenet frame $\be := (\be_1, \be_2, \be_3)$. 

With the above definitions, the key iteration step is as follows.

\begin{proposition}\label{it prop} Let $0 < r < 1$, $K \geq 1$, $\tilde{r} = Kr$, $\br:=(r, K^{-1}, K^{-1})$ and $\ba=(a_2,a_3) \in [1/4, 4] \times [-1,1]$. With the above definitions,
\begin{equation*}
\int_{\R^3}  \sum_{\nu \in \Z} \big|(m_r^{\nu} \cdot b_{\ba})(D)f(x)\big|^2 \,w(x)\,\ud x \lesssim \int_{\R^3} \sum_{\tilde{\nu} \in \Z} \big|(m_{\tilde{r}}^{\tilde{\nu}}\cdot b_{\ba})(D)f(x)\big|^2\,  \widetilde{\mathcal{N}}_{\be, \br} \circ \widetilde{\mathcal{N}}_{\be, \br}\,  w(x)\,\ud x
\end{equation*}
for any non-negative $w \in L^1_{\mathrm{loc}}(\R^3)$.
\end{proposition}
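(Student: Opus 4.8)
The plan is to iterate the Rubio de Francia pointwise square function bound (Lemma~\ref{RdF lem}) once, at the single dyadic scale from $r$ up to $\tilde r = Kr$, and to recognize the weight that appears after this step as being controlled by a two-fold composition of the Nikodym averaging operators $\widetilde{\mathcal{A}}_{\be,\br}$. First I would fix the parameter $\ba$ (so the $\xi_2,\xi_3$ variables are localized to a $K^{-1}$-cube around $\ba$, where the plate geometry behaves well) and restrict attention to the frequency slab $\supp b_{\ba}$. On this slab, each cutoff $m_{\tilde r}^{\tilde\nu}\cdot b_{\ba}$ is adapted to a truncated plate $\theta^{\ba,K}(s_{\tilde\nu};\tilde r)$, and the finer cutoffs $m_r^{\nu}\cdot b_{\ba}$ partition each such plate into $\sim K$ sub-plates indexed by those $\nu\in\Z$ with $s_\nu$ in the relevant $\tilde r$-interval. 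I would write $f_{\tilde\nu} := (m_{\tilde r}^{\tilde\nu}\cdot b_{\ba})(D)f$ and note $(m_r^\nu\cdot b_{\ba})(D)f = (m_r^\nu \cdot b_{\ba})(D) f_{\tilde\nu}$ for the associated $\tilde\nu=\tilde\nu(\nu)$, reducing the left-hand side to $\sum_{\tilde\nu}\sum_{\nu\sim\tilde\nu}|(m_r^\nu\cdot b_{\ba})(D)f_{\tilde\nu}|^2$ integrated against $w$.

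Next, for each fixed $\tilde\nu$ I would apply Lemma~\ref{RdF lem} to the inner sum over $\nu$. The point is that $\big(\xi_1 - \sum_j \xi_j g_j(s_\nu)\big)$ depends affinely on $\xi$ through a matrix $A$ (built from $g_2,g_3$ and the $1/r$-dilation in the first coordinate), and the regularity condition \eqref{fsq Frenet 3}, $|g_{\ba}'(s)|\sim 1$, guarantees that the points $G(\nu) := r^{-1}\big(\text{projection of }(0,g_2(s_\nu),g_3(s_\nu))\big)$ along the one-dimensional family are genuinely $\gtrsim 1$-separated, so the Schur-type sum $\sum_{\nu_1} e^{-|G(\nu_1)-G(\nu_2)|/2}$ in Lemma~\ref{RdF lem} is $O(1)$. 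Thus, after one application of the lemma, each term $\sum_{\nu\sim\tilde\nu}|(m_r^\nu\cdot b_{\ba})(D)f_{\tilde\nu}(x)|^2$ is pointwise dominated by an average of $|f_{\tilde\nu}(x - A^\top y)|^2$ against $(1+|y|)^{-N}$, i.e.\ by $\psi_{T_{\be,\br}(s_{\tilde\nu})}\ast |f_{\tilde\nu}|^2$ evaluated at $x$ — here the parallelepiped $T_{\be,\br}(s)$ is exactly the dual of the plate $\theta^{\ba,K}(s;r)$, which (after the change of basis from $(\vec e_j)$ to $(\be_j(s))$, an $O(\delta_0)$ perturbation) matches the definition in \eqref{it max fn} with $\br=(r,K^{-1},K^{-1})$. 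Integrating against $w$ and moving the convolution onto $w$ by Fubini gives $\int \sum_{\tilde\nu} |f_{\tilde\nu}(x)|^2\, \big(\psi_{T_{\be,\br}(s_{\tilde\nu})}\ast w\big)(x)\,\ud x$, and since $\psi_{T_{\be,\br}(s_{\tilde\nu})}\ast w(x) = \widetilde{\mathcal{A}}_{\be,\br}w(x;s_{\tilde\nu})$ this is at most $\int \sum_{\tilde\nu}|f_{\tilde\nu}|^2\, \widetilde{\mathcal{N}}_{\be,\br}w\,\ud x$.

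This already produces one copy of $\widetilde{\mathcal{N}}_{\be,\br}$; the second copy arises from the observation that the replacement $(m_r^\nu\cdot b_{\ba})(D)f \mapsto (m_r^\nu\cdot b_{\ba})(D)f_{\tilde\nu}$ is not literally an identity but holds up to a harmless Schwartz tail, and — more to the point — that in the paper's intended use one needs the bound phrased with $f_{\tilde\nu} = (m_{\tilde r}^{\tilde\nu}\cdot b_{\ba})(D)f$ on the right, which requires absorbing the convolution kernel of $m_{\tilde r}^{\tilde\nu}\cdot b_{\ba}$ itself. Concretely, I would write $|f_{\tilde\nu}(x)|^2 \le \psi_{T_{\be,\br}(s_{\tilde\nu})}\ast |f_{\tilde\nu}|^2(x)$ once more (the kernel of $m_{\tilde r}^{\tilde\nu}\cdot b_{\ba}$ being pointwise dominated by $\psi_{T_{\be,\br}(s_{\tilde\nu})}$ by a direct integration-by-parts estimate using the plate dimensions $\tilde r \times K^{-1}\times K^{-1}$ and $\tilde r = Kr$), and then transfer this second convolution onto the weight, producing $\widetilde{\mathcal{N}}_{\be,\br}\circ\widetilde{\mathcal{N}}_{\be,\br}\,w$; here I would use that $T_{\be,\br}(s)$ is the same parallelepiped in both steps and that $\widetilde{\mathcal{A}}_{\be,\br}$ is positivity-preserving and $L^1$-kernel-dominated, so $\psi_{T_{\be,\br}(s)}\ast (\widetilde{\mathcal{N}}_{\be,\br}w) \lesssim \widetilde{\mathcal{N}}_{\be,\br}(\widetilde{\mathcal{N}}_{\be,\br}w)$. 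Summing over the $O(1)$ values of $\ba$ relevant to a fixed $\xi$ (and noting the $b_{\ba}$ have finite overlap) completes the estimate.

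\textbf{Main obstacle.} The delicate point is the bookkeeping of the change of frame: the cutoffs $m_r^\nu, m_{\tilde r}^{\tilde\nu}$ and the plates $\theta(s;r)$ are defined in the \emph{standard} coordinates via the functions $g_j$, whereas the Nikodym operator $\widetilde{\mathcal{N}}_{\be,\br}$ and its kernel $\psi_{T_{\be,\br}(s)}$ are defined via the \emph{Frenet} frame $(\be_j(s))$. Reconciling these requires the containment properties \eqref{fsq Frenet 2} and $\pi^*_{0,\gamma}(s;r)\subseteq\theta^*(s;C^{-1}r)$ together with the fact (from \eqref{fsq prelim 1}) that the transition matrix is an $O(\delta_0)$ perturbation of the identity, so that $T_{\be,\br}(s)$ and the dual of $\theta^{\ba,K}(s;r)$ are comparable up to absolute constants; I would need to check that these perturbations do not destroy the $\gtrsim 1$-separation of the points $G(\nu)$ feeding into Lemma~\ref{RdF lem}, which is exactly where \eqref{fsq Frenet 3} is used. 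Beyond this, the estimates are routine integration by parts and Fubini.
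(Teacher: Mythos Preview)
Your overall strategy --- one application of Lemma~\ref{RdF lem} plus one kernel/Cauchy--Schwarz estimate, each contributing one copy of $\widetilde{\mathcal{N}}_{\be,\br}$ --- is exactly the paper's, but there are two genuine gaps in the execution.

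First, Lemma~\ref{RdF lem} does \emph{not} apply directly to the family $m_r^{\nu}\cdot b_{\ba}$. The linear form $\xi_1 - \xi_2 g_2(s_\nu) - \xi_3 g_3(s_\nu)$ has $\nu$-dependent coefficients in front of $\xi_2$ and $\xi_3$, so these multipliers are not of the shape $\psi(A\xi - G(\nu))$ for any \emph{fixed} $A$; your ``matrix $A$ built from $g_2,g_3$'' would itself depend on $\nu$. You correctly note that on $\supp b_{\ba}$ the sub-plates are approximately parallel translates, but the lemma needs exact translates. The paper makes this precise by introducing, for each fixed $\tilde\nu$, auxiliary multipliers $m_{\tilde r,r}^{\tilde\nu,\nu}$ whose argument involves the fixed linear form $\xi_1 - \sum_j \xi_j g_j(\tilde s_{\tilde\nu})$ shifted by the constant $\sum_j a_j\big(g_j(s_\nu)-g_j(\tilde s_{\tilde\nu})\big)$; the containment \eqref{sf it 1} gives a reproducing identity, and it is to these genuinely translated multipliers that \eqref{fsq Frenet 3} and Lemma~\ref{RdF lem} are applied.

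Second, your mechanism for the second copy of $\widetilde{\mathcal{N}}_{\be,\br}$ fails a dimension check. The kernel of $(m_{\tilde r}^{\tilde\nu}\cdot b_{\ba})(D)$ is concentrated on a parallelepiped of dimensions $\tilde r^{-1}\times K\times K$, and at the origin has size $\sim \tilde r K^{-2} = K\cdot r K^{-2}$, which is $K$ times larger than $\psi_{T_{\be,\br}(s_{\tilde\nu})}(0)$; the claimed pointwise domination therefore cannot hold. The paper instead applies the kernel estimate to the \emph{outermost} factor $(m_r^{\nu}\cdot \tilde b_{\ba})(D)$ --- this is at scale $r$, so its kernel \emph{is} dominated by $\psi_{T_{\be,\br}(s_\nu)}$ --- and does so \emph{before} the Rubio de Francia step. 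The factorization is $(m_r^{\nu}\cdot \tilde b_{\ba})(D)\circ m_{\tilde r,r}^{\tilde\nu,\nu}(D)\circ (m_{\tilde r}^{\tilde\nu}\cdot b_{\ba})(D)$, and the two copies of $\widetilde{\mathcal{N}}_{\be,\br}$ come from the first and second factors in that order.
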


\begin{proof} The proof is based on the following simple geometric observation, which motivates the use of the truncation. If $|s - \tilde{s}| \leq K r$, then the plates $\theta^{\ba, K}(s;\,r)$, $\theta^{\ba, K}(\tilde{s};\,r)$ are essentially parallel translates of one another. More precisely, if $\xi \in \theta^{\ba, K}(s;\,r)$, then 
\begin{equation*}
    \Big| \xi_1 - \sum_{j=2}^3a_j\big(g_j(s) - g_j(\tilde{s})\big) - \sum_{j=2}^3 \xi_j g_j(\tilde{s})\Big| \leq \Big|\xi_1 - \sum_{j=2}^3 \xi_j g_j(s)\Big| + \sum_{j=2}^3 |a_j - \xi_j||g_j(s) - g_j(\tilde{s})| \lesssim_{\bg} r
\end{equation*}
and, consequently, there exists some constant $C_{\bg}$ such that
\begin{equation}\label{sf it 1} 
    \theta^{\ba, K}(s;\,r) - \sum_{j=2}^3a_j\big(g_j(s) - g_j(\tilde{s})\big)\vec{e}_1 \subseteq \theta^{\ba, K}\big(\tilde{s};\,C_{\bg} r\big).
\end{equation}
In light of this observation, define the multipliers
\begin{equation}\label{sf it 2} 
    m^{\tilde{\nu}, \nu}_{\tilde{r}, r}(\xi) :=
    \frac{\eta\Big((2C_{\bg}r)^{-1}\Big( \xi_1 - \sum_{j=2}^3a_j\big(g_j(s_{\nu}) - g_j(\tilde{s}_{\tilde{\nu}})\big) - \sum_{j=2}^3 \xi_j g_j(\tilde{s}_{\tilde{\nu}})\Big) \Big)\, \tilde{b}_{\ba}(\xi)}
    {\sum_{i=-1}^1 m_{\tilde r}^{\tilde \nu+i} (\xi)}
\end{equation}
for $\tilde{\nu}$, $\nu \in \Z$ and $\tilde{s}_{\tilde{\nu}} := \tilde{r} \tilde{\nu}$, $s_{\nu} := r \nu$ and $\tilde{r}=Kr$, where $ \tilde{b}_{\ba}(\xi) := \prod_{j=2}^3\eta(K(\xi_j - a_j))$ so that $b_{\ba} = \tilde{b}_{\ba} \cdot b_{\ba}$. Thus, in view of \eqref{sf it 1}, we have 
\begin{equation} \label{reproducing-property} 
    m_r^{\nu} \cdot b_{\ba} =  m^{\tilde{\nu}, \nu}_{\tilde{r}, r} \cdot m_r^{\nu} \cdot b_{\ba} 
    \sum_{i=-1}^1 m_{\tilde r}^{\tilde \nu+i} (\xi) 
    \qquad \textrm{whenever $|s_{\nu} - \tilde{s}_{\tilde{\nu}}| \leq Kr=:\tilde{r}$.}
\end{equation}
Furthermore, since for fixed $\tilde{\nu}$ the multipliers $m_{\tilde{r},r}^{\tilde{\nu},\nu}$ correspond to essentially parallel frequency regions for $|s_\nu - \tilde{s}_{\tilde{\nu}}| \leq  5\tilde{r}$, Lemma~\ref{RdF lem} implies they satisfy a weighted $L^2$ inequality. Indeed, recall from \eqref{fsq Frenet 3} that the functions $g_{\ba}(s) := a_2g_2(s) + a_3g_3(s)$ satisfy the uniform regularity condition $|g_{\ba}'(s)| \sim 1$; recall that $\ba=(a_2,a_3) \in [1/4,4] \times [-1,1]$. From this we deduce that
\begin{equation*}
    \sup_{\nu_2 \in \Z} \sum_{\nu_1 \in \Z} e^{-r^{-1}|g_{\ba}(r \nu_1) - g_{\ba}(\tilde{r} \nu_2)|/2} \lesssim 1,
\end{equation*}
where the above inequality holds with a constant uniform in both $r$ and $\ba$. Thus, recalling the definition of the multipliers $m_{\tilde{r},r}^{\tilde{\nu}, \nu}$ from \eqref{sf it 2}, Lemma~\ref{RdF lem} implies that for fixed $\tilde{\nu} \in \Z$,
\begin{equation}\label{sf it 3}
    \int_{\R^3} \sum_{\substack{ \nu \in \Z \\ |s_{\nu} - \tilde{s}_{\tilde{\nu}}|\leq 5 \tilde{r}} } |m_{\tilde{r},r}^{\tilde{\nu}, \nu}(D)f(x)|^2 w(x)\,\ud x \lesssim \int_{\R^3} |f(x)|^2 \widetilde{\mathcal{N}}_{\be,\br} w(x)\,\ud x;
\end{equation}
indeed the inequality holds with $\widetilde{\mathcal{N}}_{\be,\br} w(x)$ replaced by the single average $\widetilde{\mathcal{A}}_{\be,\br}w(x; \tilde{s}_{\tilde{\nu}})$, but there is no loss in taking supremum over $s \in [-1,1]$ in view of other appearances of $\widetilde{\mathcal{N}}_{\be,\br}$ (see \eqref{first step} below).
From \eqref{reproducing-property} we get 
\begin{equation*}
      \sum_{\nu \in \Z} \big|(m_r^{\nu} \cdot b_{\ba})(D)f(x)\big|^2 \lesssim  \sum_{\substack{\tilde{\nu}, \nu \in \Z \\ |s_{\nu} - \tilde{s}_{\tilde{\nu}}|\leq 5 \tilde{r}} } \big|(m_r^{\nu}\cdot \tilde{b}_{\ba})(D) \circ  m^{\tilde{\nu}, \nu}_{\tilde{r}, r}(D) \circ (m_{\tilde{r}}^{\tilde{\nu}}\cdot b_{\ba})(D)f(x)\big|^2. 
\end{equation*}
By the Schwartz decay property of $\widecheck{\eta}$, the convolution kernel associated to the multiplier operator $(m_r^{\nu} \cdot \tilde{b}_{\ba})(D)$ satisfies
\begin{equation*}
    |(m_r^{\nu} \cdot \tilde{b}_{\ba})\;\widecheck{}\;(x)| \lesssim_N rK^{-2} \, \big(1 + r|x_1| + K^{-1}\sum_{j=2}^3|x_j + x_1 g_j(s)|\big)^{-100} \lesssim \psi_{\,T_{\be,\br}(s)}(x)
\end{equation*}
where the function $\psi_{\,T_{\be,\br}(s)}(x)$ is the $L^1$-normalised smooth cutoff defined in \eqref{it max fn}. To justify the second inequality in the above display we use \eqref{fsq Frenet 4}, which allows us to deduce that $\sum_{j=2}^3|x_j + x_1g_j(s)| \gtrsim \sum_{j=2}^3|\inn{\be_j(s)}{x}|$. Combining the preceding observations with a simple Cauchy--Schwarz and Fubini argument, 
\begin{equation}\label{first step}
    \int_{\R^3}  \sum_{\nu \in \Z} \big|(m_r^{\nu} \cdot b_{\ba})(D)f(x)\big|^2 \,w(x)\,\ud x \lesssim  \sum_{\tilde{\nu} \in \Z} \, \int_{\R^3} \! \sum_{\substack{\nu \in \Z \\ |s_{\nu} - \tilde{s}_{\tilde{\nu}}|\leq 5 \tilde{r}} } \big|m^{\tilde{\nu}, \nu}_{\tilde{r}, r}(D) \circ (m_{\tilde{r}}^{\tilde{\nu}}\cdot b_{\ba})(D)f(x)\big|^2 \widetilde{\mathcal{N}}_{\be,\br} w(x)\,\ud x.
\end{equation}
On the other hand, \eqref{sf it 3} implies
\begin{equation*}
   \int_{\R^3} \sum_{\substack{\nu \in \Z \\ |s_{\nu} - \tilde{s}_{\tilde{\nu}}|\leq 5 \tilde{r}} } \big|m^{\tilde{\nu}, \nu}_{\tilde{r}, r}(D) \circ (m_{\tilde{r}}^{\tilde{\nu}}\cdot b_{\ba})(D)f(x)\big|^2 \widetilde{\mathcal{N}}_{\be,\br}\, w(x)\,\ud x \lesssim \int_{\R^3} \big|(m_{\tilde{r}}^{\tilde{\nu}}\cdot b_{\ba})(D)f(x)\big|^2 \widetilde{\mathcal{N}}_{\be,\br} \circ \widetilde{\mathcal{N}}_{\be,\br}\, w(x)\,\ud x.
\end{equation*}
The two previous displays combine to give the desired estimate. 
\end{proof}




\subsection{Proof of the \texorpdfstring{$L^2$}{}-weighted estimate}\label{L2 wtd proof subsec} Lemma~\ref{it prop} is now repeatedly applied to prove Proposition~\ref{f SF prop}. 

\begin{proof}[Proof of Proposition~\ref{f SF prop}] First observe that by the definition of $\pi$ in \eqref{chi pi}, the containment property \eqref{fsq Frenet 2} and the definition of $m_r^{\nu}$ in \eqref{m r nu}, for each $\pi \in \mathcal{P}_0(r)$ there is an associated $\nu \in \Z$ such that $m_r^\nu(\xi)=1$ for $\xi \in \supp \chi_\pi$. Thus, a simple Cauchy--Schwarz and Fubini argument yields 
\begin{equation*}
    \int_{\R^3} \sum_{\pi \in \mathcal{P}_0(r)} |\chi_{\pi}(D)f(x)|^2 w(x)\,\ud x \lesssim \int_{\R^3} \sum_{\nu \in \Z} |(m_r^{\nu}\cdot b)(D)f(x)|^2 \widetilde{\mathcal{N}}_{\be, \br_*}w(x)\,\ud x,  
\end{equation*}
where $\br_* := (r, 1, 1)$. Take $K := r^{-\varepsilon/8}$ and decompose $b = \sum_{\,\ba \in K^{-1} \Z^2} b_{\ba}$ as in \eqref{fsq b dec}. By a pigeonholing, it follows that there exists a choice of $\ba \in [1/4,4]\times [-1,1]$ satisfying
\begin{equation*}
    \int_{\R^3} \sum_{\pi \in \mathcal{P}_0(r)} |\chi_{\pi}(D)f(x)|^2 w(x)\,\ud x \lesssim r^{-\varepsilon/2} \int_{\R^3} \sum_{\nu \in \Z} |(m_r^{\nu}\cdot b_{\ba})(D)f(x)|^2 \widetilde{\mathcal{N}}_{\be, \br_*}w(x)\,\ud x.  
\end{equation*}

Define the sequence
\begin{equation*}
    \br_M:=(r_M, K^{-1}, K^{-1}) \quad  \text{where $\,\, r_M:= K^M r\,\,$ for $\,\,M \geq 0$}
\end{equation*}
and recursively define a sequence of maximal operators by 
\begin{equation*}
\widetilde{\mathcal{N}}_{\be, \br}^{\, 0}:=\widetilde{\mathcal{N}}_{\be, \br_0} \circ \widetilde{\mathcal{N}}_{\be, \br_0} \circ \widetilde{\mathcal{N}}_{\be, \br_*} \qquad \text{ and } \qquad 
\widetilde{\mathcal{N}}_{\be, \br}^M:= \widetilde{\mathcal{N}}_{\be, \br_{M}} \circ \widetilde{\mathcal{N}}_{\be, \br_{M}}  \circ \widetilde{\mathcal{N}}_{\be, \br}^{M-1} \quad  \text{ for $M \geq 1$.}
\end{equation*}
We now repeatedly apply Proposition~\ref{it prop} to deduce that 
\begin{equation}\label{fsq 1}
    \int_{\R^3} \sum_{\nu \in \Z} |(m_r^{\nu} \cdot b_{\ba})(D)f(x)|^2 \widetilde{\mathcal{N}}_{\be, \br_*}w(x)\,\ud x \leq C^{M} \int_{\R^3} \sum_{\nu \in \Z} |(m_{r_M}^{\nu} \cdot b_{\ba})(D)f(x)|^2 \widetilde{\mathcal{N}}_{\be, \br}^{M-1}w(x)\,\ud x,
\end{equation}
provided $r_M \leq 1$. In particular, if $M := \floor{8/\varepsilon} - 1$, then $r^{\varepsilon/8}\leq r_M \leq 1$ and, consequently, there are only $O(r^{-\varepsilon/8})$ values of $\nu$ which contribute to the right-hand sum in \eqref{fsq 1}. Thus, one readily deduces that
\begin{equation*}
    \int_{\R^3} \sum_{\nu \in \Z} |(m_{r_M}^{\nu}\cdot b_{\ba})(D)f(x)|^2 \widetilde{\mathcal{N}}^{M-1}_{\be, \br}w(x)\,\ud x \lesssim r^{-\varepsilon/8} \int_{\R^3} |f(x)|^2 \widetilde{\mathcal{N}}_{\gamma,r}^{\,(\varepsilon)} \,  w(x)\,\ud x
\end{equation*}
where $\widetilde{\mathcal{N}}_{\gamma,r}^{\, (\varepsilon)} := \widetilde{\mathcal{N}}_{\be, \br_M} \circ \widetilde{\mathcal{N}}^{M-1}_{\be, \br}$. Combining the preceding observations concludes the proof of the $L^2$ weighted inequality, with the above choice of maximal operator.

It remains to show that the iterated maximal operator $\widetilde{\mathcal{N}}_{\gamma,r}^{\, (\varepsilon)}$ satisfies the $L^2$ bound from \eqref{f SF eq}. However, this is an immediate consequence of Proposition~\ref{3d Nik prop} of the following subsection.
\end{proof}




\subsection{Boundedness of the maximal functions}
From the proof of Proposition~\ref{f SF prop}, we see that the maximal function $\widetilde{\mathcal{N}}_{\gamma,r}^{\, (\varepsilon)}$ is obtained by repeatedly composing operators of the form $\widetilde{\mathcal{N}}_{\be, \br}$, as defined in \eqref{wtd 3d Nik ops}, where:
\begin{itemize}
    \item The family of curves $\be$ corresponds to the Frenet frame $(\be_1, \be_2, \be_3)$ associated to $\gamma$;
    \item The scales $\br = (r_1, r_2, r_3)$ depend on $r$ and $\varepsilon$ and vary over the different factors of the composition. Each featured tuple $\br = (r_1, r_2, r_3)$ satisfies
    \begin{equation*}
        \mathrm{ecc}(\br) \leq r^{-1}
    \end{equation*}
    where the \textit{eccentricity} $ \mathrm{ecc}(\br)$ is the ratio of $\max_j r_j$ and $\min_j r_j$.
\end{itemize}
In particular, to prove the $L^2$ bound \eqref{f SF eq} it suffices to show that, for all $\varepsilon_\circ>0$,
\begin{equation}\label{3d Nik reduction}
    \|\widetilde{\mathcal{N}}_{\be, \br}\|_{L^2(\R^3) \to L^2(\R^3)} \lesssim_{\varepsilon_{\circ}} \mathrm{ecc}(\br)^{\varepsilon_{\circ}}.
\end{equation}

To prove \eqref{3d Nik reduction}, we will in fact work with a more general setup, replacing $\be$ with a general family of smooth curves in $\R^n$ satisfying a non-degeneracy hypothesis. Let $\be := (\be_1, \dots, \be_n)$ where $\be_j \colon [-1,1] \to S^{n-1}$ is a smooth curve in the unit sphere in $\R^n$ for $1 \leq j \leq n$. Suppose these curves satisfy 
\begin{equation*}
    \big| \bigwedge_{j=1}^n \be_j(s) \big| \gtrsim 1 \qquad \textrm{for all $s \in [-1,1]$.}
\end{equation*}
Note that the $\be_j$ notation was previously reserved for the Frenet frame. In applications, we always take the $\be_j$ to be the Frenet vectors, and therefore there should be no conflict in the above choice of notation.

Given a tuple $\br := (r_1,\dots, r_n) \in (0,\infty)^n$ and $s \in [-1,1]$ define the parallelepiped
\begin{equation*}
    T_{\be, \br}(s) := \Big\{ x \in \R^n : x = \sum_{j=1}^n \lambda_j \be_j(s) \textrm{ where } \lambda_j \in [-r_j^{-1}, r_j^{-1}] \textrm{ for $1 \leq j \leq n$} \Big\}. 
\end{equation*}
Associated to these sets are the averaging operators and the maximal operator
\begin{equation}\label{Nik ops}
    \mathcal{A}_{\be,\br}f(x;s) := \fint_{T_{\be, \br}(s)} f(x-y)\,\ud y \quad \textrm{and} \quad \mathcal{N}_{\be,\br}f(x) := \sup_{s \in [-1,1]} |\mathcal{A}_{\be,\br}f(x;s)|
\end{equation}
defined for $f \in L^1_{\mathrm{loc}}(\R^n)$. The $\mathcal{N}_{\be,\br}$ satisfy favourable $L^2$ estimates. 

\begin{proposition}\label{3d Nik prop} With the above definitions, for all $\varepsilon > 0$ we have the norm bound
\begin{equation*}
    \|\mathcal{N}_{\be,\br}f\|_{L^2(\R^n) \to L^2(\R^n)} \lesssim_{\be, \varepsilon} \mathrm{ecc}(\br)^{\varepsilon},
\end{equation*}
where the \textit{eccentricity} $\mathrm{ecc}(\br) \geq 1$ is defined to be the ratio of $\max_j r_j$ and $\min_j r_j$. 
\end{proposition}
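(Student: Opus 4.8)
The plan is to prove the stronger Proposition~\ref{3d Nik prop}, from which \eqref{3d Nik reduction} and hence \eqref{f SF eq} follow immediately. After a harmless rescaling $x \mapsto (\min_j r_j)^{-1}x$ we may assume $\min_j r_j = 1$, so that $\mathrm{ecc}(\br) = \max_j r_j =: R$ and all the $r_j \in [1,R]$. The key geometric point is that the parallelepiped $T_{\be,\br}(s)$ has dimensions $r_1^{-1}\times\cdots\times r_n^{-1}$ along the orthonormal directions $\be_1(s),\dots,\be_n(s)$, and as $s$ varies the directions $\be_j(s)$ move at bounded speed. Thus if we partition $[-1,1]$ into $O(R)$ intervals of length $R^{-1}$, then on each such interval the directions $\be_j(s)$ are essentially frozen (they vary by $O(R^{-1})$, which is at most the smallest sidelength of $T_{\be,\br}(s)$ relative to the largest sidelength). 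Consequently, on each of these short intervals the family $\{T_{\be,\br}(s)\}$ is comparable to a single translate-dilate of a fixed parallelepiped, and the corresponding restricted maximal operator is bounded on $L^2$ (indeed on all $L^p$, $p>1$) by a \emph{universal} constant via a Fefferman–Córdoba type covering/duality argument for maximal averages over a fixed convex body. Summing the $\ell^2$ contributions over the $O(R)$ intervals then costs a factor $R^{1/2}$, which is far worse than the claimed $R^{\varepsilon}$; to do better one must not lose the full square-root.

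First I would set up the reduction to a smooth, $L^1$-normalised model operator $\widetilde{\mathcal A}_{\be,\br}$ with kernel $\psi_{T_{\be,\br}(s)}$ as in \eqref{it max fn} — pointwise domination of the sharp cutoff by the smooth one makes $\mathcal N_{\be,\br}$ dominated by the smooth version, so it suffices to bound the latter. Then I would implement the key estimate via a Fourier-analytic / almost-orthogonality argument rather than a crude triangle inequality over the $O(R)$ frozen pieces. The idea: decompose $[-1,1]$ dyadically. At scale $2^{-k}$ (for $2^{-k} \gtrsim R^{-1}$) the directions $\be_j$ vary by $\sim 2^{-k}$, which is small compared to the relevant sidelengths only for the \emph{last few} coordinates; one accounts for the curvature of $s\mapsto(\be_1(s),\dots,\be_n(s))$ by noting that the $(n-1)$-dimensional "angular" directions sweep out a set with non-vanishing torsion (as encoded in the Frenet relations and the non-degeneracy $|\bigwedge_j \be_j(s)|\gtrsim 1$). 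The bound then follows by a Córdoba-type $L^2$ argument: write $\|\mathcal N_{\be,\br}f\|_{L^2}^2$ via linearisation $s = s(x)$, expand the square, and use the rapid decay of $\psi_{T}$ together with the fact that two parallelepipeds $T_{\be,\br}(s)$, $T_{\be,\br}(s')$ with $|s - s'| \sim 2^{-k}$ intersect in a set of measure $\lesssim 2^{-k}|T|$ (transversality coming from the rotation of the frame). This geometric intersection bound is the heart of the matter; it yields $\|\mathcal N_{\be,\br}f\|_{L^2}^2 \lesssim (\log R)^{O(1)}\|f\|_{L^2}^2$, which is even stronger than $R^\varepsilon$.

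Concretely, the scheme I would follow is: (i) rescale so $\min_j r_j = 1$ and replace sharp by smooth kernels; (ii) linearise the maximal operator, choosing a measurable $s \colon \R^n \to [-1,1]$ realising the supremum up to a factor $2$; (iii) partition $\R^n$ (via the values of $s$) into the preimages of a dyadic decomposition of $[-1,1]$ and expand $\|\mathcal N_{\be,\br}f\|_{L^2}^2$ into a double sum over pairs of such preimages; (iv) estimate the bilinear terms using the pointwise kernel bound and the volume-of-intersection estimate $|T_{\be,\br}(s)\cap T_{\be,\br}(s')| \lesssim (1 + 2^{k})^{-1}\,\min(|T_{\be,\br}(s)|,|T_{\be,\br}(s')|)$ for $|s-s'| \sim 2^{-k}$, which follows from the bounded rotation of the frame together with the non-degeneracy hypothesis; (v) sum the resulting geometric/harmonic series in the scale parameter $k$, picking up only logarithmic losses.

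\textbf{Main obstacle.} The delicate step is (iv)–(v): obtaining the correct transversality/volume-of-intersection bound for the parallelepipeds $T_{\be,\br}(s)$ when $\br$ is highly anisotropic (e.g. $\br = (r,r^2,r^3)$-type eccentricities). A naive estimate only gives that the frame directions move by $O(|s-s'|)$, which controls overlap just in the \emph{shortest} direction; one must exploit that successive derivatives $\be_j'(s)$ point in \emph{new} directions (the Frenet equations) so that the parallelepipeds genuinely separate at the anisotropic scale, not merely at the isotropic one. Making this quantitative — and checking it is compatible with all the tuples $\br$ produced by the iteration in \S\ref{L2 wtd proof subsec}, all of which satisfy $\mathrm{ecc}(\br)\le r^{-1}$ — is where the real work lies. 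If a clean intersection bound proves elusive, a fallback is to prove \eqref{3d Nik reduction} with the weaker but still sufficient power: interpolate the trivial $L^\infty$ bound with an $L^1\to L^{1,\infty}$ or $L^2$ bound obtained by the frozen-frame decomposition into $O(R)$ pieces plus a single-parallelepiped maximal bound, accepting an $R^{1/2}$ loss at $L^2$ but recovering $R^\varepsilon$ after interpolating against a good $L^p$ estimate for large $p$; since only $R^{\varepsilon_\circ}$ for arbitrary $\varepsilon_\circ$ is needed in \eqref{f SF eq}, even a polynomial-loss bound at a single exponent suffices once combined with the trivial bounds.
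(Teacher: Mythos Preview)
The paper takes a much simpler route that avoids your main obstacle entirely. Rather than a bilinear/intersection argument for the anisotropic parallelepipeds, it factors the average as an iterated one-dimensional convolution: by Fubini, $\mathcal A_{\be,\br}f(x;s)\lesssim \mathcal A_{\be_n(s),r_n}\circ\cdots\circ\mathcal A_{\be_1(s),r_1}f(x)$, where $\mathcal A_{\omega,r}$ averages along the line in direction $\omega$ at scale $r^{-1}$. After rescaling each factor, this reduces the problem to bounding, for each $j$ separately, the single-direction maximal function $\sup_s|\mathcal A_{\be_j(s),1}f|$ on $L^2$. A preliminary pigeonholing restricts $s$ to an interval of length $R^{-\varepsilon/2}$ (costing $R^{\varepsilon/2}$), on which $\be_j$ is replaced by its Taylor polynomial of degree $\lceil 1/\varepsilon\rceil-1$; a polynomial curve either crosses every affine hyperplane boundedly many times or lies in one, and in either case C\'ordoba's classical directional maximal bound \cite{Cordoba1982} gives a $\log R$ loss. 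The factorisation completely decouples the scales $r_j$, so the anisotropy of $\br$ never enters.

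Your volume-of-intersection bound is not correct as stated: already for $n=2$ with $\br=(1,R)$, two $1\times R^{-1}$ rectangles at angular separation $\theta\in(R^{-1},1)$ intersect in measure $\sim R^{-2}/\theta$, not $\theta\cdot R^{-1}$, so your claimed bound fails throughout the range $R^{-1}<\theta<R^{-1/2}$; for general anisotropic $\br$ in higher dimensions the geometry is worse, as you yourself note. Your fallback is also flawed: the application in Proposition~\ref{f SF prop} needs the bound specifically on $L^2$ (the weight is paired via Cauchy--Schwarz), and interpolating an $R^{1/2}$ bound at $L^2$ against the trivial $L^\infty$ bound only produces estimates at intermediate $L^p$, never an improved $L^2$ bound.
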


This proposition is based on a classical maximal bound due to C\'ordoba~\cite{Cordoba1982}. The details of the proof are provided below.\medskip

We generalise the weighted operators introduced in \eqref{wtd 3d Nik ops} by setting
\begin{equation}\label{wtd Nik ops}
     \widetilde{\mathcal{A}}_{\be, \br} f (x;s) :=   \int_{\R^n}   f(x-y) \psi_{\,T_{\be,\br}(s)}(y) \, \ud y \quad \textrm{and} \quad \widetilde{\mathcal{N}}_{\be, \br} f(x) := \sup_{s \in [-1,1]} | \widetilde{\mathcal{A}}_{\be, \br} f (x;s)|  
\end{equation}
where $\psi_{\,T_{\be,\br}(s)}$ is a smooth weight function adapted to the parallelepiped $T_{\be,\br}(s)$, given by 
\begin{equation}\label{gen it max fn}
 \psi_{\,T_{\be,\br}(s)}(y) :=  \big(\prod_{j=1}^n r_j\big) \, \big(1 + \sum_{j=1}^n r_j|(\bm{E}(s)^{-1} y)_j|\big)^{-100n}
\end{equation}
where $\bm{E}(s)$ denotes the $n \times n$ matrix whose $j$th column is $\be_j(s)$ for $1 \leq j \leq n$. If $(\be_j(s))_{j=1}^n$ forms an orthonormal frame, then $(\bm{E}(s)^{-1}y)_j = (\bm{E}(s)^{\top} y)_j = \inn{\be_j(s)}{y}$ and so \eqref{gen it max fn} generalises the definition \eqref{it max fn}. Note that the operators in \eqref{wtd Nik ops} correspond to weighted version of the averaging operator and Nikodym maximal function in \eqref{Nik ops}. Moreover, by dominating $ \psi_{\,T_{\be,\br}(s)}$ by a weighted sum of characteristic functions, it is clear that Proposition~\ref{3d Nik prop} implies analogous $L^2$ bounds for the $\widetilde{\mathcal{N}}_{\be, \br}$ operators.\medskip

In view of the preceding discussion, the estimate \eqref{f SF eq} for the maximal function $\widetilde{\mathcal{N}}_{\gamma,r}^{\, (\varepsilon)}$ appearing in Proposition~\ref{f SF prop} follows as a consequence of Proposition~\ref{3d Nik prop}.

\begin{proof}[Proof of Proposition~\ref{3d Nik prop}] Write $R := \mathrm{ecc}(\br)$ and let $\varepsilon > 0$ be given. We begin with some basic reductions. By pigeonholing, it suffices to show 
\begin{equation*}
    \|\mathcal{N}_{\be,\br}\|_{L^2(\R^n) \to L^2(\R^n)} \lesssim_{\varepsilon} R^{\varepsilon/2} 
\end{equation*}
where now the maximal operator $\mathcal{N}_{\be,\br}$ is redefined so that the supremum is taken over some subinterval $I_{\varepsilon} \subseteq [-1,1]$ of length $R^{-\varepsilon/2}$ rather than the whole of $[-1,1]$. Furthermore, if $|s_1-s_2| \leq R^{-1}$, then $T_{\be, \br}(s_1)$ and $T_{\be, \br}(s_2)$ define essentially the same parallelepiped, and therefore we may further restrict the supremum to some dyadic $R^{-1}$-net $\mathfrak{S}_{\varepsilon}$ in $I_{\varepsilon}$.\medskip

Let $a \in [-1,1]$ denote the centre of the interval $I_{\varepsilon}$ and $N := \ceil{1/\varepsilon}$. For $1 \leq j \leq n$ let $p_j$ denote the degree $N-1$ Taylor polynomial of $\be_j$ centred at $a$ and define $\bm{p} := (p_1, \dots, p_n)$. By Taylor's theorem, 
\begin{equation*}
    |p_j (s) - \be_j(s)| \lesssim_{\gamma} R^{-N\varepsilon} \leq R^{-1} \qquad \textrm{for all $s \in I_{\varepsilon}$}
\end{equation*}
and therefore there exists a constant $C \geq 1$, independent of $\br$, such that
\begin{equation*}
    T_{\bm{p}, C^{-1}\br}(s) \subseteq T_{\be, \br}(s) \subseteq T_{\bm{p}, C \br}(s) \qquad \textrm{for all $s \in I_{\varepsilon}$.}
\end{equation*}
In light of this observation, henceforth we may assume without loss of generality that the $\be_j$ are all polynomial mappings. Under this hypothesis, the $\be_j$ no longer map into the sphere; however, we may assume that over the domain $I_{\varepsilon}$ they map into, say, a $1/10$-neighbourhood of $S^{n-1}$.\medskip

Since the operators are all positive, it suffices to show
\begin{equation*}
    \|\sup_{s \in \mathfrak{S}_{\varepsilon}} |\mathcal{A}_{\be,\br}f(\,\cdot\,;s)| \|_{L^2(\R^n)} \lesssim_{\varepsilon} R^{\varepsilon} \|f\|_{L^2(\R^n)}
\end{equation*}
for all $f \in L^2(\R^n)$ continuous and non-negative. Fixing such an $f$, define the averages
\begin{equation*}
    \mathcal{A}_{\omega, r} f(x) := \int_{\R} f(x - t\omega) \chi_r(t)\,\ud t  \qquad \textrm{for $\omega \in \R^n$ with $\big||\omega| - 1\big| < 1/10$ and $r > 0$,}
\end{equation*}
where $\chi_r(t) := r^{-1}\chi_1(r^{-1}t)$ for some $\chi_1 \in C^{\infty}_c(\R)$ non-negative which satisfies $\chi_1(s) = 1$ for $|s| \leq 1$. Thus, by the Fubini--Tonelli theorem,
\begin{equation}\label{3d Nik 1}
    \mathcal{A}_{\be,\br}f(x;s) \lesssim \mathcal{A}_{\be_n(s), r_n} \circ \cdots \circ \mathcal{A}_{\be_1(s), r_1}f(x).
\end{equation}
Writing $\mathcal{A}_{\be_j}f(x;s) := \mathcal{A}_{\be_j(s), 1}f(x)$, we may combine \eqref{3d Nik 1} with a simple scaling argument the reduce to problem to showing
\begin{equation}\label{3d Nik 2}
    \|\sup_{s \in \mathfrak{S}_{\varepsilon}} |\mathcal{A}_{\be_j}f(\,\cdot\,;s)|\|_{L^2(\R^n)} \lesssim (\log R) \, \|f\|_{L^2(\R^n)} \qquad \textrm{for $1 \leq j \leq n$.}  
\end{equation}
The previous display is essentially a consequence of a maximal estimate proved in \cite[p.223]{Cordoba1982}. There similar maximal operators are considered for smooth curves $\gamma \colon [-1,1] \to S^{n-1}$ under the key hypothesis that $\gamma$ cross any affine hyperplane a bounded number of times. Since we are considering polynomial curves $\be_j$, the fundamental theorem of algebra ensures either:
\begin{enumerate}[a)]
    \item The curve $\be_j$ crosses any affine hyperplane a bounded number of times, where the bound depends on the degrees of the component polynomials, or
    \item There exists an affine hyperplane which contains the image of $\be_j$.
\end{enumerate}
In the former case, we may deduce \eqref{3d Nik 2} directly through appeal to the result from \cite[p.223]{Cordoba1982}.\footnote{It is remarked that the argument in \cite{Cordoba1982} carries through for a curve which maps into a $1/10$-neighbourhood of the sphere (rather than the sphere itself), provided the curve satisfies the finite crossing property.} In the latter case, we may apply the maximal bound from  \cite{Cordoba1982} over a lower dimensional affine subspace and combine this with a Fubini argument to again deduce the desired result. 
\end{proof}




\subsection{Scaling properties}\label{Nik scale subsec} We conclude this section with a discussion of the scaling properties of the maximal function $\widetilde{\mathcal{N}}_{\gamma,r}^{\, (\varepsilon)}$ and, in particular, fill in the gap in proof of Proposition~\ref{L4 forward SF prop} by proving the Claim therein.\medskip

We begin by introducing a general setup for rescaling the operators $\widetilde{\mathcal{N}}_{\be, \br}$ when defined with respect to a Frenet frame; as in the previous subsection, here we work in general dimensions. Fix $\gamma \colon [-1,1] \to \R^n$ a non-degenerate curve with $\gamma \in \mathfrak{G}(\delta)$ and $\sigma \in [-1,1]$, $0 < \lambda < 1$ be such that $[\sigma - \lambda, \sigma + \lambda] \subseteq [-1,1]$. Consider the rescaled curve
\begin{equation*}
    \gamma_{\sigma, \lambda}(\tilde{s}) := \big([\gamma]_{\sigma, \lambda}\big)^{-1}\big(\gamma(\sigma + \lambda \tilde{s}) - \gamma(\sigma)\big)
\end{equation*}
as defined in Definition~\ref{rescaled curve def}. Let $\be = (\be_1, \dots, \be_n)$ denote the Frenet frame defined with respect to $\gamma$ and $\tilde{\be} = (\tilde{\be}_1, \dots, \tilde{\be}_n)$ denote the Frenet frame defined with respect to $\widetilde{\gamma} := \gamma_{\sigma, \lambda}$. We suppose $\br = (r_1, \dots, r_n) \in (0,1]^n$ satisfies 
\begin{equation}\label{Nik scale hyp}
    r_i \leq \lambda r_{i+1} \qquad \textrm{for $1 \leq i \leq n-1$}
\end{equation}
and define $\tilde{\br} := D_{\lambda} \cdot \br$ where $D_{\lambda} := \mathrm{diag}(\lambda, \dots, \lambda^n)$ is as in \eqref{gamma transformation}.\medskip

\begin{lemma}\label{gen Nik scale lem} If $f \in L^1_{\mathrm{loc}}(\R^n)$ is non-negative, then, with the above definitions,
\begin{equation}\label{gen Nik scale eq}
    \big([\gamma]_{\sigma,\lambda}\big)^{-1} \circ \widetilde{\mathcal{N}}_{\tilde{\be}, \tilde{\br}} \circ [\gamma]_{\sigma,\lambda} \cdot  f(x) \lesssim_{\gamma} \widetilde{\mathcal{N}}_{\be, \br} f(x) \qquad \textrm{for all $x \in \R^n$}.
\end{equation}
\end{lemma}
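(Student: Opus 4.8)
\textbf{Proof strategy for Lemma~\ref{gen Nik scale lem}.} The plan is to unwind both sides of \eqref{gen Nik scale eq} through the change of variables $x = [\gamma]_{\sigma,\lambda}\tilde{x}$ and match up the relevant parallelepipeds. Write $A := [\gamma]_{\sigma,\lambda}$ for brevity. The left-hand side at $x$ is $\widetilde{\mathcal{N}}_{\tilde{\be},\tilde{\br}}\big(f\circ A\big)\big(A^{-1}x\big)$, which by definition is $\sup_{\tilde{s}\in[-1,1]}\int_{\R^n}(f\circ A)(A^{-1}x - \tilde{y})\,\psi_{T_{\tilde{\be},\tilde{\br}}(\tilde{s})}(\tilde{y})\,\ud\tilde{y}$. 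After substituting $\tilde{y} = A^{-1}y$ (so $\ud\tilde{y} = |\det A|^{-1}\ud y$) this becomes $\sup_{\tilde{s}}\,|\det A|^{-1}\int_{\R^n} f(x-y)\,\psi_{T_{\tilde{\be},\tilde{\br}}(\tilde{s})}(A^{-1}y)\,\ud y$. So the whole lemma reduces to the pointwise comparison of weights
\begin{equation}\label{gen Nik scale weight comp}
   |\det A|^{-1}\,\psi_{T_{\tilde{\be},\tilde{\br}}(\tilde{s})}(A^{-1}y) \lesssim_\gamma \psi_{T_{\be,\br}(s)}(y) \qquad \textrm{where $s = \sigma + \lambda\tilde{s}$,}
\end{equation}
for every $\tilde{s}\in[-1,1]$ and $y\in\R^n$; taking the supremum over $\tilde{s}$ (equivalently over $s$ in the subinterval, which only makes the right-hand maximal function larger) then gives \eqref{gen Nik scale eq}.

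To prove \eqref{gen Nik scale weight comp} I would recall the explicit form \eqref{gen it max fn}: the weight $\psi_{T_{\bm{E},\mathbf{s}}}$ attached to a frame with column matrix $\bm{E}$ and scales $\mathbf{s}=(s_1,\dots,s_n)$ is $(\prod_j s_j)(1+\sum_j s_j|(\bm{E}^{-1}z)_j|)^{-100n}$. For the left-hand frame $\tilde{\be}$ the relevant column matrix is $\tilde{\bm{E}}(\tilde{s})$ and the scales are $\tilde{\br} = D_\lambda\br$, so $\prod_j\tilde{r}_j = \lambda^{n(n+1)/2}\prod_j r_j$. The key algebraic identity is the relation between the two Frenet frames under the affine rescaling: since $\widetilde{\gamma}^{(j)}(\tilde{s}) = \lambda^j A^{-1}\gamma^{(j)}(s)$ and the Frenet vectors are obtained from the derivative vectors by Gram--Schmidt, the change of basis from $(\tilde{\be}_i(\tilde{s}))_i$ to $A^{-1}(\be_i(s))_i$ (after normalising) is lower triangular and an $O(\delta)$ perturbation of $D_{\lambda}$ up to a diagonal rescaling — more precisely, there are constants $c_i(s)\sim 1$ with $\tilde{\be}_i(\tilde{s}) = c_i(s)\,\lambda^i\,(\text{$i$-th Gram--Schmidt vector of }A^{-1}\gamma^{(\cdot)}(s))$ and one checks $\tilde{\bm{E}}(\tilde{s})^{-1}A^{-1} = \bm{C}(s)\,D_\lambda^{-1}\,\bm{E}(s)^{-1}$ for a matrix $\bm{C}(s)$ with $\bm{C}(s), \bm{C}(s)^{-1} = O_\gamma(1)$. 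Feeding this into the exponential factor, $\sum_j \tilde{r}_j|(\tilde{\bm{E}}(\tilde{s})^{-1}A^{-1}y)_j| = \sum_j \lambda^j r_j\,|(\bm{C}(s)D_\lambda^{-1}\bm{E}(s)^{-1}y)_j| \sim \sum_j \lambda^j r_j\,\lambda^{-j}|(\bm{E}(s)^{-1}y)_j| = \sum_j r_j|(\bm{E}(s)^{-1}y)_j|$, where the middle equivalence uses that $\bm{C}(s)$ mixes the coordinates only by $O(1)$ amounts while $D_\lambda^{-1}$ scales the $j$-th coordinate by $\lambda^{-j}$, and the hypothesis \eqref{Nik scale hyp} $r_i\le\lambda r_{i+1}$ guarantees the scales $r_j$ are ordered compatibly so that the $O(1)$ mixing does not lose powers of $\lambda$. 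Finally $|\det A|^{-1}\prod_j\tilde{r}_j = |\det A|^{-1}\lambda^{n(n+1)/2}\prod_j r_j$, and since $\det A = \det[\gamma]_\sigma\cdot\det D_\lambda = \det[\gamma]_\sigma\cdot\lambda^{n(n+1)/2}$ with $\det[\gamma]_\sigma\sim 1$ (by $\gamma\in\mathfrak{G}(\delta)$), this prefactor is comparable to $\prod_j r_j$. Combining the prefactor and exponential-factor comparisons yields \eqref{gen Nik scale weight comp}.

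The main obstacle I anticipate is bookkeeping the precise triangular structure relating $\tilde{\bm{E}}(\tilde{s})^{-1}A^{-1}$ to $D_\lambda^{-1}\bm{E}(s)^{-1}$ and verifying that the $O(\delta)$ (or $O_\gamma(1)$) errors from Gram--Schmidt, combined with the lower-triangular mixing, genuinely cannot conspire to lose a power of $\lambda$ — this is exactly where hypothesis \eqref{Nik scale hyp} is used, and it requires care because the coordinates get rescaled by wildly different powers $\lambda^{-1},\dots,\lambda^{-n}$. Concretely, when one expands $(\bm{C}(s)D_\lambda^{-1}\bm{E}(s)^{-1}y)_j$, a lower-triangular $\bm{C}(s)$ means the $j$-th output only involves coordinates $i\ge j$ scaled by $\lambda^{-i}$, i.e. by \emph{larger} negative powers; multiplying back by $\tilde{r}_j = \lambda^j r_j$ one gets terms $\lambda^{j-i}r_j|(\bm{E}(s)^{-1}y)_i|$ with $j\le i$, and since \eqref{Nik scale hyp} gives $r_j \le \lambda^{i-j} r_i$, each such term is $\lesssim r_i|(\bm{E}(s)^{-1}y)_i|$, which is harmless. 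So the triangular orientation of $\bm{C}(s)$ (inherited from Gram--Schmidt) is exactly what makes the estimate work, and spelling this out carefully is the only real content of the proof. The rest — the change of variables, the determinant computation, and the polynomial-decay manipulation — is routine.
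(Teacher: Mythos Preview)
Your reduction to the weight comparison \eqref{gen Nik scale weight comp} and the determinant/prefactor computation are exactly right, and you have correctly identified that the whole content lies in the triangular structure coming from Gram--Schmidt together with the ordering hypothesis \eqref{Nik scale hyp}. Two points need fixing, though.

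First, a terminology slip: the matrix $\bm{C}(s)$ is \emph{upper}-triangular, not lower-triangular. Your description ``the $j$-th output only involves coordinates $i\ge j$'' is the correct one --- that is what upper-triangular means --- so the substance of your computation is fine, just the label is wrong. The cleanest way to verify this structure is to pass through the intermediate coordinates $\alpha := [\gamma]_{s,\lambda}^{-1}y$ (note: derivatives at $s$, not $\sigma$), using the identity $[\widetilde{\gamma}]_{\tilde{s}} = A^{-1}[\gamma]_{s,\lambda}$ that your remark $\widetilde{\gamma}^{(j)}(\tilde{s}) = \lambda^j A^{-1}\gamma^{(j)}(s)$ already points toward. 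Then both $(\langle\be_j(s),y\rangle)_j$ and $(\langle\tilde{\be}_j(\tilde{s}),\tilde{y}\rangle)_j$ are related to $\alpha$ by upper-triangular matrices close to the identity, and $\bm{C}$ factors accordingly.

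Second, and more substantively: the inequality you spell out at the end, namely $\sum_j\tilde{r}_j|(\tilde{\bm{E}}^{-1}\tilde{y})_j| \lesssim \sum_j r_j|(\bm{E}^{-1}y)_j|$, is the \emph{wrong direction} for \eqref{gen Nik scale weight comp}. Since the weight $(1+\text{sum})^{-100n}$ is decreasing, you need the tilde-sum to be \emph{at least} the original sum. Fortunately the same argument applied to $\bm{C}^{-1}$ (also upper-triangular with $O(1)$ entries) gives the reverse: writing $z=D_\lambda\bm{C}^{-1}\tilde{z}$ one gets terms $r_j\lambda^j|\tilde{z}_i|$ with $i\ge j$, and now the hypothesis $r_j\le\lambda^{i-j}r_i$ reads exactly $r_j\lambda^j\le r_i\lambda^i=\tilde{r}_i$, so each term is bounded by $\tilde{r}_i|\tilde{z}_i|$. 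You should state this direction rather than (or in addition to) the one you wrote.

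The paper organises the same computation differently: rather than assembling the single matrix $\bm{C}$, it works in the $\alpha$-coordinates, dyadically localises to $\sum_j r_j|\langle\be_j(s),y\rangle|\sim R$, pigeonholes to find the largest index $J$ with $r_J|\langle\be_J(s),y\rangle|\ge R/n$, and then shows directly that $\tilde{r}_J|\langle\tilde{\be}_J(\tilde{s}),\tilde{y}\rangle|\gtrsim R$. Your matrix formulation is arguably cleaner once the triangularity of $\bm{C}$ is established, but establishing it amounts to the same $\alpha$-coordinate calculation the paper does.
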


Here we think of a matrix $M \in \mathrm{GL}(\R, n)$ as acting on $L^2(\R^n)$ by $M \cdot f := f\circ M$ for all $f \in L^2(\R^n)$. Thus, the left-hand side corresponds to the operator $\widetilde{\mathcal{N}}_{\tilde{\be}, \tilde{\br}}$ conjugated by the invertible operator $[\gamma]_{\sigma,\lambda} \colon L^2(\R^n) \to L^2(\R^n)$.\medskip

Before presenting the proof of Lemma~\ref{gen Nik scale lem}, we use the result to verify the rescaling step in the proof of Proposition~\ref{L4 forward SF prop}. In view of the discussion in \S\ref{L2 wtd proof subsec} and by a simple rescaling argument, we know that the maximal function\footnote{Recall, in the setup in Proposition~\ref{L4 forward SF prop} we have $\widetilde{\gamma} := \gamma_{\sigma, \lambda}$, where $\sigma := 2^{-\ell} \mu$ and $\lambda := 2^{-\ell}$, and $\tilde{r} := 2^{-(k-3\ell)/2}$.}
\begin{equation*}
    \widetilde{\mathcal{N}}^{\,\mu, (\varepsilon)}_{k,\ell} := \mathrm{Dil}_{2^{k-3\ell}} \circ \widetilde{\mathcal{N}}^{(\varepsilon)}_{\widetilde{\gamma},\tilde{r}} \circ \mathrm{Dil}_{2^{-(k-3\ell)}} 
\end{equation*}
 corresponds to a repeated composition of operators of the form $\widetilde{\mathcal{N}}_{\tilde{\be}, \tilde{\br}}$ where the $\tilde{\br} = (\tilde{r}_1, \tilde{r}_2, \tilde{r}_3)$ satisfy 
 \begin{equation*}
     \tilde{r}_1 \leq \tilde{r}_2 \leq \tilde{r}_3 \qquad \textrm{and} \qquad \mathrm{ecc}(\tilde{\br}) \lesssim 2^{(k-3\ell)/2}.
 \end{equation*} 
 Consequently, by Lemma~\ref{gen Nik scale lem}, the conjugate\
 \begin{equation*}
     \big([\gamma]_{\sigma,\lambda}\big)^{-1} \circ \widetilde{\mathcal{N}}^{\,\mu, (\varepsilon)}_{k,\ell} \circ [\gamma]_{\sigma,\lambda}
 \end{equation*}
 is dominated by a maximal function $\widetilde{\mathcal{N}}^{(\varepsilon)}_{k,\ell}$ given by a repeated composition of operators of the form $\widetilde{\mathcal{N}}_{\be, \br}$ where each $\br = (r_1, r_2, r_3)$ satisfies 
 \begin{equation*}
     r_1 \leq \lambda r_2 \leq \lambda^2 r_3 \qquad \textrm{and} \qquad \mathrm{ecc}(\br) \lesssim 2^{(k+\ell)/2}.
 \end{equation*}
 Furthermore, there are only $O_{\varepsilon}(1)$ factors in this composition. The just given definition for $\widetilde{\mathcal{N}}^{(\varepsilon)}_{k,\ell}$ is independent of $\mu$ and, by Proposition~\ref{3d Nik prop}, for all $\varepsilon_{\circ} > 0$ the operator $\widetilde{\mathcal{N}}^{(\varepsilon)}_{k,\ell}$ is bounded on $L^2(\R^3)$ with operator norm $O_{\varepsilon}(2^{\varepsilon k})$. Thus, we have verified all the outstanding claims in the proof of Proposition~\ref{L4 forward SF prop}.

\begin{proof}[Proof of Lemma~\ref{gen Nik scale lem}] Consider the conjugated operator on the left-hand side of \eqref{gen Nik scale eq}. By applying a change of variables to the integral defining the underlying averages, the problem is quickly reduced to the pointwise estimate  
\begin{equation*}
    |\det [\gamma]_{\sigma, \lambda}|^{-1} \cdot \psi_{T_{\tilde{\be}, \tilde{\br}}(\tilde{s})} \circ  \big([\gamma]_{\sigma, \lambda}\big)^{-1}(y) \lesssim  \psi_{T_{\be, \br}(s)} (y)
\end{equation*}
for the weight functions as defined in \eqref{gen it max fn}, where $s= \sigma + \lambda \tilde{s}$. Suppose $y \in \R^n$ satisfies
\begin{equation*}
    R \leq \sum_{j=1}^n r_j |\inn{\be_j(s)}{y}| \leq 2 R 
\end{equation*}
for some $R \geq 1$. From the definition of the weight function from \eqref{gen it max fn}, and the orthonormality of the Frenet frame, the problem is further reduced to showing
\begin{equation}\label{Nik scale 1}
    \sum_{j=1}^n \tilde{r}_j |\inn{\tilde{\be}_j(\tilde{s})}{\tilde{y}}| \gtrsim R \qquad \textrm{where $\tilde{y} := \big([\gamma]_{\sigma, \lambda}\big)^{-1}(y)$.}
\end{equation}

Let $\alpha = \big([\gamma]_{s, \lambda}\big)^{-1}(y)$ so that, by the definition of the matrix $[\gamma]_{s, \lambda}$, we have 
\begin{equation*}
    y = \sum_{j=1}^n \lambda^j \alpha_j \gamma^{(j)}(s). 
\end{equation*}
Taking the inner product of both sides of this identity with respect to the vectors $\be_j(s)$, it follows that the vectors $\big(\inn{\be_j(s)}{y}\big)_{j=1}^n$ and $\big(\lambda^j \alpha_j\big)_{j=1}^n$ are related by an \textit{upper-triangular} matrix transformation, which is also an $O(\delta)$ perturbation of the identity. For this observation, we use the fact that $\langle \be_1(s), \dots, \be_j(s)\rangle = \langle \gamma^{(1)}(s), \dots, \gamma^{(j)}(s)\rangle$ for $1 \leq j \leq n$, owing to the definition of the Frenet frame. 

In view of the hypothesis \eqref{Nik scale hyp} which, in particular, implies $r_i \leq r_{i+1}$ for $1 \leq i \leq n-1$, the above observation yields that
\begin{equation}\label{Nik scale 3}
    r_j \lambda^j |\alpha_j| \lesssim R \qquad \textrm{for $1 \leq j \leq n$.}
\end{equation}
Furthermore, by pigeonholing, there exists some $1 \leq J \leq n$ such that 
\begin{equation*}
    r_J |\inn{\be_J(s)}{y}| \geq R/n \quad \textrm{and}  \quad r_j |\inn{\be_j(s)}{y}| < R/n \qquad \textrm{for $J + 1 \leq j \leq n$.}
\end{equation*}
Thus, by the same argument used to show \eqref{Nik scale 3}, provided $\delta$ is chosen sufficiently small,
\begin{equation}\label{Nik scale 4}
    r_J \lambda^J |\alpha_J| \sim R. 
\end{equation}

Since $\widetilde{\gamma}^{(j)}(\tilde{s}) = \lambda^j \big([\gamma]_{\sigma,\lambda}\big)^{-1} \gamma^{(j)}(s)$ for $j \geq 1$, it follows that $[\widetilde{\gamma}]_{\tilde{s}} = \big([\gamma]_{\sigma,\lambda}\big)^{-1} \circ [\gamma]_{s,\lambda}$ and, consequently,  
\begin{equation*}
    \tilde{y} = \big([\gamma]_{\sigma, \lambda}\big)^{-1}  (y) = \big([\gamma]_{\sigma, \lambda}\big)^{-1} \circ [\gamma]_{\lambda,s} (\alpha) = [\widetilde{\gamma}]_{\tilde{s}} (\alpha).
\end{equation*}
Thus, we have $\alpha = \big([\widetilde{\gamma}]_{\tilde{s}}\big)^{-1} (\tilde{y})$ and, arguing as before, this implies the vectors $\big(\inn{\tilde{\be}_j(\tilde{s})}{\tilde{y}}\big)_{j=1}^n$ and $\alpha$ are also related by an upper-triangle matrix transformation, which is again an $O(\delta)$ perturbation of the identity. From this observation, provided $\delta$ is chosen sufficiently small, we see that
\begin{equation*}
    \tilde{r}_J|\inn{\tilde{\be}_J(\tilde{s})}{\tilde{y}}| \gtrsim  r_J \lambda^J|\alpha_J| - \delta  \sum_{j = J+1}^n \big(r_J \lambda^{J - j}r_j^{-1}\big) r_j \lambda^j |\alpha_j| \gtrsim R,
\end{equation*}
where the final inequality uses the hypothesis \eqref{Nik scale hyp} together with \eqref{Nik scale 3} and \eqref{Nik scale 4}. This implies the desired bound \eqref{Nik scale 1}.

\end{proof}




\section{Proof of the \texorpdfstring{$\R^{3+1} \to \R^3$}{} Nikodym maximal estimate}\label{Nikodym sec}

In this section we establish Proposition~\ref{Nikodym prop}. We begin by recalling the basic setup. Let $\gamma \colon [-1,1] \to \R^3$ be a smooth, non-degenerate curve with Frenet frame $(\be_j)_{j=1}^3$. Given $\br \in (0,1)^3$ and $s \in [-1,1]$, consider the \textit{plates}
\begin{equation*}
    \mathcal{T}_{\br}(s) := \big\{ (y,t) \in \R^3 \times [1,2] :   \big|\inn{y - t\gamma(s)}{\be_j(s)}\big| \leq r_j \, \textrm{ for $j=1,2,3$} \big\} 
\end{equation*}
and the associated averaging and maximal operators
 \begin{equation*}
    \mathcal{A}_{\br}^{\mathrm{sing}} g(x; s) :=  \fint_{\mathcal{T}_{\br}(s)} g(x-y, t) \,\ud y \ud t \quad \textrm{and} \quad \mathcal{N}_{\br}^{\,\mathrm{sing}} g(x) := \sup_{-1 \leq s \leq 1} |\mathcal{A}_{\br}^{\mathrm{sing}} g(x; s)|. 
 \end{equation*}
We assume the exponents satisfy the conditions 
\begin{equation*}
    r_3 \leq r_2 \leq r_1 \leq r_2^{1/2} \qquad \textrm{and} \qquad r_2 \leq r_{1}^{1/2} r_3^{1/2}
\end{equation*}
and the goal is to establish the $L^2$ bound 
  \begin{equation}\label{Nikodym recall}
      \|\mathcal{N}_{\br}^{\,\mathrm{sing}} g\|_{L^2(\R^3)} \lesssim |\log r_3|^3 \|g\|_{L^2(\R^4)}.
  \end{equation}
To prove this norm inequality we will rely on the Fourier transform and reduce the problem to certain oscillatory integral estimates. The argument is a (significant) elaboration of that used to establish a lower dimensional variant of \eqref{Nikodym recall} in \cite{MSS1992}. We shall make heavy use of the frequency decomposition used to analyse the helical averaging operator in \S\ref{J=3 sec}. 

\begin{proof}[Proof of Proposition~\ref{Nikodym prop}] The argument is somewhat involved and is therefore broken into steps.\medskip

 \noindent\textit{Initial reductions}. Let $0 < \delta_0 \ll 1$ be a small parameter, as introduced at the beginning of \S\ref{sec:slow decay cone}. By familiar localisation and rescaling arguments, we may assume $\gamma$ satisfies $\gamma(\,\cdot\,) - \gamma(0) \in \mathfrak{G}_3(\delta_0)$. Further, we may replace $\mathcal{A}_{\br}^{\mathrm{sing}} g(x; s)$ with the localised version $\mathcal{A}_{\br}^{\mathrm{sing}} g(x; s) \chi(s)$, where $\chi \in C^{\infty}_c(\R)$ is supported in $I_0 := [-\delta_0, \delta_0]$. Note that this model situation is already enough for our application in \S\ref{L2 wtd 3+1 sec}.\medskip

 \noindent\textit{Fourier representation}. The first step is to derive an alternative representation of the averages $\mathcal{A}_{\br}^{\mathrm{sing}}g$ in terms of an oscillatory integral operator. Given $a \in C^{\infty}_c(\widehat{\R}^3 \times \R \times \R)$, define
 \begin{align*}
     \mathcal{A}[a]g(x;s) &:= \frac{1}{(2\pi)^3}\int_1^2 \int_{\R^3} \int_{\widehat{\R}^3} e^{i \inn{x-y-t\gamma(s)}{\xi}} a(\xi; s; t)\,\ud \xi\, g(y,t)\ud y\, \ud t \\
     &= \frac{1}{(2\pi)^3} \int_{\widehat{\R}^3} e^{i \inn{x}{\xi}}\int_1^2 e^{-it \inn{\gamma(s)}{\xi}} a(\xi; s; t) \tilde{g}(\xi,t) \,\ud t\,\ud \xi,
 \end{align*}
  where $\tilde{g}$ denotes the Fourier transform of $g$ with respect to the $y$-variable only. The associated maximal operator is then defined by
  \begin{equation*}
      \mathcal{N}[a] g(x) := \sup_{-1 \leq s \leq 1} |\mathcal{A}[a] g(x; s)|. 
  \end{equation*}
 Without loss of generality, to prove Proposition~\ref{Nikodym prop} it suffices to consider the estimate for $g$ Schwartz and taking values in $[0,\infty)$. Fix $\psi \in C^{\infty}_c(\widehat{\R})$ with $\supp \psi \subseteq [-1,1]$ such that $\widecheck{\psi}$ takes values in the positive real line and $\widecheck{\psi}(y) \gtrsim 1$ for $|y| \leq 1$. Define
 \begin{equation*}
     a_{\br}(\xi; s) := \prod_{j=1}^3 \psi\big(r_j  \inn{\xi}{\be_j(s)}\big) \, \chi(s) 
 \end{equation*}
 so that, by integral formula for the inverse Fourier transform and a change of variable, 
 \begin{equation*}
     \frac{1}{|\mathcal{T}_{\br}(s)|} \bbone_{\mathcal{T}_{\br}(s)}(y, t)\chi(s) \lesssim \prod_{j=1}^3 r_j^{-1} \widecheck{\psi}\big(r_j^{-1}  \inn{y - t\gamma(s)}{\be_j(s)}\big)\chi(s) = \frac{1}{(2 \pi)^3} \int_{\hat{\R}^3} e^{i \inn{y - t\gamma(s)}{\xi}} a_{\br}(\xi;s;t)\,\ud \xi.
 \end{equation*}
Thus, the pointwise inequality
 \begin{equation*}
    |\mathcal{A}_{\br}^{\mathrm{sing}} g(x;s)| \lesssim |\mathcal{A}[a_{\br}]g(x;s)|
 \end{equation*}
holds and therefore it suffices to bound the operator $\mathcal{N}[a_{\br}]$.\medskip
 
\noindent\textit{Sobolev embedding} Given $a \in C^{\infty}_c(\widehat{\R}^3 \times \R \times \R)$, by elementary Sobolev embedding,
  \begin{equation}\label{gen Sobolev}
    \| \mathcal{N}[a] g\|_{L^2(\R^3)}^2 \leq  \| \mathcal{A}[a]g \|_{L^2(\R^{3+1})}^2 + 2 \prod_{\iota \in \{0,1\}}\| \partial_s^{\iota}\, \mathcal{A}[a]g \|_{L^2(\R^{3+1})};
  \end{equation}
  indeed, this bound is a simple and standard consequence of the fundamental theorem of calculus and the Cauchy--Schwarz inequality (see for instance ~\cite[Chapter XI, $\S$3.2]{Stein1993}). Observe that $\partial_s\,\mathcal{A}[a]$ is an operator of the same form as $\mathcal{A}[a]$ and, in particular, 
 \begin{equation}\label{N gen deriv}
    \partial_s\, \mathcal{A}[a] = \mathcal{A}[\mathfrak{d}_s a] \qquad \textrm{where} \qquad    \mathfrak{d}_s a(\xi; s; t) := -i t \inn{\gamma'(s)}{\xi} \, a(\xi; s; t) + \partial_s a(\xi; s; t).
    \end{equation}
These observations reduce the problem to proving estimates of the form
\begin{equation}\label{N gen norm}
    \| \mathcal{A}[\mathfrak{d}_s^{\iota}\,a]g \|_{L^2(\R^{3+1}) \to L^2(\R^{3+1})} \leq B^{(\iota - 1/2)} \qquad \textrm{for $\iota \in \{0, 1\}$}
\end{equation}
for suitable symbols $a$ and constants $B \geq 1$. In particular, it suffices to decompose the original symbol $a_{\br}$ into $O(|\log r_3|^3)$ many pieces and show that \eqref{N gen norm} holds for some choice of $B \geq 1$ on each piece.\medskip
  
\noindent \textit{Reduction to oscillatory integral estimates}. Continuting to work with a general $a \in C^{\infty}_c(\widehat{\R}^3 \times \R \times \R)$, it follows from Plancherel's theorem in the $x$-variable and the Cauchy--Schwarz inequality that
  \begin{align}
      \| \mathcal{A}[a]g\|_{L^2(\R^{3+1})}^2 &\leq  \int_{\widehat{\R}^3} \int_{\R} |T_{\xi}[a]\tilde{g}(\xi;\,\cdot\,)(t)\tilde{g}(\xi;t)|\, \ud t \, \ud \xi \notag \\
      &\leq  \int_{\widehat{\R}^3} \|T_{\xi}[a]\tilde{g}(\xi;\,\cdot\,)\|_{L^2(\R)}\|\tilde{g}(\xi;\,\cdot\,)\|_{L^2(\R)}  \, \ud \xi \label{N gen square}
  \end{align}
where, for each $\xi \in \widehat{\R}^3$, the operator $T_{\xi}[a]$ acts on univariate functions by integrating (in the $t'$-variable) against the kernel
  \begin{equation}\label{N gen ker}
     \mathcal{K}[a](t,t';\xi) := \int_{\R} e^{i (t-t') \inn{\gamma(s)}{\xi}} \overline{a(\xi; s; t)}a(s,t';\xi)\bbone_{[1,2]^2}(t, t')\,\ud s.
  \end{equation}
It suffices to show that
  \begin{equation}\label{N gen univar}
      \| T_\xi [\fd_s^\iota a] \tilde{g}(\xi; \cdot) \|_{L^2(\R)} \leq B^{2 \iota -1} \| \tilde{g}(\xi; \cdot) \|_{L^2(\R)} \qquad \textrm{ for $\iota \in \{0,1\}$}
  \end{equation}
  holds uniformly in $\xi \in \widehat{\R}^3$. Indeed, in this case the norm bound \eqref{N gen norm} would follow via \eqref{N gen square} and a further application of Plancherel's theorem in the $\xi$-variable.
 By the Schur test, the inequality \eqref{N gen univar} is reduced to verifying the oscillatory integral estimates
  \begin{equation}\label{N gen Schur}
    \sup_{t' \in [1,2]} \int_1^2 |\mathcal{K}[\fd_s^\iota a](t,t';\xi)| \,\ud t, \quad \sup_{t \in [1,2]} \int_1^2 |\mathcal{K}[\fd_s^\iota a](t,t';\xi)| \,\ud t' \leq B^{2\iota -1}, \qquad \iota \in \{0,1\}
  \end{equation}
hold uniformly over all $\xi \in \hat{\R}^3$.\medskip 

\noindent \textit{Initial decomposition}. In order to obtain favourable estimates, it is necessary to first decompose the original symbol $a_{\br}$ into a number of localised pieces. This decomposition is similar to that used in \S\ref{J=3 sec} and is described in detail presently. Later in the proof, the kernel estimates \eqref{N gen Schur} are verified for each piece of the decomposition and the resulting norm bounds are combined to estimate the entire operator.\smallskip

Define $\delta_1 := \delta_0^3$, $\delta_2 := \delta_0$ and $\delta_3 := 9/10$ and for $1\leq J \leq 3$ let $\Omega_J$ denote the set of $\xi \in \hat{\R}^3$ satisfying
  \begin{align*}
      \inf_{s \in I_0} |\inn{\gamma^{(J)}(s)}{\xi}| &\geq \delta_J |\xi|, \\
      \inf_{s \in I_0} |\inn{\gamma^{(j)}(s)}{\xi}| &\leq \delta_j |\xi| \qquad \textrm{for $1 \leq j \leq J-1$.}
  \end{align*}
  Provided $\delta_0 > 0$ is chosen sufficiently small, the condition $\gamma(\,\cdot\,) - \gamma(0) \in \mathfrak{G}_3(\delta_0)$ ensures that these sets partition $\hat{\R}^3$. By pigeonholing,\footnote{As we are interested in $L^2$ estimates here, we are free to decompose the symbol using the rough partition of unity $1 \equiv \bbone_{\Omega_1} + \bbone_{\Omega_2} + \bbone_{\Omega_3}$.} it suffices to work with the symbols $a_{\br}^J(\xi;s):= a_{\br}(\xi;s)\bbone_{\Omega_J}(\xi)$ for $1 \leq J \leq 3$.\smallskip

Decompose the symbol into dyadic frequency bands by writing
\begin{equation*}
    a_{\br} = \sum_{k = 0}^{\infty} a_{\br,k} \qquad \textrm{where} \qquad  a_{\br,k}(\xi; s) :=   \left\{ \begin{array}{ll}
        a_{\br}^J(\xi; s) \cdot \beta^k(\xi) & \textrm{for $k \geq 1$} \\
         a_{\br}^J(\xi; s) \cdot \eta(\xi) & \textrm{for $k =0$}
     \end{array} \right. .
\end{equation*}
Here, for notational convenience, we suppress the choice of $J$ in the notation. Since $r_3 \leq r_1, r_2$, only the first $O(|\log r_3|)$ terms of the above sum are non-zero, so it suffices to show
 \begin{equation}\label{N freq loc est}
   \|\mathcal{N}[a_{\br,k}]\|_{L^2(\R^4) \to L^2(\R^3)}  \lesssim k^2 \qquad \textrm{for all $k \in \N_0$.}
 \end{equation}
In particular, note that $2^{k}\lesssim r_3^{-1}$. \medskip

\noindent\underline{$J = 1$ case.} Suppose $\xisupp a_{\br,k} \subseteq \Omega_1$. Here a simple integration-by-parts argument yields
\begin{equation*}
    \sup_{t' \in [1,2]} \int_1^2 |\mathcal{K}[\mathfrak{d}_s^{\iota} a_{\br, k}](t,t';\xi)| \,\ud t, \quad \sup_{t \in [1,2]} \int_1^2 |\mathcal{K}[\mathfrak{d}_s^{\iota} a_{\br, k}](t,t';\xi)| \,\ud t' \lesssim 2^{k(2\iota - 1)} \qquad \textrm{ for $\iota \in \{0,1\}$.}
  \end{equation*} 
In view of our earlier observations, the bound \eqref{N freq loc est} therefore holds in this case with a uniform bound in $k$. \medskip

\noindent\underline{$J = 2$ case.}  Suppose $\xisupp a_{\br,k} \subseteq \Omega_2$. If $\xi \in \Omega_2$, then the equation $\inn{\gamma'(s)}{\xi} = 0$ has a unique solution in $\frac{5}{4} \cdot I_0$ which we denote by $\theta(\xi)$. Indeed, this follows from a simple calculus exercise, similar to the proof of Lemma~\ref{theta2 lem}.\medskip 

\noindent \textit{Further decomposition} Here the symbol $a_{\br,k}$ is further decomposed by writing
   \begin{equation*}
       a_{\br,k} = \sum_{\ell = 0}^{\floor{k/2}}  a_{\br,k,\ell}
\qquad \textrm{where} \qquad
    a_{\br, k,\ell}(\xi;s) := 
    \left\{\begin{array}{ll}
       \displaystyle a_{\br,k}(\xi; s)\beta\big(2^{\ell}|s - \theta(\xi)|\big)   & \textrm{if $0 \leq \ell < \floor{k/2}$}  \\[6pt]
       \displaystyle a_{\br,k}(\xi; s)\eta\big(2^{\floor{k/2}}|s - \theta(\xi)|\big) & \textrm{if $\ell = \floor{k/2}$}
    \end{array}\right. .
    \end{equation*}
Since $|\inn{\gamma''(s)}{\xi}| \sim 2^k$ for all $(\xi;s) \in \supp a_{\br, k, \ell}$, one has the relation $2^k \leq r_2^{-1}$.
\medskip

\noindent \textit{Kernel estimates}. The kernels are analysed using stationary phase techniques. 

\begin{lemma}\label{J=2 N ker lem} If $k \in \N$, $0 \leq \ell \leq \floor{k/2}$ and $\iota \in \{0,1\}$, then 
      \begin{equation}\label{J=2 N ker}
    \sup_{t' \in [1,2]} \int_1^2 |\mathcal{K}[\mathfrak{d}_s^{\iota} a_{\br,k,\ell}](t,t';\xi)| \,\ud t, \quad \sup_{t \in [1,2]} \int_1^2 |\mathcal{K}[\mathfrak{d}_s^{\iota} a_{\br, k,\ell}](t,t';\xi)| \,\ud t' \lesssim 2^{(k-\ell)(2\iota - 1)}.
  \end{equation} 
  \end{lemma}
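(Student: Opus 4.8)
\textbf{Proof proposal for Lemma~\ref{J=2 N ker lem}.}

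The plan is to analyse the oscillatory kernel $\mathcal{K}[\mathfrak{d}_s^\iota a_{\br,k,\ell}](t,t';\xi)$ via stationary phase in the $s$-variable, keeping careful track of how the localisation $|s-\theta(\xi)|\sim 2^{-\ell}$ (or $\lesssim 2^{-\floor{k/2}}$) interacts with the size $|\inn{\gamma''(s)}{\xi}|\sim 2^k$ on $\xisupp a_{\br,k,\ell}$. Recall the kernel has phase $s\mapsto (t-t')\inn{\gamma(s)}{\xi}$, amplitude $\overline{a(\xi;s;t)}a(\xi;s;t')\bbone_{[1,2]^2}(t,t')$, and for the $\mathfrak{d}_s$ version picks up factors $it\inn{\gamma'(s)}{\xi}$. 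First I would record the relevant derivative bounds on the support: by Taylor expansion around $\theta(\xi)$ together with $\inn{\gamma'\circ\theta(\xi)}{\xi}=0$ and $|\inn{\gamma''\circ\theta(\xi)}{\xi}|\sim 2^k$, one gets $|\inn{\gamma'(s)}{\xi}|\sim 2^{k-\ell}$ in the range $\ell<\floor{k/2}$ (and $\lesssim 2^{k-\ell}$, possibly much smaller, when $\ell=\floor{k/2}$), while $|\inn{\gamma'(s)}{\xi}|\lesssim 2^{k-\ell}$ always, and $|\inn{\gamma^{(j)}(s)}{\xi}|\lesssim 2^k$ for $j\ge 2$. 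Also the amplitude and its $s$-derivatives satisfy $|\partial_s^N a_{\br,k,\ell}|\lesssim_N 2^{\ell N}$ by the definition of the cutoff $\beta(2^\ell|s-\theta(\xi)|)$, and the extra $\mathfrak{d}_s$ factor $t\inn{\gamma'(s)}{\xi}$ contributes size $\lesssim 2^{k-\ell}$ with $s$-derivatives of size $\lesssim 2^k$ (which is $2^\ell\cdot 2^{k-\ell}$, consistent with treating $2^{k-\ell}$ as the symbol "size" and $2^\ell$ as the reciprocal of the "scale").

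The key step is a van der Corput / stationary-phase estimate for the $s$-integral. Writing $\tau := t-t'$, the phase derivative is $\tau\inn{\gamma'(s)}{\xi}$ and the second derivative is $\tau\inn{\gamma''(s)}{\xi}\sim \tau\, 2^k$. Splitting into $|\tau|\le 2^{-k+2\ell}$ and $|\tau|> 2^{-k+2\ell}$: in the first regime one uses the trivial bound — the $s$-interval has length $\sim 2^{-\ell}$ and the amplitude (including the $\mathfrak{d}_s$ factor) is $\lesssim 2^{(k-\ell)\iota}$, giving $|\mathcal{K}|\lesssim 2^{-\ell}2^{(k-\ell)\iota}$; in the second regime, since $|s-\theta(\xi)|\lesssim 2^{-\ell}$ forces the critical point $\theta(\xi)$ of $\inn{\gamma'}{\xi}$ to be at most $O(2^{-\ell})$ from the localisation centre, van der Corput's lemma with the second-derivative lower bound $|\tau|2^k$ yields a gain $(|\tau|2^k)^{-1/2}$ times the amplitude, i.e. $|\mathcal{K}|\lesssim (|\tau|2^k)^{-1/2}2^{(k-\ell)\iota}$, where one must also absorb the amplitude's total variation $\lesssim 2^\ell\cdot 2^{(k-\ell)\iota}$ — this is controlled because $2^\ell\lesssim (|\tau|2^k)^{1/2}$ precisely when $|\tau|\gtrsim 2^{-k+2\ell}$. (For $\ell=\floor{k/2}$ the interval length $2^{-\floor{k/2}}$ and the threshold coincide up to constants, so the two regimes merge.) Integrating these pointwise bounds in $t$ over $[1,2]$ (equivalently in $\tau$ over $[-1,1]$): the contribution of $|\tau|\le 2^{-k+2\ell}$ is $\lesssim 2^{-k+2\ell}\cdot 2^{-\ell}2^{(k-\ell)\iota}=2^{\ell-k}2^{(k-\ell)\iota}$, and the contribution of $|\tau|>2^{-k+2\ell}$ is $\lesssim 2^{-k/2}2^{(k-\ell)\iota}\int_{2^{-k+2\ell}}^{1}|\tau|^{-1/2}\,\ud\tau\lesssim 2^{-k/2}2^{(k-\ell)\iota}\cdot 1 = 2^{-k/2}2^{(k-\ell)\iota}$; hmm, this last bound is $2^{-k/2}$ rather than $2^{-(k-\ell)}$, so the regime splitting must instead be taken at $|\tau|\sim 2^{-k+2\ell}$ with the observation that the van der Corput gain should be measured against the oscillation on the actual support interval, giving effectively $|\mathcal{K}|\lesssim 2^{-\ell}(1+|\tau|2^{k-2\ell})^{-N}2^{(k-\ell)\iota}$ by non-stationary phase once $|\tau|2^{k-2\ell}\gg 1$ (the first derivative $|\tau\inn{\gamma'(s)}{\xi}|\sim|\tau|2^{k-\ell}$ dominates the amplitude variation $2^\ell$ exactly when $|\tau|\gtrsim 2^{-k+2\ell}$), and then $\int_{-1}^1 2^{-\ell}(1+|\tau|2^{k-2\ell})^{-N}\,\ud\tau\lesssim 2^{-\ell}\cdot 2^{k-2\ell}{}^{-1}\cdot\, $ wait — $\lesssim 2^{-\ell}\cdot 2^{-(k-2\ell)}=2^{\ell-k}$, so combined with the $\mathfrak{d}_s$ amplitude factor $2^{(k-\ell)\iota}$ we obtain $\lesssim 2^{\ell-k}2^{(k-\ell)\iota}=2^{(k-\ell)(\iota-1)}\cdot 2^{(1-\iota)(\ell - k) + \ell - k + (k-\ell)\iota}$; cleaning this up, $2^{\ell-k}2^{(k-\ell)\iota}=2^{-(k-\ell)}2^{(k-\ell)\iota}=2^{(k-\ell)(\iota-1)}$, which is exactly the claimed $2^{(k-\ell)(2\iota-1)}$ when $\iota=0$; for $\iota=1$ one instead needs $2^{(k-\ell)}$, and indeed $2^{\ell-k}2^{(k-\ell)\cdot 1}\cdot 2^{(k-\ell)} = 2^{(k-\ell)}$ comes from the $\mathfrak{d}_s$ factor contributing $2^{(k-\ell)\cdot\iota}$ \emph{squared} in the kernel (since the kernel involves $a(\cdot;t)$ and $a(\cdot;t')$ both carrying the factor), giving $2^{\ell-k}\cdot 2^{2(k-\ell)}=2^{(k-\ell)}$ — this is the correct bookkeeping and matches $2^{(k-\ell)(2\iota-1)}$.

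The main obstacle will be making the van der Corput / non-stationary phase dichotomy completely rigorous while correctly counting powers of $2^{k-\ell}$ from the two amplitude factors in the kernel \eqref{N gen ker} and from the $\mathfrak{d}_s$ operator; the cleanest route is to treat $2^{k-\ell}$ as the amplitude size and $2^{\ell}$ as the reciprocal scale, apply Lemma~\ref{non-stationary lem} in $s$ to gain arbitrary powers of $(1+|\tau|2^{k-2\ell})^{-1}$ once the first-derivative term dominates, use the trivial bound $|\mathcal{K}|\lesssim 2^{-\ell}2^{2(k-\ell)\iota}$ otherwise, and then integrate in $t$ (or $t'$) over $[1,2]$, which costs exactly a factor $2^{-(k-2\ell)}$ from the decay in $|\tau|=|t-t'|$. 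Summing: $2^{-\ell}\cdot 2^{-(k-2\ell)}\cdot 2^{2(k-\ell)\iota}=2^{\ell - k}2^{2(k-\ell)\iota}=2^{(k-\ell)(2\iota-1)}$, which is \eqref{J=2 N ker}. The boundary case $\ell=\floor{k/2}$ requires only minor modification since there the localisation scale and the stationary-phase scale coincide, so the trivial bound already suffices. By the reductions recorded in the earlier part of this section — specifically \eqref{gen Sobolev}, \eqref{N gen norm}, \eqref{N gen univar} and \eqref{N gen Schur} — the kernel bounds \eqref{J=2 N ker} yield $\|\mathcal{N}[a_{\br,k,\ell}]\|_{L^2\to L^2}\lesssim 2^{(k-\ell)\cdot 0}=1$ for each $\ell$ after the Sobolev interpolation between $\iota=0$ and $\iota=1$ (the geometric mean of $2^{-(k-\ell)}$ and $2^{(k-\ell)}$ is $1$), and summing the $O(k)$ values of $\ell$ gives the $O(k)$ contribution to \eqref{N freq loc est} from the $J=2$ frequency region; together with the analogous (and easier) $J=1$ and $J=3$ analyses and the $O(|\log r_3|)$ dyadic frequency bands this produces the $|\log r_3|^3$ bound in \eqref{Nikodym recall}.
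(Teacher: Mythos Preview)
Your approach is correct and essentially matches the paper's: both treat $\ell=\floor{k/2}$ by the trivial support-size bound and $0\le\ell<\floor{k/2}$ by non-stationary phase (Lemma~\ref{non-stationary lem}) with $R=2^{k-2\ell}|t-t'|$, yielding $|\mathcal{K}[\mathfrak{d}_s^\iota a_{\br,k,\ell}]|\lesssim_N 2^{-\ell}2^{2(k-\ell)\iota}(1+2^{k-2\ell}|t-t'|)^{-N}$ and then integrating in $t$ or $t'$.

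One point you gloss over: the bound $|\partial_s^N a_{\br,k,\ell}|\lesssim_N 2^{\ell N}$ does \emph{not} follow solely from the cutoff $\beta(2^\ell|s-\theta(\xi)|)$. The factor $a_{\br}(\xi;s)=\prod_{j=1}^3\psi(r_j\inn{\be_j(s)}{\xi})$ also gets differentiated, and each $s$-derivative of $\psi(r_j\inn{\be_j(s)}{\xi})$ brings down a factor $r_j\inn{\be_j'(s)}{\xi}$ of size $\lesssim r_j\cdot 2^k$ (via the Frenet equations and $\xi\in\Omega_2$). To bound this by $2^\ell$ you need the support constraint: on $\supp a_{\br,k,\ell}$ one has $r_1|\inn{\be_1(s)}{\xi}|\lesssim 1$ and $|\inn{\be_1(s)}{\xi}|\sim|\inn{\gamma'(s)}{\xi}|\sim 2^{k-\ell}$, forcing $r_1\lesssim 2^{-(k-\ell)}$ and hence $r_1\cdot 2^k\lesssim 2^\ell$; similarly $r_2,r_3\lesssim 2^{-k}$ give $r_2\cdot 2^k,\,r_3\cdot 2^k\lesssim 1\le 2^\ell$. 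The paper records exactly this step (and the analogous $r_1\le r_2^{1/2}\le 2^{-k/2}$ for the $\ell=\floor{k/2}$ case). With this filled in your argument is complete.
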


\begin{proof} If $\ell=\floor{k/2}$, then the localisation of the symbol ensures that $|s-\theta (\xi)|\lesssim 2^{-\ell}$ for all $(\xi;s)\in \supp a_{\br,k,\ell}$. The bound for $\iota=0$ then follows immediately from the size of the $s$-support of $a_{\br,k,\ell}$. For $\iota=1$, note that by the mean value theorem, we may write
\begin{equation}\label{J=2 Nik 1}
    \inn{\gamma'(s)}{\xi} = \omega(\xi; s) \, (s - \theta(\xi))
\end{equation}
where $|\omega(\xi;s)| \sim 2^k$ on $\supp a_{k,\ell}$. Consequently, 
\begin{equation}\label{J=2 Nik 2}
    | \inn{\gamma'(s)}{\xi}| \lesssim 2^{k/2} \qquad \textrm{ for all $(\xi; s) \in \supp a_{\br,k,\ell}$.}
\end{equation}
Furthermore, by the definition of $a_{\br}$ and of the Frenet frame $\{\be_j(r)\}_{j=1}^3$, the relation $r_3 \leq r_2 \leq r_1 \lesssim r_2^{1/2} \leq 2^{-k/2}$ implies
\begin{equation}\label{J=2 Nik 3}
    |\partial_s a_{\br, k,\ell}(\xi;s)|\lesssim 2^{k/2}.
\end{equation}
In view of the definition of $\fd_s$  in \eqref{N gen deriv}, the bounds \eqref{J=2 Nik 2} and \eqref{J=2 Nik 3} immediately imply that $|\fd_s a_{\br,k,\ell} (\xi;s)| \lesssim 2^{k/2}$ and the bound for $\iota=1$ now follows immediately from the size of the $s$-support of $a_{\br,k,\ell}$ and the definition of $\mathcal{K}$ in \eqref{N gen ker}.

If $0 \leq \ell < \floor{k/2}$, then the localisation of the symbols ensures that
\begin{equation}\label{J=2 N ker 1}
    |s - \theta(\xi)| \sim 2^{-\ell} \qquad \textrm{for all $(\xi;s) \in \supp a_{\br, k,\ell}$.}
\end{equation}
Consequently, by directly applying \eqref{J=2 N ker 1} in \eqref{J=2 Nik 1}, we have the bounds 
\begin{equation}\label{J=2 N ker 2}
    |\inn{\gamma'(s)}{\xi}| \sim 2^{k-\ell}, \quad |\inn{\gamma^{(N)}(s)}{\xi}| \lesssim 2^k \qquad \textrm{for $N \geq 2, \,\,\, (\xi;s) \in \supp a_{\br,k,\ell}$.}
\end{equation}
Moreover, by the definition of $a_{\br}$, the first relation above immediately implies $2^{k-\ell} \leq r_1^{-1}$; recall that $r_2, r_3\leq 2^{-k}$. Thus, by the definition of the Frenet frame $\{\be_j(s)\}_{j=1}^3$, the symbol satisfies
\begin{equation}\label{J=2 N ker 3}
        |\partial_s^N a_{\br,k,\ell}(\xi;s)| \lesssim 2^{\ell N} = 2^{-(k-2\ell)N} 2^{(k - \ell)N} \qquad \textrm{for all $N \in \N_0$.} 
\end{equation}
Thus, we may bound the kernel via repeated integration-by-parts. In particular, applying Lemma~\ref{non-stationary lem} with $\phi(s):=(t-t')\inn{\gamma(s)}{\xi}$  and $R := 2^{k-2\ell}|t-t'|$, we deduce that 
\begin{equation*}
    |\mathcal{K}[\mathfrak{d}_s^{\iota} a_{\br,k,\ell}](\xi; t, t')| \lesssim_N 2^{2(k - \ell)\iota} 2^{-\ell} \big(1 + 2^{k-2\ell}|t - t'| \big)^{-N}, \qquad \textrm{for $\iota \in \{0,1\}$}.
\end{equation*}
 The additional $2^{2(k - \ell)}$ factor arises in the bound for the derived operator owing to the formula \eqref{N gen deriv} for the corresponding symbol (and in particular, due to the first bound in \eqref{J=2 N ker 2}, the bounds in \eqref{J=2 N ker 3} and the relation $0 \leq \ell < \floor{k/2}$) and the form of the kernel as described in \eqref{N gen ker}.  Integrating both sides of the above display in either $t$ or $t'$, the desired estimate \eqref{J=2 N ker} follows.
\end{proof}

\noindent \textit{Putting everything together}. In view of the kernel estimates from Lemma~\ref{J=2 N ker lem} and the discussion at the beginning of the proof, it follows that
\begin{equation*}
   \| \mathcal{A}[\mathfrak{d}_s^{\iota} a_{\br,k,\ell}]g \|_{L^2(\R^4) \to L^2(\R^{3+1})}  \lesssim 2^{(k-\ell)(\iota - 1/2)} \qquad \textrm{for all $0 \leq \ell \leq \floor{k/2}$ and $\iota \in \{0,1\}$.}
\end{equation*}
 Combining these bounds with \eqref{gen Sobolev}, it follows that
\begin{align*}
   \| \mathcal{N}[a_{\br,k,\ell}]g \|_{L^2(\R^4) \to L^2(\R^3)}  &\lesssim 1 \quad \textrm{for all $0 \leq \ell \leq \floor{k/2}$}
\end{align*}
The frequency localised maximal bound \eqref{N freq loc est} immediately follows (with linear dependence on $k$)  from the triangle inequality.\medskip

\noindent\underline{$J = 3$ case.} Suppose $\xisupp a_{\br,k} \subseteq \Omega_3$. As in Lemma~\ref{theta2 lem}, if $\xi \in \Omega_3$, then the equation $\inn{\gamma''(s)}{\xi} = 0$ has a unique solution in $[-1,1]$, which we denote by $\theta_2(\xi)$. As in Lemma~\ref{theta1 lem}, if $u(\xi) < 0$, where
\begin{equation*}
   u(\xi) := \inn{\gamma'\circ \theta_2(\xi)}{\xi}, 
\end{equation*}
then the equation $\inn{\gamma'(s)}{\xi} = 0$ has a precisely two solutions in $[-1,1]$, which we denote by $\theta_1^{\pm}(\xi)$. We will further assume without loss of generality that $\inn{\gamma^{(3)}(s)}{\xi} > 0$ for all $\xi \in \xisupp a_{\br,k}$. \medskip

\noindent \textit{Further decomposition} Here the symbol $a_{\br,k}$ is decomposed in a manner similar (but not quite identical) to that used in \S\ref{J=3 sec}. First perform a dyadic decomposition of $u(\xi)$ by writing
    \begin{equation*}
       a_{\br,k} = \sum_{\ell = 0}^{\floor{k/3}}  a_{\br,k,\ell} + \sum_{\ell = 0}^{\floor{k/3}-1}  a_{\br,k,\ell}^+
\end{equation*}
where
\begin{equation*}
    a_{\br,k,\ell}(\xi; s) := 
    \left\{\begin{array}{ll}
       \displaystyle a_{\br,k}(\xi; s)\beta^-\big(2^{-k+ 2\ell}\bu(\xi)\big)   & \textrm{if $0 \leq \ell < \floor{k/3}$}  \\[6pt]
       \displaystyle a_{\br,k}(\xi; s)\eta\big(2^{-k + 2\floor{k/3}}\bu(\xi)\big) & \textrm{if $\ell = \floor{k/3}$}
    \end{array}\right. 
    \end{equation*}
and the $a_{\br,k,\ell}^+$ are defined similarly but with $\beta^+$ in place of $\beta^-$. Here $\beta = \beta^- + \beta^+$ is the decomposition of the bump function described in \S\ref{loc curv subsec}. The symbols $a_{\br,k,\ell}^+$ are relatively easy to analyse, and are dealt with using an argument similar to that of the $J=2$ case. Henceforth, we focus exclusively on the $a_{\br,k,\ell}$.\smallskip

We further decompose each $a_{\br,k,\ell}$ with respect to the distance of the $s$-variable to the root $\theta_2(\xi)$. Once again it is convenient to introduce a fine tuning constant $\rho > 0$. Similar to \eqref{akell dec}, define
\begin{equation}\label{N J3 s loc a}
    a_{\br,k,\ell, 0}(\xi; s) :=   a_{\br,k,\ell}(\xi; s) \eta \big(\rho 2^{\ell} |s-\theta_2(\xi)|\big)
       \qquad \textrm{for $0 \leq \ell \leq \floor{k/3}$.}
\end{equation}
Note, in contrast with  \eqref{akell dec}, we have not decomposed with respect to $|s - \theta_1^{\pm}(\xi)|$ for $\ell < \floor{k/3}$. Such a decomposition does appear later: here it is necessary to localise simultaneously with respect to \textit{both} roots $\theta_2(\xi)$ and $\theta_1^{\pm}(\xi)$.  
Also in contrast with the analysis of \S\ref{J=3 sec}, here it is not possible to reduce the problem to studying the $s$-localised pieces in \eqref{N J3 s loc a}. Consequently, we also consider the $s$-localisation of the symbol to the remaining dyadic shells, viz. \begin{equation*}
    a_{\br,k,\ell, m}(\xi;s):=   a_{\br,k,\ell}(\xi) \beta \big(\rho 2^{\ell - m}|s-\theta_2(\xi)|\big)
       \qquad \textrm{for $0 \leq \ell \leq \floor{k/3}$.}
\end{equation*}

The most difficult terms to estimate correspond to $0 \leq \ell < \floor{k/3}$ and $m = 0$. These symbols require a further decomposition. In particular, for $0 \leq \ell < \floor{k/3}$ let
\begin{equation*}
    b_{\br,k,\ell, m}(\xi;s) := 
    \begin{cases}
a_{\br,k,\ell, 0}(\xi;s) \eta\big(\rho^{-1} 2^{(k-\ell)/2}\displaystyle\min_{\pm}|s- \theta_1^{\pm}(\xi)|\big) & \textrm{if $m = 0$} \\[5pt]
a_{\br,k,\ell, 0}(\xi;s) \beta\big(\rho^{-1} 2^{(k-\ell)/2 - m }\displaystyle\min_{\pm}|s- \theta_1^{\pm}(\xi)|\big) & \textrm{if $1 \leq m < \floor{\frac{k-3\ell}{2}}$} 
\\[5pt]
a_{\br,k,\ell, 0}(\xi;s) \big(1-\eta\big(\rho^{-1} 2^{(k-\ell)/2 - m }\displaystyle\min_{\pm}|s- \theta_1^{\pm}(\xi)|\big)\big) & \textrm{if $m = \floor{\frac{k-3\ell}{2}}$} 
\end{cases}.
\end{equation*}
Observe that Lemma~\ref{root control lem} already implies that $|s - \theta_1^{\pm}(\xi)| \lesssim \rho^{-1} 2^{-\ell}$ for $(\xi; s) \in \supp a_{\br,k,\ell, 0}$. Thus, $\rho 2^{-\ell} \lesssim |s-\theta_1^{\pm}(\xi)| \lesssim \rho^{-1} 2^{-\ell}$ for $(\xi;s) \in \supp b_{\br,k,\ell,m}$ for $m=\floor{\frac{k-3\ell}{2}}$. 

Combining the above definitions and observations, the symbol may be written as
\begin{equation*}
       a_{\br,k} = \sum_{\ell = 0}^{\floor{k/3}} \sum_{m = 0}^{\ell} a_{\br,k,\ell, m} =  \sum_{(\ell,m)\in \Lambda_a(k)} a_{\br,k,\ell, m} + \sum_{(\ell, m) \in \Lambda_b(k)} b_{\br,k,\ell, m}
\end{equation*}
where
\begin{align*}
    \Lambda_a(k) &:= \big\{(\ell, m) \in \N_0^2 : 0 \leq \ell \leq \floor{\tfrac{k}{3}} \textrm{ and } 1 \leq m \leq \ell \big\} \cup \big\{\big(\floor{\tfrac{k}{3}}, 0\big) \big\}, \\
    \Lambda_b(k) &:= \big\{(\ell, m) \in \N_0^2 : 0 \leq \ell < \floor{\tfrac{k}{3}} \textrm{ and } 0 \leq m \leq \floor{\tfrac{k-3\ell}{2}} \big\}.
\end{align*}
Note that the range of $m$ in the definition of $\Lambda_a(k)$ is restricted since $a_{\br,k,\ell,m}$ is identically zero whenever $m > \ell$.\medskip

\noindent \textit{Kernel estimates}. The kernels are analysed using stationary phase techniques. 

\begin{lemma}\label{J=3 N ker lem} Let $k \in \N$ and $\iota \in \{0,1\}$.
\begin{enumerate}[a)]
    \item If $(\ell, m) \in \Lambda_a(k)$, then
      \begin{equation}\label{J=3 N ker a}
    \sup_{t' \in [1,2]} \int_1^2 |\mathcal{K}[\mathfrak{d}_s^{\iota} a_{\br,k,\ell, m}](t,t';\xi)| \,\ud t, \quad \sup_{t \in [1,2]} \int_1^2 |\mathcal{K}[\mathfrak{d}_s^{\iota} a_{\br,k,\ell, m}](t,t';\xi)| \,\ud t' \lesssim 2^{(k-2\ell + 2m)(2\iota - 1)}.
  \end{equation} 
    \item If $(\ell, m) \in \Lambda_b(k)$, then
      \begin{equation}\label{J=3 N ker b}
    \sup_{t' \in [1,2]} \int_1^2 |\mathcal{K}[\mathfrak{d}_s^{\iota} b_{\br,k,\ell, m}](t,t';\xi)| \,\ud t, \quad \sup_{t \in [1,2]} \int_1^2 |\mathcal{K}[\mathfrak{d}_s^{\iota} b_{\br,k,\ell, m}](t,t';\xi)| \,\ud t' \lesssim 2^{((k - \ell)/2 + m)(2\iota - 1)}.
  \end{equation} 
\end{enumerate}

\end{lemma}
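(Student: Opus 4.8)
\textbf{Proof strategy for Lemma~\ref{J=3 N ker lem}.}
The plan is to estimate the kernel $\mathcal{K}[\mathfrak{d}_s^\iota a](t,t';\xi)$ for each piece of the decomposition by non-stationary phase (repeated integration by parts in $s$, via Lemma~\ref{non-stationary lem}), after first identifying the correct lower bound on the $s$-derivative of the phase $\phi(s) := (t-t')\inn{\gamma(s)}{\xi}$ and the correct upper bounds on the $s$-derivatives of the amplitude. This mirrors the $J=2$ case treated above and, closer in spirit, the kernel analysis in Lemma~\ref{J=3 ker lem}; the main new feature is that for the symbols $b_{\br,k,\ell,m}$ one must exploit the localisation to \emph{both} roots $\theta_2(\xi)$ and $\theta_1^\pm(\xi)$ simultaneously.

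\textbf{Part a) (symbols $a_{\br,k,\ell,m}$).}
First I would record the size of the relevant quantities on $\supp a_{\br,k,\ell,m}$: the $\bu$-localisation in \eqref{J=3 akell def} gives $|u(\xi)|\lesssim 2^{k-2\ell}$, while the $s$-localisation to the dyadic shell gives $|s-\theta_2(\xi)|\sim \rho^{-1}2^{-\ell+m}$ (for $m\geq 1$) or $|s-\theta_2(\xi)|\lesssim \rho^{-1}2^{-\ell}$ (for $m=0$, $\ell=\floor{k/3}$). Taylor expansion around $\theta_2(\xi)$, exactly as in \eqref{J=3 curve loc 3}--\eqref{J=3 curve loc 4}, yields $\inn{\gamma'(s)}{\xi}=\bu(\xi)+\omega_1(\xi;s)(s-\theta_2(\xi))^2$ with $|\omega_1|\sim 2^k$, and for $m\geq 1$ (or on the tail $\ell=\floor{k/3}$, after choosing $\rho$ small) the quadratic term dominates, so $|\inn{\gamma'(s)}{\xi}|\sim 2^{k-2\ell+2m}$. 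Higher derivatives satisfy $|\inn{\gamma^{(N)}(s)}{\xi}|\lesssim 2^k$ for $N\geq 2$. The amplitude bounds come from the definition of $a_{\br}$ and the Frenet frame: the relation $|\inn{\gamma'(s)}{\xi}|\sim 2^{k-2\ell+2m}$ forces $2^{k-2\ell+2m}\lesssim r_1^{-1}$, and combining with $r_2,r_3\leq 2^{-k}$ and the chain/Leibniz rules (tracking derivatives of $\theta_2$ via Lemma~\ref{imp deriv lem} as in \eqref{J3 ker 2a}) one gets $|\partial_s^N a_{\br,k,\ell,m}(\xi;s)|\lesssim 2^{(\ell-m)N}$. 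Since $2^{k-2\ell+2m}\geq 2^{(\ell-m)}$ in the relevant range $m\leq\ell\leq\floor{k/3}$, this is a favourable ratio, and integrating by parts with $R:=2^{k-2\ell+2m}|t-t'|$ in Lemma~\ref{non-stationary lem} gives $|\mathcal{K}[a_{\br,k,\ell,m}]|\lesssim 2^{-(k-2\ell+2m)}\cdot 2^{k-2\ell+2m}\cdot\big(\text{$s$-measure}\big)\cdot(1+R)^{-N}$ — more precisely $|\mathcal{K}[\mathfrak{d}_s^\iota a_{\br,k,\ell,m}]|\lesssim 2^{2\iota(k-2\ell+2m)}2^{-\ell+m}(1+2^{k-2\ell+2m}|t-t'|)^{-N}$, where the extra $2^{2(k-2\ell+2m)}$ for $\iota=1$ is the square of the $t\inn{\gamma'(s)}{\xi}$ factor in $\mathfrak{d}_s$ from \eqref{N gen deriv}. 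Integrating in $t$ or $t'$ yields \eqref{J=3 N ker a}.

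\textbf{Part b) (symbols $b_{\br,k,\ell,m}$).}
Here $0\leq\ell<\floor{k/3}$, so $|\bu(\xi)|\sim 2^{k-2\ell}$ and, by Lemma~\ref{root control lem}, $|\theta_1^+(\xi)-\theta_1^-(\xi)|\sim|\theta_2(\xi)-\theta_1^\pm(\xi)|\sim 2^{-\ell}$ and $|v^\pm(\xi)|\sim 2^{k-\ell}$. The $b$-localisation imposes $\min_\pm|s-\theta_1^\pm(\xi)|\sim\rho^{-1}2^{-(k-\ell)/2+m}$ (for $1\leq m<\floor{\frac{k-3\ell}{2}}$, with the obvious one-sided versions at $m=0$ and $m=\floor{\frac{k-3\ell}{2}}$). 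Taylor expanding $\phi'$ around $\theta_1^-(\xi)$ (resp.\ $\theta_1^+(\xi)$) on the appropriate side exactly as in \eqref{theta1 lem 2}, and using that $\phi$ vanishes at $\theta_1^\pm(\xi)$, gives $|\inn{\gamma'(s)}{\xi}|\sim |v^\pm(\xi)|\min_\pm|s-\theta_1^\pm(\xi)|\sim 2^{k-\ell}\cdot 2^{-(k-\ell)/2+m}=2^{(k-\ell)/2+m}$ — this is the convexity/secant argument already used in \eqref{J=3 curve loc 2a}--\eqref{J=3 curve loc 3a}, and it works uniformly for all $m$ in the stated range (at $m=\floor{\frac{k-3\ell}{2}}$ one has instead $\min_\pm|s-\theta_1^\pm(\xi)|\gtrsim\rho^{-1}2^{-(k-\ell)/2+m}$, giving a lower bound of the same size). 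Higher derivatives again satisfy $|\inn{\gamma^{(N)}(s)}{\xi}|\lesssim 2^k$ for $N\geq 2$. The relation $|\inn{\gamma'(s)}{\xi}|\sim 2^{(k-\ell)/2+m}\lesssim r_1^{-1}$ together with $r_2,r_3\leq 2^{-k}$ and the chain rule bounds for $\theta_2(\xi)$ and $\theta_1^\pm(\xi)$ (the latter as in \eqref{J3 ker 2b}, \eqref{J3 ker 4a}--\eqref{J3 ker 4c}, using Lemma~\ref{imp deriv lem}) yields $|\partial_s^N b_{\br,k,\ell,m}(\xi;s)|\lesssim 2^{((k-\ell)/2-m)N}$, again a favourable ratio against $2^{(k-\ell)/2+m}$. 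Integration by parts via Lemma~\ref{non-stationary lem} with $R:=2^{(k-\ell)/2+m}\,2^{-((k-\ell)/2-m)}|t-t'|=2^{2m}\cdot 2^{-(k-3\ell)/2+\cdots}$... more cleanly, the gain per integration by parts is $(2^{(k-\ell)/2+m}/2^{((k-\ell)/2-m)})^{-1}\cdot(\text{phase decay})$; carrying this out gives $|\mathcal{K}[\mathfrak{d}_s^\iota b_{\br,k,\ell,m}]|\lesssim 2^{2\iota((k-\ell)/2+m)}\cdot 2^{-(k-\ell)/2+m}\cdot(1+2^{(k-3\ell)/2}2^{2m}|t-t'|)^{-N}$ (with the extra $2^{2((k-\ell)/2+m)}$ for $\iota=1$ again from the $t\inn{\gamma'(s)}{\xi}$ factor). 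Integrating in $t$ or $t'$ produces \eqref{J=3 N ker b}.

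\textbf{Main obstacle.}
The delicate point is part b) at the two extreme shells $m=0$ and $m=\floor{\frac{k-3\ell}{2}}$, where the localisation parameters in $b_{\br,k,\ell,m}$ interact with those already present in $a_{\br,k,\ell,0}$ (the $\theta_2$-shell). For $m=0$ one must check that the $\theta_1^\pm$ localisation does not conflict with $|s-\theta_2(\xi)|\lesssim\rho^{-1}2^{-\ell}$ — this is where the disjointness of the $\pm$ supports (as in the Remark following \eqref{akell dec}, using $|\theta_1^+(\xi)-\theta_1^-(\xi)|\gtrsim 2^{-\ell}\gg 2^{-(k-\ell)/2}$ for $\ell<\floor{k/3}$) is needed so that $\min_\pm$ is attained by a single well-defined branch, and one must verify that the quadratic Taylor term around the relevant $\theta_1^\pm$ genuinely dominates $\bu(\xi)$ there. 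For $m=\floor{\frac{k-3\ell}{2}}$ one works with the complementary cutoff and only has a lower bound on $\min_\pm|s-\theta_1^\pm(\xi)|$, so one must combine it with the \emph{upper} bound $|s-\theta_2(\xi)|\lesssim\rho^{-1}2^{-\ell}$ to pin $|\inn{\gamma'(s)}{\xi}|$ from below; choosing the fine-tuning constant $\rho>0$ small enough (and using $\ell<\floor{k/3}$, so that the scales $2^{-\ell}$, $2^{-(k-\ell)/2}$ and $2^{k-2\ell}$ are comparably separated) is what makes the argument go through. The amplitude derivative bound $|\partial_s^N b_{\br,k,\ell,m}|\lesssim 2^{((k-\ell)/2-m)N}$ also requires care because $b_{\br,k,\ell,m}$ depends on $s$ through $\min_\pm|s-\theta_1^\pm(\xi)|$, which is only piecewise smooth; one handles this by restricting to whichever branch is active on $\supp b_{\br,k,\ell,m}$ (legitimate by the disjointness just mentioned) and differentiating the smooth branch, with $\theta_1^\pm$-derivatives controlled by Lemma~\ref{imp deriv lem} as in the proof of Lemma~\ref{J=3 ker lem}.
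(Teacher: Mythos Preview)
Your overall strategy is the paper's: locate the phase derivative $|\inn{\gamma'(s)}{\xi}|$ via Taylor/convexity around $\theta_2$ (part a) or $\theta_1^\pm$ (part b), bound the $s$-derivatives of the amplitude, and apply Lemma~\ref{non-stationary lem}. Two points need correction, however.

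First, the integration-by-parts parameter $R$ is misstated in both parts. In part a) you take $R=2^{k-2\ell+2m}|t-t'|$, which is $|\phi'(s)|$; but the amplitude obeys $|\partial_s^N a_{\br,k,\ell,m}|\lesssim 2^{(\ell-m)N}$, so hypothesis iii) of Lemma~\ref{non-stationary lem} forces $R\lesssim |\phi'(s)|/2^{\ell-m}\sim 2^{k-3\ell+3m}|t-t'|$. Your stated kernel decay $(1+2^{k-2\ell+2m}|t-t'|)^{-N}$ is therefore unjustified (the correct exponent is $2^{k-3\ell+3m}$). The same slip appears in part b): the right parameter is simply $R=2^{2m}|t-t'|$, not $2^{(k-3\ell)/2+2m}|t-t'|$. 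Also, the endpoint $(\ell,m)=(\floor{k/3},0)$ in part~a) is \emph{not} covered by your argument: on the central ball $|s-\theta_2(\xi)|\lesssim\rho^{-1}2^{-\ell}$ the quadratic term need not dominate $u(\xi)$ and $\inn{\gamma'(s)}{\xi}$ can vanish, so no IBP is available; the paper handles this case by the trivial size estimate instead. (The remark about tracking $\nabla_\xi\theta_2$ via Lemma~\ref{imp deriv lem} is a red herring here---you are differentiating in $s$, not $\xi$.)

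Second, and more substantively, the endpoint $m=\floor{(k-3\ell)/2}$ in part~b) requires the hypothesis $r_2\le r_1^{1/2}r_3^{1/2}$, which you never invoke. At this shell one only has $|\inn{\gamma''(s)}{\xi}|\lesssim 2^{k-\ell}$ (an \emph{upper} bound, since $s$ can be near $\theta_2(\xi)$), so the support of $a_{\br}$ gives no direct lower bound on $r_2^{-1}$. The paper obtains $r_2\lesssim 2^{-(k-\ell)}$ instead from $r_1\lesssim 2^{-(k-2\ell)}$ (forced by $|\inn{\gamma'(s)}{\xi}|\sim 2^{k-2\ell}$) and $r_3\lesssim 2^{-k}$ via $r_2\le r_1^{1/2}r_3^{1/2}$; without this the amplitude derivative bound $|\partial_s^N b_{\br,k,\ell,m}|\lesssim 2^{\ell N}$ fails and the argument breaks down. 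Your ``main obstacle'' paragraph correctly flags this shell as delicate but proposes only adjusting $\rho$, which is not enough.
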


\begin{proof} The argument is similar to that used to prove Lemma~\ref{J=3 s loc lem}. \medskip

\noindent a) Let $(\ell, m) \in \Lambda_a(k)$. If $(\ell,m)=(\floor{k/3},0)$, then the localisation of the $a_{\br,k,\ell, m}$ symbols ensures that
\begin{equation}\label{J=3 Nik 1}
     |\bu(\xi)| \lesssim 2^{k/3} \quad \text{ and } \quad |s-\theta_2(\xi)| \lesssim  \rho^{-1}2^{-k/3}  \quad \textrm{for all $(\xi;s) \in \supp a_{\br, k,\ell, m}$.}
\end{equation}
The bound \eqref{J=3 N ker a} for $\iota=0$ follows immediately from the size of the $s$-support of $a_{\br,k,\ell,m}$. For $\iota=1$, apply the familiar Taylor expansion to write 
\begin{equation}\label{J=3 Nik 2}
\begin{split}
    \inn{\gamma'(s)}{\xi} &= u(\xi) + \omega_1(\xi; s) \, (s - \theta_2(\xi))^2, \\
    \inn{\gamma''(s)}{\xi} &= \omega_2(\xi; s) \, (s - \theta_2(\xi))
\end{split}
\end{equation}
where $|\omega_j(\xi;s)| \sim 2^k$ on $\supp a_{\br, k, \ell, m}$ for $j=1$, $2$. Consequently, by directly applying \eqref{J=3 Nik 1}, we have the upper bounds
\begin{equation}\label{J=3 Nik 3}
    | \inn{\gamma'(s)}{\xi}|   \lesssim  \rho^{-2} 2^{k/3}, \quad
    | \inn{\gamma''(s)}{\xi}|   \lesssim \rho^{-1} 2^{2k/3}, 
     \qquad \textrm{for all $(\xi;s) \in \supp a_{\br,k,\ell,m}$.} 
\end{equation}
Note that the relations $r_2 \leq r_1 \leq r_2^{1/2}$ and $r_3 \leq r_2 \leq r_1^{1/2}r_3^{1/2}$ imply, in particular, $r_1 \leq r_3^{1/3} \lesssim 2^{-k/3}$ and $r_2 \leq r_3^{2/3} \lesssim 2^{-2k/3}$. It then follows from the definitions of $a_{\br}$ and of the Frenet frame $\{\be_j(r)\}_{j=1}^3$ that
\begin{equation}\label{J=3 Nik 4}
    |\partial_s a_{\br,k,\ell, m}(\xi;s)| \lesssim 2^{k/3}.
\end{equation}
In view of the definition of $\fd_s$  in \eqref{N gen deriv}, the first bound in \eqref{J=3 Nik 3} and \eqref{J=3 Nik 4} immediately imply that $|\fd_s a_{\br, k, \ell, m}(\xi;s)| \lesssim 2^{k/3}$, and the bound for $\iota=1$ now follows immediately from the size of the $s$-support of $a_{\br,k,\ell,m}$ and the definition of $\mathcal{K}$ in \eqref{N gen univar}.

Now suppose $0 \leq \ell \leq \floor{k/3}$ and $1 \leq m \leq \ell$. Then the localisation of the $a_{\br,k,\ell, m}$ symbols ensures that
\begin{equation}\label{J=3 N ker 1 a}
     |\bu(\xi)| \lesssim 2^{k - 2\ell} \quad \text{ and } \quad |s-\theta_2(\xi)| \sim  \rho^{-1}2^{-\ell + m}  \quad \textrm{for all $(\xi;s) \in \supp a_{\br,k,\ell, m}$.}
\end{equation}
%
Provided $\rho$ is chosen sufficiently small, by directly applying \eqref{J=3 N ker 1 a} in \eqref{J=3 Nik 2}, we have the bounds
\begin{equation}\label{J=3 N ker 2 a}
    | \inn{\gamma'(s)}{\xi}|   \sim  \rho^{-2} 2^{k - 2\ell + 2m}, \quad
    | \inn{\gamma''(s)}{\xi}|   \sim \rho^{-1} 2^{k - \ell + m}, \quad
    | \inn{\gamma^{(N)}(s)}{\xi}|   \lesssim_N   2^k 
     \qquad \textrm{for $N \geq 3$.} 
\end{equation}
By the definition of $a_{\br}$, the first and second bounds above immediately imply $2^{k-2\ell+2m} \leq r_1^{-1}$ and $2^{k-\ell+m}\leq r_2^{-1}$, whilst $2^k \leq r_3^{-1}$. Thus, by the definition of the Frenet frame $\{\be_j(s)\}_{j=1}^3$ and the bounds \eqref{J=3 N ker 2 a}, the symbol satisfies
\begin{equation}\label{J=3 N ker 3 a}
    |\partial_s^N a_{\br, k,\ell, m}(\xi;s)| \lesssim 2^{(\ell - m)N} = 2^{-(k-3\ell + 3m)N} 2^{(k - 2\ell + 2m)N} \qquad \textrm{for all $N \in \N_0$.}
\end{equation}
Thus, we may bound the kernel via repeated integration-by-parts. In particular, applying Lemma~\ref{non-stationary lem} with $\phi(s):=(t-t')\inn{\gamma(s)}{\xi}$ and $R:=2^{k-3\ell+3m}|t-t'|$, we deduce that 
\begin{equation*}
    |\mathcal{K}[\mathfrak{d}_s^{\iota} a_{\br,k,\ell,m}](\xi; t, t')| \lesssim_N 2^{2(k - 2\ell + 2m)\iota} 2^{-\ell + m} \big(1 + 2^{k-3\ell + 3m}|t - t'| \big)^{-N}.
\end{equation*}
The additional $2^{2(k - 2\ell + 2m)\iota}$ arises in the bound for the derived operator $\fd_s$ owing to the formula \eqref{N gen deriv} for the corresponding symbol (and in particular, due to the bounds in \eqref{J=3 N ker 2 a} and in \eqref{J=3 N ker 3 a} and the relation $0 \leq \ell - m \leq \ell \leq \floor{k/3}$) and the form of the kernel $\mathcal{K}$ as described in \eqref{N gen ker}. Finally, by integrating both sides of the above display in either $t$ or $t'$, the desired estimate \eqref{J=3 N ker a} follows.\medskip
 
 \noindent b) Let $(\ell, m) \in \Lambda_b(k)$. If $m =0$, then the localisation of the $b_{\br,k,\ell, m}$ symbols ensures that
\begin{equation}\label{J=3 Nik 1 b}
     |\bu(\xi)| \sim 2^{k-2\ell} \quad \text{ and } \quad \min_{\pm}|s-\theta_1^{\pm}(\xi)| \lesssim  \rho 2^{-(k-\ell)/2}  \quad \textrm{for all $(\xi;s) \in \supp b_{\br,k,\ell, m}$.}
\end{equation}
The bound \eqref{J=3 N ker b} for $\iota=0$ follows immediately from the size of the $s$-support of $b_{\br,k,\ell,m}$. For $\iota=1$, apply the familiar Taylor expansion to write 
\begin{equation}\label{J=3 Nik 2 b}
    \begin{split}
    \inn{\gamma'(s)}{\xi} &= v^{\pm}(\xi)\, (s - \theta_1^{\pm}(\xi)) + \omega_1^{\pm}(\xi; s) \, (s - \theta_1^{\pm}(\xi))^2, \\
    \inn{\gamma''(s)}{\xi} &= v^{\pm}(\xi) +  \omega_2^{\pm}(\xi; s) \, (s - \theta_1^{\pm}(\xi))
\end{split}
\end{equation}
where $|\omega_j^{\pm}(\xi;s)| \sim 2^k$ on $\supp b_{\br, k, \ell, m}$ for $j=1$, $2$. 
Consequently, in view of Lemma~\ref{root control lem} and \eqref{J=3 Nik 1 b}, and provided $\rho>0$ is chosen sufficiently small, we have the bounds,
\begin{equation}\label{J=3 Nik 4 b}
    | \inn{\gamma'(s)}{\xi}| \lesssim \rho 2^{(k - \ell)/2}, \quad   | \inn{\gamma''(s)}{\xi}| \sim  2^{k - \ell} \qquad \textrm{for all $(\xi;s) \in \supp b_{\br,k,\ell,m}$,} 
\end{equation}
using the relation $0 \leq \ell \leq \floor{k/3}$.

By the definition of $a_{\br}$, the second bound above implies $r_2 \lesssim 2^{-(k-\ell)}$ and therefore $r_1 \leq r_2^{1/2} \lesssim 2^{-(k-\ell)/2}$, whilst $r_3 \lesssim 2^{-k}$. Thus, by the definition of the Frenet frame $\{\be_j(s)\}_{j=1}^3$ and the bounds \eqref{J=3 Nik 4 b}, the symbol satisfies
\begin{equation}\label{J=3 Nik 5 b}
    |\partial_s b_{\br,k,\ell,m}(\xi;s)| \lesssim_N 2^{(k-\ell)/2},
\end{equation}
using the relation $0 \leq \ell < \floor{k/3}$.
In view of the definition of $\fd_s$ in \eqref{N gen deriv}, the first bound in \eqref{J=3 Nik 4 b} and \eqref{J=3 Nik 5 b} immediately implies that $|\fd_s b_{\br,k,\ell,m}(\xi;s)|\lesssim 2^{(k-\ell)/2}$, and the bound for $\iota=1$ now follows immediately from the size of the $s$-support of $b_{\br,k,\ell,m}$ and the definition of $\mathcal{K}$ in \eqref{N gen ker}.
\smallskip

Now suppose $0 < m < \floor{\frac{k-3\ell}{2}}$. Then the localisation of the $b_{\br, k,\ell, m}$ symbols ensures that
\begin{equation}\label{J=3 N 1 b}
     |\bu(\xi)| \sim 2^{k-2\ell} \quad \text{ and } \quad \min_{\pm}|s-\theta_1^{\pm}(\xi)| \sim  \rho 2^{-(k-\ell)/2 + m}  \quad \textrm{for all $(\xi;s) \in \supp b_{\br,k,\ell, m}$.}
\end{equation}
Using the convexity argument from the proof of Lemma~\ref{J=3 s loc lem}, we may bound 
\begin{equation}\label{J=3 Nik 3 b}
    |\inn{\gamma'(s)}{\xi}| \geq \min_{\pm} \frac{|u(\xi)| |s-\theta_1^{\pm}(\xi)|} {|\theta_2(\xi)- \theta_1^{\pm}(\xi)|} \qquad \textrm{for all $(\xi;s) \in \supp b_{\br,k,\ell, m}$.}
\end{equation}
Consequently, using Lemma~\ref{root control lem} and \eqref{J=3 N 1 b} in \eqref{J=3 Nik 2 b} and \eqref{J=3 Nik 3 b}, and provided $\rho>0$ is chosen sufficiently small,
\begin{equation}\label{J=3 N 2 b}
    | \inn{\gamma'(s)}{\xi}| \sim \rho 2^{(k - \ell)/2 + m}, \quad   | \inn{\gamma''(s)}{\xi}| \sim  2^{k - \ell} \quad \textrm{and} \quad |\inn{\gamma^{(N)}(s)}{\xi}|  \lesssim_N   2^k \qquad \textrm{for all $N \geq 3$.} 
\end{equation}
For the upper bound in the first derivative in the above display, we use the restriction $m \leq \floor{\tfrac{k-3\ell}{2}}$. It is for this reason that we simultaneously localise with respect to \textit{both} $\theta_2(\xi)$ and $\theta_1^{\pm}(\xi)$. 
In particular,
\begin{equation*}
    |\inn{\gamma^{(N)}(s)}{\xi}|\lesssim 2^k \sim 2^{k-((k-\ell)/2+m)N}  |\inn{\gamma'(s)}{\xi}|^N \lesssim 2^{-2m(N-1)}   |\inn{\gamma'(s)}{\xi}|^N \quad \text{ for all $N \geq 3$,}
\end{equation*}
where in the last inequality one uses the restriction $m \leq \floor{\frac{k-3\ell}{2}}$ and the fact $N \geq 3$.

By the definition of $a_{\br}$, the first and second bounds in \eqref{J=3 N 2 b} imply $r_1 \leq 2^{-(k-\ell)/2 - m}$ and $r_2 \leq 2^{-(k-\ell)}$, whilst $r_3 \leq 2^{-k}$. Thus, by the definition of the Frenet frame $\{\be_j(s)\}_{j=1}^3$ and the bounds \eqref{J=3 N 2 b}, the symbol satisfies
\begin{equation}\label{J=3 N 3 b}
    |\partial_s^N b_{\br,k,\ell, m}(\xi;s)| \lesssim 2^{((k-\ell)/2 - m)N} = 2^{-2mN} 2^{((k-\ell)/2 + m)N} \qquad \textrm{for all $N \in \N_0$,}
\end{equation}
using the restriction $m\leq \floor{\frac{k-3\ell}{2}}$. 
Thus, we may bound the kernel via repeated integration-by-parts. In particular, applying Lemma~\ref{non-stationary lem} with $\phi(s):=(t-t')\inn{\gamma(s)}{\xi}$ and  $R := 2^{2m}|t-t'|$, we deduce that
\begin{equation*}
    |\mathcal{K}[\mathfrak{d}_s^{\iota} b_{\br,k,\ell,m}](\xi; t, t')| \lesssim_N 2^{(k - \ell + 2m)\iota} 2^{-(k-\ell)/2 + m} \big(1 + 2^{2m}|t - t'| \big)^{-N}.
\end{equation*}

The additional $2^{(k - \ell + 2m)\iota}$ arises in the bound for the derived operator $\fd_s$ owing to the formula \eqref{N gen deriv} for the corresponding symbol (and in particular, due to the bounds in \eqref{J=3 N 2 b} and in \eqref{J=3 N 3 b}) and the form of the kernel $\mathcal{K}$ as described in \eqref{N gen ker}. Finally, by integrating both sides of the above display in either $t$ or $t'$, the desired estimate \eqref{J=3 N ker a} follows.\smallskip
 
 Finally, consider the case $m=\floor{\frac{k-3\ell}{2}}$. Then the localisation of the $b_{\br,k,\ell, m}$ symbols ensures that 
\begin{equation}\label{J=3 Nikodym 1 b}
     |\bu(\xi)| \sim_{\rho} 2^{k-2\ell} \quad \text{ and } \quad \min_{\pm}|s-\theta_1^{\pm}(\xi)| \sim_{\rho}   2^{-\ell}  \quad \textrm{for all $(\xi;s) \in \supp b_{\br,k,\ell, m}$.}
\end{equation}
Using Lemma~\ref{root control lem} and \eqref{J=3 Nikodym 1 b} in \eqref{J=3 Nik 2 b} and \eqref{J=3 Nik 3 b}, we have the bounds
\begin{equation*}
    | \inn{\gamma'(s)}{\xi}| \sim 2^{k - 2\ell}, \quad   | \inn{\gamma''(s)}{\xi}| \lesssim  2^{k - \ell}, \quad \textrm{and} \quad |\inn{\gamma^{(N)}(s)}{\xi}|  \lesssim_N   2^k \qquad \textrm{for all $N \geq 3$.} 
\end{equation*}
By the definition of $a_{\br}$, the first bound above implies $r_1 \lesssim_{\rho} 2^{-(k-2\ell)}$ and, as $r_3 \lesssim 2^{-k}$, one has $r_2 \leq r_1^{1/2}r_3^{1/2} \lesssim_{\rho} 2^{k-\ell}$. Thus, by the definition of the Frenet frame $\{\be_j(s)\}_{j=1}^3$ and the bounds \eqref{J=3 N 2 b}, the symbol satisfies
\begin{equation*}
    |\partial_s^N b_{\br,k,\ell, m}(\xi;s)| \lesssim 2^{\ell N} = 2^{-(k-3\ell)N} 2^{(k-2\ell)N} \qquad \textrm{for all $N \in \N_0$.}
\end{equation*}
Thus, we may bound the kernel via repeated integration-by-parts. In particular, applying Lemma~\ref{non-stationary lem} with $\phi(s):=(t-t')\inn{\gamma(s)}{\xi}$ and  $R := 2^{k-3\ell}|t-t'|$, we deduce that
\begin{equation*}
    |\mathcal{K}[\mathfrak{d}_s^{\iota} b_{\br,k,\ell,m}](\xi; t, t')| \lesssim_N 2^{2(k - 2\ell)\iota} 2^{-\ell} \big(1 + 2^{k-3\ell}|t - t'| \big)^{-N}.
\end{equation*}
The additional $2^{2(k - 2\ell)\iota}$ arises in the bound for the derived operator $\fd_s$ owing to the formula \eqref{N gen deriv} for the corresponding symbol (and in particular, due to the bounds in \eqref{J=3 N 2 b} and in \eqref{J=3 N 3 b} and the restriction $ \ell \leq \floor{k/3}$) and the form of the kernel $\mathcal{K}$ as described in \eqref{N gen ker}. Finally, by integrating both sides of the above display in either $t$ or $t'$, the desired estimate \eqref{J=3 N ker a} follows
\medskip
\end{proof}

\noindent \textit{Putting everything together}. In view of the kernel estimates from Lemma~\ref{J=3 N ker lem} and the discussion at the beginning of the proof, it follows that
\begin{align*}
   \| \mathcal{A}[\mathfrak{d}_s^{\iota} a_{\br,k,\ell,m}]g \|_{L^2(\R^4) \to L^2(\R^{3+1})}  &\lesssim 2^{(k-2\ell + 2m)(\iota - 1/2)} \quad \textrm{for all $(\ell, m) \in \Lambda_a(k)$}, \\
   \| \mathcal{A}[\mathfrak{d}_s^{\iota} b_{\br,k,\ell,m}]g \|_{L^2(\R^4) \to L^2(\R^{3+1})}  &\lesssim 2^{((k - \ell)/2 + m)(\iota - 1/2)} \quad \textrm{for all $(\ell, m) \in \Lambda_b (k)$},
\end{align*}
for $\iota \in \{0,1\}$. Combining these bounds with \eqref{gen Sobolev}, it follows that
\begin{align*}
   \| \mathcal{N}[a_{\br,k,\ell,m}]g \|_{L^2(\R^4) \to L^2(\R^3)}  &\lesssim 1 \quad \textrm{for all $(\ell, m) \in \Lambda_a(k)$}, \\\
   \| \mathcal{N}[b_{\br,k,\ell,m}]g \|_{L^2(\R^4) \to L^2(\R^3)}  &\lesssim 1 \quad \textrm{for all $(\ell, m) \in \Lambda_b (k)$}.
\end{align*}
Since the cardinalities of $\Lambda_a(k)$ and $\Lambda_b(k)$ are $O(k^2)$, the frequency localised maximal bound \eqref{N freq loc est} immediately follows from the triangle inequality. Summing over $k$ then concludes the proof of the proposition. 
\end{proof}




\section{Necessary conditions}\label{nec cond sec}

In this final section we show the condition $p > 3$ in Theorem~\ref{intro max thm} is necessary. Moreover, we prove the following result, which is valid in arbitrary dimensions $n \geq 2$.

\begin{proposition} If $n \geq 2$ and $\gamma \colon I \to \R^n$ is a smooth non-degenerate curve, then
\begin{equation*}
    \|M_{\gamma}\|_{L^p(\R^n) \to L^p(\R^n)} = \infty \qquad \textrm{for $1 \leq p \leq n$.}
\end{equation*}
\end{proposition}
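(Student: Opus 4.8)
The plan is to construct an explicit Knapp-type counterexample. Since $\gamma$ is non-degenerate, after an affine change of variables and localisation we may assume $\gamma(0) = 0$, $\gamma^{(j)}(0) = \vec{e}_j$ for $1 \le j \le n$, so that near $s = 0$ the curve resembles the moment curve $\gamma_{\circ}(s) = (s, s^2/2, \dots, s^n/n!)$. The key geometric observation is that for a small parameter $\delta > 0$, the arc $\{\gamma(s) : |s| \le \delta\}$ is contained in a box $R_\delta$ of dimensions roughly $\delta \times \delta^2 \times \cdots \times \delta^n$, hence of volume $\sim \delta^{n(n+1)/2}$. Take $f = f_\delta := \mathbbm{1}_{R_\delta}$. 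Then $\|f_\delta\|_{L^p(\R^n)}^p \sim \delta^{n(n+1)/2}$.

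\textbf{The lower bound for $M_\gamma f_\delta$.} First I would show that $M_\gamma f_\delta(x) \gtrsim \delta$ on a large set. Fix a point $x$ with $|x| \le c$ for a small absolute constant $c > 0$. Choose $t = t(x) \sim |x|$ appropriately so that the dilated arc $t\gamma([-\delta,\delta])$ passes within distance $O(\delta^2 |x|)$ — which is $\ll$ the relevant dimension of $R_\delta$ once $\delta$ is small — of the point $x$; more precisely, since $t\gamma'(0) = t\vec{e}_1$, for the correct choice of $t$ the translate $x - t\gamma(s)$ stays inside (a fixed dilate of) $R_\delta$ for \emph{all} $s$ in a subinterval of $[-\delta, \delta]$ of length $\gtrsim \delta$. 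For such $x$ and $t$ one has $A_t f_\delta(x) = \int f_\delta(x - t\gamma(s))\chi(s)\,\ud s \gtrsim \delta$, and therefore $M_\gamma f_\delta(x) \gtrsim \delta$. This estimate is valid for $x$ ranging over a set $E_\delta$ of measure $\gtrsim 1$ (a fixed ball of radius $\sim c$, independent of $\delta$), since for each such $x$ one can solve for an admissible $t$ — this is where the non-degeneracy, ensuring $\gamma'$ does not vanish, is used.

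\textbf{Conclusion.} Combining the two estimates, if $M_\gamma$ were bounded on $L^p(\R^n)$ we would get
\begin{equation*}
    \delta \cdot |E_\delta|^{1/p} \lesssim \|M_\gamma f_\delta\|_{L^p(\R^n)} \lesssim \|f_\delta\|_{L^p(\R^n)} \sim \delta^{n(n+1)/(2p)},
\end{equation*}
and since $|E_\delta| \gtrsim 1$ this forces $\delta \lesssim \delta^{n(n+1)/(2p)}$ uniformly as $\delta \to 0$, i.e. $1 \ge n(n+1)/(2p)$, i.e. $p \ge n(n+1)/2$. This already rules out $p$ small. To get the sharp threshold $p \le n$, I would instead refine the lower bound: rather than using that $M_\gamma f_\delta \gtrsim \delta$ on a ball of radius $\sim 1$, one shows $M_\gamma f_\delta(x) \gtrsim \delta \cdot (\delta/|x|)^{?}$-type decay is \emph{not} lossy in the right directions — more efficiently, one tests against the focusing example where the tube $\bigcup_{|s|\le\delta} t\gamma(s)$ for $t$ in a unit range sweeps out a set of measure $\sim \delta$ (an anisotropic neighbourhood of the arc scaled by $t$), giving $M_\gamma f_\delta(x) \gtrsim \delta$ on a set of measure $\gtrsim \delta^{?}$. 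The cleanest route: let $f_\delta = \mathbbm{1}_{R_\delta}$ as above but observe that for $x$ in the set $\widetilde{E}_\delta := \{x : \mathrm{dist}(x, t\gamma([-\delta,\delta])) \le \delta^n \text{ for some } t \in [1,2]\}$, which has measure $\sim \delta^{n}$ (the arc $\gamma([-\delta,\delta])$ has length $\sim\delta$, and its $\delta^n$-neighbourhood swept over $t\in[1,2]$ has measure $\sim \delta \cdot \delta^{n-1} = \delta^n$ after accounting for the transversal spreading), one has $A_t f_\delta(x) \gtrsim \delta$ for the appropriate $t$. Then boundedness gives $\delta \cdot \delta^{n/p} \lesssim \delta^{n(n+1)/(2p)}$, which as $\delta \to 0$ forces $1 + n/p \ge n(n+1)/(2p)$; this is not quite sharp either, so the honest optimal construction (the one matching $p = n$) uses $f_\delta = \mathbbm{1}_{B(0,\delta)}$, the indicator of a \emph{ball} of radius $\delta$: then $\|f_\delta\|_p^p \sim \delta^n$, while for every $x$ with $|x| \le 1$ there is $t \sim |x|$ with $A_t f_\delta(x) \gtrsim \delta$ (the sphere-like set $\{t\gamma(s)\}$ meets $B(x,\delta)$ in an arc of length $\gtrsim \delta$ only when $x$ is $\delta$-close to the dilated curve, giving $A_tf_\delta(x)\sim\delta^{?}$) — I expect the main obstacle to be pinning down exactly this last measure computation, i.e. verifying that the set where $M_\gamma f_\delta \gtrsim \delta^{1/p'}\!\cdot\!\delta^{?}$ has the measure needed to produce precisely the exponent $n$, which amounts to the standard observation that a single average $A_t$ of the indicator of a $\delta$-ball is $\gtrsim \delta$ on a $\delta$-neighbourhood of the dilated curve $t\gamma(I)$, a set of measure $\gtrsim \delta^{n-1}$, and then letting $t$ range over $[1,2]$ this union still has measure $\gtrsim \delta^{n-1}$ (not $\delta^{n-2}$, because of the transversality of $\partial_t(t\gamma(s)) = \gamma(s)$ to the curve), yielding $\delta \cdot \delta^{(n-1)/p} \lesssim \delta^{n/p}$, hence $p \le n$. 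The careful bookkeeping of these $\delta$-powers, using the non-degeneracy to guarantee the transversality and the arc-length lower bounds, is the only substantive point; everything else is routine.
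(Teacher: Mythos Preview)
Your proposal does not contain a working proof; each of the three attempted constructions either rests on an incorrect claim or yields a trivial inequality.

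\textbf{The anisotropic box.} With your normalisation $\gamma(0)=0$, the claim that $M_\gamma f_\delta \gtrsim \delta$ on a set of measure $\gtrsim 1$ is false. For $x - t\gamma(s) \in R_\delta$ with $|s|\lesssim\delta$ and any $t>0$, each coordinate satisfies $|x_j - t\gamma_j(s)|\lesssim\delta^j$; since $|t\gamma_j(s)|\sim t|s|^j\lesssim t\delta^j$, this forces $|x_j|\lesssim(1+t)\delta^j$. Thus $x$ itself must lie in (a dilate of) $R_\delta$, and the favourable set has measure $\sim\delta^{n(n+1)/2}$, not $\gtrsim 1$. The normalisation $\gamma(0)=0$ kills the mechanism by which dilation in $t$ moves the arc: $t\gamma(0)=0$ for every $t$.

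\textbf{The ball.} Your final computation gives $\delta\cdot\delta^{(n-1)/p}\lesssim\delta^{n/p}$, i.e.\ $1+(n-1)/p\ge n/p$, which is $p\ge 1$ --- vacuous, not $p\le n$.

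\textbf{What the paper does.} The paper retains $\gamma(0)\neq 0$ (specifically $\langle\gamma(0),\vec e_n\rangle\neq 0$), then projects to a $2$-dimensional problem: one replaces $M_\gamma$ by a planar maximal operator along $s\mapsto(s+a_1,h(s)+a_2)$ with $a_2\neq 0$ and $h$ vanishing to order $n$ at $0$. The test function is a box $K(r)$ of dimensions $r\times r^n$ centred at $(a_1,a_2)$. Because $a_2\neq 0$, choosing $t_x:=a_2^{-1}x_2-1\in[\lambda,2\lambda]$ moves the arc to pass through $K(r)$, and a change of variables $s\mapsto t_x^{-1}s$ shows $\mathcal M_h f_r(x)\gtrsim\lambda^{-1/n}r$ on a strip $E_\lambda(r)$ of measure $\sim r\lambda$. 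The crucial point --- entirely absent from your proposal --- is that summing $(\lambda^{-1/n}r)^n\cdot|E_\lambda(r)|\sim r^{n+1}$ over $O(|\log r|)$ dyadic scales $r^n\le\lambda\le 1$ produces $\|\mathcal M_h f_r\|_{L^n}^n\gtrsim|\log r|\,r^{n+1}$, whereas $\|f_r\|_{L^n}^n\sim r^{n+1}$. The endpoint $p=n$ fails only by this logarithm; no single-scale Knapp example detects it.
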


\begin{proof} By localisation of the operator and applying the rescaling from \S\ref{curve sym sec}, it suffices to consider the case where
\begin{equation*}
    \gamma(\,\cdot\,) - \gamma(0) \in \mathfrak{G}_n(\delta_0) \qquad \textrm{and} \qquad \inn{\gamma(0)}{\vec{e}_n} \neq 0
\end{equation*}
for $\delta_0 := 10^{-n}$, say. By reparametrising the curve, we may also assume that the first component of $\gamma \colon [-1,1] \to \R^n$ is of the form $\gamma_1(s) = s + a_1$ for some $a_1 \in \R$. 

By a simple projection argument, it suffices to study the boundedness of a maximal operator defined over the Euclidean plane. In particular, fix $a = (a_1, a_2) \in \R^2$ with $a_2 \neq 0$ and a smooth function $h \colon [-1,1] \to \R$ satisfying 
\begin{equation}\label{nec prop 1}
    h^{(j)}(0) = 0 \quad \textrm{for $0 \leq j \leq n-1$} \quad \textrm{and} \quad h^{(n)}(0) \neq 0. 
\end{equation}
Define the maximal operator
\begin{equation*}
    \mathcal{M}_h f(x) = \sup_{t > 0} \Big| \int_{\R} f\big(x_1 - t(s + a_1), x_2 - t(h(s) + a_2) \big)\chi(s)\ud s\Big| 
\end{equation*}
where $\chi \in C^{\infty}_c(\R)$ is non-negative, satisfies $\chi(s) = 1$ for $|s| \leq 1/2$ and has support contained in the interior of $[-1,1]$. To prove the proposition, it suffices to show
\begin{equation}\label{nec prop 2}
      \|\mathcal{M}_h\|_{L^p(\R^2) \to L^p(\R^2)} = \infty \qquad \textrm{for $1 \leq p \leq n$.}
\end{equation}
Furthermore, since the maximal operator is trivially bounded on $L^{\infty}$ it suffices to consider the case $p=n$ only. 

By Taylor expansion and \eqref{nec prop 1}, we have
\begin{equation*}
    |h(s)| \leq D_h \cdot |s|^n \qquad \textrm{for $|s| \leq 1$} \quad \textrm{where} \quad D_h :=  \frac{1}{n!} \sup_{|s| \leq 1} |h^{(n)}(s)|.
\end{equation*}
For $0 < r < 1$ let $f_r := \bbone_{K(r)}$ denote the indicator function of the set
\begin{equation*}
    K(r) := \big\{ y = (y_1, y_2) \in \R^2 : |y_1 - a_1| \leq r \textrm{ and } |y_2 - a_2| \leq D_h \cdot  r^n \big\}
\end{equation*}
and observe that
\begin{equation}\label{nec prop 3}
    \|f_r\|_{L^n(\R^2)} \sim_h r^{(n+1)/n}.
\end{equation}
Now let $\delta^n \leq \lambda \leq 1$ be a dyadic number and suppose $x \in E_{\lambda}(r)$ where
\begin{equation*}
    E_{\lambda}(r) := \Big\{x = (x_1, x_2) \in \R^2 : \Big|x_1 - \frac{a_1}{a_2} \, x_2\Big| \leq \frac{r}{2} \textrm{ and } \lambda \leq \frac{x_2}{a_2} - 1 < 2\lambda \Big\}.
\end{equation*}
If we define $t_x := a_2^{-1}x_2 - 1 \in [\lambda,2\lambda]$, then for any $s \in \R$ satisfying $|s| \leq \frac{1}{2} \cdot \lambda^{(n-1)/n} r$ we have 
\begin{align*}
    |x_1 - t_x(t_x^{-1}s +  a_1) - a_1| &\leq \Big|x_1 - \frac{a_1}{a_2} \, x_2\Big| + |s| \leq r, \\
    |x_2 - t_x(h(t_x^{-1} s) + a_2) - a_2| &= |t_x||h(t_x^{-1} s)| \leq D_h \cdot \lambda^{-(n-1)}|s|^n \leq D_h \cdot r^n.
\end{align*}
From these observations, we conclude that
\begin{equation*}
  \textrm{if $x \in E_{\lambda}(r)$ and $|s| \leq \frac{1}{2} \, \lambda^{(n-1)/n} r$, then }  \big(x_1 - t_x(t_x^{-1}s + a_1), x_2 - t_x(h(t_x^{-1}s) + a_2)\big) \in K(r).
\end{equation*}
 Performing a change of variable in the underlying averaging operator, we deduce that \begin{equation*}
 \mathcal{M}_h f_r(x) \gtrsim \lambda^{-1/n} r \qquad \textrm{for all $x \in E_{\lambda}(r)$,} \end{equation*}
 where here we pick up an extra factor of $\lambda^{-1}$ owing to the Jacobian. Consequently,
\begin{equation}\label{nec prop 4}
    \|\mathcal{M}_h f_r\|_{L^n(\R^2)} \gtrsim  \Big(\sum_{\substack{\lambda \,:\, \mathrm{dyadic}\ \\ r^n \leq \lambda \leq 1}} \lambda^{-1}r^n |E_{\lambda}(r)|\Big)^{1/n} \sim_a |\log r|^{1/n} r^{(n + 1)/n}.
\end{equation}
Comparing \eqref{nec prop 3} and \eqref{nec prop 4}, we see that the ratio of $\|\mathcal{M}_h f_r\|_{L^n(\R^2)}$ and $\|f_r\|_{L^n(\R^2)}$ is unbounded in $r$ and therefore \eqref{nec prop 2} holds for $p=n$, as desired. 
\end{proof}




\appendix




\section{An abstract broad/narrow decomposition}\label{BG appendix}

Here we provide an abstract version of the broad/narrow decomposition in Lemma \ref{broad narrow lemma E}. For the sake of self-containedness of this appendix, we recall some of the definitions introduced in \S\ref{multilinear subsec}.

Let $\fI$ denote the collection of all dyadic subintervals of $[-1,1]$ and for any dyadic number $0 < r \leq 1$ let $\fI(r)$ denote the subset of $\fI$ consisting of all intervals of length $r$. Let $\mathfrak{I}_{\geq r}$ denote the union of the $\mathfrak{I}(\lambda)$ over all dyadic $\lambda$ satisfying $r \leq \lambda \leq 1$. Given any pair of dyadic scales $0 < \lambda_1 \leq \lambda_2 \leq 1$ and $J \in \fI(\lambda_2)$, let $\fI(J;\,\lambda_1)$ denote the collection of all $I \in \fI(\lambda_1)$ which satisfy $I \subseteq J$.

For $d \in \N$ and each dyadic number $0 < \lambda \leq 1$ let $\fI^d_{\mathrm{sep}}(\lambda)$ denote the collection of $d$-tuples of intervals $\vec{I} = (I_1, \dots, I_d) \in \fI(\lambda)^d$ which satisfy the separation condition
\begin{equation*}
    \mathrm{dist}(I_1, \dots, I_d) := \min_{1 \leq \ell_1 < \ell_2 \leq d} \mathrm{dist}(I_{\ell_1}, I_{\ell_2}) \geq \lambda.
\end{equation*}
Given dyadic scales $0 < \lambda_1 \leq \lambda_2 \leq 1$ and $J \in \fI(\lambda_2)$, let $\fI^d_{\mathrm{sep}}(J;\,\lambda_1)$ denote the collection of all $d$-tuples of intervals $\vec{I} = (I_1, \dots, I_d) \in \fI^d_{\mathrm{sep}}(\lambda_1)$ which satisfy $I_\ell \subseteq J$ for all $1 \leq \ell \leq d$.

The dyadic decomposition from \eqref{rsq dyadic dec} is one instance of an `abstract' notion of dyadic decomposition, introduced in the following definition.

\begin{definition} Let $(X,\mu)$ be a measure space and $F \colon X \to \C$ a measurable function and $0 < r \leq 1$. A sequence $(F_I)_{I \in \fI_{\geq r}}$ of measurable functions $F_I \colon X \to \C$ is said to be a \textit{dyadic decomposition of $F$ up to scale $r$} if it satisfies
\begin{equation*}
F_{[0,1]} = F \qquad \textrm{and} \qquad F_{J} = \sum_{I \in \fI(J; \lambda_1)} F_I \qquad \textrm{for all $J \in \fI(\lambda_2)$}
\end{equation*}
whenever $0 < r \leq \lambda_1 \leq \lambda_2 \leq 1$ are dyadic. Here the identities are understood to hold $\mu$ almost everywhere.
\end{definition}

The broad/narrow decomposition result from which Lemma \ref{broad narrow lemma E} follows is the following.   

\begin{lemma}\label{broad narrow lemma} Let $(X,\mu)$ be a measure space, $k \in \N$ with $k \geq 2$ and $\varepsilon > 0$. For all $r > 0$ there exist dyadic numbers $r_{\mathrm{n}}$ and $r_{\mathrm{b}}$ satisfying
\begin{equation}\label{broad narrow equation 1}
   r < r_{\mathrm{n}} \lesssim_{\varepsilon, k} r, \qquad r < r_{\mathrm{b}} \leq 1
\end{equation} 
such that the following holds. If $F \in L^p(X)$ for some $1 \leq p < \infty$ and $(F_I)_{I \in \fI}$ is a dyadic decomposition of $F$ up to scale $r$, then
\begin{equation}\label{broad narrow equation 2}
    \|F\|_{L^p(X)} \lesssim_{\varepsilon, k} r^{-\varepsilon} \Big( \sum_{I \in \fI(r_{\mathrm{n}})}\|F_I\|_{L^p(X)}^p\Big)^{1/p} + r^{-\varepsilon} \Big(\sum_{\substack{J \in \fI(Cr_{\mathrm{b}}) \\ \vec{I} \in \fI^k_{\mathrm{sep}}(J;\,r_{\mathrm{b}})} }\big\|\prod_{j=1}^{k}|F_{I_{j}}|^{1/k}\big\|_{L^p(X)}^{p} \Big)^{1/p},
\end{equation}
where $C = C_{\varepsilon, k} \geq 1$ is a dyadic number depending only on $\varepsilon$ and $k$. 
\end{lemma}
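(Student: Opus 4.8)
The plan is to prove Lemma~\ref{broad narrow lemma} by the Bourgain--Guth broad/narrow iteration in its $L^p$ incarnation, following the scheme of \cite{Ham2014}. First I would fix a large dyadic parameter $K = K_{\varepsilon,k} \geq 2$, to be pinned down at the end, and let $N$ be the least positive integer with $K^{-N} \leq r$; I then set $r_{\mathrm{n}} := K^{-(N-1)}$, which by minimality of $N$ satisfies $r < r_{\mathrm{n}} \leq Kr = K_{\varepsilon,k} r$, so that the requirement on $r_{\mathrm{n}}$ in \eqref{broad narrow equation 1} holds. (One may assume $r < K^{-1}$, i.e. $N \geq 2$; the range $r \in [K^{-1},1)$ of the lemma is trivial since then $r_{\mathrm n}\sim_{\varepsilon,k}1$ and the narrow term alone dominates, with the nonnegative broad term added at any dyadic $r_{\mathrm b}\in(r,1)$.) The dyadic constant $C=K$ and the broad scale $r_{\mathrm b}$ will emerge from the argument.

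The key input is a \emph{single-scale dichotomy}. Let $\lambda$ be dyadic with $\lambda/K \geq r$, let $J \in \fI(\lambda)$, so that $F_J = \sum_{I \in \fI(J;\lambda/K)} F_I$ is a sum over the $K$ children of $J$ at scale $\lambda/K$. Fixing $x \in X$, let $H = H(J,x) \subseteq \fI(J;\lambda/K)$ consist of those children with $|F_I(x)| > (2K)^{-1}|F_J(x)|$. If $H$ contains a tuple $\vec I = (I_1,\dots,I_k) \in \fI^k_{\mathrm{sep}}(J;\lambda/K)$, then the arithmetic--geometric mean inequality gives $|F_J(x)| \leq 2K\prod_{\ell=1}^k |F_{I_\ell}(x)|^{1/k}$. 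Otherwise a maximal pairwise $\lambda/K$-separated subset of $H$ has at most $k-1$ elements, each member of $H$ lies within $\lambda/K$ of one of these (else it could be adjoined), and a trivial interval count yields $\#H \leq 3(k-1)$; discarding the light children (whose total contribution is at most $\tfrac12|F_J(x)|$) gives $|F_J(x)| \leq 6(k-1)\max_{I}|F_I(x)|$. In either case, crudely bounding the maximum and the minimum,
\[
    |F_J(x)| \leq 6(k-1)\Bigl(\sum_{I \in \fI(J;\lambda/K)}|F_I(x)|^p\Bigr)^{1/p} + 2K\Bigl(\sum_{\vec I \in \fI^k_{\mathrm{sep}}(J;\lambda/K)}\prod_{\ell=1}^k|F_{I_\ell}(x)|^{p/k}\Bigr)^{1/p}.
\]
Raising to the $p$th power, integrating over $X$ and summing over $J \in \fI(\lambda)$ produces the recursion $A_\lambda \leq c_k^p\,A_{\lambda/K} + (4K)^p\,B_\lambda$, where $A_\mu := \sum_{J \in \fI(\mu)}\|F_J\|_{L^p(X)}^p$, the constant $c_k := 12(k-1)$ is independent of $p$, and $B_\lambda := \sum_{J \in \fI(\lambda)}\sum_{\vec I \in \fI^k_{\mathrm{sep}}(J;\lambda/K)}\bigl\|\prod_{\ell=1}^k|F_{I_\ell}|^{1/k}\bigr\|_{L^p(X)}^p$.

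Iterating this recursion $N-1$ times from $A_{\mathrm{top}} = \|F\|_{L^p(X)}^p$ down to scale $r_{\mathrm n}=K^{-(N-1)}$ yields
\[
    \|F\|_{L^p(X)}^p \leq c_k^{(N-1)p}\,A_{r_{\mathrm n}} + (4K)^p\sum_{j=1}^{N-1}c_k^{(j-1)p}\,B_{K^{-(j-1)}} \leq (N-1)(4K)^p c_k^{(N-1)p}\Bigl(A_{r_{\mathrm n}} + \max_{1\leq j\leq N-1}B_{K^{-(j-1)}}\Bigr).
\]
Choosing $j_0$ to attain the maximum and setting $r_{\mathrm b}:=K^{-j_0}$, one has $r < r_{\mathrm n} \leq r_{\mathrm b} \leq K^{-1} < 1$ and $Cr_{\mathrm b}=K^{-(j_0-1)}\leq 1$, and $B_{K^{-(j_0-1)}}$ is exactly the broad expression appearing in \eqref{broad narrow equation 2} with $C:=K$. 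Taking $p$th roots turns the $p$th powers of the constants into first powers with no residual $p$-dependence, so it remains to absorb $(N-1)\cdot 4K\cdot c_k^{N-1}$ into $r^{-\varepsilon}$. Since $N-1 < \log_K(1/r)=\log(1/r)/\log K$ we get $c_k^{N-1} < r^{-\log c_k/\log K}$; fixing the dyadic $K=K_{\varepsilon,k}$ with $\log K \geq 2\varepsilon^{-1}\log c_k$ forces $c_k^{N-1}\leq r^{-\varepsilon/2}$, while $(N-1)\cdot 4K \lesssim_{\varepsilon,k}\log(1/r)\lesssim_\varepsilon r^{-\varepsilon/2}$. This gives \eqref{broad narrow equation 2}, with $C=K$ and $r_{\mathrm n}=K^{-(N-1)}$ produced uniformly.

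The argument is conceptually routine, and the one genuinely delicate point is the \emph{bookkeeping of constants} through the iteration: the narrow-transfer constant $c_k$ is raised only to the power $N-1\approx\log_K(1/r)$, which is exactly why enlarging $K$ (depending on $\varepsilon$ and $k$) tames $c_k^{N-1}$ to $r^{-\varepsilon}$, whereas the broad constant $4K$ — which does depend on the now-fixed $K$ — enters only to a bounded power and is harmless. Beyond this, one should verify the easy combinatorial claim $\#H\leq 3(k-1)$ in the narrow case, and note that all sums are finite (each $\fI(\mu)$ with $\mu\geq r$ is finite), so the inequality is vacuous unless $F_I\in L^p(X)$ for all $I\in\fI_{\geq r}$, which may be assumed. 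Finally, a point of interpretation worth recording: $r_{\mathrm n}$ and $C=K$ are uniform, but the broad scale $r_{\mathrm b}$ is selected by pigeonholing and therefore depends on the decomposition $(F_I)$ — this matches the way Lemma~\ref{broad narrow lemma E} is used in the proof of Theorem~\ref{reverse sf thm}, where the decomposition is fixed before the narrow and broad cases are separated.
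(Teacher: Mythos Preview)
Your proof is correct and takes a genuinely simpler route than the paper's. The paper invokes the full Ham--Lee lemma (Lemma~\ref{Ham--Lee lemma}), which at each step uses $k-1$ distinct auxiliary scales $\ell_1 > \cdots > \ell_{k-1}$ and builds the $k$-linear broad term by a $(k-1)$-step ladder from $m$-linear to $(m+1)$-linear expressions $\pi_J^m(F)$; the subsequent iteration is then indexed by words in a $(k-1)$-letter alphabet, with careful bookkeeping of the constants $\ell_{j-1}^{-2(j-1)}$ at each letter. You bypass this entirely with a single parameter $K$ and the direct combinatorial observation that if the set $H$ of heavy children contains no $k$-separated tuple then $\#H \leq 3(k-1)$, so the narrow transfer constant is just $6(k-1)$, independent of $K$. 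This collapses the recursion to the one-line geometric inequality $A_\lambda \leq c_k^{\,p} A_{\lambda/K} + (4K)^p B_\lambda$, and the choice of $K$ with $\log K \gtrsim \varepsilon^{-1}\log c_k$ then absorbs $c_k^{N-1}$ into $r^{-\varepsilon}$ exactly as you say. The paper's multi-scale machinery would matter if the narrow constants at different multilinearity levels had to be tuned separately (as in some extension-operator applications), but for the abstract lemma as stated your argument is sufficient and considerably cleaner. One incidental gain: your $r_{\mathrm n} = K^{-(N-1)}$ is produced uniformly in $F$, whereas the paper pigeonholes for both $r_{\mathrm n}$ and $r_{\mathrm b}$; in either proof $r_{\mathrm b}$ depends on the decomposition $(F_I)$, as you correctly flag at the end.
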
 

The intervals $I \in \fI(r_{\mathrm{n}})$ are referred to as \textit{narrow intervals} whilst the $k$-tuples of intervals $\vec{I} \in \fI^k_{\mathrm{sep}}(I;\,r_{\mathrm{b}})$ are referred to as \textit{broad interval tuples}.

The key ingredient in the proof of Lemma~\ref{broad narrow lemma} is a 1-parameter variant of the Bourgain--Guth decomposition from \cite{Bourgain2011} due to Ham--Lee \cite{Ham2014}.

\begin{lemma}[Ham--Lee \cite{Ham2014}]\label{Ham--Lee lemma} Let $1 \leq p < \infty$ and $k \in \N$ with $k \geq 2$. Suppose $0 <\ell_1, \dots,  \ell_{k-1} \leq 1$ are dyadic numbers such that
\begin{equation*}
1 =: \ell_0 \geq \ell_1 \geq \dots \geq \ell_{k-1} > 0.
\end{equation*}
If $(X,\mu)$ is a measure space and $(F_I)_{I \in \fI}$ is a dyadic decomposition of $F \in L^p(X)$, then for any $\ell>0$,
\begin{align*}
\Big(\sum_{J \in \fI(\ell)} \|F_J\|_{L^p(X)}^p\Big)^{1/p} &\leq 4 \sum_{i=1}^{k-1} \ell_{i-1}^{-2(i-1)} \Big(\sum_{I \in \fI(\ell_i \ell)}\|F_I\|_{L^p(X)}^p\Big)^{1/p} \\
& \quad + \ell_{k-1}^{-2(k-1)} \Big(\sum_{\substack{J \in \fI(\ell) \\ \vec{I} \in \fI^k_{\mathrm{sep}}(J;\ell_{k-1} \ell)}} \big\|\prod_{j=1}^{k}|F_{I_j}|^{1/k}\big\|_{L^p(X)}^p\Big)^{1/p}.
\end{align*}
\end{lemma}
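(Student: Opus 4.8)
The plan is to prove Lemma~\ref{Ham--Lee lemma} via the pointwise Bourgain--Guth dichotomy of \cite{Bourgain2011}, in the one-parameter formulation of \cite{Ham2014}.

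\emph{Step 1: reduction to a pointwise estimate.} I would first reduce to proving that, for $\mu$-almost every $x \in X$ and every $J \in \fI(\ell)$,
\begin{equation*}
  |F_J(x)| \leq 4\sum_{i=1}^{k-1}\ell_{i-1}^{-2(i-1)}\,\max_{I\in\fI(J;\,\ell_i\ell)}|F_I(x)| \;+\; \ell_{k-1}^{-2(k-1)}\,\max_{\vec{I}\in\fI^k_{\mathrm{sep}}(J;\,\ell_{k-1}\ell)}\Big(\prod_{j=1}^{k}|F_{I_j}(x)|\Big)^{1/k},
\end{equation*}
where the identity $F_J = \sum_{I\in\fI(J;\,\ell_i\ell)}F_I$ from the definition of a dyadic decomposition is what gives meaning to the right-hand side. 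Granting this, one raises both sides to the $p$-th power, sums over $J\in\fI(\ell)$, integrates in $x$, and applies Minkowski's inequality in $\ell^p(\fI(\ell))$ to separate the $k-1$ ``narrow'' terms and the ``broad'' term. For each fixed $i$, since every $I\in\fI(\ell_i\ell)$ is contained in a unique $J\in\fI(\ell)$ one has $\sum_{J\in\fI(\ell)}\max_{I\in\fI(J;\,\ell_i\ell)}|F_I(x)|^p\leq\sum_{I\in\fI(\ell_i\ell)}|F_I(x)|^p$; integrating and taking $p$-th roots yields precisely the $i$-th term of the lemma, and the broad term is treated identically.

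\emph{Step 2: the pointwise dichotomy.} To establish the displayed pointwise bound, fix $x$ and $J$, and run a $(k-1)$-fold iteration. Starting from $J$, at each step one refines the current dyadic interval to the next relative scale $\ell_i$ and inspects, pointwise at $x$, the sub-intervals carrying a comparable share of the current value: \emph{either} several of them are well separated --- in which case one extracts $k$ mutually $\ell_{k-1}\ell$-separated intervals on which each $|F_{I_j}(x)|$ is comparable to the current value, and bounds the current value by the geometric mean $(\prod_j|F_{I_j}(x)|)^{1/k}$ using the inequality between the minimum and the geometric mean (the ``broad'' alternative, which terminates the iteration) --- \emph{or} the comparable sub-intervals are confined to a single coarser sub-interval, which becomes the current interval at the next step (the ``narrow'' alternative). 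If the narrow alternative occurs at all $k-1$ steps, one arrives at a single interval of the finest scale $\ell_{k-1}\ell$, and a further application of the same dichotomy is forced into a $k$-fold product at scale $\ell_{k-1}\ell$. Multiplying out the sub-interval counts discarded at the narrow steps (a factor $\ell_{i-1}/\ell_i$ relative to the previous scale at step $i$) produces, at the broad step $i$, a coefficient controlled by a fixed power of $\prod_{j=1}^{i-1}\ell_j^{-1}$, which is $\leq\ell_{i-1}^{-2(i-1)}$ since $1=\ell_0\geq\ell_1\geq\cdots\geq\ell_{k-1}$; the terminal product term is bounded in the same way, and the absolute constants are absorbed into the $4$ and the $1$.

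\emph{Main obstacle.} The delicate point is the precise calibration of the per-step dichotomy: the separation scale of the extracted $k$-tuples must be uniformly the finest scale $\ell_{k-1}\ell$ even when the broad alternative first triggers at a coarse step $i$, which requires testing transversality at scale $\ell_{k-1}\ell$ while only recording concentration at the coarser scale $\ell_i\ell$; and the thresholds defining ``comparable'' must be chosen in terms of the local sub-interval counts so that the extracted pieces genuinely dominate the current value up to absolute constants. Once the iteration is organised so that these multiplicities telescope to at most $\ell_{i-1}^{-2(i-1)}$, everything else --- the min/geometric-mean step, the counting, and the Minkowski--Fubini manipulations of Step~1 --- is routine. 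As this decomposition is exactly that of \cite{Ham2014}, the cleanest option in practice is to invoke it directly, but the above is the scheme underlying it.
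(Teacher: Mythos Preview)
Your Step~1 is correct and matches the paper exactly: the lemma is reduced to the pointwise bound
\[
|F_J(x)|\le 4\sum_{m=1}^{k-1}\ell_{m-1}^{-2(m-1)}\max_{I\in\fI(J;\ell_m\ell)}|F_I(x)|+\ell_{k-1}^{-2(k-1)}\max_{\vec I\in\fI^k_{\mathrm{sep}}(J;\ell_{k-1}\ell)}\prod_{j=1}^k|F_{I_j}(x)|^{1/k},
\]
and the passage to $L^p$ via Minkowski and the trivial bound $\max\le\sum$ is exactly as in the paper.

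Your Step~2, however, does not describe the mechanism that actually produces this bound, and as written it would not yield the $k-1$ separate narrow terms. In your scheme ``narrow'' confines to a \emph{single} sub-interval and continues, while ``broad'' terminates with a $k$-fold product; iterating narrow $k-1$ times would then land on a single interval at scale $\ell_{k-1}\ell$ and give only one narrow term, not one at each intermediate scale. The paper's iteration runs the other way. One sets
\[
\pi^m_J(F)(x):=\max_{\vec I\in\fI^m_{\mathrm{sep}}(J;\ell_{m-1}\ell)}\prod_{j=1}^m|F_{I_j}(x)|^{1/m},
\]
so $\pi^1_J=|F_J|$, and proves the single-step claim
\[
\pi^m_J(F)(x)\le 4\max_{I\in\fI(J;\ell_m\ell)}|F_I(x)|+\ell_m^{-2}\,\pi^{m+1}_J(F)(x).
\]
Here the ``narrow'' alternative \emph{terminates} (it produces the $m$-th narrow term at scale $\ell_m\ell$), while the ``broad'' alternative \emph{continues} by upgrading the $m$-fold product at scale $\ell_{m-1}\ell$ to an $(m+1)$-fold product at scale $\ell_m\ell$: inside one of the $m$ intervals, a second sub-interval at scale $\ell_m\ell$ is found with comparable value, and it is automatically $\ell_m\ell$-separated from the other $m$ chosen sub-intervals since their parents were already $\ell_{m-1}\ell$-separated. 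Iterating from $m=1$ to $m=k-1$ and using $\prod_{j=1}^{m-1}\ell_j^{-2}\le\ell_{m-1}^{-2(m-1)}$ gives exactly the pointwise bound above.

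In particular, the ``main obstacle'' you flag---extracting a $k$-tuple at the finest scale $\ell_{k-1}\ell$ from a coarse step---is an artefact of your inverted scheme and does not arise: the product degree and the scale move together, one step at a time.
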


Rather than working in the relatively abstract setting of dyadic decompositions of measurable functions, Ham--Lee \cite[ Lemma 2.8]{Ham2014} apply the decomposition only in the concrete setting of Fourier extension operators associated to space curves. However, the proof is elementary, relying little on the exact form of the extension operator, and can easily be adapted to yield Lemma~\ref{Ham--Lee lemma}. For completeness, the details are presented at the end of the section. 

Lemma~\ref{broad narrow lemma} is deduced by applying Lemma~\ref{Ham--Lee lemma} iteratively, for appropriately chosen dyadic scales $\ell_1, \dots, \ell_{k-1}$.

\begin{proof}[Proof of Lemma~\ref{broad narrow lemma}] Fix $\varepsilon > 0$ and $k \in \N$ with $N \geq 2$. Define the dyadic scales $1 \geq \ell_1 \geq  \cdots \geq  \ell_{k-1} > 0$ recursively so as to satisfy
\begin{equation*}
  \frac{\log \big(4 \vee (k-1)\big)}{\log \ell_1^{-1}} \leq \frac{\varepsilon}{6}, \qquad  \frac{\log \ell_{j-1}^{-2(j-1)}}{\log \ell_j^{-1}} \leq \frac{\varepsilon}{6} \quad \textrm{for $2 \leq j \leq k-1$.}
\end{equation*}
Now fix $r > 0$, $F \in L^p(X)$ and $(F_I)_{I \in \fI}$ a dyadic decomposition of $F$. If $r \gtrsim_{\varepsilon, k} 1$, then the desired result immediately follows from the triangle inequality and so $r$ may be assumed to be smaller than some small constant $c_{\varepsilon, k}$, depending only on $\varepsilon$ and $k$ and chosen for the purposes of the forthcoming argument; in particular we can assume $\ell_{k-1} > r$.

Let $\mathcal{W}$ denote the set of all finite words formed from the alphabet $\{1, \dots, k-1\}$. Given any $w \in \mathcal{W}$ and $1 \leq j \leq k-1$ write $[w]_j$ for the number of occurrences of $j$ in $w$ and $|w| := [w]_1 + \cdots + [w]_{k-1}$ for the length of the word.

 Let $\ell^{w} := \prod_{j=1}^{k-1} \ell_j^{[w]_j}$ for any $w \in \mathcal{W}$ and define 
\begin{equation*}
    \mathcal{A}(r) := \big\{ \alpha \in \mathcal{W} : r < \ell^{\alpha} \leq r/\ell_{k-1} \big\}, \quad \mathcal{B}(r) := \{ \beta \in \mathcal{W} : \ell^{\beta} > r/\ell_{k-1} \}.
\end{equation*}
Finally, for each $N \in \N_0$ define
\begin{gather*}
     \mathcal{A}_{\leq N}(r) := \big\{ \alpha \in \mathcal{A}(r): |\alpha| \leq N \big\}, \quad \mathcal{B}_{\leq N}(r) := \{ \beta \in \mathcal{B}(r) : |\beta| \leq N \}, \\
      \mathcal{A}_N(r) := \{ \alpha \in \mathcal{A}(r) : |\alpha| = N \}, \qquad \mathcal{B}_N(r) := \{ \beta \in \mathcal{B}(r) : |\beta| = N \}.
\end{gather*}

An iterative application of Lemma~\ref{Ham--Lee lemma} yields the following key claim.

\begin{claim} For all $N \in \N_0$,
\begin{align}\label{iteration claim}
\|F\|_{L^p(X)} &\leq \sum_{\alpha \in \mathcal{A}_{\leq N}(r) \cup \mathcal{B}_N(r)} M_{\varepsilon, k}^{\alpha} \Big(\sum_{I \in \fI(\ell^{\alpha})}\|F_I\|_{L^p(X)}^p\Big)^{1/p}\\ 
\nonumber
 & + \ell_{k-1}^{-2(k-1)}\sum_{\beta \in \mathcal{B}_{\leq N-1}(r)} M_{\varepsilon, k}^{\beta} \Big(\sum_{\substack{J \in \fI(\ell^{\beta}) \\\vec{I} \in \fI^k_{\mathrm{sep}}(J;\ell_{k-1} \ell^{\beta}) }}\big\|\prod_{j=1}^{k}|F_{I_j}|^{1/k}\big\|_{L^p(X)}^p\Big)^{1/p}.
\end{align}
where $M_{\varepsilon, k}^{\alpha} := 4^{|\alpha|} \prod_{j=1}^{k-1} \ell_{j-1}^{-2(j-1)[\alpha]_j}$.
\end{claim}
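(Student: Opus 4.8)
The plan is to prove the Claim by induction on $N \in \N_0$, where the inductive step is a single application of Lemma~\ref{Ham--Lee lemma} to each ``boundary'' term of the previous estimate. For the base case $N=0$ I would use the standing hypothesis $\ell_{k-1} > r$: the empty word $\varnothing$ has $\ell^{\varnothing} = 1 > r/\ell_{k-1}$, so $\varnothing \in \mathcal{B}_0(r)$ while $\varnothing \notin \mathcal{A}(r)$; hence $\mathcal{A}_{\le 0}(r)$ is empty, $\mathcal{B}_0(r) = \{\varnothing\}$, and $\mathcal{B}_{\le -1}(r)$ is empty. Since $M_{\varepsilon,k}^{\varnothing} = 1$ and $\ell^{\varnothing} = 1$, the right-hand side of \eqref{iteration claim} collapses to $\big(\sum_{I \in \fI(1)}\|F_I\|_{L^p(X)}^p\big)^{1/p} = \|F\|_{L^p(X)}$ (using $\fI(1) = \{[0,1]\}$ and $F_{[0,1]} = F$), so the inequality holds with equality.

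For the inductive step, assume \eqref{iteration claim} at level $N$. Its first sum runs over $\mathcal{A}_{\le N}(r) \cup \mathcal{B}_N(r)$; I would keep the terms indexed by $\alpha \in \mathcal{A}_{\le N}(r)$ untouched and refine those indexed by $\beta \in \mathcal{B}_N(r)$. For such a $\beta$, the factor $\big(\sum_{I \in \fI(\ell^\beta)}\|F_I\|_{L^p}^p\big)^{1/p}$ is exactly the left-hand side of Lemma~\ref{Ham--Lee lemma} at scale $\ell = \ell^\beta$. Applying that lemma is legitimate: each scale $\ell_i \ell^\beta$ equals $\ell^{\beta i}$, where $\beta i$ is $\beta$ with the letter $i$ appended, and $\ell^{\beta i} = \ell_i \ell^\beta > \ell_{k-1}\ell^\beta > r$ because $\ell_i \ge \ell_{k-1}$ and $\ell^\beta > r/\ell_{k-1}$; similarly the broad scale $\ell_{k-1}\ell^\beta$ exceeds $r$, so all scales stay above the resolution of the dyadic decomposition. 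The lemma then bounds the $\beta$-term by $k-1$ ``narrow'' quantities at the scales $\ell^{\beta i}$ plus one broad quantity of the required shape at scale $\ell^\beta$; the coefficient of the $\beta i$ term is $M_{\varepsilon,k}^\beta \cdot 4\ell_{i-1}^{-2(i-1)}$, and since $[\beta i]_j = [\beta]_j + \delta_{ij}$ and $|\beta i| = |\beta| + 1$ one checks directly from the definition of $M^{\cdot}_{\varepsilon,k}$ that $M_{\varepsilon,k}^\beta \cdot 4\ell_{i-1}^{-2(i-1)} = M_{\varepsilon,k}^{\beta i}$, while the new broad term carries the coefficient $\ell_{k-1}^{-2(k-1)}M_{\varepsilon,k}^\beta$.

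It then remains to reassemble the index sets. Each $\beta i$ with $\beta \in \mathcal{B}_N(r)$ and $1 \le i \le k-1$ has length $N+1$ and $\ell^{\beta i} > r$, hence lies in $\mathcal{A}_{N+1}(r) \cup \mathcal{B}_{N+1}(r)$, and the map $(\beta,i) \mapsto \beta i$ is injective. Therefore the totality of $\beta i$-terms is a sub-sum of $\sum_{\gamma \in \mathcal{A}_{N+1}(r) \cup \mathcal{B}_{N+1}(r)} M_{\varepsilon,k}^\gamma \big(\sum_{I \in \fI(\ell^\gamma)}\|F_I\|_{L^p}^p\big)^{1/p}$, and since all summands are non-negative we may enlarge to the full set at no cost. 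Adjoining the retained $\mathcal{A}_{\le N}(r)$-terms and using $\mathcal{A}_{\le N}(r) \sqcup \mathcal{A}_{N+1}(r) = \mathcal{A}_{\le N+1}(r)$ (these being disjoint, of lengths $\le N$ and $N+1$) produces exactly the first sum of \eqref{iteration claim} at level $N+1$. For the broad part, the newly created broad terms are indexed by $\mathcal{B}_N(r)$ and those inherited from level $N$ by $\mathcal{B}_{\le N-1}(r)$; since $\mathcal{B}_N(r) \sqcup \mathcal{B}_{\le N-1}(r) = \mathcal{B}_{\le N}(r)$ and each broad quantity depends only on $\ell^\beta$, their combination (with prefactor $\ell_{k-1}^{-2(k-1)}$) is precisely the broad term at level $N+1$. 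This closes the induction.

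I do not expect a genuine obstacle here; the one point demanding care is the bookkeeping of the three families $\mathcal{A}_{\le N}(r)$, $\mathcal{B}_N(r)$, $\mathcal{B}_{\le N-1}(r)$ — specifically, recognising that the words of $\mathcal{B}_N(r)$ sit in the \emph{narrow} part of the level-$N$ estimate and are therefore the ones to be further decomposed, that replacing the actually-produced children by all of $\mathcal{A}_{N+1}(r) \cup \mathcal{B}_{N+1}(r)$ is harmless by positivity (and is exactly what makes \eqref{iteration claim} uniform in the dyadic decomposition $(F_I)$), and that the hypothesis $\ell_{k-1} > r$ underpins both the base case and the inequality $\ell^{\beta i} > r$ that keeps every scale in play $\geq r$.
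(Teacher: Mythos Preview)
Your proposal is correct and follows the same inductive scheme as the paper: induct on $N$, leave the $\mathcal{A}_{\le N}(r)$ terms alone, apply Lemma~\ref{Ham--Lee lemma} to each $\beta \in \mathcal{B}_N(r)$, check $M_{\varepsilon,k}^{\beta}\cdot 4\ell_{i-1}^{-2(i-1)} = M_{\varepsilon,k}^{\beta i}$, and reassemble. Your treatment is in fact slightly more careful than the paper's on one point: the paper asserts that $\mathcal{A}_{N+1}(r)\cup\mathcal{B}_{N+1}(r)$ \emph{precisely} coincides with the children $\{\beta i : \beta \in \mathcal{B}_N(r),\,1\le i\le k-1\}$, whereas you only claim the children lie inside that set and then enlarge by positivity --- which is the correct logical step, since a word $\gamma \in \mathcal{A}_{N+1}(r)$ could in principle have its length-$N$ prefix in $\mathcal{A}_N(r)$ rather than $\mathcal{B}_N(r)$.
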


\begin{proof}[Proof (of Claim)] The proof is by induction on $N$. The case $N = 0$ is vacuous and thus one may assume, by way of induction hypothesis, that \eqref{iteration claim} holds for some $N \geq 0$. It remains to establish the inductive step.

Consider the terms on the right-hand side of \eqref{iteration claim} of the form 
\begin{equation*}
\Big(\sum_{I \in \fI(\ell^{\beta})}\|F_I\|_{L^p(X)}^p\Big)^{1/p} \qquad \textrm{for $\beta \in \mathcal{B}_N(r)$.}    
\end{equation*}
 Applying Lemma~\ref{Ham--Lee lemma} to each of these terms,
\begin{align*}
\|F\|_{L^p(X)} &\leq \sum_{\alpha \in \mathcal{A}_{\leq N}(r)} M_{\varepsilon, k}^{\alpha} \Big(\sum_{I \in \fI(\ell^{\alpha})}\|F_I\|_{L^p(X)}^p\Big)^{1/p} \\
& + \sum_{\beta \in \mathcal{B}_{N}(r)}  M_{\varepsilon, k}^{\beta} 4\sum_{i=1}^{k-1} \ell_{i-1}^{-2(i-1)} \Big(\sum_{I \in \fI(\ell_i \ell^{\beta})}\|F_I\|_{L^p(X)}^p\Big)^{1/p}\\
& + \ell_{k-1}^{-2(k-1)}\sum_{\beta \in \mathcal{B}_{\leq N}(r)} M_{\varepsilon, k}^{\beta} \Big(\sum_{\substack{J \in \fI(\ell^{\beta}) \\ \vec{I} \in \fI^k_{\mathrm{sep}}(J;\ell_{k-1} \ell^{\beta})} }\big\|\prod_{j=1}^{k}|F_{I_j}|^{1/k}\big\|_{L^p(X)}^p \Big)^{1/p}.
\end{align*}
From the definitions, 
\begin{equation*}
    \mathcal{A}_{\leq N+1}(r) \cup \mathcal{B}_{N+1}(r) = \mathcal{A}_{\leq N}(r) \cup \mathcal{A}_{N+1}(r) \cup \mathcal{B}_{N+1}(r),
\end{equation*}
where the union is disjoint. Furthermore, the set $\mathcal{A}_{N+1}(r) \cup \mathcal{B}_{N+1}(r)$ precisely corresponds to the set of words obtained by adding a single letter to one of the words in $\mathcal{B}_{ N}(r)$. Combining these observations, the induction readily closes.
\end{proof}

 Using the claim, the proof of Lemma~\ref{broad narrow lemma} quickly follows from the choice of scales $\ell_j$. Indeed, first observe that for $N := \max_{\alpha \in \mathcal{A}(r)} |\alpha|$ it follows that $\mathcal{B}_N(r) = \emptyset$ and thus
\begin{equation*}
    \mathcal{A}_{\leq N}(r) \cup \mathcal{B}_N(r) = \mathcal{A}(r) \qquad \textrm{and} \qquad \mathcal{B}_{\leq N-1}(r) = \mathcal{B}(r).  
\end{equation*}
Note that each $w \in \mathcal{A}(r) \cup \mathcal{B}(r)$ satisfies $(\ell^{w})^{-1} < r^{-1}$ and therefore
\begin{equation}\label{how many steps?}
   |w| \log \ell_1^{-1} \leq \sum_{j=1}^{k-1} [w]_j \log \ell_j^{-1}  \leq \log r^{-1}. 
\end{equation}
By the choice of $\ell_1$, it follows that 
\begin{equation}\label{4 to w}
    4^{|w|} \leq 4^{\log r^{-1}/\log \ell_1^{-1}} = r^{-\log 4/\log \ell_1^{-1}} \leq r^{-\varepsilon/6},
\end{equation}
whilst, similarly,
\begin{equation*}
\# \mathcal{A}(r) \cup \mathcal{B}(r)  \leq  \#\{w \in \mathcal{W} :  |w| \leq \log r^{-1}/\log \ell_1^{-1}\} \leq r^{-\log(k-1)/\log \ell_1^{-1}} \leq r^{-\varepsilon/6}. 
\end{equation*}
On the other hand, as a further consequence of \eqref{how many steps?} and the choice of scales $\ell_j$, if $w \in \mathcal{A}(r) \cup \mathcal{B}(r)$, then
\begin{equation}\label{prod of ell's}
    \log \prod_{j=1}^{k-1} \ell_{j-1}^{-2(j-1)[w]_j} = \sum_{j=1}^{k-1} [w]_j\log \ell_j^{-1} \frac{\log \ell_{j-1}^{-2(j-1)}}{\log \ell_j^{-1}} \leq \log r^{-\varepsilon/6}. 
\end{equation}
The estimates \eqref{4 to w} and \eqref{prod of ell's} imply that
\begin{equation*}
M_{\varepsilon,k}^\alpha= 4^{|\alpha|}\prod_{j=1}^{k-1} \ell_{j-1}^{-2(j-1)[w]_j}  \leq r^{-\varepsilon/3}, 
\end{equation*}
where $M_{\varepsilon,k}^\alpha$ are the constants appearing in the above claim. Combining these observations with \eqref{iteration claim} for the choice of $N$ as above,
\begin{align*}
\|F\|_{L^p(X)} &\leq r^{-\varepsilon/3} \sum_{\alpha \in \mathcal{A}(r)} \Big( \sum_{ I \in \fI(\ell^{\alpha})}\|F_I\|_{L^p(X)}^p\Big)^{1/p}\\
& \quad + r^{-\varepsilon/3}\ell_{k-1}^{-2(k-1)} \sum_{\beta \in \mathcal{B}(r)} \Big(\sum_{\substack{J \in \fI(\ell^{\beta}) \\ \vec{I} \in \fI^k_{\mathrm{sep}}(J; \ell_{k-1}\ell^{\beta}) }}\big\|\prod_{j=1}^{k}|F_{I_j}|^{1/k}\big\|_{L^p(X)}^p\Big)^{1/p}.
\end{align*}
Finally, since $\ell_{k-1}^{-1} \lesssim_{\varepsilon, k} 1$ and $\# \mathcal{A}(r), \# \mathcal{B}(r), \leq r^{-\varepsilon/6}$, by pigeonholing there exists some $\alpha_{\mathrm{n}} \in \mathcal{A}(r)$ and $\beta_{\mathrm{b}} \in \mathcal{B}(r)$ such that, if $r_{\mathrm{n}} := \ell^{\alpha_{\mathrm{n}}}$ and $r_{\mathrm{b}} := \ell_{k-1}  \ell^{\beta_{\mathrm{b}}}$ and $C_{\varepsilon,k} := \ell_{k-1}^{-1}$, then the desired inequality \eqref{broad narrow equation 2} holds. It is easy to see that these parameters also satisfy \eqref{broad narrow equation 1} directly from and the relevant definitions.
\end{proof}

To close this section, the proof of Lemma~\ref{Ham--Lee lemma} is presented, following the argument in \cite{Ham2014}.

\begin{proof}[Proof of Lemma~\ref{Ham--Lee lemma}] For notational convenience, given $m \in \N$ and $J \in \fI(\ell)$ define
\begin{equation*}
    \pi^m_J(F)(x) := \max_{\vec{I}^{k-1} \in \fI_{\mathrm{sep}}^m(J;\ell_{k-1}\ell) } \prod_{j=1}^m |F_{I^{m-1}_j}(x)|^{1/m}. 
\end{equation*}
When $m = 1$ this reduces to $\pi^m_J(F)(x) = |F_J(x)|$. The main step in the proof of Lemma~\ref{Ham--Lee lemma} is the following pointwise estimate. 

\begin{claim} For all $m \in \N$ and $J \in \fI(\ell)$, the pointwise estimate
\begin{equation*}
  \pi^m_J(F)(x)  \leq 4 \max_{\substack{I^m \in \fI(J;\ell_m\ell)}} |F_{I^m}(x)| + \ell_m^{-2} \, \pi^{m+1}_J(F)(x)
\end{equation*}
holds for $\mu$-almost all $x \in X$.
\end{claim}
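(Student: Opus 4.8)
<br>

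The plan is to prove the pointwise Bourgain--Guth-type inequality by a dichotomy argument based on whether the set of `large' subintervals of $J$ at the next scale is geometrically spread out or concentrated. Fix $m \in \N$ and $J \in \fI(\ell)$, and fix a point $x \in X$ at which all the (finitely many) dyadic decomposition identities hold. For the tuple $\vec{I}^{m}$ realising the maximum in the definition of $\pi^m_J(F)(x)$, each factor $F_{I^{m-1}_j}$ decomposes via the dyadic decomposition property as $F_{I^{m-1}_j} = \sum_{I^m \in \fI(I^{m-1}_j;\, \ell_m \ell)} F_{I^m}$. I would then order the subintervals $I^m \subseteq I^{m-1}_j$ by the size of $|F_{I^m}(x)|$ and call $I^m$ \emph{significant} if $|F_{I^m}(x)|$ is within a factor of, say, $2$ of $\max_{I^m \subseteq I^{m-1}_j}|F_{I^m}(x)|$; since there are $\sim \ell_m^{-1}$ such subintervals inside a given $I^{m-1}_j$ one gets the crude bound $|F_{I^{m-1}_j}(x)| \le \ell_m^{-1} \max_{I^m \subseteq I^{m-1}_j}|F_{I^m}(x)|$, which will be the source of the $\ell_m^{-2}$ loss below.

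The key dichotomy is then carried out at the level of the ambient interval $J$: consider the collection of all $I^m \in \fI(J;\ell_m \ell)$ with $|F_{I^m}(x)|$ comparable to $\max_{I^m \in \fI(J;\ell_m\ell)}|F_{I^m}(x)| =: \Lambda$. \textbf{Case 1: these significant subintervals are not $\ell_m\ell$-separated in a spread-out way}, i.e.\ they are all contained in $O(1)$ many sub-blocks of length $\sim \ell_{m-1}\ell$ (equivalently, one cannot find $m+1$ of them that are pairwise $\ell_{k-1}\ell$-separated while remaining inside $J$). In this situation every significant $I^m$ lies in $O(1)$ blocks, so each factor $F_{I^{m-1}_j}$ in the product defining $\pi^m_J(F)(x)$ is, up to the $\ell_m^{-1}$ loss per factor, controlled by $\Lambda$; taking the geometric mean over the $m$ factors gives $\pi^m_J(F)(x) \le 4\,\Lambda \le 4\max_{I^m \in \fI(J;\ell_m\ell)}|F_{I^m}(x)|$, which is the first term on the right-hand side. (Here I would absorb the $\ell_m^{-1}$-type losses into the constant $4$ by being slightly more careful about what `significant' means — this is the routine bookkeeping step.) \textbf{Case 2: the significant subintervals are spread out}, meaning there exist $m+1$ of them, say $I^m_1, \dots, I^m_{m+1}$, that are pairwise $\ell_{k-1}\ell$-separated and contained in $J$. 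Then $\prod_{j=1}^{m+1}|F_{I^m_j}(x)|^{1/(m+1)} \gtrsim \Lambda \gtrsim \max_{I^m}|F_{I^m}(x)|$, and since the left-hand product is, by definition, at most $\pi^{m+1}_J(F)(x)$, we recover $\max_{I^m}|F_{I^m}(x)| \lesssim \pi^{m+1}_J(F)(x)$; feeding this back through the $\ell_m^{-1}$-per-factor crude bound for $\pi^m_J(F)(x)$ in terms of $\max_{I^m}|F_{I^m}(x)|$ produces the $\ell_m^{-2}$ factor and hence the second term. This is exactly the argument of Ham--Lee \cite[Lemma 2.8]{Ham2014}, transplanted from the extension-operator setting to the abstract dyadic-decomposition setting, and the adaptation is purely formal since nothing about the specific structure of $F_I$ is used beyond the reproducing identity $F_{I^{m-1}_j} = \sum_{I^m} F_{I^m}$.

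The main obstacle — really the only non-mechanical point — is pinning down the combinatorics of the separation condition in Case 2: one must check that `not spread out' in the precise sense needed for Case 1 (all significant subintervals fitting into $O(1)$ blocks of length $\ell_{m-1}\ell$, with the $O(1)$ depending only on $k$) is genuinely the negation of `spread out' in the sense of producing an $m+1$-tuple in $\fI^{m+1}_{\mathrm{sep}}(J;\ell_{k-1}\ell)$. This is a pigeonhole statement about intervals on the line: if more than $C_k$ subintervals of length $\ell_m\ell$ inside $J$ carry a comparable value and no $(m+1)$-subcollection is $\ell_{k-1}\ell$-separated, then they cluster, and one can then cover them efficiently. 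I would isolate this as a short geometric sublemma and then the two cases assemble immediately into the claimed pointwise bound; the passage from the pointwise claim to the stated $L^p$ inequality of Lemma~\ref{Ham--Lee lemma} is then just summation over $J \in \fI(\ell)$, the triangle inequality in $\ell^p$, and iteration of the pointwise estimate from $m=1$ up to $m=k-1$ (each iteration contributing one factor $4\,\ell_{i-1}^{-2(i-1)}$, matching the constants in the statement).
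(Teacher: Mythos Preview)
Your proposal has a genuine gap in Case~1. You claim that if the ``significant'' subintervals at scale $\ell_m\ell$ (those with $|F_{I^m}(x)|$ within a factor of~$2$ of the maximum $\Lambda$) are not spread out, then $\pi^m_J(F)(x) \leq 4\Lambda$. But concentration of the significant intervals says nothing about the contribution of the \emph{non-significant} ones: each $I^{m-1}_j$ contains roughly $\ell_{m-1}/\ell_m$ subintervals at scale $\ell_m\ell$, and even if none of them is significant, each can have $|F_{I^m}(x)|$ as large as $\Lambda/2$, so the crude bound $|F_{I^{m-1}_j}(x)| \lesssim (\ell_{m-1}/\ell_m)\Lambda$ is essentially sharp in general. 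You dismiss this $\ell_m^{-1}$-type loss as ``routine bookkeeping'' that can be absorbed into the constant~$4$, but it cannot: $\ell_m$ is a small dyadic parameter, not an absolute constant.

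The paper's dichotomy is structured differently, and this is where the content lies. Rather than a single global spread/concentrate test on $J$, the paper's narrow case is the disjunction of two conditions: (i) \emph{within each} $I^{m-1}_j$ separately, every subinterval $I^m_j$ separated from the maximal one $I^{m,*}_j$ satisfies $|F_{I^m_j}| \leq (\ell_m/\ell_{m-1})|F_{I^{m,*}_j}|$ --- precisely the relative-size hypothesis that makes the sum over the $\ell_{m-1}/\ell_m$ subintervals collapse to $4|F_{I^{m,*}_j}|$; or (ii) the ratio $\min_j |F_{I^{m,*}_j}| / \max_j |F_{I^{m,*}_j}|$ is at most $(\ell_m/\ell_{m-1})^m$, which alone makes the geometric mean small. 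In the broad case both fail: failure of (i) yields a second large subinterval $I^{m,**}_{j_0}$ separated from $I^{m,*}_{j_0}$ inside some $I^{m-1}_{j_0}$, while failure of (ii) ensures all $|F_{I^{m,*}_j}|$ are comparable; together with the pre-existing separation of the $I^{m-1}_j$'s, the tuple $(I^{m,*}_1,\dots,I^{m,*}_m,I^{m,**}_{j_0})$ is then an admissible separated $(m+1)$-tuple. The threshold $(\ell_m/\ell_{m-1})$ in (i) is chosen exactly so that the sum converges --- this is a local relative-size condition inside each $I^{m-1}_j$, not a global concentration statement about intervals in $J$, and that distinction is what makes the argument work.
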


\begin{proof} Fix $x \in X$ and $\vec{I}^{m-1} = (I^{m-1}_1,\dots,I^{m-1}_m) \in \fI^m(\ell_{m-1}\ell)$ with $I^{m-1}_j \subset J$ for $1 \leq j \leq m$. For each $j$ there exists an interval $I^{m,*}_j \in \fI(\ell_m \ell)$ satisfying
\begin{equation*}
    I^{m,*}_j \subset I^{m-1}_j \quad \textrm{and} \quad |F_{I^{m,*}_j}(x)| = \max_{\substack{I^{m}_j \in \fI(I_j^{m-1};\ell_m\ell)}} |F_{I^{m}_j}(x)|. 
\end{equation*}

There are two cases to consider:\\

\noindent \textbf{Narrow case:} Either one of the following two conditions hold:
\begin{enumerate}[i)]
    \item For all $1 \leq j \leq m$, if $I^{m}_j \in \fI(\ell_m\ell)$ satisfies $I^m_j \subset I^{m-1}_j$ and $\mathrm{dist}(I^m_j, I^{m,*}_j) \geq \ell_m \ell$, then 
    \begin{equation*}
        |F_{I^m_j}(x)| \leq \Big(\frac{\ell_{m}}{\ell_{m-1}}\Big)|F_{I^{m,*}_j}(x)|.
    \end{equation*}
    \item  The selected interval $I_j^{m,*} \in \fI (\ell_m \ell)$ above satisfies
    \begin{equation*}
        \min_{1 \leq j \leq m} |F_{I^{m,*}_j}(x)| \leq \Big(\frac{\ell_{m}}{\ell_{m-1}}\Big)^m\max_{1 \leq j \leq m} |F_{I^{m,*}_j}(x)|.
    \end{equation*}
\end{enumerate}

\noindent \textbf{Broad case:} The conditions of the narrow case fail.\\

\subsubsection*{The narrow case} If condition i) of the narrow case holds, then 
\begin{equation*}
    |F_{I^{m-1}_j}(x)| \leq 3 |F_{I^{m, *}_j}(x)| + \sum_{\substack{ I^m_j \in \fI(\ell_m\ell),\, I^m_j \subset I^{m-1}_j \\ \mathrm{dist}(I^m_j, I^{m,*}_j) \geq \ell_m\ell }}|F_{I^{m}_j}(x)| \leq 4 |F_{I_j^{m,*}}(x)|,
\end{equation*}
since there are at most $\ell_{m-1}/\ell_m$ intervals $I^m_j \in \fI(\ell_m\ell)$ contained in $I^{m-1}_j$. Thus, in this case, 
\begin{equation}\label{Ham--Lee 1}
    \prod_{j=1}^m |F_{I^{m-1}_j}(x)|^{1/m} \leq 4 \max_{I^m \in \fI(J;\ell_m\ell) } |F_{I^{m}}(x)|.
\end{equation}

Now suppose that condition ii) of the narrow case holds. Thus,
\begin{equation*}
    \prod_{j=1}^m |F_{I^{m-1}_j}(x)|^{1/m} \leq \Big(\frac{\ell_{m-1}}{\ell_{m}}\Big) \prod_{j=1}^m |F_{I^{m,*}_j}(x)|^{1/m} \leq \max_{1 \leq j \leq m} |F_{I^{m,*}_j}(x)|
\end{equation*}
where the first inequality follows since there are at most $\ell_{m-1}/\ell_m$ intervals $I_j^m \in \mathfrak{I}(\ell_m \ell)$ contained in $I_j^{m-1}$. Once again, \eqref{Ham--Lee 1} holds (in fact, it holds with a constant 1 rather 4). Hence, a favourable estimate holds in the narrow case.

\subsubsection*{The broad case} Suppose the broad case holds. By definition, condition i) from the narrow fails. Consequently, there exists some $1 \leq j_0 \leq m$ and an interval $I_{j_0}^{m,**} \in \fI(\ell_m\ell)$ satisfying 
\begin{equation*}
   I_{j_0}^{m,**} \subseteq I_{j_0}^{m-1}, \qquad \mathrm{dist} (I_j^{m,**}, I_j^{m,*}) \geq \ell_m \ell \qquad  \textrm{and} \qquad |F_{I_{j_0}^{m,*}}(x)| \leq \Big(\frac{\ell_{m-1}}{\ell_m}\Big)|F_{I_{j_0}^{m,**}}(x)|.
\end{equation*}
On the other hand, condition ii) from the narrow case also fails and, consequently,
\begin{equation*}
    \max_{1 \leq j \leq m} |F_{I^{m,*}_j}(x)| \leq \Big(\frac{\ell_{m-1}}{\ell_m}\Big)^m |F_{I^{m,*}_{j_0}}(x)| \leq \Big(\frac{\ell_{m-1}}{\ell_m}\Big)^{m+1}|F_{I_{j_0}^{m,**}}(x)|.
\end{equation*}
Thus, for each $1 \leq j \leq m$, it follows that
\begin{equation*}
|F_{I^{m,*}_j}(x)|^{1/m} \leq \Big(\frac{\ell_{m-1}}{\ell_m}\Big)^{1/m}|F_{I^{m,*}_j}(x)|^{1/(m+1)}|F_{I_{j_0}^{m,**}}(x)|^{1/m(m+1)}.
\end{equation*}
Finally, taking the product of the above estimate over all $j$, one deduces that
\begin{align*}
    \prod_{j=1}^m |F_{I^{m-1}_j}(x)|^{1/m} &\leq \Big(\frac{\ell_{m-1}}{\ell_{m}}\Big) \prod_{j=1}^m |F_{I^{m,*}_j}(x)|^{1/m} \\
    &\leq \Big(\frac{\ell_{m-1}}{\ell_{m}}\Big)^2 \Big(\prod_{j=1}^m |F_{I^{m,*}_j}(x)|^{1/(m+1)}\Big) |F_{I_{j_0}^{m,**}}(x)|^{1/(m+1)} \\
     &\leq \ell_m^{-2} \, \pi^{m+1}_J(F)(x),
\end{align*}
where in the last inequality we use the separation condition. Hence, in the broad case a favourable estimate also holds. 
\end{proof}

By repeated application of the claim and the relation $\ell_1 \geq \cdots \geq \ell_{k-1}$,
\begin{equation*}
    |F_J(x)| \leq 4\sum_{m=1}^{k-1} \ell_{m-1}^{-2(m-1)} \max_{I^m \in \fI(J;\ell_m\ell)} |F_{I^m}(x)| + \ell_{k-1}^{-2(k-1)} \pi^{k}_J(F)(x)
\end{equation*}
for $\mu$-almost every $x \in X$. Bounding all the maxima in the above display by the corresponding $\ell^p$ expressions and integrating over $x \in X$, one deduces that 
\begin{align*}
    \|F_J\|_{L^p(X)} &\leq 4 \sum_{m=1}^{k-1} \ell_{m-1}^{-2(m-1)} \Big(\sum_{I^m \in \fI(J;\ell_m\ell)} \|F_{I^m}\|_{L^p(X)}^p\Big)^{1/p} \\
    & \qquad + \ell_{k-1}^{-2(k-1)} \Big(\sum_{\vec{I} \in \fI^k_{\mathrm{sep}}(J;\ell_{k-1}\ell)} \big\|\prod_{j=1}^k |F_{I_j}|^{1/k}\big\|_{L^p(X)}^p\Big)^{1/p}
\end{align*}
Finally, taking a $\ell^p$ sum over $J$ of both sides of the above inequality and applying the triangle inequality concludes the proof.  
\end{proof}




\section{A pointwise square function inequality}\label{RdF appendix}

Here we provide the simple proof of Lemma~\ref{RdF lem}, which is a slight extension of an argument due to Rubio de Francia \cite{RdF1983}. Given $G \colon \Z^m \to \R^n$ define
\begin{equation*}
\vvvert G \vvvert := \sup_{\nu_2 \in \Z^m} \sum_{\nu_1 \in \Z^m} e^{- |G(\nu_1) - G(\nu_2)|/2}.
\end{equation*}
By rescaling and a simple limiting argument, Lemma~\ref{RdF lem} is a consequence of the following pointwise bound.  

\begin{lemma} Let $\psi \in \mathscr{S}(\widehat{\R}^n)$ and $G \colon \Z^m \to \R^n$. For all $M$, $N \in \N$ the pointwise inequality \begin{equation*}
\sum_{\nu \in \Z^m\cap[-M,M]^m} \big|\psi\big(D - G(\nu)\big)f(x)\big|^2 \lesssim_{\psi, N} \vvvert G \vvvert \int_{\R^n}  |f(x - y)|^2 (1 + |y|)^{-N}\,\ud y 
\end{equation*}
holds for all $f \in \mathscr{S}(\R^n)$, with an implied constant independent of $M$.
\end{lemma}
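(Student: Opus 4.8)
The strategy is the standard $T^*T$ argument. Write $K_\nu$ for the convolution kernel of $\psi(D - G(\nu))$, so $K_\nu(y) = e^{i\inn{y}{G(\nu)}}\widecheck{\psi}(y)$, and $\psi(D - G(\nu))f = K_\nu * f$. Introduce the vector-valued operator $T \colon L^2(\R^n) \to \ell^2(\Z^m \cap [-M,M]^m; L^2(\R^n))$ defined by $(Tf)_\nu := K_\nu * f$; the claimed pointwise inequality will follow from a stronger \emph{pointwise} kernel bound for the composition $T^*T$, which I'll arrange by hand below. Concretely, I would first note that for each fixed $x$,
\begin{equation*}
\sum_{\nu} |K_\nu * f(x)|^2 = \sum_\nu \int\!\!\int K_\nu(y)\overline{K_\nu(z)} f(x-y)\overline{f(x-z)}\,\ud y\,\ud z = \int\!\!\int L(y,z) f(x-y)\overline{f(x-z)}\,\ud y\,\ud z
\end{equation*}
where $L(y,z) := \widecheck{\psi}(y)\overline{\widecheck{\psi}(z)} \sum_{\nu} e^{i\inn{y - z}{G(\nu)}}$, the sum over $\nu \in \Z^m \cap [-M,M]^m$. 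The only place $M$ enters is in this last sum, and the point is that $|L(y,z)|$ will be controlled uniformly in $M$.

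The key estimate is the following pointwise bound on $L$: for every $N \in \N$,
\begin{equation*}
|L(y,z)| \lesssim_{\psi,N} \vvvert G \vvvert\, (1+|y|)^{-(N+n+1)}(1+|z|)^{-(N+n+1)} \qquad \text{(uniformly in } M\text{)}.
\end{equation*}
To prove this I would combine two observations. First, the trivial bound $|\sum_\nu e^{i\inn{y-z}{G(\nu)}}| \le \#(\Z^m \cap [-M,M]^m)$ is useless; instead I pair the exponential sum with the rapid decay of $\widecheck\psi$. Write $\sum_\nu e^{i\inn{y-z}{G(\nu)}}$ and, after fixing an arbitrary $\nu_2$, bound trivially $|\sum_\nu e^{i\inn{y-z}{G(\nu)}}| \le \sum_\nu 1$; this is where the Gaussian-type weight $e^{-|G(\nu_1)-G(\nu_2)|/2}$ must be manufactured. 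The clean way: absorb a factor $\widecheck\psi(y)\overline{\widecheck\psi(z)}$ into the sum by the Schwartz decay of $\widecheck\psi$, which gives for any $L$, $|\widecheck\psi(y)| \lesssim_L (1+|y|)^{-L}$, and then estimate the whole double integral via Schur's test with the kernel $L(y,z)$. Indeed, once $|L(y,z)| \lesssim \vvvert G\vvvert (1+|y|)^{-N'}(1+|z|)^{-N'}$ for $N'$ large, the Schur test (integrating $|L(y,z)|$ in $z$, then in $y$) bounds the double integral by $\vvvert G\vvvert \int |f(x-y)|^2 (1+|y|)^{-N}\,\ud y$ for $N = N' - n - 1$ via Cauchy--Schwarz in the off-diagonal variable. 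Thus the real content reduces to producing the factor $\vvvert G\vvvert$ from the exponential sum.

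For that, I would argue as in Rubio de Francia: decompose the sum over $\nu$ according to the nearest lattice point $\nu_2$ where... more precisely, bound $\big|\sum_{\nu\in\Z^m\cap[-M,M]^m} e^{i\inn{y-z}{G(\nu)}}\big|$ by first splitting off one factor of $\widecheck\psi$. A cleaner route: use that $\widecheck\psi \in \mathscr{S}$ to write $\widecheck\psi(y) = \int e^{i\inn{y}{\xi}}\psi(\xi)\,\ud\xi$ and substitute back, so $K_\nu * f(x) = \int \psi(\xi)\, e^{i\inn{x}{\xi + G(\nu)}}\widehat f(\xi + G(\nu))\cdot(\text{stuff})$... but this just re-derives Plancherel, which fails on the non-lattice range $G$. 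So I stick with the $T^*T$/Schur approach. The essential lemma is purely about the exponential sum: for fixed $w := y - z$,
\begin{equation*}
\Big| \widecheck\psi(y)\overline{\widecheck\psi(z)} \sum_{\nu\in\Z^m\cap[-M,M]^m} e^{i\inn{w}{G(\nu)}} \Big| \le |\widecheck\psi(y)|\,|\widecheck\psi(z)| \sum_{\nu}1,
\end{equation*}
and here I use the crude count $\sum_\nu 1 \le \sup_{\nu_2}\sum_{\nu_1} e^{-|G(\nu_1)-G(\nu_2)|/2}\cdot e^{(\text{diam})/2}$ — no, that's not bounded either. \textbf{The honest point} is that $\vvvert G\vvvert$ is finite by hypothesis (it is finite precisely when $G$ is suitably separated, e.g.\ $G$ injective with separated image; in the application $G$ will be $1$-separated so $\vvvert G\vvvert < \infty$), and $\sum_\nu 1$ is \emph{not} what appears — rather, after the Schur test the relevant quantity is $\sup_{\nu_2}\sum_{\nu_1} e^{-|G(\nu_1) - G(\nu_2)|/2}$, which emerges when one estimates $\int |L(y,z)|\,\ud z$ by expanding $L$ as a sum over $\nu_1, \nu_2$ and integrating each term $\int |\widecheck\psi(y)||\widecheck\psi(z)| e^{i\inn{y-z}{G(\nu_1)}}e^{-i\inn{y-z}{G(\nu_2)}}\,\ud z$; this is where an integration-by-parts / stationary phase bound in $z$ gives the exponential gap $e^{-|G(\nu_1)-G(\nu_2)|/2}$. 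So the plan is: (i) expand $\sum_\nu |K_\nu*f(x)|^2$ into the double sum $\sum_{\nu_1,\nu_2}\int\!\!\int \widecheck\psi(y)\overline{\widecheck\psi(z)}e^{i\inn{y}{G(\nu_1)}}e^{-i\inn{z}{G(\nu_2)}}f(x-y)\overline{f(x-z)}$, restricted appropriately; (ii) estimate the inner double integral by Cauchy--Schwarz to reduce to a diagonal weight $\int|f(x-y)|^2(1+|y|)^{-N}\,\ud y$ times an off-diagonal decay factor; (iii) show that off-diagonal factor is $\lesssim e^{-|G(\nu_1)-G(\nu_2)|/2}$ by exploiting the oscillation $e^{i\inn{\cdot}{G(\nu_1)-G(\nu_2)}}$ against the Schwartz decay of $\widecheck\psi$ (this uses $|\widecheck\psi(y)e^{i\inn{y}{G(\nu_1)-G(\nu_2)}}|$ integrated with weight — the standard fact $\int |\widecheck\psi(y)|e^{i\inn{y}{\xi}}\cdot(\ldots)$ decays rapidly in $|\xi|$, in particular faster than $e^{-|\xi|/2}$ after adjusting constants in $\psi$, or one replaces $\psi$ by a fixed Schwartz majorant); (iv) sum the resulting bound $\vvvert G\vvvert$-style over $\nu_1$ for fixed $\nu_2$ and take the supremum over $\nu_2$.

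The main obstacle I anticipate is step (iii): one must pass from "$\widecheck\psi$ is Schwartz" to a clean exponential gain $e^{-|G(\nu_1)-G(\nu_2)|/2}$ in the cross terms, which requires either choosing $\psi$ with an analytic/Gevrey majorant whose Fourier transform decays exponentially, or — more in the spirit of Rubio de Francia — first reducing to the case where $\widehat{(\widecheck\psi)} = \psi$ has compact support (so $\widecheck\psi$ is band-limited and its cross-correlation $\widecheck\psi * \overline{\widecheck\psi(-\cdot)}$ vanishes unless $|G(\nu_1)-G(\nu_2)|$ is small, making the sum genuinely finite and controlled by $\vvvert G\vvvert$), and then a limiting/regularisation argument for general $\psi \in \mathscr{S}$. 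Everything else — the $T^*T$ expansion, Cauchy--Schwarz, Schur's test, the uniformity in $M$ (which is automatic since no step uses the cardinality of the index set, only the pointwise-in-$(\nu_1,\nu_2)$ bounds) — is routine. Finally, Lemma~\ref{RdF lem} follows from this pointwise bound by the substitution $\xi \mapsto A^{\top}\xi$ in the multipliers (equivalently $f \mapsto f(A^{-1}\cdot)$), which converts $\psi(AD - G(\nu))$ into $\psi(D - G(\nu))$ composed with the dilation, together with the change of variables $y \mapsto A^{\top}y$ in the integral on the right, and letting $M \to \infty$ by monotone convergence.
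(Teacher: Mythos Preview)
Your proposal has a genuine structural gap. The kernel $L(y,z) = \widecheck\psi(y)\overline{\widecheck\psi(z)}\sum_\nu e^{i\inn{y-z}{G(\nu)}}$ is \emph{not} amenable to the pointwise bound you seek: at $y=z$ the exponential sum equals the cardinality of the index set, so $|L(y,y)|$ grows like $M^m$ and any Schur-test argument on $|L|$ fails. You noticed this, but your proposed repair in step~(i) is incorrect: the expression $\sum_{\nu_1,\nu_2}\iint \widecheck\psi(y)\overline{\widecheck\psi(z)}e^{i\inn{y}{G(\nu_1)}}e^{-i\inn{z}{G(\nu_2)}}f(x-y)\overline{f(x-z)}$ equals $\big|\sum_\nu K_\nu*f(x)\big|^2$, not $\sum_\nu |K_\nu*f(x)|^2$; the latter is only the diagonal part of that double sum, and you cannot recover it by adding the off-diagonal terms (they are not sign-definite).

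The missing idea is duality on the $\ell^2$ side: write $\sum_\nu |K_\nu*f(x)|^2 = \sup_{\|a\|_{\ell^2}=1}\big|\sum_\nu a_\nu K_\nu*f(x)\big|^2$ and observe that the right-hand side is $|\mathcal K*f(x)|^2$ for the single kernel $\mathcal K(y)=\big[\sum_\nu a_\nu e^{i\inn{y}{G(\nu)}}\big]\widecheck\psi(y)$. Now apply Cauchy--Schwarz with weight $|\widecheck\psi(y)|$:
\[
|\mathcal K*f(x)|^2 \le \int_{\R^n}\Big|\sum_\nu a_\nu e^{i\inn{y}{G(\nu)}}\Big|^2|\widecheck\psi(y)|\,\ud y \cdot \int_{\R^n}|f(x-y)|^2|\widecheck\psi(y)|\,\ud y.
\]
The second factor gives the right-hand side of the lemma. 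For the first factor, expanding the square produces the double sum $\sum_{\nu_1,\nu_2}\overline{a_{\nu_1}}a_{\nu_2}\int e^{i\inn{y}{G(\nu_2)-G(\nu_1)}}|\widecheck\psi(y)|\,\ud y$, and \emph{this} is where the exponential off-diagonal decay must come from. As you anticipated in your ``main obstacle'', Schwartz decay of $\widecheck\psi$ alone is not enough; the paper resolves this by majorising $|\widecheck\psi(y)|\lesssim\phi(y):=(1+|y|^2)^{-n-1}$, which is analytic in the strip $|\Im z|<1/2$, so a contour shift gives $|\widehat\phi(\xi)|\lesssim e^{-|\xi|/2}$. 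The resulting matrix $e^{-|G(\nu_1)-G(\nu_2)|/2}$ is then bounded on $\ell^2$ by Schur with norm $\vvvert G\vvvert$. Your Gevrey/compact-support workarounds are in the right spirit but less clean than this explicit majorant.
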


\begin{proof} Let $a = (a_{\nu})_{\nu \in \Z^m}$ be a sequence supported in $\Z^m \cap [-M, M]^m$ satisfying $\|a\|_{\ell^2} = 1$. Consider the function
\begin{equation*}
  \sum_{\nu \in \Z^m} a_{\nu} \psi\big(D - G(\nu)\big)f(x) = \mathcal{K} \ast f(x)
\end{equation*}
where the kernel $\mathcal{K}$ is given by
\begin{equation*}
   \mathcal{K}(x) := \frac{1}{(2\pi)^n} \int_{\widehat{\R}^n}e^{i\inn{x}{\xi}} \sum_{\nu \in \Z^m} a_{\nu} \psi\big(\xi - G(\nu)\big)\,\ud \xi =\Big[ \sum_{\nu \in \Z^m} a_{\nu} e^{i \inn{x}{G(\nu)}} \Big]\widecheck{\psi}(x).
\end{equation*}
By duality, it suffices to show
\begin{equation*}
   |\mathcal{K} \ast f(x)|^2 \leq \vvvert G \vvvert   \int_{\R^n}  |f(x - y)|^2 (1 + |y|)^{-N}\,\ud y.
\end{equation*}

Applying the Cauchy--Schwarz inequality,
\begin{equation*}
    |\mathcal{K} \ast f(x)|^2 \leq \int_{\R^n} \Big| \sum_{\nu \in \Z^m} a_{\nu} e^{i \inn{y}{G(\nu)}} \Big|^2 |\widecheck{\psi}(y)|\,\ud y \int_{\R^n} |f(x-y)|^2 |\widecheck{\psi}(y)|\,\ud y
\end{equation*}
and so, in view of the rapid decay of $\widecheck{\psi}$, the problem is further reduced to showing 
\begin{equation*}
    \int_{\R^n} \Big| \sum_{\nu \in \Z^m} a_{\nu} e^{i \inn{y}{G(\nu)}} \Big|^2 |\widecheck{\psi}(y)|\,\ud y \lesssim \vvvert G \vvvert. 
\end{equation*}

Since  $\psi \in \mathscr{S}(\widehat{\R}^n)$ we have $|\widecheck\psi (y)|\lesssim \phi(y)$ where $\phi(z):= (1+z^2)^{-n-1}$.  Consider $\phi(z)$ for $|\Im(z)|\le 1/2$ and observe that, by contour integration, $|\widehat{\phi}(\xi) |\lesssim e^{-|\xi|/2} $ for $\xi\in \widehat \R^n$. Hence  
\begin{align*}
   \int_{\R^n} \Big| \sum_{\nu \in \Z^m} a_{\nu} e^{i \inn{y}{G(\nu)}} \Big|^2 |\widecheck{\psi}(y)|\,\ud y &\lesssim  \sum_{\nu_1, \nu_2 \in \Z^m} \overline{a_{\nu_1}}a_{\nu_2}  \widehat{\phi}\big(G(\nu_1) - G(\nu_2)\big) \\ &\lesssim_{\psi}   \sum_{\nu_1, \nu_2 \in \Z^m} |a_{\nu_1}||a_{\nu_2}|  e^{-|G(\nu_1)-G(\nu_2)|/2}.
\end{align*} 
The right-hand side of the above inequality is then bounded by $\vvvert  G \vvvert$ via the Cauchy--Schwarz inequality and the Schur test, as $\| a \|_{\ell^2}=1$.
\end{proof}




\section{Derivative bounds for implicitly defined functions}

Let $\Omega$, $I \subseteq \R$ be open intervals and $G \colon \Omega \times I \to \C$ a $C^{\infty}$ mapping.  Suppose $\partial_y G(x,y)$ is non-vanishing on $\Omega \times I$ and $y \colon \Omega \to I$ is a $C^{\infty}$ mapping such that
\begin{equation*}
    G(x,y(x)) = 0 \qquad \textrm{for all $x \in \Omega$.}
\end{equation*}

\begin{lemma}\label{imp deriv lem} Let $G \colon \Omega \times I \to \C$ and $y \colon \Omega \to I$ be as above and suppose $A, M_1$, $M_2 > 0$ are constants such that
\begin{equation}\label{imp deriv eq}
\left\{\begin{array}{rcl}
    \big|(\partial_y G)(x,y(x))\big| &\geq& AM_2, \\[5pt]
    \big|(\partial_x^{\alpha_1} \partial_y^{\alpha_2}G)(x,y(x))\big| &\lesssim_{\alpha} & AM_1^{\alpha_1} M_2^{\alpha_2} \qquad \textrm{for all $\alpha \in \N_0^2\setminus\{0\}$.} 
\end{array} \right. 
\end{equation}
Then the function $y$ satisfies
\begin{equation}\label{der bounds implicit function}
    |y^{(j)}(x)| \lesssim_j M_1^j M_2^{-1} \qquad \textrm{for all $j \in \N$.}
\end{equation}
Consequently, for all $C^{\infty}$ functions $H \colon \Omega \times I \to \C$ for which there exists some constant $B > 0$ such that
\begin{equation}\label{imp deriv hyp}
    |(\partial_x^{\alpha_1}\partial_y^{\alpha_2}H)(x,y(x))| \lesssim_N B M_1^{\alpha_1} M_2^{\alpha_2} \qquad \textrm{for all $\alpha \in \N_0^2 \setminus \{0\}$,}
\end{equation}
one has
\begin{equation}\label{Faa di Bruno eq 2}
    \Big|\frac{\ud^N}{\ud x^N} H(x, y(x))\Big| \lesssim_N B M_1^N \qquad \textrm{for all $N \in \N$.}
\end{equation}
\end{lemma}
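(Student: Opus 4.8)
The plan is to prove the two conclusions \eqref{der bounds implicit function} and \eqref{Faa di Bruno eq 2} in turn, both via the Fa\`a di Bruno formula (or, more precisely, repeated differentiation of composite functions via the multivariate chain rule). The key device throughout is a normalisation: introduce rescaled variables $\tilde x := M_1 x$ and $\tilde y := M_2 y$, so that a derivative $\partial_x$ costs a factor $M_1$ and a derivative $\partial_y$ costs a factor $M_2$. Under this rescaling the hypothesis \eqref{imp deriv eq} says that the rescaled function $\tilde G(\tilde x,\tilde y) := A^{-1}G(\tilde x/M_1, \tilde y/M_2)$ satisfies $|\partial_{\tilde y}\tilde G| \gtrsim 1$ and $|\partial_{\tilde x}^{\alpha_1}\partial_{\tilde y}^{\alpha_2}\tilde G|\lesssim_\alpha 1$ along the graph of the implicit function; and \eqref{der bounds implicit function} is precisely the assertion that the rescaled implicit function $\tilde y(\tilde x) := M_2\, y(\tilde x/M_1)$ has all derivatives $O_j(1)$. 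So after rescaling we are reduced to the scale-free statement: if $\partial_{\tilde y}\tilde G$ is bounded below and all derivatives of $\tilde G$ are bounded above along the graph, then all derivatives of the implicit function are bounded.

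For this scale-free statement I would argue by induction on $j$. Differentiating the identity $\tilde G(\tilde x,\tilde y(\tilde x))=0$ once gives $\partial_{\tilde x}\tilde G + (\partial_{\tilde y}\tilde G)\,\tilde y' = 0$, hence $\tilde y' = -(\partial_{\tilde y}\tilde G)^{-1}\partial_{\tilde x}\tilde G$, which is $O(1)$ since the numerator is bounded and the denominator is bounded away from $0$. For the inductive step, differentiate the identity $j$ times using Fa\`a di Bruno: $\frac{\ud^j}{\ud\tilde x^j}\tilde G(\tilde x,\tilde y(\tilde x))$ is a sum of terms of the form $(\partial_{\tilde x}^{\alpha_1}\partial_{\tilde y}^{\alpha_2}\tilde G)\cdot \prod_i (\tilde y^{(m_i)})^{\,c_i}$ with $\sum_i m_i c_i + \alpha_1 = j$ and $\alpha_2 = \sum_i c_i$. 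The only term involving $\tilde y^{(j)}$ itself is $(\partial_{\tilde y}\tilde G)\,\tilde y^{(j)}$; isolating it, we get
\begin{equation*}
    (\partial_{\tilde y}\tilde G)(\tilde x,\tilde y(\tilde x))\,\tilde y^{(j)}(\tilde x) = -\sum (\partial_{\tilde x}^{\alpha_1}\partial_{\tilde y}^{\alpha_2}\tilde G)(\tilde x,\tilde y(\tilde x))\prod_i \big(\tilde y^{(m_i)}(\tilde x)\big)^{c_i},
\end{equation*}
where the sum is over the remaining multi-indices, all of which have $m_i < j$. By the induction hypothesis each $\tilde y^{(m_i)}$ is $O(1)$, and each $\partial_{\tilde x}^{\alpha_1}\partial_{\tilde y}^{\alpha_2}\tilde G$ is $O(1)$ by hypothesis, so the right-hand side is $O_j(1)$; dividing by $\partial_{\tilde y}\tilde G$, which is bounded below, yields $|\tilde y^{(j)}|\lesssim_j 1$, completing the induction. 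Unrolling the rescaling gives \eqref{der bounds implicit function}.

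The second conclusion \eqref{Faa di Bruno eq 2} is then a direct application of the same Fa\`a di Bruno expansion, now to $H$ in place of $G$: $\frac{\ud^N}{\ud x^N}H(x,y(x))$ is a finite sum of terms $(\partial_x^{\alpha_1}\partial_y^{\alpha_2}H)(x,y(x))\prod_i (y^{(m_i)}(x))^{c_i}$ with $\alpha_1 + \sum_i m_i c_i = N$ and $\alpha_2 = \sum_i c_i$. By \eqref{imp deriv hyp} the first factor is $\lesssim_N B\,M_1^{\alpha_1}M_2^{\alpha_2}$, and by \eqref{der bounds implicit function} we have $|y^{(m_i)}|\lesssim M_1^{m_i}M_2^{-1}$, so the product of the $y$-derivative factors is $\lesssim M_1^{\sum m_i c_i}M_2^{-\sum c_i} = M_1^{N-\alpha_1}M_2^{-\alpha_2}$. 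Multiplying, every term is $\lesssim_N B\,M_1^{\alpha_1}M_2^{\alpha_2}M_1^{N-\alpha_1}M_2^{-\alpha_2} = B\,M_1^N$, and summing the $O_N(1)$ many terms gives \eqref{Faa di Bruno eq 2}. I do not expect a serious obstacle here: the content is entirely bookkeeping with the chain rule, and the one point that needs a little care is the inductive isolation of the top-order derivative $y^{(j)}$ in the first step — making sure the combinatorial coefficient of that term is a nonzero constant (it is $1$, coming from the single partition into one block of size $j$) so that division by $\partial_y G$ is legitimate. It is also worth noting that \eqref{imp deriv hyp} is only assumed for $\alpha \neq 0$, which suffices because the chain-rule terms always carry at least one derivative of $H$; the $\alpha = 0$ value of $H$ never appears.
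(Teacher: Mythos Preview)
Your proof is correct and follows the same underlying strategy as the paper's: Fa\`a di Bruno plus induction on the order of the derivative, with the second conclusion a direct consequence of the first. The organisational details differ slightly. You perform a full rescaling in $(x,y)$ to reduce to a scale-free statement and then differentiate the relation $G(x,y(x))=0$ directly $j$ times, isolating the single top-order term $(\partial_y G)\,y^{(j)}$ and bounding the remainder by the induction hypothesis. The paper instead only scales away $A$, writes $y' = Q(x,y(x))$ with $Q = -(\partial_x G)/(\partial_y G)$, and differentiates this identity $j$ times via Fa\`a di Bruno; this requires an auxiliary estimate on the mixed derivatives of $Q$ (handled via a separate Fa\`a di Bruno expansion of $(\partial_y G)^{-1}$). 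Your route is a bit more streamlined since it avoids this extra reciprocal step, while the paper's version keeps the scaling explicit throughout. Either way the content is the same chain-rule bookkeeping, and your care about the coefficient of the top-order term being $1$ and about not needing the $\alpha=0$ value of $H$ are exactly the right checks.
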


Before giving the proof of Lemma~\ref{imp deriv lem}, we make some preliminary observations. A simple induction argument shows that there exists a sequence of coefficients $(C_{\alpha, d})_{d \in \N_0^j}$, depending only on $j$ and $\alpha$, such that for all $C^{\infty}$ functions $H \colon \Omega \times I \to \C$ the identity
\begin{equation}\label{Faa di Bruno eq}
    \frac{\ud^j}{\ud x^j} H(x, y(x)) = \sum_{\substack{\alpha \in \N_0^2 \setminus\{0\} \\ \alpha_1, \alpha_2 \leq j}} (\partial_{x}^{\alpha_1} \partial_y^{\alpha_2}H)(x,y(x)) \sum_{\substack{   d_1 + \cdots + j d_j = j - \alpha_1 \\  d_1 + \cdots + d_j = \alpha_2}} C_{\alpha, d} \prod_{i=1}^j y^{(i)}(x)^{d_i}
\end{equation}
holds. The precise values of the $C_{\alpha,d}$ are given by the multivariate Fa\`a di Bruno formula: see \cite[Theorem 4.2]{LP}. Similarly, for $1 \leq k \leq |\alpha|$ there exists a sequence of coefficients $(C_{k, e})_{e \in \mathcal{E}(\alpha,k)}$, depending only on $\alpha$, such that
\begin{equation}\label{Faa di Bruno ratio}
    \partial_x^{\alpha_1} \partial_y^{\alpha_2}\big[(\partial_yG)(x,y)^{-1}\big] = \sum_{k = 1}^{|\alpha|} (\partial_yG)(x,y)^{-k-1}  \sum_{e \in \mathcal{E}(\alpha,k)}  C_{k,e} \prod_{\beta \preceq \alpha} (\partial_x^{\beta_1} \partial_y^{\beta_2  + 1}G)(x,y)^{e_{\beta}}
\end{equation}
where 
\begin{equation*}
    \mathcal{E}(\alpha,k) := \Big\{ e = (e_{\beta})_{\beta \preceq \alpha} : e_{\beta} \in \N_0 \textrm{ for all $\beta \preceq \alpha$ and }  \sum_{\beta \preceq \alpha} \beta_{\ell} \cdot e_{\beta} = \alpha_{\ell} \textrm{ for $\ell = 1, 2$, } \sum_{\beta \preceq \alpha} e_{\beta} = k \Big\}
\end{equation*}
and the notation $\beta \preceq \alpha$ refers to those $\beta \in \N_0^2 \setminus \{0\}$ which satisfy $\beta_{\ell} \leq \alpha_{\ell}$ for $\ell = 1,2$.
Once again, the precise values of the $C_{k,e}$ are given by the multivariate Fa\`a di Bruno formula.

Both identities \eqref{Faa di Bruno eq} and \eqref{Faa di Bruno ratio} play a r\^ole in the proof of Lemma~\ref{imp deriv lem}. 
\begin{proof} By scaling, it suffices to show the case $A=1$. The proof of \eqref{der bounds implicit function} proceeds by (strong) induction on $j$. By implicit differentiation, 
\begin{equation}\label{imp deriv eq 1}
    y'(x) = Q(x, y(x)) \quad \textrm{where} \quad    Q(x,y) :=  - (\partial_xG)(x,y) \cdot (\partial_yG)(x,y)^{-1} \quad \textrm{for $(x,y) \in \Omega \times I$.}
\end{equation}
Thus, the $j=1$ case is an immediate consequence of this identity together with the hypothesised bounds \eqref{imp deriv eq}. Now let $j \geq 1$ and suppose $|y^{(i)}(x)| \lesssim_i M_1^i M_2^{-1}$ holds for all $1 \leq i \leq j$.

To bound the higher order derivative $y^{(j+1)}$ we make use of the differential identity \eqref{Faa di Bruno eq}, taking $H := Q$. In particular, \eqref{Faa di Bruno eq} together with \eqref{imp deriv eq 1} directly imply that
\begin{equation}\label{imp deriv eq 5}
    y^{(j+1)}(x) = \sum_{\substack{\alpha \in \N_0^2 \setminus\{0\} \\ \alpha_1, \alpha_2 \leq j}} (\partial_{x}^{\alpha_1} \partial_y^{\alpha_2} Q)(x,y(x)) \sum_{\substack{   d_1 + \cdots + j d_j = j - \alpha_1 \\  d_1 + \cdots + d_j = \alpha_2}} C_{\alpha, d} \prod_{i=1}^j y^{(i)}(x)^{d_i}.
\end{equation}
The bound \eqref{der bounds implicit function} is now reduced to showing 
\begin{equation}\label{imp deriv eq 4}
    |(\partial_x^{\alpha_1} \partial_y^{\alpha_2}Q)(x,y(x))| \lesssim_{\alpha}  M_1M_2^{-1}M_1^{\alpha_1}M_2^{\alpha_2}.
\end{equation}
Indeed, once \eqref{imp deriv eq 4} is established, one may use this inequality to bound the derivatives of $Q$ appearing on the right-hand side of \eqref{imp deriv eq 5} and the induction hypothesis to bound the $y^{(i)}(x)$ terms. Consequently, one deduces that
\begin{equation*}
|y^{(j+1)}(x)| \lesssim_j  M_1^{j + 1}M_2^{- 1}.
\end{equation*}
This closes the induction and completes the proof of \eqref{der bounds implicit function}.

Turning to the proof of \eqref{imp deriv eq 4}, note that \eqref{Faa di Bruno ratio} and the hypothesised bounds \eqref{imp deriv eq} imply
\begin{equation}\label{imp deriv eq 2}
    \big|\partial_x^{\alpha_1} \partial_y^{\alpha_2}\big[(\partial_yG)(x,y)^{-1}\big]|_{y = y(x)}\big| \lesssim_{\alpha} M_2^{-1}M_1^{\alpha_1}M_2^{\alpha_2} \qquad \textrm{for all $\alpha \in \N_0^2 \setminus \{0\}$.}
\end{equation}
On the other hand, \eqref{imp deriv eq} immediately implies that 
\begin{equation}\label{imp deriv eq 3}
       \big|\partial_x^{\alpha_1} \partial_y^{\alpha_2}(\partial_x G)(x,y)|_{y = y(x)}\big| \lesssim_{\alpha} M_1M_1^{\alpha_1}M_2^{\alpha_2} \qquad \textrm{for all $\alpha \in \N_0^2 \setminus \{0\}$.} 
\end{equation}
Combining \eqref{imp deriv eq 2} and \eqref{imp deriv eq 3} with the Leibniz rule one obtains \eqref{imp deriv eq 4}.

The bound \eqref{Faa di Bruno eq 2} is a simple consequence of \eqref{der bounds implicit function} and \eqref{imp deriv hyp} via the formula \eqref{Faa di Bruno eq}.
\end{proof}

Lemma~\ref{imp deriv lem} immediately implies the following multivariate extension. Let $\Omega \subseteq \R^n$ be an open set, $I \subseteq \R$ an open interval and $G \colon \Omega \times I \to \C$ a $C^{\infty}$ mapping, for some $N \in \N$. Suppose $\partial_y G(\bm{x}, y)$ is non-vanishing on $\Omega \times I$ and $y \colon \Omega \to I$ is a $C^{\infty}$ mapping such that
\begin{equation*}
    G(\bm{x},y(\bm{x})) = 0 \qquad \textrm{for all $\bm{x} \in \Omega$.}
\end{equation*}
For $\be \in S^{n-1}$ let $\nabla_{\be}$ denote the directional derivative operator with respect to $\bm{x}$ in the direction of $\be$. Suppose $A$, $M_1$, $M_2 > 0$ are constants such that 
\begin{equation}\label{multi imp deriv}
\left\{\begin{array}{rcl}
    \big|(\partial_y G)(\bm{x},y(\bm{x}))\big| &\geq& A M_2, \\[5pt]
    \big|(\nabla_{\be}^{\alpha_1}\partial_y^{\alpha_2}G)(\bm{x},y(\bm{x}))\big| &\lesssim_N& A M_1^{\alpha_1} M_2^{\alpha_2} 
\end{array} \right.  \qquad \textrm{for all $\alpha \in \N_0^2\setminus\{0\}$ and all $\bm{x} \in \Omega$.}
\end{equation}
Then the function $y$ satisfies
\begin{equation}\label{multi imp der bound}
    |\nabla_{\be}^{N} y(\bm{x})| \lesssim_N M_1^N M_2^{-1} \qquad \textrm{for all $\bm{x} \in \Omega$ and all $N \in \N_0$.}
\end{equation}
Similarly, \eqref{Faa di Bruno eq 2} has a multivariate extension. In particular, suppose, in addition to the above, that $H \colon \Omega \times I \to \C$ a $C^{\infty}$ mapping and $B > 0$ is a constant such that
\begin{equation}\label{multi imp deriv hyp}
    \big|(\nabla_{\be}^{\alpha_1}\partial_y^{\alpha_2}H)(\bm{x},y(\bm{x}))\big| \lesssim_N B M_1^{\alpha_1} M_2^{\alpha_2} \qquad \textrm{for all $\alpha \in \N_0^2\setminus\{0\}$.}
\end{equation}
Then it follows from \eqref{Faa di Bruno eq 2} that
\begin{equation}\label{multi Faa di Bruno eq 2}
    \big|\nabla_{\be}^N H(\bm{x}, y(\bm{x}))\big| \lesssim_N B M_1^N  \qquad \textrm{for all $\bm{x} \in \Omega$ and all $N \in \N$.}
\end{equation}

For the purposes of this paper, we are interested in the special case where $G$, $H \colon \R^n \times I \to \R$ are both linear in the $\bm{x}$ variable. Thus, $G$ and $H$ are of the form
\begin{equation*}
    G(\bm{x},y) = \inn{g(y)}{\bm{x}}, \quad  H(\bm{x},y) = \inn{h(y)}{\bm{x}},
\end{equation*}
for some $C^{\infty}$ functions $g$, $h \colon I \to \R^n$. Furthermore, the conditions in \eqref{multi imp deriv} can be written as
\begin{equation}\label{multi imp deriv 2}
    \left\{\begin{array}{rcl}
    |\inn{g'\circ y(\bm{x})}{\bm{x}}| &\geq& A M_2, \\[2pt]
    |\inn{g^{(N)}\circ y(\bm{x})}{\bm{x}}| &\lesssim_N& A M_2^{N}  \\[2pt]
    |\inn{g^{(N)}\circ y(\bm{x})}{\be}| &\lesssim_N& A M_1 M_2^{N} 
\end{array} \right. \qquad\textrm{for all $N \in \N$ and all $\bm{x} \in \Omega$} 
\end{equation}
and the condition in \eqref{multi imp deriv hyp} can be written as
\begin{equation}\label{multi imp deriv 3}
    \left\{\begin{array}{rcl}
    |\inn{h^{(N)}\circ y(\bm{x})}{\bm{x}}| &\lesssim_N& B M_2^{N}  \\[2pt]
    |\inn{h^{(N)}\circ y(\bm{x})}{\be}| &\lesssim_N& B M_1 M_2^{N} 
\end{array} \right. \qquad\textrm{for all $N \in \N$ and all $\bm{x} \in \Omega$.} 
\end{equation}

\begin{example}[Application to Lemma~\ref{J=3 ker lem}]\label{deriv ex}
Let $\gamma \in \mathfrak{G}_3(\delta_0)$, and $\theta_2: \widehat{\R}^3 \backslash\{0\} \to I_0$ satisfying
\begin{equation*}
    \inn{\gamma'' \circ \theta_2 (\xi)}{\xi}=0.
\end{equation*}
We apply the previous result with $g=\gamma''$ and $h=\gamma'$. If $B \leq A$ the conditions \eqref{multi imp deriv 2} and \eqref{multi imp deriv 3} read succinctly as
\begin{equation*}
    \left\{\begin{array}{rcl}
    |\inn{\gamma^{(3)}\circ \theta_2(\xi)}{\xi}| &\geq& A M_2, \\[2pt]
    |\inn{\gamma^{(1+N)}\circ \theta_2(\xi)}{\xi}| &\lesssim_N& B M_2^{N}  \\[2pt]
    |\inn{\gamma^{(1+N)}\circ \theta_2(\xi)}{\be}| &\lesssim_N& B M_1 M_2^{N} 
\end{array} \right.
\qquad\textrm{for all $N \in \N$ and all $\xi \in \Omega \subset \hat{\R}^3 \backslash \{0\}$},
\end{equation*}
which imply
\begin{equation*}
    |\nabla_{\bm{e}}^N \theta_2(\xi)| \lesssim_N M_1^N M_2^{-1} \quad \text{ and } \quad |\nabla_{\bm{e}}^N \inn{\gamma' \circ \theta_2(\xi)}{\xi}| \lesssim_N BM_1^N.
\end{equation*}
for all $N \in \N$ and all $\xi \in \Omega \subset \hat{\R}^3 \backslash \{0\}$.

The application with respect to $\theta_1^\pm: \widehat{\R}^3 \backslash \{0\} \to I_0$ satisfying
\begin{equation*}
    \inn{\gamma' \circ \theta_1^\pm (\xi)}{\xi}=0
\end{equation*}
is similar, with $g=\gamma'$ (we do not require to take an auxiliary $h$ in this case).
\end{example}




\section{Integration-by-parts}

For $a \in C^{\infty}_c(\R)$ supported in an interval $I \subset \R$ and $\phi \in C^{\infty}(I)$, define the oscillatory integral
\begin{equation*}
    \mathcal{I}[\phi, a] := \int_{\R} e^{i \phi(s)} a(s)\,\ud s.
\end{equation*}
The following lemma is a standard application of integration-by-parts.

\begin{lemma}[Non-stationary phase]\label{non-stationary lem} Let $R \geq 1$ and $\phi, a$ be as above. Suppose that for each $j \in \N_0$ there exist constants $C_j \geq 1$ such that the following conditions hold on the support of $a$:
\begin{enumerate}[i)]
    \item $|\phi'(s)| > 0$,
    \item $|\phi^{(j)}(s)| \leq C_j R^{-(j-1)}|\phi'(s)|^j\,\,$ for all $j \geq 2$,
    \item $|a^{(j)}(s)| \leq C_j R^{-j}|\phi'(s)|^j\,\,$ for all $j \geq 0$.
\end{enumerate}
Then for all $N \in \N_0$ there exists some constant $C(N)$ such that
\begin{equation*}
    |\mathcal{I}[\phi, a]| \leq C(N) \cdot |\supp a| \cdot R^{-N}.
\end{equation*}
Moreover, $C(N)$ depends on $C_1, \dots, C_N$ but is otherwise independent of $\phi$ and $a$ and, in particular, does not depend on $r$. 
\end{lemma}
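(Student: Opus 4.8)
The plan is to run the standard non-stationary phase integration-by-parts argument, with the careful bookkeeping of $R$-powers dictated by hypotheses ii) and iii). Define the first-order differential operator $L$ acting on smooth functions by
\begin{equation*}
    Lg(s) := \frac{1}{i\phi'(s)}\,\frac{\ud}{\ud s}\Big(\frac{g(s)}{i\phi'(s)}\Big) \cdot (i\phi'(s))
    \quad\text{— more precisely}\quad
    Lg := \frac{\ud}{\ud s}\Big(\frac{g}{i\phi'}\Big),
\end{equation*}
chosen so that its transpose $L^{\top}$ satisfies $L^{\top}\big(e^{i\phi(s)}\big) = e^{i\phi(s)}$; here condition i) guarantees $\phi'$ is non-vanishing on $\supp a$, so $L$ is well-defined there. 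Then for each $N\in\N_0$, integrating by parts $N$ times and using that $a$ is compactly supported in $I$ (so no boundary terms appear), we obtain
\begin{equation*}
    \mathcal{I}[\phi,a] = \int_{\R} e^{i\phi(s)}\, (L^{N}a)(s)\,\ud s,
\end{equation*}
where $L^{N}$ denotes the $N$-fold composition. It then suffices to show the pointwise bound $|(L^{N}a)(s)| \lesssim_{C_1,\dots,C_N} R^{-N}$ on $\supp a$, since integrating this over the support of $a$ yields the claim with $C(N)$ depending only on $C_1,\dots,C_N$.

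\textbf{Key step: the pointwise estimate for $L^N a$.} By induction on $N$, I claim that $L^N a$ is a finite linear combination (with absolute combinatorial coefficients depending only on $N$) of terms of the form
\begin{equation*}
    \frac{a^{(j_0)}(s)\,\prod_{\ell=1}^{m}\phi^{(1+j_\ell)}(s)}{(\phi'(s))^{\,N+m}},
    \qquad j_0 + \sum_{\ell=1}^m j_\ell = N,\quad j_0 \geq 0,\ j_\ell \geq 1,\ 0 \leq m \leq N.
\end{equation*}
Indeed, this is immediate for $N=0$ (the term $a/(\phi')^0 = a$), and applying $L = \frac{\ud}{\ud s}\circ\frac{1}{i\phi'}$ to such a monomial and using the quotient/product rule produces terms of exactly the same shape with $N$ increased by one (differentiating the numerator raises some $j$ by one or introduces a new factor $\phi^{(1+j)}$ with a new $m$; differentiating the denominator power raises the denominator exponent and pulls down a factor $\phi''$). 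Now estimate a generic such term on $\supp a$: by hypothesis iii), $|a^{(j_0)}| \leq C_{j_0} R^{-j_0}|\phi'|^{j_0}$, and by hypothesis ii) (together with $|\phi'|^1 = |\phi'|$ when $j_\ell = 1$, and ii) directly when $j_\ell \geq 1$, noting $\phi^{(1+j_\ell)}$ has order $\geq 2$), $|\phi^{(1+j_\ell)}| \leq C_{1+j_\ell} R^{-j_\ell}|\phi'|^{1+j_\ell}$. Multiplying these bounds, the numerator is dominated by a constant times $R^{-(j_0 + \sum j_\ell)}\,|\phi'|^{\,j_0 + \sum(1+j_\ell)} = R^{-N}\,|\phi'|^{\,N+m}$, which exactly cancels the denominator $|\phi'|^{N+m}$. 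Hence each term is $\lesssim_N R^{-N}$, and summing the finitely many terms gives $|L^N a(s)| \leq C(N) R^{-N}$ with $C(N)$ a function of $C_1,\dots,C_N$ alone.

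\textbf{Main obstacle.} The only genuinely delicate point is the bookkeeping that the powers of $R$ and of $|\phi'|$ balance exactly — i.e.\ verifying that the homogeneity degrees in the induction are preserved by the operator $L$ — rather than any analytic subtlety; hypotheses i)–iii) are precisely engineered so that $L$ maps the class of monomials above into itself while each application costs exactly one factor of $R^{-1}$. One should also double-check that the combinatorial coefficients arising in the induction depend only on $N$ and not on $\phi$, $a$, or $R$, which is clear since they come purely from the Leibniz and quotient rules. With the pointwise bound in hand, integrating over the (fixed, $R$-independent) support of $a$ completes the argument and makes the stated $r$-independence of $C(N)$ manifest.
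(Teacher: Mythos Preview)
Your proof is correct and follows essentially the same integration-by-parts scheme as the paper. The only difference is in the bookkeeping for the pointwise bound on the iterated operator: the paper runs a two-parameter induction establishing the stronger family of inequalities $|\partial_s^j (D^*)^N a(s)| \lesssim R^{-N-j}|\phi'(s)|^j$ (and separately bounds $|\partial_s^i(\phi')^{-1}|$ via a Fa\`a di Bruno-type formula), whereas you give the explicit monomial decomposition of $L^N a$ and bound each term directly. Both methods are standard and yield the same result; your explicit decomposition arguably makes the balance of $R$-powers and $|\phi'|$-powers more transparent. A minor cosmetic point: your first displayed formula for $L$ is garbled before the ``more precisely'', and the claim $L^{\top}(e^{i\phi})=e^{i\phi}$ is off by a sign (one actually gets $-e^{i\phi}$), so the integral identity picks up a harmless factor of $(-1)^N$; this does not affect the estimate.
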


\begin{proof} Taking $D := \phi'(s)^{-1} \partial_s$, repeated integration-by-parts implies that 
\begin{equation*}
    \mathcal{I}[\phi, a] = (-i)^{-N}\int_{\R} e^{i\phi(s)} (D^*)^Na(s)\,\ud s
\end{equation*}
where $D^*$ is the `adjoint' differential operator $D^* \colon a \mapsto -\partial_s \big[(\phi')^{-1} \cdot a \big]$. Thus, the proof boils down to establishing a pointwise estimate
\begin{equation*}
    |(D^*)^Na(s)| \leq C(N) \cdot R^{-N}
\end{equation*}
under the hypotheses of the lemma. 

It is in fact convenient to prove a more general inequality
\begin{equation}\label{non-stationary 1}
    |\partial_s^j(D^*)^Na(s)| \leq C(j, N)  \cdot R^{-N-j}\cdot |\phi'(s)|^j, \qquad \textrm{for all $j, N \in \N_0$},
\end{equation}
where the $C(j, N)$ again only the constants $C_k$ for $1 \leq k \leq N+j$. The inequality \eqref{non-stationary 1} is amenable to induction on the parameter $N$. Indeed, if $N = 0$, then \eqref{non-stationary 1} reduces to hypothesis iii), which establishes the base case. 

Assume the inequality \eqref{non-stationary 1} holds for some $N \geq 0$ and all $j$. By the Leibniz rule,
\begin{equation}\label{non-stationary 2}
  \partial_s^j(D^*)^{N+1}a(s) = \sum_{i=0}^{j+1} \binom{j+1}{i} \big[\partial_s^i (\phi')^{-1}\big](s) \cdot \big[\partial^{j+1-i} (D^*)^N a\big](s).  
\end{equation}
Using the induction hypothesis, one may immediately bound
\begin{equation}\label{non-stationary 3}
    \big|\big[\partial^{j+1-i} (D^*)^N a\big](s)\big| \leq C(j+1-i, N) \cdot R^{-N - 1 - j + i} \cdot |\phi'(s)|^{j + 1 - i}.
\end{equation}
On the other hand, an induction argument shows that there exists a polynomial $\wp \in \R[X_0, \dots, X_i]$, with coefficients depending only on $i$, with the following properties:
\begin{enumerate}[a)]
    \item $\wp$ is a linear combination of monomials $X_0^{\alpha_0}\cdots X_i^{\alpha_i}$ for multi-indices $(\alpha_0, \dots, \alpha_i)$ satisfying
    \begin{equation*}
        0 \cdot \alpha_0 + 1 \cdot \alpha_1 + \cdots + i \cdot \alpha_i = \alpha_0 + \alpha_1 + \cdots +  \alpha_i =  i.
    \end{equation*}
    \item The identity
    \begin{equation*}
 \big[\partial_s^K (\phi')^{-1}\big](s) = \frac{\wp\big(\phi'(s), \dots, \phi^{(i+1)}(s)\big)}{\phi'(s)^{i+1}} \qquad \textrm{holds for all $s \in I$.}
\end{equation*}
\end{enumerate}
If $(\alpha_0, \dots, \alpha_i)$ is a monomial satisfying a), then hypothesis ii) of the lemma implies that
\begin{equation*}
    \prod_{k=0}^i |\phi^{(k+1)}(s)|^{\alpha_k} \lesssim R^{-i} \cdot |\phi'(s)|^{2i},
\end{equation*}
where the implied constant is here allowed to depend on the $C_k$ for $1 \leq k \leq i+1$. Consequently, from the formula in b) above one deduces that 
\begin{equation}\label{non-stationary 4}
    |\big[\partial_s^i (\phi')^{-1}\big](s)|  \lesssim R^{-i} \cdot |\phi'(s)|^{i-1}.
\end{equation}
Substituting the bounds \eqref{non-stationary 3} and \eqref{non-stationary 4} into \eqref{non-stationary 2}, the induction now closes provided $C(j,N)$ is appropriately defined. 
\end{proof}




\bibliography{Reference}

\providecommand{\bysame}{\leavevmode\hbox to3em{\hrulefill}\thinspace}
\providecommand{\MR}{\relax\ifhmode\unskip\space\fi MR }
\providecommand{\MRhref}[2]{%
  \href{http://www.ams.org/mathscinet-getitem?mr=#1}{#2}
}
\providecommand{\href}[2]{#2}
\begin{thebibliography}{10}

\bibitem{BGHS-Sobolev}
David Beltran, Shaoming Guo, Jonathan Hickman, and Andreas Seeger,
  \emph{Sobolev improving for averages over curves in $\mathbb{R}^4$},
  Preprint.

\bibitem{BHS2}
David Beltran, Jonathan Hickman, and Christopher~D. Sogge, \emph{{S}harp local
  smoothing estimates for {F}ourier integral operators}, To appear in the
  proceedings of the conference {G}eometric {A}spects of {H}armonic {A}nalysis,
  on the occasion of {F}ulvio {R}icci's 70th birthday. Preprint:
  arxiv.org/abs/1812.11616 (2018).

\bibitem{BBFL2018}
Jonathan Bennett, Neal Bez, Taryn~C. Flock, and Sanghyuk Lee, \emph{Stability
  of the {B}rascamp-{L}ieb constant and applications}, Amer. J. Math.
  \textbf{140} (2018), no.~2, 543--569. \MR{3783217}

\bibitem{BCCT2008}
Jonathan Bennett, Anthony Carbery, Michael Christ, and Terence Tao, \emph{The
  {B}rascamp-{L}ieb inequalities: finiteness, structure and extremals}, Geom.
  Funct. Anal. \textbf{17} (2008), no.~5, 1343--1415. \MR{2377493}

\bibitem{Bennett2006}
Jonathan Bennett, Anthony Carbery, and Terence Tao, \emph{On the multilinear
  restriction and {K}akeya conjectures}, Acta Math. \textbf{196} (2006), no.~2,
  261--302. \MR{2275834}

\bibitem{Bourgain1986}
J.~Bourgain, \emph{Averages in the plane over convex curves and maximal
  operators}, J. Analyse Math. \textbf{47} (1986), 69--85. \MR{874045}

\bibitem{Bourgain1985}
Jean Bourgain, \emph{Estimations de certaines fonctions maximales}, C. R. Acad.
  Sci. Paris S\'{e}r. I Math. \textbf{301} (1985), no.~10, 499--502.
  \MR{812567}

\bibitem{BD2015}
Jean Bourgain and Ciprian Demeter, \emph{The proof of the {$l^2$} decoupling
  conjecture}, Ann. of Math. (2) \textbf{182} (2015), no.~1, 351--389.
  \MR{3374964}

\bibitem{Bourgain2011}
Jean Bourgain and Larry Guth, \emph{Bounds on oscillatory integral operators
  based on multilinear estimates}, Geom. Funct. Anal. \textbf{21} (2011),
  no.~6, 1239--1295. \MR{2860188}

\bibitem{CS1995}
Anthony Carbery and Andreas Seeger, \emph{Homogeneous {F}ourier multipliers of
  {M}arcinkiewicz type}, Ark. Mat. \textbf{33} (1995), no.~1, 45--80.
  \MR{1340270}

\bibitem{Cordoba1982}
Antonio C\'{o}rdoba, \emph{Geometric {F}ourier analysis}, Ann. Inst. Fourier
  (Grenoble) \textbf{32} (1982), no.~3, vii, 215--226. \MR{688026}

\bibitem{GGPRY}
Philip~T. Gressman, Shaoming Guo, Lillian~B. Pierce, Joris Roos, and Po-Lam
  Yung, \emph{Reversing a philosophy: from counting to square functions and
  decoupling}, To appear J. Geom. Anal.

\bibitem{GWZ2020}
Larry Guth, Hong Wang, and Ruixiang Zhang, \emph{A sharp square function
  estimate for the cone in {$\mathbf{R}^3$}}, Ann. of Math. (2) \textbf{192}
  (2020), no.~2, 551--581. \MR{4151084}

\bibitem{Ham2014}
Seheon Ham and Sanghyuk Lee, \emph{Restriction estimates for space curves with
  respect to general measures}, Adv. Math. \textbf{254} (2014), 251--279.
  \MR{3161099}

\bibitem{Hickman2016}
Jonathan Hickman, \emph{Uniform {$L_x^p$}--{$L_{x,r}^q$} improving for dilated
  averages over polynomial curves}, J. Funct. Anal. \textbf{270} (2016), no.~2,
  560--608. \MR{3425895}

\bibitem{LV2012}
Sanghyuk Lee and Ana Vargas, \emph{On the cone multiplier in {$\mathbb{R}^3$}},
  J. Funct. Anal. \textbf{263} (2012), no.~4, 925--940. \MR{2927399}

\bibitem{LP}
Roy~B. Leipnik and Charles E.~M. Pearce, \emph{The multivariate {F}a\`a di
  {B}runo formula and multivariate {T}aylor expansions with explicit integral
  remainder term}, ANZIAM J. \textbf{48} (2007), no.~3, 327--341. \MR{2326318}

\bibitem{MSS1992}
Gerd Mockenhaupt, Andreas Seeger, and Christopher~D. Sogge, \emph{Wave front
  sets, local smoothing and {B}ourgain's circular maximal theorem}, Ann. of
  Math. (2) \textbf{136} (1992), no.~1, 207--218. \MR{1173929}

\bibitem{PS2007}
Malabika Pramanik and Andreas Seeger, \emph{{$L^p$} regularity of averages over
  curves and bounds for associated maximal operators}, Amer. J. Math.
  \textbf{129} (2007), no.~1, 61--103. \MR{2288738}

\bibitem{RdF1983}
Jos\'{e}~L. Rubio~de Francia, \emph{Estimates for some square functions of
  {L}ittlewood-{P}aley type}, Publ. Sec. Mat. Univ. Aut\`onoma Barcelona
  \textbf{27} (1983), no.~2, 81--108. \MR{765844}

\bibitem{Sogge1991}
Christopher~D. Sogge, \emph{Propagation of singularities and maximal functions
  in the plane}, Invent. Math. \textbf{104} (1991), no.~2, 349--376.
  \MR{1098614}

\bibitem{Stein1993}
Elias~M. Stein, \emph{Harmonic analysis: real-variable methods, orthogonality,
  and oscillatory integrals}, Princeton Mathematical Series, vol.~43, Princeton
  University Press, Princeton, NJ, 1993, With the assistance of Timothy S.
  Murphy, Monographs in Harmonic Analysis, III. \MR{1232192}

\bibitem{Tao2020}
Terence Tao, \emph{Sharp bounds for multilinear curved {K}akeya, restriction
  and oscillatory integral estimates away from the endpoint}, Mathematika
  \textbf{66} (2020), no.~2, 517--576. \MR{4130338}

\bibitem{Wolff2000}
T.~Wolff, \emph{Local smoothing type estimates on {$L^p$} for large {$p$}},
  Geom. Funct. Anal. \textbf{10} (2000), no.~5, 1237--1288. \MR{1800068}

\bibitem{Zhang}
Ruixiang Zhang, \emph{The endpoint perturbed {B}rascamp-{L}ieb inequalities
  with examples}, Anal. PDE \textbf{11} (2018), no.~3, 555--581. \MR{3738255}

\end{thebibliography}
\bibliographystyle{amsplain}

\end{document}